\documentclass[reqno,11pt]{amsart}
\usepackage{amsmath,amssymb,amsfonts}

\usepackage{tikz}
%\usetikzlibrary{arrows}
\usetikzlibrary{matrix,arrows}
%\usepackage{verbatim}
%\usepackage{color}
%\definecolor{darkgreen}{rgb}{0,0.35,0} 
\usepackage[pagebackref,colorlinks,citecolor=red,linkcolor=blue]{hyperref}

%Cambia el tipo de letra
\usepackage{fouriernc}
%\usepackage[T1]{fontenc}
%\usepackage{bm}

%\title[Pre/sheaves and comodules on a geometric stack]{Quasi-coherent sheaves, Cartesian presheaves and comodules on a geometric stack}

\title[QC-sheaves on a geometric stack]{A functorial formalism for quasi-coherent sheaves on a geometric stack}

%versi'on de principios de 2013

\author[L. Alonso]{Leovigildo Alonso Tarr\'{\i}o}
\address[L. A. T.]{Departamento de \'Alxebra\\
Facultade de Matem\'a\-ticas\\
Universidade de Santiago de Compostela\\
E-15782  Santiago de Compostela, Spain}
\email{leo.alonso@usc.es}

\author[A. Jerem\'{\i}as]{Ana Jerem\'{\i}as L\'opez}
\address[A. J. L.]{Departamento de \'Alxebra\\
Facultade de Matem\'a\-ticas\\
Universidade de Santiago de Compostela\\
E-15782  Santiago de Compostela, Spain}
\email{ana.jeremias@usc.es}

\author[M. P\'erez]{Marta P\'erez Rodr\'{\i}guez}
\address[M. P. R.]{Departamento de Matem\'a\-ticas\\
Esc. Sup. de Enx. Inform\'atica,
Campus de Ourense\\
Universidade de Vigo\\
E-32004 Ourense, Spain}
\email{martapr@uvigo.es}

\author[M. J. Vale]{Mar\'{\i}a J. Vale Gonsalves}
\address[M. J. V.]{Departamento de \'Alxebra\\
Facultade de Matem\'a\-ticas\\
Universidade de Santiago de Compostela\\
E-15782  Santiago de Compostela, Spain}
\email{mj.vale@usc.es}

\thanks{This work has been partially supported by
Spain's MEC and E.U.'s FEDER research projects MTM2008-03465 and MTM2011-26088,
together with Xunta de Galicia's
PGIDIT10PXIB207144PR and GRC2013-045
}

\subjclass[2000]{14A20 (primary); 14F05, 18F20 (secondary)}

\date{\today}

\theoremstyle{plain}
\newtheorem{thm}{Theorem}[section]
\newtheorem{lem}[thm]{Lemma}
\newtheorem{cor}[thm]{Corollary}
\newtheorem{prop}[thm]{Proposition}

\theoremstyle{remark}
\newtheorem*{rem}{Remark}

\theoremstyle{definition}

\newtheorem*{ack}{Acknowledgements}
\newtheorem*{Ariana}{An Ariadne's thread for quasi-coherent sheaves on stacks}
\newtheorem*{Description}{Outline of the paper}
\newtheorem{cosa}[thm]{}

\numberwithin{equation}{thm}

\newcommand{\CF}{{\mathcal F}}
\newcommand{\CG}{{\mathcal G}}

\newcommand{\CK}{{\mathcal K}}

\newcommand{\CM}{\mathcal{M}}

\newcommand{\CO}{\mathcal{O}}
\newcommand{\CP}{\mathcal{P}}

\newcommand{\SB}{\mathsf{B}}
\newcommand{\SC}{\mathsf{C}}
\newcommand{\SD}{\mathsf{D}}

\newcommand{\SG}{\mathsf{G}}

\renewcommand{\SS}{\mathsf{S}}
\newcommand{\SST}{\mathsf{T}}

\newcommand{\SU}{\mathsf{U}}

\newcommand{\SW}{\mathsf{W}}
\newcommand{\SX}{\boldsymbol{\mathsf{X}}}
\newcommand{\SY}{\boldsymbol{\mathsf{Y}}}
\newcommand{\SZ}{\boldsymbol{\mathsf{Z}}}

\newcommand{\fs}{\mathsf{f}}
\newcommand{\sr}{\mathsf{r}}
\newcommand{\md}{\text{-}\mathsf{Mod}}
\newcommand{\com}{\text{-}\mathsf{coMod}}

\newcommand{\qc}{\mathsf{qc}}

\newcommand{\op}{\mathsf{o}}
\newcommand{\ab}{\mathsf{Ab}}
\newcommand{\set}{\mathsf{Set}}

\newcommand{\ZZ}{\mathbb{Z}}

\newcommand{\dirlim}[1]{\begin{array}[t]{c} {\rm lim}\\[-7.5 pt]
 {\longrightarrow} \\[-7.5 pt] {\scriptstyle {#1}} \end{array}}
\newcommand{\invlim}[1]{\begin{array}[t]{c} {\rm lim}\\[-7.5 pt]
 {\longleftarrow} \\[-7.5 pt] {\scriptstyle {#1}} \end{array}}

\newcommand*{\longrightrightarrows}{
\ensuremath{%
\makebox[0pt][l]
{\raisebox{0.3ex}{\ensuremath{\longrightarrow}}}%
\raisebox{-0.3ex}{\ensuremath{\longrightarrow}}%
}}

\newcommand*{\longleftrightarrows}{
\ensuremath{%
\makebox[0pt][l]
{\raisebox{0.3ex}{\ensuremath{\longleftarrow}}}%
\raisebox{-0.3ex}{\ensuremath{\longrightarrow}}%
}}

\newcommand{\lto}{\longrightarrow}
\newcommand{\xto}{\xrightarrow}

\DeclareMathOperator{\iso}{\tilde{\to}}

\DeclareMathOperator{\liso}{\tilde{\lto}}

\newcommand{\imp}{\Rightarrow}

\DeclareMathOperator{\Hom}{Hom}

\DeclareMathOperator{\Img}{Im}

\DeclareMathOperator{\spec}{Spec}

\DeclareMathOperator{\id}{id}
\DeclareMathOperator{\iid}{\mathsf{id}}

\DeclareMathOperator{\qco}{\mathsf{Qco}}
\DeclareMathOperator{\fun}{\mathsf{Fun}}
\DeclareMathOperator{\cart}{\mathsf{Crt}}
\DeclareMathOperator{\car}{\mathsf{crt}}
\DeclareMathOperator{\des}{\mathsf{Desc}}
\DeclareMathOperator{\stack}{\mathsf{Stck}}

\DeclareMathOperator{\pre}{\mathsf{Pre}}
\DeclareMathOperator{\aff}{\mathsf{Aff}}
\DeclareMathOperator{\pin}{\mathsf{p}}
\DeclareMathOperator{\et}{\mathsf{\acute{e}t}}
\DeclareMathOperator{\ff}{\mathsf{fppf}}
\DeclareMathOperator{\ring}{\mathsf{Ring}}

\newcommand{\GGamma}{\boldsymbol{\Gamma}}
%\DeclareBoldMathCommand\GGamma{\Gamma}
\newcommand{\wadj}[1]{{#1}^\mathsf{a}}

\newcommand{\ie}{{\it i.e.\/}}

\hyphenation{se-mi-seÐpa-ra-ted Gro-then-dieck}

\begin{document}

\begin{abstract} 
A geometric stack is a quasi-compact and semi-separated algebraic stack. We prove that the quasi-coherent sheaves on the small flat topology, Cartesian presheaves on the underlying category, and comodules over a Hopf algebroid associated to a presentation of a geometric stack are equivalent categories. As a consequence, we show that the category of quasi-coherent sheaves on a geometric stack is a Grothendieck category.

We also associate, in a 2-functorial way, to a 1-morphism of geometric stacks $f \colon \SX \to \SY$, an adjunction $f^* \dashv f_*$ for the corresponding categories of quasi-coherent sheaves that agrees with the classical one defined for schemes. This construction is described both geometrically in terms of the small flat site and algebraically in terms of comodules over the Hopf algebroid.
\end{abstract}

\maketitle
\tableofcontents

\section*{Introduction}

The theory of quasi-coherent sheaves on algebraic stacks has suffered from an unbalanced record in the published literature. Our initial aim was to develop the basic cohomological properties of quasi-coherent sheaves on algebraic stacks. The corresponding chapters of \cite{stack} were not available when we started to study these problems, and the literature presented some gaps. This led us to pursue a general approach to quasi-coherent sheaves with the cohomology formalism in mind. The result is this paper, that has thus a semi-expository nature. Some of the results here are accessible either on unpublished papers or references with different proofs or employing a language that, in our opinion, makes the connection with the classical literature difficult. The paper also contains some new approaches to the functoriality behaviors of quasi-coherent sheaves on algebraic stacks.

The setting of this paper is to define a category of quasi-coherent sheaves through a \emph{small} ringed site by mimicking the original definition given in \cite[(5.1.3)]{ega1} with a view towards an extension of \cite{ahst} to stacks. As we look toward cohomological properties, we restrict to a certain class of algebraic stacks, the geometric stacks ---we will discuss this choice later in the introduction. We will show that to such an algebraic stack $\SX$ one can associate an abelian category of $\qco(\SX)$ that is Grothendieck, \ie{} cocomplete with exact filtered direct limits and possessing a generator. Moreover this category agrees trivially with the usual one when $\SX$ is equivalent to a scheme. Further we explore the usual variances with respect to 1-morphisms and the corresponding 2-functorial properties.

What are the differences between this approach and the previous ones already in the literature? The very notion of quasi-coherent sheaf on stacks usually does not refer to a ringed site but it is often mistook with a related notion, that of Cartesian \emph{pre}sheaf. Both are equivalent for geometric stacks as we show. The sheaf approach suffers a major technical problem, namely, the lack of functoriality of the \emph{lisse-\'etale} site. 

In \cite{O}, a solution is presented for the category of quasi-coherent sheaves defined in \cite{lmb}. Unfortunately not all the results that we need are contained in this reference. We also develop a treatment in our setting of the adjoint functors $f^* \dashv f_*$ associated to a 1-morphism $f \colon \SX \to \SY$ together with its 2-functorial properties. Note the description in \cite[3.3]{O} of $f_*$ is slightly incorrect because the 1-morphism $f$ would have to be representable by schemes; it is easily repaired by considering the approach in \cite{lmb} or in \cite{six}. We provide a full detailed construction of both functors. We give also an explicit description of these functors via Hopf algebroids (Corollary \ref{adjHoveyInv}).

We should mention that in the \emph{Stacks Project} \cite{stack} a construction similar to ours is made but using \emph{big} flat sites. The drawback of this approach is that in order to the category of sheaves of modules over a big (ringed) site to possess a generator, it is necessary to make the choice of a universe. This construction makes the sites considered automatically functorial but one has to check the invariance of universe. We refrain from using Grothendieck universes by using sets and classes \emph{a la} Von Neumann-G\"odel-Bernays but we had to wrestle with the non functoriality of the small sites. This paper thus was developed mostly independently of \cite{stack}, that is why we refer to \cite{lmb} for all the basic definitions on stacks.

\begin{Description}
Let us explain first the reason why we assume always that the stacks we consider are quasi-compact and semi-separated, \ie{} with affine diagonal morphism. These are called geometric stacks in \cite{lur}. On schemes, this condition is the one that reveals itself as \emph{indispensable} to have a nice cohomological behavior. Moreover we may attach a site made of \emph{affine} schemes, much as we may use a basis of affine open subsets on quasi-compact, semi-separated schemes, so it makes the theory simpler. And a third reason is that they may be represented by a purely algebraic object, a Hopf algebroid.

This hypothesis allows us to give a shorter proof of the existence of a generator, without choosing either a big cardinal as in Gabber's proof or appealing to the finite presentation of the category. This shortcut should be regarded as a consequence of our restricted setting.

Notice that sometimes the hypothesis of semi-separateness has a reputation of being difficult to check. Fortunately, recent results of Hall (see \cite{hacoh}, especially Theorem D) ``can also be used to show that many algebraic stacks of interest have affine diagonals''. For examples of this principle at work, see, for instance, \cite{haop} and \cite{hr}.

For such a stack $\SX$ we consider a certain variant of the small flat site, that we will denote by $\SX_{\ff}$ (see \ref{site}). Its associated topos is not functorial, but still, it has enough functoriality properties to allow us to develop a useful formalism. Notice that it is finer than the classical \emph{lisse-\'etale} site, but has the advantage that a presentation yields a covering for every object in the small flat topology. This allows for simpler proofs and the category of quasi-coherent sheaves does not change.

In the affine case there are three ways of understanding a quasi-coherent sheaf, namely, 
\begin{itemize}
 \item as a sheaf of modules which locally admits a presentation,
 \item as a cartesian presheaf\footnote{This property corresponds essentially to the fact that a localization of a module at an element of the ring may be computed as a tensor product.}, and
 \item as a module over the ring of global sections.
\end{itemize}
These three aspects are, in some sense, present also in the global case. In this paper we show that they arise quite naturally when we consider a geometric stack. For the small flat site, cartesian presheaves of modules are sheaves (in fact, for any topology coarser than the \textsf{fpqc}) and moreover they are the same as quasi-coherent sheaves. This relation is described for the small flat site of an affine scheme in section \ref{sec2}, after establishing in section \ref{sec1} the general properties of ringed sites that we will need.

In section \ref{sec3} we transport all the previous discussion to the setting of geometric stacks. A geometric stack may be described by a Hopf algebroid, as the stackification of its associated affine groupoid scheme. We prove in Theorem \ref{agree} that quasi-coherent sheaves and cartesian presheaves agree on a geometric stack.

Next we develop in several steps an equivalence of categories between comodules over a Hopf algebroid and quasi-coherent sheaves on its associated geometric stack. In section \ref{sec4}, we see that quasi-coherent sheaves on a geometric stack correspond to descent data for quasi-coherent sheaves on an affine cover (Theorem \ref{p52}). Once we are in the affine setting, we see that a descent datum corresponds to certain supplementary structure on its module of global sections. 

By an algebraic procedure, in section \ref{sec5}, we show that this structure can be transformed into the structure of a comodule, establishing a further equivalence of categories. All these equivalences combined provide the equivalence between quasi-coherent sheaves on the geometric stack and comodules over the Hopf algebroid (Corollary \ref{descrt}). An analogous result is due by Hovey under a different setting, as we will discuss later. An important consequence of this equivalence is that the category of quasi-coherent sheaves on a geometric stack is Grothendieck.

The motivation for the results in the present paper is to have a convenient formalism for doing cohomology. So, it is crucial to have a good functoriality. We devote section \ref{sec6} to this issue. Specifically, we show that a map between geometric stacks induces a pair of adjoint functors between the corresponding categories of quasi-coherent sheaves. This construction is not straightforward because, as we remarked before, there is no underlying map of topoi and the definition of the direct image has extra complications. Moreover, we show that these constructions are 2-functorial. One needs to check in addition that 2-cells induce a natural transformation between their associated functors, a feature of stacks.

In section \ref{sec7}, we express the adjunction in a purely algebraic way. The benefit of this result is to put ourselves in the most convenient setting for approaching cohomological questions. It is noteworthy that for a 1-morphism of geometric stacks $f \colon \SX \to \SY$, the functor $f_*$ has a simpler geometric description while its adjoint $f^*$ is easier to describe algebraically.

In the final section \ref{sec8} we compare our functoriality formalism with the classical one. Let us recall that the category quasi-coherent sheaves on a scheme does not change if we replace the Zariski by the \'etale topology. Moreover, the \'etale topology gives a reasonable topos for a geometric Deligne-Mumford stack. So in this setting there is a couple of adjoint functors that restrict to the classical ones in the scheme case and agree with the ones just defined in the general case of a geometric stack. This is shown in Propositions \ref{p72}, \ref{p73} and Corollary \ref{final}.
\end{Description}

\begin{Ariana}
It only seems pertinent to give some inputs about the story of the development of the theory of quasi-coherent sheaves on stacks. We do not pretend to be exhaustive, and we will content ourselves with putting in perspective the references that lead to the present work while commenting the relationship with the notions employed here.

After the definition by Grothendieck (possibly inspired by Cartier) of quasi-coherent sheaf on ringed spaces in \cite[(5.1.3)]{ega1}, there did not seem to be studied in general contexts. One possible reason is that the category of quasi-coherent sheaves constitutes the closure of the category of coherent sheaves for direct limits and this is not true for analytical varieties or noetherian formal schemes.

There are however, two remarkable works in which this notion was considered in a kind of general setting. The earliest is Hakim's thesis. The notion of relative quasi-coherence with respect to a base topos is studied. Relative quasi-coherence agrees with the usual one when the base topos is the Zariski topos of a commutative ring \cite[IV, Remarque (4.13)]{hakt}. In a quite different vein, Orlov has developed a notion of quasi-coherence with respect to a topos with a view toward non-commutative geometry. His approach is, essentially, to define quasi-coherent sheaves as those sheaves whose underlying presheaf is Cartesian \cite[Definition 2.2]{orlov}.

Quite possibly, the first explicit definition of quasi-coherence on a stack is by Vistoli, \cite[(7.18)]{vit}. For a geometric stack $\SX$, his notion essentially agrees with our $\des_{\qc}(X/\SX)$ for a given presentation $X \to \SX$. The classical reference \cite{lmb} defines quasi-coherent sheaf as a mixture of Cartesian sheaf plus a local condition of quasi-coherence. Besides, it overlooks the lack of functoriality of the \emph{lisse-\'etale} site \cite[\href{http://stacks.math.columbia.edu/tag/07BF}{Section 07BF}]{stack}. The authoritative reference is Olsson \cite{O}. As we remarked previously, this reference does not contain all the results we need for the next step in our program.

Parallel to this story, another point of view has arisen with origin in Hopkin's vision on unifying stable homotopy theory with algebraic geometry to obtain information about number theoretic questions deeply rooted in complex oriented cohomology theories. The point of departure is Hopkins seminar \cite{hop}, where the idea of representing geometric stacks through Hopf algebroids arose.

An important step in this point of view is given by Hovey's papers \cite{hmh}, \cite{hov}. His definition of quasi-coherent sheaf is in fact the notion of Cartesian pre\-sheaf, as he considers mostly the discrete topology. Thus, for a geometric stack $\SX$, his notion essentially agrees with our category $\cart(\SX)$. So, there is a great deal of overlapping in the corresponding parts of this paper and Hovey's work. We will point it out along the exposition. This point of view is also embraced by Goerss in \cite{goe} who also defines quasi-coherent sheaves through the Cartesian condition.

We have to mention also Pribble's Ph.~D. thesis \cite{P} whose point of view ultimately refers to the \emph{yoga} ``coalgebras on a cotriple are descent data'' developed in \cite{BR} in a general setting. We have used several times the exact sequence associated to the monad (or triple) corresponding to a presentation, simplifying some arguments. An important work with homotopical motivation is  Naumann's \cite{N} who identifies the category of quasi-coherent sheaves as defined in \cite{lmb} with the category $\des_{\qc}(X/\SX)$, defined below (see \ref{desqc}), by invoking the classical results in \cite[Chapter 6]{blr}.

Independently of the present work, the future main reference for stacks \cite{stack} already has a presentation of the basic properties of quasi-coherent sheaves on an algebraic stack, in the setting of big sites as we said before. Moreover, they work with a notion of algebraic stack with very few restrictions on the diagonal. In this general setting a sheaf is quasi-coherent if, and only if, it is Cartesian and locally quasi-coherent. They work over big sites for which functoriality follows easy. In our opinion, this sacrifices other useful features that provide a small site, especially in the somewhat restricted setting of geometric stacks. Their choice also implies the use of Grothendieck universes, something we do not afford to use. We do not intend to replace this approach but rather to complement it.

In recent times there have been further work on the properties of the category of quasi-coherent sheaves on an algebraic stack. We will just mention the work of Hall, Neeman, Rydh and Sch\"appi, to name a few that we are aware of.

\end{Ariana}

\begin{ack}
We thank M. Olsson for the useful exchange concerning the algebraic structure of inverse images from which the consideration of Lemma \ref{Olss} arose. We thank T. Sitte for his comments on the text and conversations on the topics of the paper. We also thank the referee for comments and suggestions that allowed us to improve the paper.

\end{ack}

\section{Sheaves on ringed sites}\label{sec1}

%%%%%%%%%%%%%%%%%%%%%%%
\setcounter{thm}{-1}

\begin{cosa}
In this paper we will use as underlying axiomatics that of von Neumann, G\"odel and Bernays using sets and classes. A category $\SC$ has a class of objects and a class of arrows but the class of maps between two objets $A, B \in \SC$, denoted $\Hom_\SC(A,B)$, is always assumed to be a set.
%, otherwise $\SC$ is called a \emph{pseudo-category}.
For readers fond of Bourbaki-Grothendieck universes (and for comparison with results in \cite{sga41}) fix two universes $\mathbb{U} \in \mathbb{V}$ and identify elements of $\mathbb{U}$ with sets and elements of $\mathbb{V}$ with classes. For the basic properties of fibered categories and stacks, we have used \cite{vgt} as a reference.
\end{cosa}

\begin{cosa} \textbf{Ringed categories}.
Let $\SC$ be a small\footnote{Its class of objects is a set, not a proper class.} category. We will denote by $\pre(\SC) := \fun(\SC^\op, \ab)$ the category of contravariant functors from $\SC$ to abelian groups. We will usually refer to objects in this category as \emph{presheaves on} $\SC$. We will say that $(\SC, \CO)$ is a ringed category if $\CO$ is a ring object in $\pre(\SC)$. In other words $\CO \colon \SC^\op \to \ring$ is a presheaf, or else $\CO$ is a presheaf together with a couple of maps $\cdot \colon \CO \otimes \CO \to \CO$ and $1 \colon \ZZ_{\SC} \to \CO$, with $\ZZ_\SC$ denoting the constant presheaf with value $\ZZ$; these data make commutative the usual diagrams expressing the associativity and commutativity of the product $\cdot$ and the fact that $1$ is a unit.
\end{cosa}

\begin{cosa} \textbf{Modules}.
Let $(\SC, \CO)$ be a ringed category. Denote by $\pre(\SC,\CO)\md$ or simply by $\pre\text{-}\CO\md$, if no confusion arises, the category of presheaves $\CM \in \pre(\SC)$ equipped with an action of $\CO$, $\CO \otimes \CM \to \CM$, that makes commutative the diagrams expressing the fact that the sections of $\CM$ are modules over the sections of $\CO$ and the restriction maps are linear. We will say that $\CM$  is an $\CO$-Module.
\end{cosa}

\begin{cosa} \label{superadj} \textbf{Cartesian presheaves}.
Let $(\SC, \CO)$ be a ringed category. We will say that an $\CO$-module $\CM$ is a \emph{Cartesian} presheaf if for every morphism $f:X \to Y$ in $\SC$ the $\CO(X)$-linear map
\[
\wadj{\CM(f)}: \CO(X) \otimes_{\CO(Y)} \CM(Y) \lto \CM(X)
\]
adjoint to the $\CO(Y)$-linear map $\CM(f):\CM(Y) \to \CM(X)$ is an isomorphism. We will denote the category of Cartesian presheaves on $(\SC,\CO)$ by $\cart(\SC, \CO)$.
\end{cosa}

\begin{cosa} \textbf{Ringed sites and topoi}.
As usual we will call a small category $\SC$ a \emph{site} if it is equipped with a Grothendieck topology $\tau$ \cite[Exp. II \S1]{sga41}. If it is necessary to make it explicit, we will denote the site by $(\SC, \tau)$, otherwise we will keep the notation $\SC$. A site has an associated \emph{topos}, its category of sheaves of sets. For a site $(\SC, \tau)$ we will denote its associated topos by $\SC_\tau$. We may also consider sheaves with additional algebraic structures, in particular, a sheaf of rings $\CO$. We will call the pair $(\SC, \CO)$ a \emph{ringed site} and $(\SC_\tau, \CO)$ a \emph{ringed topos}. We will always refer with the notation $(\SC_\tau,\CO)\md$ or simply $\CO\md$ to the category of \emph{sheaves} of modules over the sheaf of rings $\CO$, and not to the biggest category of presheaves of modules.
\end{cosa}

%\begin{lem}
%If $\SC$ possesses a final object $\mathbf{e}$, then $\CM$ is Cartesian if and only if
%for ${\mathbf e}_X:X \to {\mathbf e}$ the $\CO(X)$-linear map
%\[
%\wadj{\CM({\mathbf e}_X)} \colon \CO(X) \otimes_{\CO(\mathbf{e})} \CM(\mathbf{e}) \lto \CM(X)
%\]
% is an isomorphism.
%\end{lem}
%
%\begin{proof}
%The condition is clearly satisfied when $\CM$ is Cartesian. Conversely if $\CM$ satisfies the condition, consider the following commutative diagram
%\begin{center}
%\begin{tikzpicture}[description/.style={fill=white,inner sep=2pt}]
%\matrix (m) [matrix of math nodes, row sep=3em,
%column sep=2.5em, text height=1.5ex, text depth=0.25ex]
%{
%\CO(X) \otimes_{\CO(\mathbf{e})} \CM(\mathbf{e}) &   & \CO(X) \otimes_{\CO(Y)} \CM(Y) \\
%                                                 & \CM(X) & \\
%};
%\path[->,font=\scriptsize]
%(m-1-1) edge node[auto]  {$ \psi$} (m-1-3)
%        edge node[below] {$ \wadj{\CM({\mathbf e}_X)} $} (m-2-2)
%(m-1-3) edge node[auto]  {$\wadj{\CM(f)}$} (m-2-2);
%\end{tikzpicture}
%\end{center}
%we see that $\tilde{\CM(f)}$ is an isomorphism because the remaining maps are.
%\end{proof}

\begin{cosa}\label{comma}
Let $\SC$ be a site  and $X$ and object in $\SC$. We will consider the category whose objects are morphisms $u:U \to X$, and morphisms are commutative triangles.\footnote{Sometimes called \emph{the comma category}.} We will denote such an object by $(U,u)$, or simply by $U$ if no confusion arises. The category $\SC/X$ is a site. The coverings of the corresponding topology of an object $U$
are the families of morphisms $\{f_i:U_i \to U\}$ in $\SC /X$ that are coverings in $\SC$. If $(\SC,\CO)$ is a ringed site, then $(\SC/X,\CO|_X)$ is a ringed site with the induced sheaf of rings $\CO|_X$ given by
 $\CO|_X(U,u)=\CO(U)$. If $\CM$ is a sheaf of $\CO$-Modules on the site $(\SC,\CO)$ we define the sheaf restriction of $\CM$ to $\SC/X$, denoted $\CM|_X$, as the sheaf of $\CO|_X$-Modules on the site $(\SC/X,\CO|_X)$ given by $\CM|_X(U,u)=\CM(U)$.
\end{cosa}

\begin{cosa} \textbf{Quasi-coherent sheaves}.
Let $(\SC,\CO)$ be a ringed site with a topology $\tau$. We will say that a sheaf of $\CO$-Modules $\CM$ is quasi-coherent if for every object $X \in \SC$ there is a covering $\{p_i \colon X_i \to X\}_{i \in L}$ such
that for the restriction of $\CM$ to each $\SC/X_i$, there is a
presentation
\[
\CO|_{X_i}^{(I)} \lto \CO|_{X_i}^{(J)} \lto \CM|_{X_i} \lto 0,
\]
where by $\CF^{(I)}$ we denote a coproduct of copies of a sheaf $\CF$ indexed by some set $I$. If $\SC$ is the site of open sets of a classical topology, this definition agrees with \cite[\textbf{0}, (5.1.3)]{ega1}. We will denote the category of quasi-coherent sheaves on $(\SC,\CO)$ by $\qco(\SC_\tau, \CO)$ as a full subcategory of $\CO\md$. It is clearly an additive subcategory, and cokernels agree with those of the ambient category. It may happen that kernels do not exist, due to the lack of flatness on the site maps.
\end{cosa}

%\begin{lem}
%If the site $\SC$ possesses a final object $\mathbf{e}$, then $\CM$ is quasi-coherent if and only if the final object $\mathbf{e}$ possesses a covering $\{p_i \colon U_i \to \mathbf{e}\}_{i \in L}$ such that there is a presentation
%\[
%\CO|_{U_i}^{(I)} \lto \CO|_{U_i}^{(J)} \lto \CM|_{U_i} \lto 0
%\]
%as before.
%\end{lem}
%
%\begin{proof}
%Let $X \in \SC$. We define a covering $\{p_i \colon X_i \to X\}_{i \in L}$ by $X_i := U_i \times X$. For this covering we find the desired presentation by the transitivity of the restriction.
%\end{proof}

\begin{cosa}\label{adjtop}
Let $\fs \colon (\SC, \tau) \to (\SD, \sigma)$ be a continuous functor of sites \cite[III, (1.1)]{sga41} such that $\fs$ preserves fibered products. In view of \cite[III, (1.6)]{sga41}, the continuity of $\fs$ is equivalent to the fact that it takes coverings in $(\SC, \tau)$ to coverings in $(\SD, \sigma)$.

Whenever $\SC$ possesses colimits, this morphism induces a pair of adjoint functors between the associated topoi
\[
\SD_\sigma\, \underset{\fs_*}{\overset{\fs^{-1}}{\longleftrightarrows}} \,
\SC_\tau.
\]
Let us recall first the definition of $\fs_*$. If $\CG \in \SD_\sigma$ and $V \in \SC$, then
\[
(\fs_*\CG)(V) = \CG(\fs(V));
\]
on morphisms the definition is straightforward. Let now $\CF \in \SC_\tau$, $\fs^{-1}\CF$ is defined as the sheaf associated to the presheaf %$\fs_!\CF$ **+*+OJO+*+*
that assigns to each $U \in \SD$ the colimit
\[
\dirlim{\mathbf{I}_U^\fs} \CF(V),
\]
where $\mathbf{I}_U^\fs$ is the category defined as follows. The objects of 
$\mathbf{I}_U^\fs$ are the pairs $(V,g)$ with $V \in \SC$ and $g \colon U \to \fs(V)$ is a morphism in $\SD$. Its morphisms correspond to commutative diagrams
\begin{center}
\begin{tikzpicture}
\matrix(m)[matrix of math nodes, row sep=1.8em, column sep=3em,
text height=1.5ex, text depth=0.25ex]{
V     &      & f(V) \\
      &   U  & \\
V'    &      & f(V')   \\}; 
\path[->,font=\scriptsize,>=angle 90]
(m-1-1) edge node[auto] {$h$}  (m-3-1)
(m-2-2) edge node[above] {$g$}  (m-1-3) 
        edge node[below] {$g'$} (m-3-3)
(m-1-3) edge node[auto] {$f(h)$} (m-3-3);
\end{tikzpicture}
\end{center}
where $ h \colon V \to V'$ is a morphism in $\SC$ such that $\fs(h)\, g = g'$, and that we denote simply by \(h \colon (V,g) \to (V',g')\)
(see \cite[I, 5.1]{sga41} and \cite[III, 1.2.1]{sga41} where $\fs^{-1}$ is denoted $\fs^s$). We extend the definition to morphisms using the same functorial construction. 

In general, $\fs^{-1}$ might not be exact. If $\fs^{-1}$ is exact, we say that the pair $(\fs_*,\fs^{-1})$ is a morphism of topoi from $\SD_\sigma$ to $\SC_\tau$ according to \cite[IV 3.1]{sga41}.
\end{cosa}

We want to extend this formalism to the case of ringed sites. To have the possibility of transporting algebraic structures through the inverse image functor, we need the following lemma, communicated to us by Olsson.

\begin{lem}\label{Olss}
Let $\mathbf{I}$ be a category with finite products and let $F_i \colon \mathbf{I}^\op \to \set$, $i \in \{1 \dots r\}$, be a finite family of functors. The natural map
 \[
\dirlim{\mathbf{I}} (F_1 \times \dots \times F_r) \lto \left( \dirlim{\mathbf{I}} F_1 \right) \times \dots \times \left( \dirlim{\mathbf{I}} F_r \right) 
 \]
is an isomorphism.
\end{lem}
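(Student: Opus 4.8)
The plan is to reduce to the case $r=2$ by an evident induction, and then to recognise the statement as an instance of the fact that colimits indexed by a category with finite coproducts commute with finite products in $\set$. Write $\mathbf{J} := \mathbf{I}^{\op}$; since $\mathbf{I}$ has finite products, $\mathbf{J}$ has finite coproducts, each $F_i$ is a covariant functor on $\mathbf{J}$, and $\dirlim{\mathbf{I}} F_i$ is precisely its colimit over $\mathbf{J}$ (the subscript $\mathbf I$ being read as a colimit over the domain $\mathbf I^{\op}$). The natural map in question sends the class of a pair $(j,(x_1,\dots,x_r))$ to the tuple of classes $([j,x_1],\dots,[j,x_r])$.

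Surjectivity is the easy half and already uses the hypothesis: given classes $[j,x]$ and $[k,y]$ (taking $r=2$), form the coproduct $j \sqcup k$ in $\mathbf{J}$ with coprojections $\iota_j,\iota_k$; pushing $x$ forward along $\iota_j$ and $y$ along $\iota_k$ yields an element $(F_1(\iota_j)x,\,F_2(\iota_k)y)$ of $(F_1\times F_2)(j\sqcup k)$ whose image is $([j,x],[k,y])$. The substance of the lemma is injectivity, where one cannot merely push forward, since $\mathbf{J}$ need not be filtered (parallel arrows need not be coequalisable), so a naive span-style description of the colimit would be false.

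To settle injectivity cleanly I would argue abstractly that the diagonal $\delta \colon \mathbf{J} \to \mathbf{J} \times \mathbf{J}$ is a \emph{final} functor: for each $(a,b)\in\mathbf{J}\times\mathbf{J}$ the comma category $(a,b)\downarrow\delta$, whose objects are triples $(c, f\colon a\to c, g\colon b\to c)$, has the coproduct object $(a\sqcup b,\iota_a,\iota_b)$ as an initial object, the comparison maps being the copairings $\langle f,g\rangle$; hence it is nonempty and connected. Applying finality to $F_1\boxtimes F_2 \colon \mathbf{J}\times\mathbf{J}\to\set$, $(a,b)\mapsto F_1(a)\times F_2(b)$, whose restriction along $\delta$ is exactly $F_1\times F_2$, gives
\[
\dirlim{\mathbf{I}}(F_1\times F_2)\;\cong\;\dirlim{\mathbf{J}\times\mathbf{J}}(F_1\boxtimes F_2).
\]
Evaluating the right-hand colimit one variable at a time (colimits commute with colimits) and using that $S\times(-)$ preserves colimits in $\set$ for every set $S$ (cartesian closedness of $\set$), the double colimit collapses to $(\dirlim{\mathbf{J}}F_1)\times(\dirlim{\mathbf{J}}F_2)$, and one checks that the resulting isomorphism is the natural map above.

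The main obstacle is exactly this finality/injectivity step: everything hinges on the finite-coproduct structure of $\mathbf{J}=\mathbf{I}^{\op}$ (equivalently the finite-product hypothesis on $\mathbf{I}$), and one must keep the variance straight, since the analogous statement is false for colimits indexed by a category with finite \emph{products}. A fully elementary alternative avoids finality altogether by working in the categories of elements of $F_1$ and $F_2$ and merging the two a priori independent zigzags witnessing $[j,x]=[j',x']$ and $[j,y]=[j',y']$ into a single zigzag for $F_1\times F_2$ by repeatedly forming coproducts; this is more computational but rests on the same hypothesis.
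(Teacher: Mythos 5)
Your proof is correct, and at the structural level it follows the same outline as the paper's: reduce to $r=2$, then compare the colimit of $F_1\times F_2$ over $\mathbf{I}$ with the colimit of the external product $F_1\,\overline{\times}\,F_2$ over $\mathbf{I}\times\mathbf{I}$ along the diagonal functor. Where you genuinely diverge is in how the comparison map is shown to be invertible. The paper argues by hand: it uses the product functor $\pi(U,V)=U\times V$ (in your notation, the coproduct on $\mathbf{J}=\mathbf{I}^{\op}$, i.e.\ the left adjoint of the diagonal) to build an explicit map $z'$ in the reverse direction, proves surjectivity of $z$ directly, and verifies $z'z=\mathrm{id}$ on representatives using the diagonal morphism $U\to U\times U$. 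You instead invoke the general theory of final functors: each comma category $(a,b)\downarrow\delta$ has the initial object $(a\sqcup b,\iota_a,\iota_b)$, so $\delta$ is final, and the double colimit then collapses by Fubini together with cartesian closedness of $\set$. Your route is more conceptual and in fact justifies a step the paper only asserts, namely that the colimit of $F_1\,\overline{\times}\,F_2$ over $\mathbf{I}\times\mathbf{I}$ \emph{is} the product of the two colimits; in modern language you are proving that a category with finite coproducts is sifted and that sifted colimits commute with finite products in $\set$. What this costs is reliance on the finality theorem, whereas the paper's element-level computation is self-contained, which suits its expository aims. The one loose end on your side is the closing claim that the composite isomorphism agrees with the canonical map; this is a one-line check on representatives, $[j,(x_1,x_2)]\mapsto[(j,j),(x_1,x_2)]\mapsto([j,x_1],[j,x_2])$, and is worth writing out explicitly.
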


\begin{proof}
By using induction, we may assume that $r = 2$. Notice that 
\[
\dirlim{\mathbf{I}} F_1 \times \dirlim{\mathbf{I}} F_2 
\]
is identical to 
\[
\dirlim{\mathbf{I} \times \mathbf{I}} F_1  \overline{\times} F_2 
\]
where $F_1  \overline{\times} F_2 \colon \mathbf{I}^\op \times \mathbf{I}^\op \to \set$ is the functor that sends a pair $(U, V) \in \mathbf{I}^\op \times \mathbf{I}^\op$ to $F_1(U) \times F_2(V)$. The diagonal functor $\Delta \colon  \mathbf{I}^\op \to \mathbf{I}^\op \times \mathbf{I}^\op$ induces a map
\[
z \colon \dirlim{\mathbf{I}}  {F_1  \times F_2}  \,\lto
\dirlim{\mathbf{I} \times \mathbf{I}}  F_1  \overline{\times} F_2 \,\,=
\dirlim{\mathbf{I}} F_1 \times \dirlim{\mathbf{I}} F_2   
\]

We will check first that $z$ is surjective using the fact that $\mathbf{I}$ has products. Indeed, for any two objects $U, V \in \mathbf{I}$ and $u \in F_1(U)$, $v \in F_2(V)$ the element 
\[
[(u,v)] \in \dirlim{\mathbf{I} \times \mathbf{I}}  F_1  \overline{\times} F_2
\]
equals the class of $(F_1(p_1)(u), F_2(p_2)(v)) \in F_1(U \times V)  \times F_2(U \times V)$ via the natural maps $p_1 \colon U \times V \to U$, $p_2 \colon U \times V \to V$, so any element lies in the image of $z$.

Now, we prove that $z$ is injective.
Consider the functor $\pi \colon \mathbf{I}^\op \times \mathbf{I}^\op \to \mathbf{I}^\op$ that sends $(U, V)$ to $U \times V$ (notice that the product is taken in $\mathbf{I}$). It gives the following natural transformation
\[
F_1  \overline{\times} F_2 \lto (F_1  \times F_2) \circ \pi
\]
defined as
\[
F_1(p_1) \times F_2(p_2) \colon F_1(U)  \times F_2(V) \lto 
F_1(U \times V) \times F_2(U \times V)
\]
on $(U,V) \in \mathbf{I}^\op \times \mathbf{I}^\op$. The functor $\pi$ induces a map 
\[
z' \colon 
\dirlim{\mathbf{I} \times \mathbf{I}} F_1  \overline{\times} F_2 
\,\lto
\dirlim{\mathbf{I}} {F_1  \times F_2} .
\]
To see that $z$ is injective is enough to check that $z' z = \id$. The map $z' z$ sends the class of $(u_1, u_2) \in F_1(U)  \times F_2(U)$ to the class of $(F_1(p_1)(u_1), F_2(p_2)(u_2)) \in F_1(U \times U)  \times F_2(U \times U)$. Let $\delta \colon U \to U \times U$ be the diagonal morphism. As we claimed, the class of $(F_1(p_1)(u_1), F_2(p_2)(u_2))$ equals, applying $F_i(\delta)$ (for $i \in \{1, 2\}$), the class of the element \(
(F_1(p_1 \delta)(u_1), F_2(p_2 \delta)(u_2)) = (u_1, u_2).
%\qedhere
\)
\end{proof}

\begin{rem}
Of course, the thesis of the previous lemma holds clearly if we assume $\mathbf{I}$ to be cofiltering, which is the true in many cases of interest. Unfortunately this is neither the case in the topology we are interested in (see the remark after Corollary \ref{adjhaz6}) nor in the case of the \textit{lisse-\'etale} site of an algebraic stack.
\end{rem}

\begin{cosa}\label{adjring}
Let $\fs \colon (\SC, \tau) \to (\SD, \sigma)$ be a continuous functor of sites as in \ref{adjtop}. Let $\CO$ be a sheaf of rings on $(\SC, \tau)$ and $\CP$ a sheaf of rings on $(\SD, \sigma)$. Let us be given a  morphism of sheaves of rings $\fs^\# \colon \CO \to \fs_*\CP$ or, equivalently, its adjoint map $\fs_\# \colon \fs^{-1}\CO \to \CP$.

Consider first the case in which $(\fs_*, \fs^{-1})$ is a morphism of topoi. In this case $((\fs_*,\fs^{-1}), \fs_\#)$ is a morphism of \emph{ringed} topoi \cite[IV, 13.1]{sga41} and provides an adjunction
\[
 \CP\md\, \underset{\fs_*}{\overset{\fs^*}{\longleftrightarrows}}\, \CO\md.
\]
The functor $\fs^* \colon  \CO\md \to  \CP\md$ is defined by
\[
\fs^*\CF := \CP\otimes_{\fs^{-1}\CO}\fs^{-1}\CF
\]
for an $\CO$-module $\CF$. Observe that for a $\CP$-module $\CG$, $\fs_*\CG$ inherits a structure of $\CO$-module via $\fs^\#$ and we have the claimed adjunction $\fs^* \dashv \fs_*$.

In general $\fs^{-1}$ may not be exact, and the previous discussion does not apply. However, if we assume that the category $\mathbf{I}_U^\fs$ has finite products, then by Lemma \ref{Olss}, the functor that defines the inverse image for presheaves commutes with finite products. The sheafification functor commutes also with finite products since it is exact. It follows that $\fs^{-1}$ also commutes with finite products. Thus, $\fs^{-1}\CO$ has canonically the structure of a sheaf of rings and also $\fs_\#$ is a homomorphism. By the same reason, if $\CF$ is an $\CO$-module then $\fs^{-1}\CF$ is an $\fs^{-1}\CO$-module. The
functor $\fs^* \colon  \CO\md \to  \CP\md$ given by
\[
\fs^*\CF := \CP\otimes_{\fs^{-1}\CO}\fs^{-1}\CF
\]
provides an adjunction $\fs^* \dashv \fs_*$ as in the previous situation.

Notice that we use here $\fs^{-1}$ for general sheaves and reserve $\fs^*$ for modules over ringed topoi, contradicting the usage of \cite[IV]{sga41}.
\end{cosa}

\begin{rem}
The perspicuous reader will notice that Lemma \ref{Olss} is also implicitly used in \cite[Def. 3.11]{O}.
\end{rem}
%%%%%%%%%%%%%%%%%%%%%%%%%%%%%%%%%%%%%%%%%%%%%

\section{Cartesian presheaves on affine schemes}\label{sec2}

\begin{cosa}
Let $X = \spec(A)$ be an affine scheme. We will work with the
category  $\aff/X$ of flat finitely presented affine schemes over
$X$, in other words the opposite category to the category of flat
finite presentation $A$-algebras. Note that this is an essentially
small category. We will endow it with the topology whose coverings
are finite families $\{p_i \colon U_i \to U\}_{i \in L}$  such
that $p_i$ are flat finite presentation maps such that $\cup_{i \in L}
\Img(p_i) = U$. We will denote this site by $\aff_{\ff}/X$ and by
$X_{\ff}$ its associated topos. Due to the fact that we limit the
possible objects to those flat of finite presentation, this site
should not be regarded as a big site and sometimes we will refer
to it informally as the \emph{small flat site}.

Let $\widetilde{A}$ or $\CO_X$ denote the ring valued sheaf defined by
$\widetilde{A}(\spec(B)) = B$. Note that $\widetilde{A}$ is a sheaf on $\aff_{\ff}/X$ as follows from faithfully flat descent \cite[VII, 2c]{sga42}, see also \cite[VIII, 1.6]{sga1} and \cite[IV, 6.3.1]{sga3}. We will denote the corresponding ringed topos as $(X_{\ff}, \widetilde{A})$ or $(X_{\ff}, \CO_X)$.
\end{cosa}

Let $\cart(X) := \cart(\aff/X,\widetilde{A})$. We recall the somewhat classical

\begin{prop}\label{cartaff}
With the previous notation, there is an equivalence of categories
\[
 \cart(X) \cong A\md.
\]
\end{prop}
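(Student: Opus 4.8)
The plan is to set up a pair of mutually quasi-inverse functors. In one direction take the global sections functor $\Gamma \colon \cart(X) \to A\md$ given by $\Gamma(\CM) = \CM(X)$; this lands in $A\md$ because $\widetilde{A}(\spec A) = A$. In the other direction send an $A$-module $M$ to the presheaf $\widetilde{M}$ defined on objects by $\widetilde{M}(\spec B) = M \otimes_A B$, with the evident restriction maps induced by the $A$-algebra structure maps. On morphisms one sends an $A$-linear $\phi \colon M \to N$ to the family $\{\phi \otimes \id_B\}_{\spec B}$.

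First I would check that $\widetilde{M}$ really is a Cartesian presheaf, so that $\widetilde{(\,\cdot\,)}$ is well defined. For a morphism $f \colon \spec C \to \spec B$ in $\aff/X$, corresponding to an $A$-algebra map $B \to C$, the adjoint map $\wadj{\widetilde{M}(f)}$ is the canonical comparison $C \otimes_B (M \otimes_A B) \to M \otimes_A C$, which is an isomorphism. Note that nothing about the sheaf condition or faithfully flat descent is needed here, since $\cart(X)$ is a category of \emph{presheaves}.

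The heart of the argument is the natural isomorphism $\widetilde{\CM(X)} \cong \CM$ for a Cartesian presheaf $\CM$. The crucial observation is that the comparison map on $\spec B$, namely $\CM(X) \otimes_A B \to \CM(\spec B)$, $m \otimes b \mapsto b\cdot \CM(p)(m)$, is exactly the adjoint map $\wadj{\CM(p)}$ attached to the structure morphism $p \colon \spec B \to X$; by the very definition of a Cartesian presheaf this map is an isomorphism. Thus the only thing left to verify is that these maps assemble into a morphism of presheaves, i.e.\ that they are compatible with restriction. For $g \colon \spec C \to \spec B$ over $X$ with composite structure morphism $p' = p\,g$, compatibility reduces to the identity $\CM(g)\bigl(b\cdot\CM(p)(m)\bigr) = g^{\#}(b)\cdot\CM(p')(m)$, which follows from the contravariant functoriality $\CM(p') = \CM(g)\,\CM(p)$ together with the $B$-linearity of the restriction map $\CM(g)$. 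That the comparison is $\CO_X$-linear is immediate.

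Finally, on the other composite one has $\Gamma(\widetilde{M}) = M \otimes_A A = M$ naturally, so $\Gamma \circ \widetilde{(\,\cdot\,)} \cong \id_{A\md}$; together with $\widetilde{\CM(X)} \cong \CM$ this yields the equivalence. Equivalently, one can argue directly that $\Gamma$ is fully faithful---a morphism of Cartesian presheaves is determined by its value on $X$, since after the identification $\CM(\spec B) \cong \CM(X)\otimes_A B$ it must be $\id_B \otimes \Gamma(-)$ on each $\spec B$---and essentially surjective, with essential image witnessed by $\widetilde{(\,\cdot\,)}$. I expect the only real subtlety to be the naturality/compatibility check in the third paragraph; everything else is formal.
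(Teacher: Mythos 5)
Your proof is correct and takes essentially the same approach as the paper: the paper's proof consists of defining the same two functors, $\Gamma(X,-)$ and $(-)_{\car}$ (your $\widetilde{(\,\cdot\,)}$), and asserting that they are mutually quasi-inverse. The verifications you supply---that $\widetilde{M}$ is Cartesian, that the comparison map $\widetilde{\CM(X)} \to \CM$ is precisely $\wadj{\CM(p)}$ and hence an isomorphism by the Cartesian condition, and the compatibility with restriction---are exactly the details the paper declares ``clear.''
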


\begin{proof}
Consider the functors $\Gamma(X,-) \colon \cart(X) \to A\md$ and ${(-)_{\car}} \colon  A\md \to \cart(X)$ defined by $\Gamma(X,\CM) :=\CM(X,\id_X)$ and $M_{\car}(\spec(R)) := R \otimes_A M$. It is clear that they are mutually quasi-inverse.
\end{proof}

Since each Cartesian presheaf in $\cart(X)$ is isomorphic to a presheaf of the form $M_{\car}$, it  is a sheaf \cite[VII, 2c]{sga42}. The category $\cart(X)$ is a full subcategory of the category of sheaves of $\widetilde{A}$-Modules closed under the formation of kernels, cokernels and coproducts, and it is an abelian category.
%\vspace{\baselineskip}

\begin{cosa}
Let $f \colon X \to Y$ be a morphism of affine schemes with $X=\spec(B$) and $Y=\spec(A)$. The category $\aff_{\ff} \to \aff/\spec{\ZZ}$ is a fibered category. Upon choice of a cleavage \cite[Exp. VI, \S 7]{sga1}, we may define a continuous functor
\[
{f}^{\aff} \colon \aff_{\ff}/Y \lto \aff_{\ff}/X
\]
given on objects by ${f}^{\aff}(V,v) := (V \times_Y X, p_2)$ with $p_2$ denoting the projection and on maps by the functoriality of pull-backs provided by the cleavage. It is clear that ${f}^{\aff}$ preserves fibered products, therefore by \ref{adjtop}, induces a pair of adjoint functors
\[
X_{\ff}\, \underset{f_*}{\overset{f^{-1}}{\longleftrightarrows}}
Y_{\ff},
\]
where we are taking the usual shortcuts $f^{-1} := ({f}^{\aff})^{-1}$ and $f_* := ({f}^{\aff})_* $.

Moreover, we have the morphism of sheaves of rings
\[
{f}^{\aff \,\#}\colon \widetilde{A} \lto f_*\widetilde{B}
\]
given by ${f}^{\aff \,\#}(V,v)\colon R\to R\otimes_A B$, ${f}^{\aff \,\#}(V,v)(r)=r\otimes 1$, for $V=\spec(R)$. Since the category $\mathbf{I}_{(U,u)}^{f^{\aff}}$ has finite products, by \ref{adjring}
we have the adjunction
\[
 \widetilde{B}\md\, \underset{f_*}{\overset{f^*}{\longleftrightarrows}} \widetilde{A}\md
\]
where $f^*:= ({f}^{\aff})^*$.
Notice that if $f$ is a flat finitely presented morphism, then $f^*\!\CF(U,u)=\CF(U,fu)$ for each $\CF \in \widetilde{A}\md$.
\end{cosa}

\begin{rem}
 We will abuse notation slightly and write $\CG(U)$ for $\CG(U,u)$ for any $(U,u) \in \aff/X$ and $\CG\in X_{\ff}$, 
if the morphism $u$ is understood.
\end{rem}

\begin{prop}\label{adjaf}
It holds that
\begin{enumerate}
 \item If $\CF \in \cart(Y)$, then $f^*\CF \in \cart(X)$.
 \item If $\CG \in \cart(X)$, then $f_*\CG \in \cart(Y)$.
\end{enumerate}

\end{prop}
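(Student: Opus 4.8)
The plan is to establish part (2) first by a direct computation of sections and then to deduce part (1) from it, using the adjunction $f^* \dashv f_*$ together with the equivalence of Proposition \ref{cartaff}. For (2), write $\CG \cong N_{\car}$ with $N := \Gamma(X,\CG) \in B\md$, invoking Proposition \ref{cartaff} for $X = \spec(B)$. For an object $(V,v) \in \aff_{\ff}/Y$ with $V = \spec(R)$ we have $f^{\aff}(V,v) = (V\times_Y X, p_2)$ and $V\times_Y X = \spec(R\otimes_A B)$; since flatness and finite presentation are stable under base change, $R\otimes_A B$ is a flat finitely presented $B$-algebra, so $f^{\aff}(V,v)$ genuinely lies in $\aff_{\ff}/X$. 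Hence
\[
(f_*\CG)(V,v) = \CG(f^{\aff}(V,v)) = (R\otimes_A B)\otimes_B N \cong R\otimes_A N,
\]
where on the right $N$ is regarded as an $A$-module through the ring map $A \to B$. Tracking a morphism $V' \to V$ (that is, a map $R \to R'$) shows the restriction maps of $f_*\CG$ correspond to those of $R \mapsto R\otimes_A N$, so $f_*\CG \cong (N_{|A})_{\car} \in \cart(Y)$, where $N_{|A}$ denotes $N$ with its $A$-module structure. This proves (2) and, under the equivalences of Proposition \ref{cartaff}, identifies $f_*$ on Cartesian objects with restriction of scalars along $A \to B$.

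For (1) the difficulty is that $f^*$ is built from the possibly non-exact functor $f^{-1}$ followed by sheafification, so the sections of $f^*M_{\car}$ cannot be read off directly. I avoid computing them by using only two robust facts: as a left adjoint $f^*$ preserves all colimits, and $f^*\widetilde A = \widetilde B \otimes_{f^{-1}\widetilde A} f^{-1}\widetilde A = \widetilde B$ tautologically. Consider the two functors $A\md \to \widetilde B\md$ given by $F(M) := f^*(M_{\car})$ and $G(M) := (B\otimes_A M)_{\car}$. Both are right exact and preserve coproducts: $F$ because $f^*$ and $(-)_{\car}$ do, and $G$ because $B\otimes_A -$ and $(-)_{\car}$ do (the equivalence of Proposition \ref{cartaff} preserves cokernels and coproducts). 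Using part (2), the $A$-linear map $M \to (B\otimes_A M)_{|A}$, $m \mapsto 1\otimes m$, induces $M_{\car} \to ((B\otimes_A M)_{|A})_{\car} \cong f_*\,G(M)$, whose adjoint under $f^* \dashv f_*$ is a natural transformation $\eta_M \colon F(M) \to G(M)$.

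It remains to verify that $\eta$ is an isomorphism. On $M = A$ it is the canonical identification $f^*\widetilde A \cong \widetilde B$, since the map defining it is precisely $f^{\aff\,\#}\colon \widetilde A \to f_*\widetilde B$. As $\eta$ is natural and both functors preserve coproducts, $\eta_{A^{(J)}}$ is an isomorphism for every free module. For general $M$ choose a free presentation $A^{(I)} \to A^{(J)} \to M \to 0$; applying the right-exact functors $F$ and $G$ exhibits $F(M)$ and $G(M)$ as cokernels of the maps $F(A^{(I)}) \to F(A^{(J)})$ and $G(A^{(I)}) \to G(A^{(J)})$, which $\eta$ identifies, so $\eta_M$ is an isomorphism. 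Thus $f^*(M_{\car}) \cong (B\otimes_A M)_{\car} \in \cart(X)$, proving (1) and simultaneously identifying $f^*$ on Cartesian objects with extension of scalars. The main obstacle is exactly this identification: one must resist trying to compute $f^*M_{\car}$ section by section and instead reduce, via colimit-preservation and a free presentation, to the tautological case $f^*\widetilde A = \widetilde B$.
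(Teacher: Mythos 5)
Your proof is correct, but for part (\emph{i}) it takes a genuinely different route from the paper. For part (\emph{ii}) the two arguments are essentially the same: you compute the sections $(f_*\CG)(V,v)=\CG(V\times_Y X,p_2)\cong R\otimes_A N$ after writing $\CG\cong N_{\car}$, while the paper verifies directly that the adjoint map $\wadj{f_*\CG(h)}$ is an isomorphism using that $\CG$ is Cartesian; both land on $f_*\CG\cong({}_A\CG(X))_{\car}$. The real divergence is in part (\emph{i}). The paper attacks $f^*\CF$ head-on: it writes the inverse-image presheaf as a colimit over $\mathbf{I}_U$, uses a Hom/Yoneda manipulation to commute the tensor product with that colimit, and then uses the Cartesian hypothesis to show every transition map of the resulting diagram is an isomorphism, so the colimit collapses to $S\otimes_A\CF(Y)$ and $f^*\CF\cong(B\otimes_A\CF(Y))_{\car}$. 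You never touch the colimit or the sheafification: you build a natural comparison $\eta\colon f^*((-)_{\car})\to(B\otimes_A-)_{\car}$ by adjunction out of part (\emph{ii}), check it at $M=A$ where it is the tautological identification $f^*\widetilde{A}\cong\widetilde{B}$ (the adjunct of $f^{\aff\,\#}$ under the adjunction of \ref{adjring}), and propagate to all modules by coproduct-preservation, right-exactness and a free presentation. What the paper's computation buys is independence of the two parts (its proof of (\emph{i}) does not need (\emph{ii})) and an explicit view of where Cartesianness enters; what your argument buys is robustness, since it bypasses the index category $\mathbf{I}_U$ and the interaction of sheafification with colimits entirely, and it would apply verbatim to any pair of right-exact, coproduct-preserving functors agreeing on the structure sheaf. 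The ingredients you invoke --- closure of $\cart$ under cokernels and coproducts inside the sheaves of modules, $f^*$ being a left adjoint, and the identification of the adjunct of $f^\#$ with the canonical isomorphism $f^*\widetilde{A}\cong\widetilde{B}$ --- are all available in the paper before this proposition, so reversing the logical order of (\emph{i}) and (\emph{ii}) creates no circularity.
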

\begin{proof}
(\emph{i}) Let $(U,u) \in \aff_{\ff}/X$ with $U = \spec(S)$. Denote the category $\mathbf{I}_{(U,u)}^{f^{\aff}}$ (see \ref{adjtop}) simply by $\mathbf{I}_{U}$ and let
\[
A_U := \dirlim{\,\mathbf{I}_{U}} \widetilde{A}(V).
\]
The sheaf $f^*\CF$ is the sheaf associated to the presheaf $P$ given by
\[
P(U,u) = S \otimes_{A_U}\!\!\dirlim{\mathbf{I}_{U}} \CF(V).
%\text{ for }  (U,u)\in \aff_{\ff}/X
\]
We have for every $S$-module $N$ the following isomorphisms
%\[
\begin{align*}
\Hom_S(S\otimes_{A_U} \!\!\dirlim{\mathbf{I}_{U}} \CF(V), N)
&\simeq \Hom_{A_U}(\!\!\dirlim{\mathbf{I}_{U}} \CF(V), N)\\
&\simeq \!\!\invlim{\mathbf{I}_{U}} \Hom_{\widetilde{A}(V)}(\CF(V), N)\\
&\simeq \!\!\invlim{\mathbf{I}_{U}} \Hom_S(S\otimes_{\widetilde{A}(V)}\CF(V), N) \\
&
\simeq \Hom_S(\!\!\dirlim{\mathbf{I}_{U}}(S\otimes_{\widetilde{A}(V)}\CF(V)), N).
\end{align*}
%\]
By Yoneda's lemma, it follows that $f^*\CF$ is isomorphic to the sheaf associated to the presheaf $P'$ such that
\[
P'(U,u)=\dirlim{\mathbf{I}_{U}}S\otimes_{\widetilde{A}(V)}\CF(V).
\]
Let $h \colon ((V,v),g) \to ((V',v'),g')$ be a
morphism in $\mathbf{I}_{U}$, with $V=\spec(R)$,
$V'=\spec(R')$ and $h=\spec(\varphi)$. We have the commutative diagram

\begin{center}
\begin{tikzpicture}
\matrix(m)[matrix of math nodes, row sep=2.6em, column sep=3em,
text height=1.5ex, text depth=0.25ex]
{S\otimes_{R'}\CF(V') & S\otimes_R\CF(V)                \\
S\otimes_{R'}R'\otimes_A\CF(Y) & S\otimes_R R \otimes _A \CF(Y)\\
S\otimes_A\CF(Y) & S\otimes_A\CF(Y) \\};
\path[->,font=\scriptsize,>=angle 90]
(m-1-1) edge node[auto] {$\id \otimes\CF(h)$} (m-1-2)
(m-2-1) edge node[left] {$\id \otimes \wadj{\CF(v')}$} (m-1-1)
(m-2-2) edge node[right] {$\id \otimes \wadj{\CF(v)}$} (m-1-2)
(m-2-1) edge node[auto] {$\id \otimes \varphi \otimes \id$} (m-2-2)
(m-3-1) edge[-, double distance=2pt] (m-3-2)
(m-2-1) edge[-, double distance=2pt] (m-3-1)
(m-2-2) edge[-, double distance=2pt] (m-3-2);
\end{tikzpicture}
\end{center}
where the top horizontal map is the transition morphism and the vertical maps are isomorphisms by the definition of Cartesian presheaf and the properties of the tensor product. Thus,  $P'(U,u)=S\otimes_A\CF(Y)$ and
therefore  $f^*\CF$ is isomorphic to $(B\otimes_A \CF(Y))_{\car}$.

(\emph{ii}) Let $h \colon (V,v) \to
(V',v')$ be a morphism in $\aff_{\ff}/\,Y$, $V=\spec(R)$, $V'=\spec(R')$ and $h=\spec(\varphi)$. We have to prove that the morphism
\[
\wadj{f_*\CG(h)}
\colon R \otimes_{R'} f_*\CG(V') \lto f_*\CG(V)
\]
is an isomorphism (see \ref{superadj}).
We have that $f_*\CG(V)=\CG(V\times_Y X)$ and similarly for  $f_*\CG(V')$. Notice that $f_*\CG(h)=\CG(h\times_Y \id_X)$, thus the morphism
\[
\wadj{\CG(h\times_Y \id_X)} \colon
(R\otimes_A B) \otimes_{R' \otimes_A B}\CG(V' \times_Y X)
\lto \CG(V \times_Y X)
\]
is an isomorphism because $\CG$ is Cartesian. The result follows because $\wadj{f_*\CG(h)}$ is identified with the composition of the following isomorphisms
\[
R\otimes_{R'}\CG(V' \times_Y X)
\xto{\sim} (R \otimes_A B) \otimes _{R'\otimes_A B} \CG(V' \times_Y X)
\xto{\wadj{\CG(h\times \id)}}\CG(V\times_YX).
\]
In particular, $f_*(\CG)$ is isomorphic to $(_A\CG(X))_{\car}$, where  $_A\CG(X)$ means  $\CG(X)$ considered as an $A$-module.
\end{proof}

\begin{cor}
A morphism of affine schemes $f \colon X \to Y$ induces a pair of adjoint functors
\[
\cart(X) \underset{f_{*}}{\overset{f^*}{\longleftrightarrows}} \cart(Y).
\]
\end{cor}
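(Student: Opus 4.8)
The plan is to recognize that this corollary is a purely formal consequence of the module-level adjunction $f^* \dashv f_*$ between $\widetilde{B}\md$ and $\widetilde{A}\md$ established in the preceding discussion, together with Proposition \ref{adjaf}. All the genuine content—namely that each of the two functors preserves the Cartesian condition—has already been settled in that proposition; what remains is the standard observation that an adjunction restricts automatically to full subcategories that are respected by both functors.

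Concretely, first I would recall that we already have the adjunction $f^* \dashv f_*$ with $f^* \colon \widetilde{A}\md \to \widetilde{B}\md$ and $f_* \colon \widetilde{B}\md \to \widetilde{A}\md$, coming from \ref{adjring} since $\mathbf{I}_{(U,u)}^{f^{\aff}}$ has finite products. By Proposition \ref{adjaf}(\emph{i}), $f^*$ carries $\cart(Y)$ into $\cart(X)$, and by (\emph{ii}), $f_*$ carries $\cart(X)$ into $\cart(Y)$. Hence both functors restrict to the respective Cartesian subcategories, and I would denote the restrictions again by $f^*$ and $f_*$.

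Next I would invoke fullness. Since $\cart(Y)$ is a full subcategory of $\widetilde{A}\md$ and $\cart(X)$ is a full subcategory of $\widetilde{B}\md$, for any $\CF \in \cart(Y)$ and $\CG \in \cart(X)$ the Hom-sets computed inside the Cartesian subcategories coincide with those of the ambient module categories. Using that $f^*\CF \in \cart(X)$ and $f_*\CG \in \cart(Y)$, the adjunction bijection of the module categories specializes to
\[
\Hom_{\widetilde{B}}(f^*\CF, \CG) \cong \Hom_{\widetilde{A}}(\CF, f_*\CG),
\]
which is exactly the desired natural isomorphism between morphisms in $\cart(X)$ and morphisms in $\cart(Y)$; naturality in both variables is inherited from the ambient adjunction. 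This completes the argument. There is no real obstacle here beyond the already-proved Proposition \ref{adjaf}; the only point deserving a moment's attention is keeping the variances straight—$\widetilde{A}$ governing the side of $Y$ and $\widetilde{B}$ the side of $X$—so that the restricted functors point in the correct directions.
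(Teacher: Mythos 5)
Your proposal is correct and is precisely the argument the paper intends: the corollary is stated there without proof, as an immediate consequence of the adjunction $f^* \dashv f_*$ on module categories from \ref{adjring} together with Proposition \ref{adjaf}, which shows both functors preserve the Cartesian subcategories. Your explicit appeal to fullness to restrict the adjunction bijection is exactly the routine step the paper leaves implicit.
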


\begin{rem}
We leave as an exercise for the interested reader to recognize that the previous adjunction agrees with the classical one
\[
\qco(X_{\ff}, \CO_X) \underset{f_{*}}{\overset{f^*}{\longleftrightarrows}} \qco(Y_{\ff}, \CO_Y),
\]
via the fact that in both cases the (pre)sheaves can be represented by its module of global sections. Notice that we consider the $\ff$ topology instead of the usual Zariski topology but this does not change essentially the category of quasi-coherent sheaves. We will see later (in Theorem \ref{agree}) the agreement of quasi-coherent sheaves with Cartesian presheaves in a much more general setting that includes all quasi-compact and semi-separated schemes.
\end{rem}

\section{Quasi-coherent sheaves on geometric stacks}\label{sec3}

\begin{cosa}\label{31}
We will follow the conventions of \cite{lmb}, specially for the definition of \emph{algebraic} stack. But for our purposes we will restrict to what we call \emph{geometric stacks} after Lurie (see \cite{lur}). So, from now on, a geometric stack $\SX$ will be a stack on the \'etale topology\footnote{\label{art}In fact, it can be shown that the $\ff$ topology yields the same category of stacks, this follows from \cite[Theorem 6.1]{ar74}, see also \cite[Chap. 10]{lmb}.} of (affine) schemes over a base scheme $S$ (that will not play any role in what follows, and will be omitted almost always in the notations) such that:
\begin{enumerate}
 \item the stack $\SX$ is semi-separated, i.e. the diagonal morphism $\delta_{\SX} \colon \SX \to \SX \times \SX$ is representable by \emph{affine} schemes;
 \item the stack $\SX$ is algebraic and quasi-compact, this amounts to the existence of a smooth and surjective morphism $p \colon X \to \SX$ from an \emph{affine} scheme $X$.
\end{enumerate}
We will refer to the map $p \colon X \to \SX$ as a \emph{presentation} of $\SX$. By \cite[Th\'eor\`eme (10.1)]{lmb}, it is enough to assume that there exists a presentation map $p$ which is faithfully flat of finite presentation. This fact allows us to use these presentations instead of just smooth ones.

A map representable by affine schemes is usually called an affine morphism \cite[3.10]{lmb}. Observe that every morphism $u \colon U \to \SX$ from an affine scheme $U$ is affine. Indeed, if $v \colon V \to \SX$ is another morphism with $V$ an affine scheme, we have that $U\times _{\SX }V \simeq \SX \times_{\SX \times \SX}(U \times V)$ is 1-isomorphic
to an affine scheme and this 1-isomorphism is an isomorphism because $U\times_{\SX}V$ is in fact a category fibered in sets. We will identify it with the spectrum of its global sections. We recall that the pair $(X, X \times_{\SX} X)$ with the obvious structure morphisms is a scheme in groupoids\footnote{Following Grothendieck's yoga of considering schemes as functors, we use the terminology \emph{scheme in groupoids} rather than \emph{groupoid scheme}.} whose associated stack is $\SX$, \ie
\[\SX = [(X, X \times_{\SX} X)]\]
 with the notation as in \cite[(3.4.3)]{lmb}.
\end{cosa}

\begin{cosa}
As examples of geometric stacks, let $G$ be a smooth affine algebraic group acting on a separated scheme $X$, then the quotient stack $[X/G]$ is geometric. In particular, the classifying stack for $G$, $\SB G$ is geometric. For further examples of geometric stacks, we send the reader to \cite{hr}.
\end{cosa}

\begin{lem}\label{relssep}
Let $f \colon \SX \to \SY$ be a morphism of geometric stacks. The relative diagonal $\delta_{\SX/\SY} \colon \SX \to \SX \times_{\SY} \SX$ is an affine morphism.
\end{lem}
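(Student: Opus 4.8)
The plan is to deduce the statement from the standard stability properties of affine morphisms together with the ``magic square'' relating the relative and absolute diagonals. Recall that affine morphisms of stacks are stable under composition and under base change, and that every affine morphism is separated, so that its own diagonal is a closed immersion and hence again affine; I will use these three facts freely (see \cite{lmb}). Since $\SX$ and $\SY$ are geometric, their absolute diagonals $\delta_\SX \colon \SX \to \SX\times\SX$ and $\delta_\SY\colon\SY\to\SY\times\SY$ are affine by definition (see \ref{31}).

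The starting point is the cartesian identification
\[
\SX\times_\SY\SX \;\cong\; (\SX\times\SX)\times_{\SY\times\SY}\SY,
\]
in which $\SY\to\SY\times\SY$ is $\delta_\SY$ and $\SX\times\SX\to\SY\times\SY$ is $f\times f$; concretely, an object of the right-hand side is a pair of objects of $\SX$ together with an isomorphism of their images under $f$, which is exactly an object of $\SX\times_\SY\SX$. Under this identification the projection $q\colon \SX\times_\SY\SX \to \SX\times\SX$ is the base change of $\delta_\SY$ along $f\times f$, hence affine. Moreover $q\circ\delta_{\SX/\SY}=\delta_\SX$, and $\delta_\SX$ is affine.

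It then remains to cancel $q$. I factor $\delta_{\SX/\SY}$ through its graph relative to $q$:
\[
\SX \xrightarrow{\;\Gamma\;} \SX\times_{\SX\times\SX}(\SX\times_\SY\SX) \xrightarrow{\;\mathrm{pr}\;} \SX\times_\SY\SX,
\]
where the fibre product is formed using $\delta_\SX\colon\SX\to\SX\times\SX$ and $q$. Here $\mathrm{pr}$ is the base change of $\delta_\SX$ along $q$, hence affine, while $\Gamma$ is the base change of the diagonal $\Delta_q$ of $q$; since $q$ is affine it is separated, so $\Delta_q$ is a closed immersion and thus affine, whence $\Gamma$ is affine. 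Therefore $\delta_{\SX/\SY}=\mathrm{pr}\circ\Gamma$ is a composition of affine morphisms, hence affine. The same factorization shows $\delta_{\SX/\SY}$ is representable, being built from representable morphisms.

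The only delicate point is this last cancellation step: one must verify the graph factorization and that each of its two factors is affine. The crux is the fact that an affine morphism is separated, so that the contribution of $\Delta_q$ is a closed immersion rather than merely a representable morphism; once this is in hand, everything else is a formal consequence of the stability of affine morphisms under base change and composition.
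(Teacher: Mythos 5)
Your proof is correct and takes essentially the same route as the paper: both factor $\delta_{\SX/\SY}$ as the graph morphism $\SX \to \SX \times_{\SX\times\SX}(\SX\times_\SY\SX)$ followed by the projection, using that $q$ (the paper's $h$) is affine as a pullback of $\delta_\SY$, that the projection is a base change of $\delta_\SX$, and that the graph is a base change of the diagonal of $q$. The only difference is cosmetic: you spell out why $\Delta_q$ is affine (affine $\Rightarrow$ separated $\Rightarrow$ diagonal is a closed immersion), a fact the paper asserts without comment.
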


\begin{proof}
Factor $\delta_{\SX}$ as
\[
\SX \xto{\delta_{\SX/\SY}} \SX \times_{\SY} \SX \xto{h} \SX \times \SX.
\]
Notice that $h$ is affine because it is the pull-back of $\delta_{\SY}$, and also its diagonal. Consider the following pull-back diagram
\begin{equation*}
\begin{tikzpicture}[baseline=(current  bounding  box.center)]
\matrix(m)[matrix of math nodes, row sep=2.6em, column sep=3.6em,
text height=1.5ex, text depth=0.25ex]{
  \SX                   &  \SX\: {_{\delta_{\SX}}\!\times_{h}} (\SX \times_{\SY} \SX) \\
  \SX \times_{\SY} \SX  & (\SX \times_{\SY} \SX) \times_{\SX^2} (\SX \times_{\SY} \SX) \\};
\path[->,font=\scriptsize,>=angle 90] 
(m-1-1) edge node[auto] {$\langle \id, \delta_{\SX/\SY}\rangle$}
(m-1-2) edge node[left] {$\delta_{\SX/\SY}$} (m-2-1)
(m-1-2) edge node[auto] {$\delta_{\SX/\SY} \times \id$} (m-2-2)
(m-2-1) edge node[below] {$\delta_{h}$} (m-2-2);
\draw [shorten >=0.3cm,shorten <=0.2cm,->,double] (m-2-1) -- (m-1-2); %node[auto, midway,font=\scriptsize]{$\phi$};
\end{tikzpicture}
\end{equation*}
with $\SX^2 :=  \SX \times \SX$. Notice that $\delta_{h}$ is affine, thus so is $\langle \id, \delta_{\SX/\SY}\rangle$. Now, $\delta_{\SX}$ is affine and therefore also the second projection $p_2 \colon \SX\: {_{\delta_{\SX}}\!\times_{h}} (\SX \times_{\SY} \SX) \to \SX \times_{\SY} \SX$. It follows that $\delta_{\SX/\SY} = p_2 \langle \id, \delta_{\SX/\SY}\rangle$ is affine as wanted.
\end{proof}

\begin{prop}\label{2cartsq}
Let
\begin{equation*}%\label{ccpg}
\begin{tikzpicture}[baseline=(current  bounding  box.center)]
\matrix(m)[matrix of math nodes, row sep=2.6em, column sep=2.8em,
text height=1.5ex, text depth=0.25ex]{
  \SX' & \SY' \\
  \SX  & \SY \\};
\path[->,font=\scriptsize,>=angle 90] 
(m-1-1) edge node[auto] {$f'$}
(m-1-2) edge node[left] {$g'$} (m-2-1)
(m-1-2) edge node[auto] {$g$} (m-2-2)
(m-2-1) edge node[auto] {$f$} (m-2-2);
\draw [shorten >=0.2cm,shorten <=0.2cm,->,double] (m-2-1) -- (m-1-2) node[auto, midway,font=\scriptsize]{$\phi$};
\end{tikzpicture}
\end{equation*}
be a 2-Cartesian square of algebraic stacks. If $\SX$, $\SY$ and $\SY'$ are geometric stacks, then so is $\SX'$. 
\end{prop}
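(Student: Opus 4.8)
The plan is to verify directly, for $\SX' = \SX \times_\SY \SY'$, the two defining conditions of a geometric stack listed in \ref{31}: that the diagonal is affine, and that there is a presentation by an affine scheme. Algebraicity of $\SX'$ is already part of the hypothesis, since the square consists of algebraic stacks, so only these two points need attention. Throughout I would rely on two stability facts: affine morphisms are stable under base change, and smooth surjective morphisms are stable under base change and composition.

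For the affine diagonal, I would factor $\delta_{\SX'}$ through the relative diagonal over $\SX$,
\[
\SX' \xto{\ \delta_{\SX'/\SX}\ } \SX' \times_\SX \SX' \xto{\ q\ } \SX' \times \SX'.
\]
First, the identification $\SX' \times_\SX \SX' \cong \SX \times_\SY (\SY' \times_\SY \SY')$ exhibits $\delta_{\SX'/\SX}$ as the base change of $\delta_{\SY'/\SY} = \delta_g$ along the projection onto $\SY' \times_\SY \SY'$; since $g \colon \SY' \to \SY$ is a morphism of geometric stacks, $\delta_{\SY'/\SY}$ is affine by Lemma \ref{relssep}, hence so is $\delta_{\SX'/\SX}$. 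Second, the identification $\SX' \times_\SX \SX' \cong (\SX' \times \SX') \times_{\SX \times \SX} \SX$ exhibits $q$ as the base change of $\delta_\SX$ along $g' \times g'$; as $\SX$ is semi-separated, $\delta_\SX$ is affine, so $q$ is affine. The composite $\delta_{\SX'} = q \circ \delta_{\SX'/\SX}$ is therefore affine, which is semi-separatedness.

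For the presentation I would choose presentations $a \colon X \to \SX$ and $b \colon Y' \to \SY'$ with $X, Y'$ affine and $a, b$ smooth and surjective, and form $X \times_\SY Y'$. This is an affine scheme: the canonical map $X \times_\SY Y' \to X \times Y'$ is the base change of the affine morphism $\delta_\SY$, and an affine morphism over the affine scheme $X \times Y'$ has affine source. Moreover $a \times_\SY b \colon X \times_\SY Y' \to \SX \times_\SY \SY' = \SX'$ factors as a base change of $a$ followed by a base change of $b$, hence is smooth and surjective. This yields the required affine presentation and, in particular, shows $\SX'$ is quasi-compact.

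I expect the affineness of the diagonal to be the main obstacle: one must keep careful track of the several fibre-product identifications so that each comparison map is recognized as the base change of an already affine morphism, and this is exactly where both the semi-separatedness of $\SX$ and Lemma \ref{relssep} (applied to $\SY' \to \SY$) come into play. By contrast, once the semi-separatedness of $\SY$ is used to see that $X \times_\SY Y'$ is affine, the construction of the presentation is routine.
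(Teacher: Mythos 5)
Your proof is correct and takes essentially the same approach as the paper's: the identical factorization $\delta_{\SX'} = h \circ \delta_{\SX'/\SX}$, with $h$ recognized as a base change of $\delta_{\SX}$ along $g' \times g'$ and $\delta_{\SX'/\SX}$ as a base change of $\delta_{\SY'/\SY}$ (affine by Lemma \ref{relssep}), together with the same presentation $X \times_{\SY} Y' \to \SX'$. The only cosmetic difference is that you deduce affineness of $X \times_{\SY} Y'$ by base-changing $\delta_{\SY}$ over $X \times Y'$, while the paper deduces it from affineness of the presentation of $\SY'$; both are the same use of semi-separatedness of $\SY$.
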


\begin{proof}
First, $\SX'$ is an algebraic stack by \cite[(4.5)(i)]{lmb}.

Let us check that the diagonal morphism $\delta_{\SX'} \colon \SX' \to \SX' \times \SX'$ is affine. Factor it as
 \[
 \SX' \xto{\,\delta_{\SX'/\SX}\,} \SX' \times_{\SX} \SX'  
 \overset{h}\lto \SX' \times \SX'
 \]
where $h$ is the morphism in the following 2-Cartesian square
\begin{equation*}
\begin{tikzpicture}[baseline=(current  bounding  box.center)]
\matrix(m)[matrix of math nodes, row sep=2.6em, column sep=2.8em,
text height=1.5ex, text depth=0.25ex]{
  \SX' \times_{\SX} \SX' & \SX' \times \SX' \\
  \SX                    & \SX \times \SX \\};
\path[->,font=\scriptsize,>=angle 90] 
(m-1-1) edge node[auto] {$h$}
(m-1-2) edge node[left] {$g''$} (m-2-1)
(m-1-2) edge node[auto] {$g' \times g'$} (m-2-2)
(m-2-1) edge node[auto] {$\delta_{\SX}$} (m-2-2);
\draw [shorten >=0.2cm,shorten <=0.2cm,->,double] (m-2-1) -- (m-1-2) node[auto, midway,font=\scriptsize]{$\phi'$};
\end{tikzpicture}
\end{equation*} 
The morphism $h$ is affine because it is obtained by base change from $\delta_{\SX}$. The morphism $\delta_{\SX'/\SX} \simeq \delta_{\SY'/\SY} \times_{\SX} \SX'$, so by Lemma \ref{relssep}, it is affine. It follows that $\delta_{\SX'}$ is affine as wanted.

Finally, given presentations $p \colon X \to \SX$ and $q \colon Y' \to \SY'$ we have an induced morphism
\[
p \times_{\SY} q \colon X \times_{\SY} Y' \lto \SX' 
\]
Notice that $X \times_{\SY} Y'$ is affine over $X$ because is the base change of the map $q$ that is affine, therefore $X \times_{\SY} Y'$ is an affine scheme. The morphism $p \times_{\SY} q$ is smooth and surjective because both conditions are stable by base change applying \cite[Lemma~(3.11)]{lmb} in view of \cite[\textbf{0}, (1.3.9)]{ega1}. This makes of $p \times_{\SY} q$ a presentation of $\SX'$ and the proof is complete.
\end{proof}

\begin{cosa}
For $\SX$ a geometric stack, we define the category $\aff/\SX$ as follows. Objects are pairs $(U,u)$ with $U$ an affine scheme and $u \colon U \to \SX$ a flat finitely presented 1-morphism of stacks. Morphisms $(f , \alpha)
  \colon (U,u) \to (V,v)$ are commutative diagrams of 1-morphisms of stacks. Let us spell this out. To give such an $(f,\alpha)$ amounts to give a diagram
\[
 \begin{tikzpicture}
      \draw[white] (0cm,2cm) -- +(0: \linewidth)
      node (G) [black, pos = 0.35] {$U$}
      node (H) [black, pos = 0.65] {$V$};
      \draw[white] (0cm,1.8cm) -- +(0: \linewidth)
      node (I) [black, pos = 0.6] {};
      \draw[white] (0cm,1.1cm) -- +(0: \linewidth)
      node (C) [black, pos = 0.45] {};
      \draw[white] (0cm,0.5cm) -- +(0: \linewidth)
      node (E) [black, pos = 0.5] {$\SX$};
      \draw [->] (G) -- (H) node[above, midway, sloped, scale=0.75]{$f$};
      \draw [->] (G) -- (E) node[auto, swap, midway, scale=0.75]{$u$};
      \draw [->] (H) -- (E) node[auto, midway, scale=0.75]{$v$};
      \draw [shorten >=0.2cm,shorten <=0.2cm,->,double] (C) -- (I) node[auto, midway, scale=0.75]{$\alpha$};  \end{tikzpicture}
\]
where $f \colon U \to V$ is a morphism of affine schemes making the diagram 2-commutative, through the 2-cell $\alpha \colon u \imp v f$.  The composition of morphisms is given by
\[
(g, \beta)\circ  (f, \alpha)=(g f, \beta f \circ \alpha).
\]

Notice that $\aff/\SX$ is an essentially small category. In fact, let $(U,u) \in \aff/\SX$ with $U = \spec(B)$, by the semi-separateness of $\SX$, the scheme $U \times_{\SX} X$ is affine. This affine scheme is isomorphic to the spectrum of a finitely presented algebra $B'$ over the ring of global sections of the structure sheaf of $X$. By faithful flatness $B$ is isomorphic to a subring of $B'$. This makes sense of what follows.
\end{cosa}

\begin{cosa}\label{site}
The category $\aff/\SX$ is ringed by the presheaf $\CO \colon \aff/\SX \to \ring$, defined by $\CO(U,u) = B$ when $U = \spec(B)$. We define Cartesian presheaves on a geometric stack $\SX$ as Cartesian presheaves over this ringed category,
\[
\cart(\SX) := \cart(\aff/\SX, \CO).
\]

Now, on $\aff/\SX$ we may define a topology by declaring as
covering the finite families $\{(f_i, \alpha_i) \colon (U_i,u_i)
\to (V,v)\}_{i \in I}$, with every $f_i$ a flat finitely presented
 map, that are jointly surjective. It becomes a site that
we denote $\aff_{\ff}/\SX$. The associated topos, \ie the category
of sheaves of sets on $\aff_{\ff}/\SX$ is denoted simply
$\SX_{\ff}$. The site $\aff_{\ff}/\SX$ is a ringed site by the
sheaf of rings associated to the presheaf $\CO$ that we keep
denoting the same.\footnote{The next lemma will show that there is no ambiguity.} We denote
\[
\qco(\SX) := \qco(\SX_{\ff}, \CO).
\]
\end{cosa}

\begin{rem}
\noindent
\begin{enumerate}
 \item Every smooth morphism is flat of finite presentation, therefore our site is finer than the usual \emph{lisse-\'etale} topology, see \cite[(12.1)]{lmb}.
 \item The topos $\SX_{\ff}$ has enough points. The argument is the same as for the lisse-\'etale topos, see \cite[(12.2.2)]{lmb}.
\end{enumerate}
\end{rem}

In what follows we will abuse notation slightly and write $\CF(U)$ for $\CF(U,u)$ and analogously for any object in $\aff/\SX$ if the morphism $u\colon U \to \SX$ is obvious.

\begin{lem} \label{eshaz}
Any $\CF \in \cart(\SX)$ is actually a \emph{sheaf} of $\CO$-Modu\-les.
\end{lem}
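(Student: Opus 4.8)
The plan is to verify the sheaf axiom directly, reducing at each object of the site to the affine situation already settled in Section~\ref{sec2}. Since the forgetful functor from $\CO$-Modules to presheaves of sets creates limits, it suffices to show that the underlying presheaf of $\CF$ satisfies the equalizer condition for every covering. So I fix an object $(V,v) \in \aff_{\ff}/\SX$ with $V = \spec(A')$ and a covering $\{(f_i,\alpha_i) \colon (U_i,u_i) \to (V,v)\}_{i \in I}$, where each $f_i \colon U_i \to V$ is a flat finitely presented map of affine schemes and the family is jointly surjective.

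First I would transport $\CF$ to the small flat site of the affine scheme $V$. The functor $\aff_{\ff}/V \to \aff_{\ff}/\SX$ sending $(W, w \colon W \to V)$ to $(W, v w)$ (with the evident action on morphisms, using identity $2$-cells) lets me define a presheaf $\CF_V$ on $\aff_{\ff}/V$ by $\CF_V(W) := \CF(W, vw)$. The Cartesian condition for $\CF$ applied to the morphisms $(h,\id)$ induced by maps $h$ over $V$ shows at once that $\CF_V$ is a Cartesian presheaf on $(\aff_{\ff}/V, \widetilde{A'})$, since $\CO$ restricts to $\widetilde{A'}$ along this functor; that is, $\CF_V \in \cart(V)$. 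By Proposition~\ref{cartaff} and the remark following it, $\CF_V$ is therefore isomorphic to a presheaf of the form $M_{\car}$ and hence is a sheaf by faithfully flat descent \cite[VII, 2c]{sga42}.

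Next I would match the datum over $(V,v)$ with one over $V$. Each $2$-cell $\alpha_i \colon u_i \imp v f_i$ gives an isomorphism $(\id_{U_i}, \alpha_i) \colon (U_i, u_i) \to (U_i, v f_i)$ in $\aff/\SX$, compatible with the projections to $(V,v)$, whence a natural isomorphism $\CF(U_i, u_i) \cong \CF(U_i, v f_i) = \CF_V(U_i)$. Thus the covering $\{(f_i,\alpha_i)\}$ becomes, up to isomorphism, the family $\{(f_i, \id) \colon (U_i, v f_i) \to (V,v)\}$, which is the image of the covering $\{f_i \colon U_i \to V\}$ in $\aff_{\ff}/V$. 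The key compatibility is that fiber products are preserved: the fiber product $(U_i, vf_i) \times_{(V,v)} (U_j, vf_j)$ computed in $\aff/\SX$ is $(U_i \times_V U_j, v\,\mathrm{pr})$. This I would check by unwinding the composition law $(g,\beta)\circ(f,\alpha) = (gf, \beta f \circ \alpha)$: a compatible pair of maps into $(U_i, vf_i)$ and $(U_j, vf_j)$ over $(V,v)$ amounts exactly to a pair of scheme maps agreeing after composing with $f_i, f_j$ together with a \emph{single} $2$-cell to $v$, which is precisely the universal property of $U_i \times_V U_j$.

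With these identifications the equalizer diagram for $\CF$ attached to $\{(f_i,\alpha_i)\}$ is isomorphic to the equalizer diagram for $\CF_V$ attached to $\{f_i \colon U_i \to V\}$, namely
\[
\CF_V(V) \lto \prod_{i} \CF_V(U_i) \;\longrightrightarrows\; \prod_{i,j} \CF_V(U_i \times_V U_j),
\]
which is an equalizer because $\CF_V$ is a sheaf; hence so is the original one. As $(V,v)$ and the covering were arbitrary, $\CF$ is a sheaf of $\CO$-Modules. The main obstacle is the third step: the $2$-categorical bookkeeping needed to identify fiber products in $\aff/\SX$ over $(V,v)$ with ordinary fiber products of affine schemes over $V$, since this is exactly where the defining $2$-cells intervene and must be shown to contribute no extra data.
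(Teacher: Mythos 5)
Your proof is correct, and it reaches the conclusion by a different organization of the argument than the paper's. You reduce to the affine case of Section~\ref{sec2}: you replace the covering $\{(f_i,\alpha_i)\colon (U_i,u_i)\to (V,v)\}$ by the isomorphic covering $\{(f_i,\iid)\colon (U_i,vf_i)\to (V,v)\}$, restrict $\CF$ along the functor $(W,w)\mapsto (W,vw)$ to a Cartesian presheaf $\CF_V$ on $\aff_{\ff}/V$, invoke Proposition~\ref{cartaff} and the remark following it to conclude $\CF_V\cong M_{\car}$ is a sheaf, and then transfer the equalizer condition back; the only substantive point is that fiber products over $(V,v)$ in $\aff/\SX$ are computed by scheme-theoretic fiber products over $V$, which you rightly single out as the crux and verify via the composition law for $2$-cells. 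The paper instead argues directly with the original covering, keeping the $2$-cells in the maps of the diagram (via $\CF(p_1^{ij},\alpha_i^{-1}p_1^{ij})$ and $\CF(p_2^{ij},\alpha_j^{-1}p_2^{ij})$), so it never needs to change the covering or discuss fiber products in $\aff/\SX$ explicitly: writing $V=\spec(C)$ and $U_i=\spec(B_i)$, Cartesianness identifies the diagram with $\CF(V)\to\prod_{i}B_i\otimes_C\CF(V)\longrightrightarrows\prod_{i,j}(B_i\otimes_CB_j)\otimes_C\CF(V)$, finiteness of the covering aggregates this into the single faithfully flat algebra $B=\prod_i B_i$, and the conclusion follows from exactness of $C\to B \longrightrightarrows B\otimes_C B$ after tensoring with $\CF(V)$ \cite[VIII 1.5]{sga1}. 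Both arguments rest on the same input, faithfully flat descent for modules over an affine base; yours buys reuse of the affine statement as a black box and makes the site-theoretic bookkeeping explicit, while the paper's buys a self-contained computation at the price of redoing, relative to $V$, the tensor-product manipulation that your route delegates to Proposition~\ref{cartaff}.
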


\begin{proof}
Let $(V,v) \in \aff/\SX$ with $V = \spec(C)$. Let $\{ (f_i, \alpha_i) \colon (U_i,u_i) \to (V,v)\}_{i \in I}$ be a covering with $U_i = \spec(B_i)$.  Let  $p_1^{ij}$ and $p_2^{ij}$ be the two canonical projections from $U_i \times_V U_j$ to $U_i$ and $U_j$, respectively. Consider $(U_i \times_V U_j,vf_ip_1^{ij}) \in \aff/\SX$ (notice that $vf_ip_1^{ij}= vf_jp_2^{ij}$). We have the diagram
\[
\begin{tikzpicture}
\matrix (m) [matrix of math nodes, row sep=3em, column sep=4em]{
\CF(V)
&   \prod_{i \in I}\CF(U_i)
&   \prod_{i, j \in I} \CF(U_i \times_V U_j)\\
}; 
\draw [transform canvas={yshift= 0.3ex},font=\scriptsize,->]
(m-1-2) -- node[above]{$\rho_2$}(m-1-3);
\draw [transform canvas={yshift=-0.3ex},font=\scriptsize,->]
(m-1-2) -- node[below]{$\rho_1$}(m-1-3);
\path[->,font=\scriptsize,>=angle 90]
    (m-1-1) edge node[auto] {$l$} (m-1-2);
\end{tikzpicture}
\]
where $l = (\CF(f_i,\alpha_i))_{i\in I}$, for $(x_i)\in \prod_{i\in I}\CF(U_i)$, we denote by $\rho_1(x_i)$ the element whose component in $\CF(U_i\times_V U_j)$ is $\CF(p_1^{ij},\alpha_i^{-1}p_1^{ij})(x_i)$ and by $\rho_2(x_i)$  the element whose component in  $\CF(U_i\times_VU_j)$ is $\CF(p_2^{ij},\alpha_j^{-1}p_2^{ij})(x_j)$. 
Let us check that this diagram is an equalizer. Being $\CF$ Cartesian, this diagram is isomorphic to the diagram
\[
\begin{tikzpicture}
\matrix (m) [matrix of math nodes, row sep=3em, column sep=4em]{
\CF(V)
&  \prod_{i \in I}B_i \otimes_C \CF(V)
& \prod_{i, j \in I}(B_i \otimes_C B_j)
 \otimes_C \CF(V)
\\
}; \draw [transform canvas={yshift= 0.3ex},font=\scriptsize,->]
(m-1-2) -- node[above]{$\rho'_2$}(m-1-3);
\draw [transform canvas={yshift=-0.3ex},font=\scriptsize,->]
(m-1-2) -- node[below]{$\rho'_1$}(m-1-3);
%    \draw[->] (A.10) -- (B.170);
%    \draw[<-] (A.350) -- (B.190);
 \path[->,font=\scriptsize,>=angle 90]
    (m-1-1) edge node[auto] {$l'$}(m-1-2);
\end{tikzpicture}
\]
where $l'(x)=(1\otimes x)_{i \in I}$, for $x\in \CF(V)$, and for $(b_i\otimes y_i)_{i \in I} \in \prod B_i\otimes_C\CF(V)$, the elements $\rho'_1((b_i\otimes y_i)_{i \in I})$ and $\rho'_2((b_i\otimes y_i)_{i \in I})$ are those whose components in $(B_i\otimes_CB_j)\otimes_C\CF(V)$ are $b_i\otimes 1 \otimes y_i$, and  $1\otimes b_j \otimes y_j$, respectively.
 Let $B:= \prod_{i \in I} B_i$ and the morphism  $\varphi \colon C \to B$ induced by the covering $\{f_i \colon U_i \to V \}$. Since $I$ is finite, it suffices to prove that the diagram
 \[
\begin{tikzpicture}
\matrix (m) [matrix of math nodes, row sep=3em, column sep=4em]{
\CF(V) & B \otimes_C \CF(V) & (B\otimes_CB)\otimes_C \CF(V) \\
}; 
\draw [transform canvas={yshift= 0.3ex},font=\scriptsize,->]
(m-1-2) -- node[above]{$j_2\otimes \id$}(m-1-3);
\draw [transform canvas={yshift=-0.3ex},font=\scriptsize,->]
(m-1-2) -- node[below]{$j_1\otimes \id$}(m-1-3);
\path[->,font=\scriptsize,>=angle 90]
    (m-1-1) edge node[auto] {$l''$}(m-1-2);
\end{tikzpicture}
\]
is an equalizer, where $l''(x) = 1\otimes x$, $j_1(b) = b\otimes 1$ and $j_2(b)=1\otimes b$, for $x \in \CF(V)$ and $b\in B$.
As $B$ is faithfully flat over $C$, the diagram
\[
C \; {\overset{\varphi}{\longrightarrow}} \; B \;
\underset{j_1}{\overset{j_2}{\longrightrightarrows}}\;
\;B\otimes_CB
\]
is an equalizer and stays so after tensoring by $\CF(V)$ \cite[VIII 1.5]{sga1}. The result follows.
\end{proof}

\begin{rem}
\noindent
\begin{enumerate}
\item The proof works for any topology coarser than \textsf{fpqc}, we have chosen the $\ff$ topology because it gives a reasonable small site.
 \item The category $\cart(\SX)$ is a priori a full subcategory of $\pre\text{-}\CO_{\SX}\md$. The previous lemma implies that it is a full subcategory of $\CO_{\SX}\md$, as $\qco(\SX)$ is.
\end{enumerate}
\end{rem}

\begin{prop}\label{cartab}
The category $\cart(\SX)$ is a full subcategory of $\CO_{\SX}\md$ closed under the formation of kernel, cokernels and coproducts, and therefore it is an abelian category.
\end{prop}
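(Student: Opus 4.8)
The plan is to realize $\cart(\SX)$ as a full additive subcategory of the abelian category $\CO_{\SX}\md$ that is closed under the three operations, each computed exactly as in the ambient category; a full additive subcategory of an abelian category closed under kernels and cokernels is itself abelian, so this suffices. Fullness is already observed in the remark after Lemma \ref{eshaz}, which moreover guarantees that any Cartesian presheaf is a genuine sheaf. The common mechanism in every case will be: perform the construction first at the level of \emph{presheaves} of $\CO$-modules, verify the Cartesian condition of \ref{superadj}, and then quote Lemma \ref{eshaz} to deduce that the Cartesian presheaf so obtained is a sheaf and therefore coincides with the corresponding kernel, cokernel or coproduct taken in $\CO_{\SX}\md$.

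Coproducts and cokernels are the easy cases, both following from formal properties of the tensor product. For a family $\{\CM_\lambda\}$ in $\cart(\SX)$ and any morphism $(f,\alpha)\colon (U,u) \to (V,v)$, the natural map
\[
\CO(U) \otimes_{\CO(V)} \bigoplus_\lambda \CM_\lambda(V) \lto \bigoplus_\lambda \CM_\lambda(U)
\]
is an isomorphism since tensor commutes with direct sums and each $\CM_\lambda$ is Cartesian; hence the presheaf direct sum is already Cartesian. Similarly, for $\phi \colon \CM \to \CN$ in $\cart(\SX)$, right-exactness of $\CO(U) \otimes_{\CO(V)} -$ applied to $\CM(V) \to \CN(V) \to \cok(\CM(V)\to\CN(V)) \to 0$ shows that the presheaf cokernel $\CC$, with $\CC(U) = \cok(\CM(U) \to \CN(U))$, satisfies $\CO(U) \otimes_{\CO(V)} \CC(V) \cong \CC(U)$ for \emph{every} morphism, with no flatness hypothesis. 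Thus both the presheaf coproduct and the presheaf cokernel are Cartesian, hence sheaves, hence the coproduct and cokernel in $\CO_{\SX}\md$.

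Kernels are the crux, and the main obstacle, because verifying the Cartesian condition now requires exactness of base change while the morphisms $f \colon U \to V$ of $\aff_{\ff}/\SX$ need \emph{not} be flat. For a flat finitely presented $f$ the kernel presheaf $\CK$, with $\CK(U) = \ker(\CM(U) \to \CN(U))$, is Cartesian by the computation above: $\CO(U)$ is then flat over $\CO(V)$, so tensoring the left-exact sequence $0 \to \CK(V) \to \CM(V) \to \CN(V)$ keeps it exact and identifies $\CO(U) \otimes_{\CO(V)} \CK(V)$ with $\CK(U)$. To pass to an arbitrary morphism $(f,\alpha)\colon (U,u) \to (V,v)$ I would use a graph argument. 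Semi-separatedness of $\SX$ makes $W := U \times_{\SX} V$ an affine scheme, and its projections $\pi_1 \colon W \to U$, $\pi_2 \colon W \to V$ are flat and finitely presented, being base changes of $v$ and $u$; the triple $(\id_U, f, \alpha)$ defines a morphism $\gamma \colon U \to W$ in $\aff/\SX$ with $\pi_1 \gamma = \id_U$ and $\pi_2 \gamma = f$. The presheaf $\CK$ is already known to be Cartesian along $\pi_1$ and $\pi_2$. Since the adjoint maps $\wadj{\CK(\cdot)}$ are compatible with composition via the associativity isomorphism of the tensor product, the relation $\pi_1 \gamma = \id_U$ forces $\wadj{\CK(\gamma)}$ to be an isomorphism (two out of three), and then $f = \pi_2\gamma$ gives that $\wadj{\CK(f)}$ is an isomorphism. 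Hence $\CK$ is Cartesian along every morphism, so it is a sheaf and equals the kernel in $\CO_{\SX}\md$.

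With $\cart(\SX)$ shown to be a full additive subcategory of $\CO_{\SX}\md$ (the finite coproducts just produced provide the biproducts) closed under kernels and cokernels, abelianness follows formally: for $\phi \colon \CM \to \CN$ the image $\ker(\CN \to \cok\phi)$ and the coimage $\cok(\ker\phi \to \CM)$ again lie in $\cart(\SX)$, and the canonical coimage-to-image map is an isomorphism there because it is one in $\CO_{\SX}\md$. The only point requiring genuine care is the compatibility of $\wadj{\CK(\pi_2\gamma)}$ with $\wadj{\CK(\gamma)}$ and $\wadj{\CK(\pi_2)}$ that underlies the two-out-of-three step; everything else is a routine transcription of the flat computation together with Lemma \ref{eshaz}.
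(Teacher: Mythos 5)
Your proof is correct, and on the one genuinely delicate point --- the Cartesian property of the kernel presheaf along \emph{non-flat} morphisms of the site --- you take a different route from the paper. The paper fixes a presentation $p\colon X\to\SX$ with $X=\spec(A_0)$ and, for a non-flat $f\colon U\to V$, base-changes along the \emph{faithfully flat} projection $U\times_{\SX}X\to U$; it then verifies that $\id_{B'}\otimes\wadj{\CK(f,\alpha)}$ is an isomorphism by a six-step chain of isomorphisms that routes everything through $\CK(X)$ using the flat projections of $U\times_{\SX}X$ and $V\times_{\SX}X$, and concludes by faithfully flat descent of isomorphisms. You instead avoid the presentation altogether: semi-separatedness makes $W=U\times_{\SX}V$ affine with flat finitely presented projections $\pi_1,\pi_2$ (note $(W,u\pi_1)$ is indeed an object of the site, $u\pi_1$ being a composite of flat fp maps), and you factor $f=\pi_2\gamma$ through the graph $\gamma$, a section of $\pi_1$. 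The multiplicativity $\wadj{\CK(hg)}=\wadj{\CK(g)}\circ(\id\otimes\wadj{\CK(h)})$ then gives first that $\wadj{\CK(\gamma)}$ is invertible (from $\pi_1\gamma=\id_U$) and then that $\wadj{\CK(f)}$ is (from $f=\pi_2\gamma$); the required compatibility of $2$-cells, $\beta\gamma=\alpha$, is exactly the universal property of the $2$-fibered product, so the composite in $\aff/\SX$ is the given morphism $(f,\alpha)$ and the cancellation is legitimate. Your argument is shorter, needs no choice of presentation and no faithful flatness, and isolates a reusable principle: on this site the Cartesian condition need only be checked on flat morphisms, by the graph trick. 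What the paper's longer computation buys is a rehearsal of the descent-through-the-presentation mechanism ($\SST=p_*p^*$ and base change to $U\times_{\SX}X$) that is the workhorse of the rest of the paper (Proposition \ref{desc1}, Lemma \ref{descart}, Theorem \ref{agree}); both arguments rest on the same two pillars, affineness of fibered products over $\SX$ and flatness of the structural maps of objects of the site. Your treatment of cokernels and coproducts (right exactness of tensor, commutation with direct sums, then Lemma \ref{eshaz} to identify presheaf and sheaf constructions) is the same as the paper's, as is the formal deduction of abelianness.
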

\begin{proof}

Let $\CK$ be the kernel of a morphism $\CM \to
\CM'$ with $\CM,\CM' \in \cart(\SX)$ and $(f,
\alpha)\colon (U,u)\to (V, v)$ be a morphism in $\aff/\SX$ with
$U=\spec(B)$ and $V=\spec(C)$. If $f$ is flat, we have the
following commutative diagram
\begin{center}
\begin{tikzpicture}
\matrix(m)[matrix of math nodes, row sep=2.6em, column sep=2.8em,
text height=1.5ex, text depth=0.25ex]{
0 & B \otimes_C\CK(V) & B \otimes_C \CM(V) & B \otimes_C \CM^{\prime}(V)\\
0 & \CK(U)            & \CM(U)             & \CM^{\prime}(U)\\
};
\path[->,font=\scriptsize,>=angle 90] (m-1-1)
(m-1-1) edge node[auto] {} (m-1-2)
(m-1-2) edge node[auto] {} (m-1-3)
(m-1-3) edge node[auto] {} (m-1-4)
(m-1-2) edge node[left] {} (m-2-2)
(m-1-3) edge node[left] {$\wr$} (m-2-3)
(m-1-4) edge node[left] {$\wr$}(m-2-4)
(m-2-1) edge node[auto] {} (m-2-2)
(m-2-2) edge node[auto] {} (m-2-3)
(m-2-3) edge node[auto] {} (m-2-4);
\end{tikzpicture}
\end{center}
where the rows are exact and the midle and right vertical arrows are
isomorphisms because $\CM$ and $\CM'$ are Cartesian sheaves.
It follows that the remaining vertical arrow
\(
\wadj{\CK(f, \alpha)} \colon B\otimes_C \CK(V) \lto \CK(U)
\)
is an isomorphism. 

If $f$ is not flat, let $p\colon X \to \SX$ be an affine presentation with $X=\spec(A_0)$. If $p_1$ and $p'_1$, respectively, $p_2$ and $p'_2$ are the projections from $U\times_{\SX}X$ and $V\times_{\SX}X$ to $U$ and $V$, and $X$, respectively, and $\beta \colon u p_1 \imp p p_2$ and $\beta' \colon v p'_1 \imp p p'_2$  are the canonical 2-morphisms, we have a
commutative diagram of affine schemes
\begin{center}
\begin{tikzpicture}
\matrix(m)[matrix of math nodes, row sep=2.6em, column sep=2.8em,
text height=1.5ex, text depth=0.25ex]
{ & U\times_{\SX}X & U \\
X & V\times_{\SX}X & V \\}; \path[->,font=\scriptsize,>=angle 90]
(m-1-2) edge node[auto] {$p_1$} (m-1-3)
(m-1-2) edge node[left] {$p_2$} (m-2-1)
(m-1-2) edge node[left] {$f'$} (m-2-2)
(m-1-3) edge node[auto] {$f$ } (m-2-3)
(m-2-2) edge node[auto] {$p'_2$} (m-2-1)
(m-2-2) edge node[below] {$p'_1$} (m-2-3);
\end{tikzpicture}
\end{center}
where the projections $p_1$, $p_2$, $p'_1$ and $p'_2$ are flat maps and  $\beta' f' = \alpha p_1 \circ \beta$. Put $\spec
(B') = U \times_{\SX} X$ and $\spec (C') = V \times_{\SX} X$. Since the projection $p_1$ is faithfully flat, to prove that the morphism
$
\wadj{\CK(f, \alpha)} %\colon B\otimes_C \CK(V) \lto \CK(U)
$
is an isomorphism it suffices to verify that the morphism
\[
\id_{B'}\otimes_B\wadj{\CK(f, \alpha)} \colon
B'\otimes_B (B\otimes_C\CK(V)) \lto B'\otimes_B\CK(U)
\]
is an isomorphism. But this morphism is the composition of the following chain of isomorphisms
\begin{align*}
B'\otimes_B(B\otimes_C \CK(V)) %& \simeq B'\otimes_C\CK(V)\\
    & \simeq B'\otimes_{C'} C'\otimes_C\CK(V) \\
    & \simeq B'\otimes_{C'} \CK(V \times_{\SX}X, pp'_2)
              \tag{via $\CK(p'_1,(\beta')^{-1})$} \\
    & \simeq B'\otimes_{C'} (C' \otimes_{A_0}\CK(X))
              \tag{via $\CK(p'_2, \id)$} \\
    & \simeq B'\otimes_{A_0}\CK(X) \\
    & \simeq \CK(U\times_{\SX}X, p p_2)
              \tag{via $\CK(p_2, \id)$} \\
    & \simeq  B'\otimes_B\CK(U)
              \tag{via $\CK(p_1, \beta^{-1})$}
\end{align*}

The presheaf cokernel of $\CM \to \CM'$ is a Cartesian presheaf and then it is the cokernel in the category of sheaves of $\CO$-modules. Similarly, the coproduct of a set of Cartesian presheaves is a Cartesian presheaf and then it is the coproduct in the category of sheaves of $\CO$-modules.
\end{proof}

\begin{rem}
Some readers may find this proof too long. Notice that there might be non-flat maps between objects in the site. Moreover, there is no final object in the $\ff$ site unless $\SX$  is (isomorphic to) an affine scheme. This problem have nothing to do with stacks and one have to deal with them already on the case of non affine schemes.

\end{rem}

Let $\SX$ be a geometric stack and $p \colon X \to\SX$ a presentation with $X$ an affine scheme. Then $p$ induces a continuous morphism of sites
\[
{p}^{\aff} \colon \aff_{\ff}/\SX \lto \aff_{\ff}/X
\]
that is given on objects by ${p}^{\aff}(U,u) := (U \times_{\SX} X, p_2)$, with $p_2 \colon U \times_{\SX} X \to X$  the projection, and on maps by the functoriality of pull-backs.
%It is in fact the restriction to the category of objects over $X$, as in \ref{comma}.

As discussed in \ref{adjtop} this morphism induces a pair of adjoint functors
\[
X_{\ff} \underset{p_*}{\overset{p^{-1}}{\longleftrightarrows}}
\SX_{\ff}.
\]
As $p$ is a flat finite type presentation 1-morphism, $(p^{-1}\CF)(V,v) = \CF(V,pv)$ for $\CF \in \SX_{\ff}$. In particular, $p^{-1}\CO_{\SX} = \CO_{X}$, 
%Indeed, if $V = \spec(C)$, then
%\[
%(p^{-1}\CO_{\SX})(V,v) = \CO_{\SX}(V,pv) = C = \CO_{X}(V,v),
%\]
therefore $p^{-1}$ induces a functor between the categories of sheaves of $\CO$-Mod\-ules, \ie{} $p^{-1}$ agrees with the functor usually denoted $p^*$. So we have an adjunction as in \ref{adjring}
\begin{equation}\label{adjdesc}
\CO_{X}\md \underset{p_*}{\overset{p^*}{\longleftrightarrows}}
\CO_{\SX}\md,
\end{equation}
with a particularly simple description of the functors involved, due to the fact that $p$ induces a restriction functor between the corresponding sites.
% (recall that $p^{*}\CF := \CO \otimes_{p^{-1}\CO}p^{-1}\CF$).

\begin{prop}\label{desc1}
It holds that
\begin{enumerate}
 \item If $\CF \in \cart(\SX)$, then $p^*\CF \in \cart(X)$.
 \item If $\CG \in \cart(X)$, then $p_*\CG \in \cart(\SX)$.
\end{enumerate}
\end{prop}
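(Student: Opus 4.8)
The plan is to mimic the affine argument of Proposition~\ref{adjaf}, taking advantage of the two explicit descriptions recorded just before the statement: since $p$ is flat of finite presentation, $p^* = p^{-1}$ acts by restriction, so $(p^*\CF)(V,v) = \CF(V,pv)$ for $\CF \in \SX_{\ff}$, while $(p_*\CG)(V,v) = \CG(V \times_\SX X, p_2)$ for $\CG \in X_{\ff}$, the fibre product being an affine scheme by semi-separateness.

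For part~(\emph{i}) I would observe that $p^*$ is essentially a restriction functor, so that the Cartesian property descends at once. A morphism $g \colon (V',v') \to (V,v)$ in $\aff_{\ff}/X$ is a map of affine schemes over $X$ with $v' = vg$; composing with $p$ produces the morphism $(g,\id) \colon (V',pv') \to (V,pv)$ in $\aff_{\ff}/\SX$, since $pv' = (pv)g$. Because $\CO_X(V) = \CO_\SX(V,pv)$ for every such object, the adjoint map $\wadj{(p^*\CF)(g)}$ of~\ref{superadj} is \emph{identical} to $\wadj{\CF(g,\id)}$, which is an isomorphism as $\CF$ is Cartesian. Thus $p^*\CF \in \cart(X)$; writing $X = \spec(A_0)$, it is then automatically the sheaf $(\CF(X,p))_{\car}$ associated by Proposition~\ref{cartaff} to its module of global sections $(p^*\CF)(X,\id_X) = \CF(X,p)$.

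For part~(\emph{ii}) I would follow Proposition~\ref{adjaf}(\emph{ii}) step by step. Fix a morphism $(f,\alpha) \colon (U,u) \to (V,v)$ in $\aff_{\ff}/\SX$ with $U = \spec(B)$, $V = \spec(C)$, and set $U \times_\SX X = \spec(B')$, $V \times_\SX X = \spec(C')$. The 2-cell $\alpha \colon u \imp vf$ furnishes a canonical isomorphism $U \times_\SX X \cong U \times_V (V \times_\SX X)$, whence $B' \cong B \otimes_C C'$. The pair $(f,\alpha)$ induces a morphism $f' \colon U \times_\SX X \to V \times_\SX X$ over $X$ with $p_*\CG(f,\alpha) = \CG(f')$, and both
\[
\wadj{(p_*\CG)(f,\alpha)}\colon B\otimes_C\CG(\spec(C'))\lto\CG(\spec(B'))
\qquad\text{and}\qquad
\wadj{\CG(f')}\colon B'\otimes_{C'}\CG(\spec(C'))\lto\CG(\spec(B'))
\]
are adjoint to the same underlying map $\CG(f')$. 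Under the identification $B' \cong B \otimes_C C'$ these two adjoints coincide, and the second is an isomorphism because $\CG$ is Cartesian; hence so is the first, giving $p_*\CG \in \cart(\SX)$.

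The only genuine work, and the step I expect to be most delicate, is the bookkeeping of 2-cells when transferring between $\aff_{\ff}/\SX$ and $\aff_{\ff}/X$: one must check that the canonical identification $U \times_\SX X \cong U \times_V (V \times_\SX X)$, and the resulting $B' \cong B \otimes_C C'$, is compatible with the structure maps so that the two adjoint morphisms above genuinely agree. Everything else is formal, being a direct transcription of the affine computation in Proposition~\ref{adjaf}.
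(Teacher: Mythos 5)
Your proposal is correct and follows essentially the same route as the paper: part (\emph{i}) is proved by observing that $p^*$ acts by restriction along $p$, so the adjoint map for $p^*\CF$ is literally the adjoint map for $\CF$ at the induced morphism $(h,\iid)$ in $\aff_{\ff}/\SX$, and part (\emph{ii}) is the transcription of Proposition~\ref{adjaf}(\emph{ii}) with the base-change identification $U \times_{\SX} X \cong U \times_V (V \times_{\SX} X)$ playing the role of $V\times_Y X \cong V\times_{V'}(V'\times_Y X)$, which is exactly what the paper means by ``similar to the proof given in Proposition \ref{adjaf}(\emph{ii})''. The 2-cell bookkeeping you flag is real but formal, and the paper leaves it implicit as well.
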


\begin{proof}
We prove (\emph{i}). Let $h \colon (U,u) \to (V,v)$ be a morphism in $\aff_{\ff}/X$ with $U = \spec(B)$ and $V = \spec(C)$. We have to check that the morphism
\[
\wadj{(p^*\!\CF (h))} \colon B \otimes_C p^*\!\CF(V,v) \lto p^*\!\CF(U,u)
\]
is an isomorphism. But $p^*\!\CF(V,v) = \CF(V,pv)$ and analogously for $(U,u)$. Therefore the previous morphism becomes
\[
\wadj{\CF (h,\iid_{pu})}  \colon B \otimes_C \CF(V,pv) \lto \CF(U,pu)
\]
where $(h, \iid_{pu}) \colon (U,pu) \to (V,pv)$ is a morphism in $\aff_{\ff}/\SX$. But this map is an isomorphism because $\CF$ is Cartesian.

The proof of (\emph{ii}) is similar to the proof given in Proposition \ref{adjaf} (\emph{ii}).
\end{proof}

\begin{cor}
In this setting, the adjunction (\ref{adjdesc}) restricts to
\[
\cart(X) \,\underset{p_{*}}{\overset{p^*}{\longleftrightarrows}}\, \cart(\SX).
\]
\end{cor}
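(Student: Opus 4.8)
The plan is to deduce the corollary formally from Proposition \ref{desc1} together with the fullness of the Cartesian subcategories established earlier. Recall that $\cart(X)$ is a full subcategory of $\CO_X\md$ (by the discussion following Proposition \ref{cartaff}) and that $\cart(\SX)$ is a full subcategory of $\CO_{\SX}\md$ (by Lemma \ref{eshaz} and Proposition \ref{cartab}). Proposition \ref{desc1} asserts precisely that $p^*$ sends $\cart(\SX)$ into $\cart(X)$ and that $p_*$ sends $\cart(X)$ into $\cart(\SX)$, so both functors of the adjunction (\ref{adjdesc}) restrict to functors between these full subcategories.

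It then remains to invoke the general principle that an adjunction restricts to full subcategories whenever each functor carries one into the other. Explicitly, for $\CF \in \cart(\SX)$ and $\CG \in \cart(X)$, the adjunction isomorphism
\[
\Hom_{\CO_X\md}(p^*\CF, \CG) \cong \Hom_{\CO_{\SX}\md}(\CF, p_*\CG)
\]
has, by the previous paragraph, both entries already lying in the respective Cartesian subcategories; because these subcategories are full, the two displayed Hom-sets coincide with the morphism sets computed inside $\cart(X)$ and $\cart(\SX)$, and naturality is inherited for the same reason. This yields the desired adjunction $p^* \dashv p_*$ between $\cart(\SX)$ and $\cart(X)$.

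I expect no real obstacle here: the substantive verification---that the Cartesian condition is preserved under both pullback and pushforward along a presentation---was already carried out in Proposition \ref{desc1}. The only thing to keep in view is that fullness is what makes the restriction harmless, since no morphisms are lost on passing to the subcategories, so that the unit and counit of (\ref{adjdesc}) restrict without modification.
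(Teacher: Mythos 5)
Your proposal is correct and matches the paper's (implicit) argument: the corollary is stated there without proof precisely because it follows formally from Proposition \ref{desc1} together with the fullness of $\cart(X) \subset \CO_X\md$ and $\cart(\SX) \subset \CO_{\SX}\md$, exactly as you argue. The observation that fullness is what makes the restriction of the adjunction isomorphism (and of the unit and counit) automatic is the right one.
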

Let us now prove a converse to Proposition \ref{desc1} (\emph{i}).

\begin{lem}\label{descart}
Let $\CF$ be an $\CO_{\SX}$-module. If $p^*\!\CF\in \cart(X)$, then
$\CF\in \cart(\SX)$.
\end{lem}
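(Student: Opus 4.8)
The plan is to establish the Cartesian condition for $\CF$ in two stages, exactly mirroring the structure of Proposition \ref{cartab}: first for morphisms whose underlying map of schemes is flat, and then for arbitrary morphisms by faithfully flat descent. Throughout I would use three things: that $\CF$, being an object of $\CO_{\SX}\md$, is a \emph{sheaf}; that $(p^*\CF)(W,w)=\CF(W,pw)$ for every $(W,w)\in\aff_{\ff}/X$ (established just before the lemma); and that the hypothesis $p^*\CF\in\cart(X)$ furnishes, for \emph{every} morphism $h\colon (W,w)\to(W',w')$ of $\aff/X$, an isomorphism $\CO(W)\otimes_{\CO(W')}\CF(W',pw')\iso\CF(W,pw)$. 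In particular this applies whenever source and target are fibre products that acquire their structure map through $p$.

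First I would treat a morphism $(g,\gamma)\colon(W,w)\to(W',w')$ in $\aff/\SX$ with $g\colon W=\spec(D)\to W'=\spec(D')$ flat of finite presentation, showing $\wadj{\CF(g,\gamma)}\colon D\otimes_{D'}\CF(W')\to\CF(W)$ is an isomorphism. Since $p$ is faithfully flat of finite presentation, so is the projection $W\times_{\SX}X\to W$; hence it is a one-element covering of $(W,w)$ in $\aff_{\ff}/\SX$, and $W\times_{\SX}X$ is affine by semi-separateness. As $\CF$ is a sheaf, $\CF(W)$ is the equalizer of $\CF(W\times_{\SX}X)\rightrightarrows\CF(W\times_{\SX}X\times_{\SX}X)$, and similarly for $\CF(W')$. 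The base change $g'\colon W\times_{\SX}X\to W'\times_{\SX}X$ is a morphism over $X$, so $p^*\CF\in\cart(X)$ gives $\CF(W\times_{\SX}X)\cong\CO(W\times_{\SX}X)\otimes_{\CO(W'\times_{\SX}X)}\CF(W'\times_{\SX}X)=D\otimes_{D'}\CF(W'\times_{\SX}X)$, using $\CO(W\times_{\SX}X)=D\otimes_{D'}\CO(W'\times_{\SX}X)$; the same holds on the double fibre products. Because $g$ is flat, $D\otimes_{D'}(-)$ is exact and commutes with these equalizers, so assembling the comparisons yields $\CF(W)\cong D\otimes_{D'}\CF(W')$, an isomorphism that one then checks coincides with $\wadj{\CF(g,\gamma)}$.

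For a general morphism $(f,\alpha)\colon(U,u)\to(V,v)$, with $U=\spec(B)$ and $V=\spec(C)$, I would transcribe the non-flat case of Proposition \ref{cartab}. Writing $\spec(B')=U\times_{\SX}X$, $\spec(C')=V\times_{\SX}X$ and $\spec(A_0)=X$, the projection $p_1\colon U\times_{\SX}X\to U$ is faithfully flat, so it suffices to see that $\id_{B'}\otimes_B\wadj{\CF(f,\alpha)}$ is an isomorphism. I would realize it as the composite of the chain $B'\otimes_C\CF(V)\cong B'\otimes_{C'}\bigl(C'\otimes_C\CF(V)\bigr)\cong B'\otimes_{C'}\CF(V\times_{\SX}X)\cong B'\otimes_{C'}\bigl(C'\otimes_{A_0}\CF(X)\bigr)=B'\otimes_{A_0}\CF(X)\cong\CF(U\times_{\SX}X)\cong B'\otimes_B\CF(U)$, in which every isomorphism is an instance of the Cartesian property of $\CF$ along one of the \emph{flat} projections $p_1,p_2,p'_1,p'_2$ (to $U$, $X$, $V$, $X$), now available from the flat case above. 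The decisive point, just as in Proposition \ref{cartab}, is that the chain is routed through $\CF(X)$ and so never invokes the possibly non-flat base change $f'$.

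The main obstacle is the flat case: one must set up the \v{C}ech comparison with care, tracking the $2$-cells and verifying that every fibre product $W\times_{\SX}X$, $W\times_{\SX}X\times_{\SX}X$ indeed carries a structure map factoring through $p$ (so that $p^*\CF\in\cart(X)$ applies), and then confirming that the isomorphism assembled on equalizers is genuinely the adjoint map $\wadj{\CF(g,\gamma)}$ and not merely some abstract identification. Once the flat case is secured, the descent step is a routine repetition of Proposition \ref{cartab}.
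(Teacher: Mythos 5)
Your proof is correct, but it is organized quite differently from the paper's. The paper's own proof is a three-line monadic argument: setting $\SST := p_*p^*$, the sheaf property of $\CF$ gives the equalizer
\[
\CF \lto \SST\CF \rightrightarrows \SST^2\CF,
\]
Proposition \ref{desc1} (part (\emph{ii}) applied to $p^*\CF \in \cart(X)$, then part (\emph{i}) followed by (\emph{ii}) again) shows that $\SST\CF$ and $\SST^2\CF$ lie in $\cart(\SX)$, and Proposition \ref{cartab} (closure of $\cart(\SX)$ under kernels in $\CO_{\SX}\md$) then forces $\CF \in \cart(\SX)$. Your two-stage verification is essentially an unwinding of this abstract argument: your \v{C}ech equalizer for the cover $W\times_{\SX}X \to W$ is precisely the paper's equalizer evaluated at $(W,w)$, and your non-flat stage reproduces the chain through $\CF(X)$ that lives inside the proof of Proposition \ref{cartab}. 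What the paper's route buys is brevity and the elimination of all direct verification: the flat/non-flat dichotomy and the 2-cell bookkeeping are delegated to results already proved, so no new diagram chases are needed. What your route buys is independence from Proposition \ref{desc1}(\emph{ii}) (that $p_*$ preserves the Cartesian property) and from the closure statement of Proposition \ref{cartab} as a black box: you use only the Cartesian property of $p^*\CF$ along the covers induced by $p$, at the cost of redoing by hand the compatibility checks you rightly flag as the delicate points --- that the comparison isomorphisms commute with the two \v{C}ech arrows, and that the map assembled on equalizers is genuinely $\wadj{\CF(g,\gamma)}$. Both proofs rest on the same two mechanisms (descent along the presentation, and routing non-flat morphisms through $\CF(X)$), so neither is more general; the paper's is the economical packaging, yours the self-contained one.
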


\begin{proof}
Set $\SST := p_* p^*$. Since $\CF$ is a sheaf, we have the equalizer of $\CO_{\SX}$-modules
\[
\CF {\overset{\eta_{\CF}}{\longrightarrow}} \SST \CF
\, \underset{\SST\eta_{\CF}}{\overset{\eta_{\SST\CF}}{\longrightrightarrows}} \, \SST^2 \CF
\]
where $\eta$ is the unit of the adjunction $p^* \dashv p_*$. As a consequence of Proposition~\ref{desc1} we have $\SST \CF$, $\SST^2 \CF \in \cart(\SX)$ and it follows that $\CF \in \cart(\SX)$.
\end{proof}

\begin{thm}\label{agree}
 Let $\SX$ be a geometric stack. The subcategories $\cart(\SX)$ and $\qco(\SX)$ of $\CO_{\SX}\md$ \emph{agree}.
\end{thm}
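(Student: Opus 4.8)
The plan is to prove the two inclusions $\cart(\SX) \subseteq \qco(\SX)$ and $\qco(\SX) \subseteq \cart(\SX)$ separately, as full subcategories of $\CO_{\SX}\md$. Throughout I fix a presentation $p \colon X \to \SX$ with $X = \spec(A)$ affine and, by \ref{31}, faithfully flat of finite presentation.

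For the inclusion $\cart(\SX) \subseteq \qco(\SX)$, let $\CF \in \cart(\SX)$; by Lemma \ref{eshaz} it is already a sheaf of $\CO$-Modules. Given $(V,v) \in \aff/\SX$ with $V = \spec(C)$, I would observe that the restriction of $\CF$ along the natural functor $\aff_{\ff}/V \to \aff_{\ff}/\SX$ is again a Cartesian presheaf, the restriction being compatible with the adjoint maps of \ref{superadj}, so by Proposition \ref{cartaff} it is isomorphic to $M_{\car}$ with $M := \CF(V)$. Choosing a free presentation $C^{(I)} \to C^{(J)} \to M \to 0$ of the $C$-module $M$ and applying $(-)_{\car}$ --- which is exact on $\aff_{\ff}/V$ because every object of this site is flat over $C$ --- yields a presentation $\CO|_V^{(I)} \to \CO|_V^{(J)} \to \CF|_V \to 0$. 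Since the singleton $\{\id_V\}$ is a covering, this exhibits $\CF$ as quasi-coherent, so no nontrivial covering is even needed here.

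For the reverse inclusion $\qco(\SX) \subseteq \cart(\SX)$, let $\CF \in \qco(\SX)$. By Lemma \ref{descart} it suffices to prove $p^*\CF \in \cart(X)$. Applying quasi-coherence of $\CF$ at the object $(X,p) \in \aff/\SX$, I obtain a finite covering $\{(X_k, w_k) \to (X,p)\}_{k}$, with underlying flat finitely presented maps $f_k \colon X_k \to X$, on each piece of which $\CF$ admits a free presentation. As the free $\CO$-Modules $\CO|^{(J)}$ are Cartesian and, by Proposition \ref{cartab}, $\cart$ is closed under cokernels and coproducts, each such restriction $\CF|_{(X_k,w_k)}$ is Cartesian; transporting along the $2$-cells $\alpha_k \colon w_k \imp p f_k$ this says exactly that $p^*\CF$ becomes Cartesian after restriction to each $X_k$, where now $\{X_k \to X\}$ is a finite covering of the \emph{final} object $X$ of $\aff_{\ff}/X$.

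Thus everything reduces to the following assertion on the affine scheme $X$, which I regard as the heart of the matter: a sheaf $\CG$ of $\CO$-Modules on $\aff_{\ff}/X$ whose restriction to each member of a finite covering $\{X_i \to X\}_{i\in I}$ of the final object is Cartesian is itself Cartesian. Because $X$ is final, being Cartesian is equivalent to the natural maps $S \otimes_A \CG(X) \to \CG(U)$ being isomorphisms for every $(U,u) \in \aff_{\ff}/X$ with $U = \spec(S)$. To check this I would pull the covering back to $\{U_i := U \times_X X_i \to U\}$ and use the sheaf condition
\[
\CG(U) \to \prod_{i} \CG(U_i) \rightrightarrows \prod_{i,j} \CG(U_i \times_U U_j).
\]
Cartesianness of $\CG|_{X_i}$ (and of $\CG|_{X_i \times_X X_j}$, obtained by further restriction) identifies $\CG(U_i) = S \otimes_A \CG(X_i)$ and $\CG(U_i \times_U U_j) = S \otimes_A \CG(X_i \times_X X_j)$. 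Since the covering is finite, the tensor product commutes with these products; since $S$ is flat over $A$, it commutes with the equalizer. Comparing with the sheaf condition for $\{X_i \to X\}$ then forces $\CG(U) = S \otimes_A \CG(X)$, as wanted. The main obstacle is precisely this last step, and it is exactly where the restriction to geometric (quasi-compact, semi-separated) stacks pays off: the finiteness of the covering and the flatness of the maps in the site are what make the tensor product commute with the relevant finite products and kernels. The $2$-categorical bookkeeping required to pass between coverings of $(X,p)$ in $\aff/\SX$ and coverings of $X$ in $\aff/X$ is routine but must be carried out with care.
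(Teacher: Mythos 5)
Your proof is correct and follows essentially the same route as the paper's: both inclusions are reduced to the affine situation via Proposition \ref{cartaff} and Lemma \ref{descart}, and the harder direction $\qco(\SX) \subseteq \cart(\SX)$ rests on the sheaf equalizer for the finite flat covering obtained by applying quasi-coherence at $(X,p)$. The differences are only presentational: for $\cart(\SX) \subseteq \qco(\SX)$ you use the trivial covering of each $(V,v)$ where the paper pulls a presentation back along $U\times_{\SX}X \to U$, and your explicit objectwise equalizer computation (identifying $\CG(U_i)$ with $S\otimes_A\CG(X_i)$ and using flatness and finiteness) is precisely what the paper packages functorially as ``$f_{i*}f_i^*p^*\!\CF$ is Cartesian (Proposition \ref{adjaf}) and $\cart(X)$ is closed under kernels and finite products.''
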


\begin{proof}
Fix a presentation $p \colon X \to \SX$ as before and let $X = \spec(A_0)$.

%Let us prove first that $\cart(\SX) \subset \qco(\SX)$.

Let $\CG \in \cart(\SX)$. First, notice that the Cartesian sheaf $p^*\!\CG$ sits in an exact sequence
\begin{equation*}%\label{presaff}
 \CO_X^{(I)} \lto \CO_X^{(J)} \lto p^*\!\CG \lto 0
\end{equation*}
that comes from a presentation of $M := \Gamma(X, p ^*\CG)$ as $A_0$-module  applying the functor $(-)_{\car}$ of Proposition \ref{cartaff}. Let $(U,u) \in \aff_{\ff}/\SX$, and put $\,V=U \times_{\SX}X$. Denote by $p_2 \colon V \to X$ the canonical projection. We obtain the exact sequence of sheaves of ${\CO}_V$-modules 
\[
 \CO_V^{(I)} \lto \CO_V^{(J)} \lto p_2^*p^*\!\CG \lto 0
\]
thus, $\CG \in \qco(\SX)$.
%The result follows from the fact that the category  ${\CO}_V\md$ is equivalent to the category of sheaves of ${\CO}|_{(V,pp_2)}$-modules over the site $(\aff_{\ff} /\SX) /(V,pp_2)$ by the comparison result \cite[III, Th\'eor\`{e}me 4.1]{sga41}.

%Let us prove now that $\qco(\SX) \subset \cart(\SX)$.

Let now $\CF \in \qco(\SX)$. There exists a covering $\{f_i \colon U_i \to X)\}_{i \in I}$ such that for the restriction of $p^*\CF$ to each $(\aff_{\ff}/U_i)$ there is a presentation
\[
 \CO_{U_i}^{(I)} \lto \CO_{U_i}^{(J)} \lto f_i^*p^*\!\CF \lto 0.
\]
Since  $\CO_{U_i}\in \cart(U_i)$, then $ f_i^*p^*\!\CF\in \cart(U_i)$.

Because $p^*\!\CF$ is a sheaf and $\{f_i \colon U_i \to X\}_{i \in I}$ is a covering in $\aff_{\ff}/X$ we obtain the equalizer of $\CO_X$-modules
\[
\begin{tikzpicture}
\matrix (m) [matrix of math nodes, row sep=3em, column sep=3em]{
p^*\!\CF & \prod_{i \in I}f_{i*}f_i^*p^*\!\CF & 
\prod_{i, j \in I}f_{ij*}f_{ij}^*p^*\!\CF,\\
}; 
\draw [transform canvas={yshift= 0.3ex},font=\scriptsize,->] 
(m-1-2) -- (m-1-3);
\draw [transform canvas={yshift= -0.3ex},font=\scriptsize,->]
(m-1-2) -- (m-1-3);
\path[->,font=\scriptsize,>=angle 90] (m-1-1) edge (m-1-2);
\end{tikzpicture}
\]
where $f_{ij}\colon U_i \times_X U_j \to  X$ is the composition of the first projection with $f_i$, or, what amounts to the same, the composition of the second projection with $f_j$. Since $f_{i*}f_i^*p^*\!\CF, f_{ij*}f_{ij}^*p^*\!\CF \in \cart(X)$, $p^*\!\CF \in \cart(X)$. The result follows from Lemma \ref{descart}.
\end{proof}

\begin{rem}
The consequence of the previous theorem is that not only all quasi-coherent sheaves are Cartesian (presheaves) but that this fact characterizes them on a geometric stack. From now on, we will use freely this identification when dealing with quasi-coherent sheaves and use the most convenient characterization for the issue at hand.
\end{rem}

\section{Descent of quasi-coherent sheaves on geometric stacks}\label{sec4}

We proceed in our goal of describing quasi-coherent sheaves through algebraic data on an affine groupoid scheme whose stackification is the initial geometric stack. As a first step, we will develop a descent result for quasi-coherent sheaves with respect to presentations of stacks.

\begin{cosa}\textbf{The category of descent data}.
Let $\SX$ be a geometric stack and let us fix $p\colon X \to\SX$ a presentation
with $X$ an affine scheme. Let $\CG$ be an $\CO_X$-Module. As before, for an object $(U,u) \in \aff_{\ff}/\SX$, we will abbreviate $\CG(U,u)$ by $\CG(U)$ if no confusion arises. Analogously, for a morphism $(h, \alpha)$ we will shorten $\CG(h, \alpha)$ by $\CG(h)$.

Consider the basic pull-back diagram
\begin{equation}\label{phi}
\begin{tikzpicture}[baseline=(current  bounding  box.center)]
\matrix(m)[matrix of math nodes, row sep=2.6em, column sep=2.8em,
text height=1.5ex, text depth=0.25ex]{
  X\times_{\SX}X & X \\
  X              & \SX \\};
\path[->,font=\scriptsize,>=angle 90] (m-1-1) edge
node[auto] {$p_2$}
(m-1-2) edge node[left] {$p_1$} (m-2-1)
(m-1-2) edge node[auto] {$p$} (m-2-2)
(m-2-1) edge node[auto] {$p$} (m-2-2);
\draw [shorten >=0.2cm,shorten <=0.2cm, ->, double] (m-2-1) -- (m-1-2) node[auto, midway,font=\scriptsize]{$\phi$};
\end{tikzpicture}
\end{equation}
A descent datum on $\CG$ is an isomorphism 
$t \colon p_2^*\CG \to p_1^*\CG$ of $\CO_{X\times_{\SX}X}$-Modules satisfying the cocycle condition on $\aff_{\ff}/X\times_{\SX}X\times_{\SX}X$
\[
p_{12}^*\,t\, \circ \, p_{23}^*\,t=p_{13}^*t,
\]
where as usual $p_{ij} \colon X\times_{\SX}X\times_{\SX}X \to X\times_{\SX}X$ ($i, j \in \{1, 2,  3\}$) denotes the several projections obtained omitting the factor not depicted. We define the category $\des(X/\SX)$ by taking as its objects the pairs $(\CG,t)$ with $\CG$ an $\CO_X$-Module and $t \colon p_2^*\!\CG \to p_1^*\!\CG$ a descent datum on $\CG$; and as its morphisms $h \colon (\CG,t) \to (\CG',t')$ where $h\colon\CG \to \CG'$ is a morphism of $\CO_X$-Modules such that the diagram
\begin{center}
\begin{tikzpicture}
\matrix(m)[matrix of math nodes,
row sep=2.6em, column sep=2.8em,
text height=1.5ex, text depth=0.25ex]
{p_2^*\!\CG & p_1^*\!\CG        \\
 p_2^*\CG'  & p_1^*\!\CG' \\};
\path[->,font=\scriptsize,>=angle 90]
(m-1-1) edge node[auto] {$t$} (m-1-2)
        edge node[left] {$p_2^*h$} (m-2-1)
(m-1-2) edge node[auto] {$p_1^*h$} (m-2-2)
(m-2-1) edge node[auto] {$t'$} (m-2-2);
\end{tikzpicture}
\end{center}
commutes.
\end{cosa}

\begin{cosa}\label{fund}\textbf{Descent datum associated to a sheaf of modules}.
The 2-morphism $\phi \colon p p_1 \imp p p_2$ induces an
isomorphism $\phi^* \colon p_2^* p^* \to p_1^* p^*$
given by 
\[
(\phi^*_{\CF}){(U,u)}=\CF(\id_U,\phi u),
\]
for $\CF \in \CO_{\SX}\md$ and $(U,u)\in \aff_{\ff}/X\times_{\SX}X$. Notice that for any $\CO_{\SX}$-Module $\CF$, it holds that $(p^*\!\CF,\phi^*_{\CF})\in \des(X/\SX)$.
Indeed, consider the pullback
\begin{equation}\label{phiprima}
\begin{tikzpicture}[baseline=(current  bounding  box.center)]
\matrix(m)[matrix of math nodes,
row sep=2.6em, column sep=2.8em,
text height=1.5ex, text depth=0.25ex]
{X \times_{\SX}X \times_{\SX}X & X   \\
 X \times_{\SX}X               & \SX \\};
 \path[->,font=\scriptsize,>=angle 90]
(m-1-1) edge node[auto] {$p_3$} (m-1-2)
        edge node[left] {$p_{12}$} (m-2-1)
(m-1-2) edge node[auto] {$p$} (m-2-2)
(m-2-1) edge node[auto] {$p\,p_2$} (m-2-2);
\draw [shorten >=0.2cm,shorten <=0.2cm,->,double] (m-2-1) -- (m-1-2) node[auto, midway,font=\scriptsize]{$\phi'$};
\end{tikzpicture}
\end{equation}
%with $\phi' \colon p \,pr_2pr_{12} \imp p\, pr_3$ the canonical 2-morphism.
With the previous notations, we have that $\phi p_{13}=\phi' \circ
\phi p_{12}$ and  $\phi p_{23}= \phi'$ and thus, $\phi
p_{23} \,\circ \,\phi p_{12}=\phi p_{13}$. Now for $(U,u)\in
\aff_{\ff}/X\times_{\SX}X \times_{\SX}X$ we have
\begin{align*}
(p_{12}^*\phi^*_{\CF} \circ p_{23}^*\phi^*_{\CF}){(U,u)} &= \CF(\id_U,\phi p_{12}u)\,\CF(\id_U,\phi p_{23}u)\\
%&=\CF(1_U,\phi p_{23}u \circ \phi p_{12}u)\\
&=\CF(\id_U,\phi p_{13}u)\\
&=(p_{13}^*\phi^*_{\CF}){(U,u)}.%\qedhere
\end{align*}
This construction defines a functor
\[
\SD \colon \CO_{\SX}\md \lto \des(X/\SX)
\]
by $\SD(\CF) := (p^*\!\CF,\phi^*_{\CF})$.
\end{cosa}

\begin{cosa}\textbf{From descent data to sheaves of modules}.
We define a functor
\[
\SG \colon \des(X/\SX) \lto \CO_{\SX}\md
\]
assigning to $(\CG,t) \in \des(X/\SX)$ the equalizer of the pair of morphisms
\[
p_* \CG\,
\underset{b_{(\CG,t)}}{\overset{a_{\CG}}{\longrightrightarrows}} \,
p_* p_{2*} p_1^*\!\CG
\]
where
\begin{align*}
a_{\CG}     &:= ((\phi_*)_{p_1^*\!\CG}) \circ (p_* \eta_{1 \CG})\\
b_{(\CG,t)} &:= (p_*p_{2*}t) \circ (p_* \eta_{2 \CG})
\end{align*}
with $\phi_* \colon p_*p_{1*} \iso p_*p_{2*}$ denoting the functorial isomorphism and $\eta_i$ the unit of the adjunction
$p_i^* \dashv p_{i*}$ for $i \in \{1, 2\}$. We denote this kernel by $\SG(\CG,t)$. This construction is clearly functorial.
\end{cosa}

\begin{prop}\label{descadj}
The previously defined functors are adjoint, $\SD \dashv \SG$.
\end{prop}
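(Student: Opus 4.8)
The plan is to establish the adjunction by exhibiting, for each $\CF \in \CO_{\SX}\md$ and each $(\CG,t) \in \des(X/\SX)$, a bijection
\[
\Hom_{\des(X/\SX)}(\SD(\CF),(\CG,t)) \;\cong\; \Hom_{\CO_{\SX}\md}(\CF,\SG(\CG,t))
\]
natural in both variables. First I would unwind the two sides. By the universal property of the equalizer defining $\SG(\CG,t)$, a morphism $\CF \to \SG(\CG,t)$ is the same as a morphism $g \colon \CF \to p_*\CG$ satisfying $a_{\CG}\circ g = b_{(\CG,t)}\circ g$. On the other side, by \ref{fund} a morphism $\SD(\CF)=(p^*\CF,\phi^*_{\CF}) \to (\CG,t)$ is an $\CO_X$-Module map $h \colon p^*\CF \to \CG$ such that $t \circ p_2^* h = p_1^* h \circ \phi^*_{\CF}$. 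The adjunction $p^* \dashv p_*$ already gives a bijection $g \leftrightarrow h$ between $\Hom(\CF,p_*\CG)$ and $\Hom(p^*\CF,\CG)$, so everything reduces to the single assertion that, under this transposition, the equalizing condition $a_{\CG}\circ g = b_{(\CG,t)}\circ g$ is equivalent to the descent-compatibility condition $t \circ p_2^* h = p_1^* h \circ \phi^*_{\CF}$.

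To prove this equivalence I would transpose both $a_{\CG}\circ g$ and $b_{(\CG,t)}\circ g$ ---which are maps $\CF \to p_*p_{2*}p_1^*\CG$--- along the composite adjunction $q^* \dashv q_*$, where $q:=pp_2$ and $q':=pp_1$, so that $q^*=p_2^*p^*$, $q_*=p_*p_{2*}$, $q'^{*}=p_1^*p^*$ and $q'_{*}=p_*p_{1*}$. Writing $g = p_*(h)\circ \eta_{\CF}$ with $\eta$ the unit of $p^*\dashv p_*$, and using the naturality of the units $\eta_2,\eta_1$ together with the identity $p_*(\eta_{2,p^*\CF})\circ \eta_{\CF} = \eta^{q}_{\CF}$ expressing the unit of the composite adjunction, a direct computation shows $b_{(\CG,t)}\circ g = q_*(t \circ p_2^* h)\circ \eta^{q}_{\CF}$; hence its $q^*\dashv q_*$-transpose is exactly $t \circ p_2^* h$. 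For $a_{\CG}\circ g$ the same bookkeeping, now invoking naturality of $\phi_*$ at $p_1^* h$, gives $a_{\CG}\circ g = q_*(p_1^* h)\circ\bigl((\phi_*)_{p_1^*p^*\CF}\circ \eta^{q'}_{\CF}\bigr)$; after transposing and pulling $p_1^* h$ through the counit $\epsilon^{q}$ of $q^*\dashv q_*$, its transpose becomes $p_1^* h \circ \bigl(\epsilon^{q}_{p_1^*p^*\CF}\circ q^*((\phi_*)_{p_1^*p^*\CF}\circ \eta^{q'}_{\CF})\bigr)$.

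The crux of the argument is therefore the single coherence identity
\[
\epsilon^{q}_{p_1^*p^*\CF}\circ q^*\bigl((\phi_*)_{p_1^*p^*\CF}\circ \eta^{q'}_{\CF}\bigr) = \phi^*_{\CF},
\]
which says precisely that the isomorphism $\phi_* \colon p_*p_{1*}\iso p_*p_{2*}$ on direct images and the isomorphism $\phi^* \colon p_2^*p^* \to p_1^*p^*$ on inverse images induced by the \emph{same} $2$-cell $\phi \colon pp_1 \imp pp_2$ are mates for the adjunctions $q^*\dashv q_*$ and $q'^{*}\dashv q'_{*}$. This is the standard compatibility between the pseudofunctorial structures of $(-)^*$ and $(-)_*$, and I would verify it by unwinding the explicit formula $(\phi^*_{\CF})(U,u)=\CF(\id_U,\phi u)$ of \ref{fund} against the definition of $\phi_*$. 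Granting it, the transpose of $a_{\CG}\circ g$ is $p_1^* h \circ \phi^*_{\CF}$ and the transpose of $b_{(\CG,t)}\circ g$ is $t \circ p_2^* h$; since transposition along $q^*\dashv q_*$ is a bijection, $a_{\CG}\circ g = b_{(\CG,t)}\circ g$ holds if and only if $p_1^* h \circ \phi^*_{\CF} = t \circ p_2^* h$, which is exactly the descent-compatibility condition. Naturality of the resulting bijection in $\CF$ and in $(\CG,t)$ is then inherited from the naturality of the equalizer universal property, of the transposition $p^*\dashv p_*$, and of all the units and counits involved, and I expect it to be routine. The main obstacle is pinning down the mate identity above with the correct variances, since it is where the $2$-categorical structure of the stack genuinely enters.
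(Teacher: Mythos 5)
Your proposal is correct and takes essentially the same approach as the paper: the paper's (much terser) proof is exactly this transposition along $p^* \dashv p_*$, asserting without computation that the adjoint of $i_{\CG}\circ g$ is a morphism of descent data and that, conversely, the adjoint of a descent-data morphism equalizes $a_{\CG}$ and $b_{(\CG,t)}$. Your reduction of both assertions to the mate compatibility between $\phi_*$ and $\phi^*$ under the composite adjunctions supplies precisely the verification the paper leaves implicit.
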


\begin{proof}
Let $i_{\CG} \colon \SG(\CG,t) \to p_*\CG$ be the canonical morphism. For each    morphism of $\CO_{\SX}$-modules $g \colon \CF \to \SG(\CG,t)$, the adjoint to $i_{\CG} g \colon \CF \to p_*\CG$ through $p^* \dashv p_*$ provides the claimed morphism $(p^*\!\CF,\phi^*_{\CF}) \to (\CG,t)$ of descent data.

Reciprocally, if $f\colon \SD(\CF) \to (\CG,t)$ is a morphism in $\des(X/\SX)$, the morphism $\CF \to p_*\CG$ given by the adjunction $p^* \dashv p_*$ factors through $i_{\CG} \colon \SG(\CG,t) \to p_*\CG$ giving a morphism $\CF \to \SG(\CG,t)$, as desired.
\end{proof}

To establish the next main result, Theorem \ref{p52}, we will need a few technical preparations.
Let $(V,v) \in \aff_{\ff}/X$ and consider the following pull-backs

\begin{center}
\begin{tikzpicture}
\matrix(m)[matrix of math nodes,
row sep=2.6em, column sep=2.8em,
text height=1.5ex, text depth=0.25ex]{
V\times_{p_2}(X\times_{\SX}X) & X\times_{\SX}X \\
V                             & X\\};
\path[->,font=\scriptsize,>=angle 90] (m-1-1) edge
node[auto] {$v_2$} (m-1-2)
        edge node[auto] {$v_1$} (m-2-1)
(m-1-2) edge node[auto] {$p_2$} (m-2-2)
(m-2-1) edge node[auto] {$v$} (m-2-2);
\end{tikzpicture}   \hspace{6mm}      \begin{tikzpicture}
\matrix(m)[matrix of math nodes,
row sep=2.6em, column sep=2.8em,
text height=1.5ex, text depth=0.25ex]{
V \times_{\SX} X & X \\
V                & \SX \\};
\path[->,font=\scriptsize,>=angle 90]
(m-1-1) edge node[auto] {$w_2$} (m-1-2)
        edge node[auto] {$w_1$} (m-2-1)
(m-1-2) edge node[auto] {$p$} (m-2-2)
(m-2-1) edge node[auto] {$p v$} (m-2-2);
\draw [shorten >=0.2cm,shorten <=0.2cm,->,double] (m-2-1) -- (m-1-2) node[auto, midway,font=\scriptsize]{$\alpha$};
\end{tikzpicture}
\end{center}
Denote $W = V\times_{p_2}(X\times_{\SX}X)$. Set $h \colon
V\times_{\SX}X \to X\times_{\SX}X$ the morphism such that $p_1 h=w_2$, $p_2 h=v w_1$ and $\phi h= \alpha^{-1}$. The
morphism $h' \colon  V\times_{\SX}X \to W$ satisfying $v_1
h'=w_1$ and $v_2 h'=h$ is an isomorphism. Let
\[
\theta \colon p_{2*}p_{1}^* \lto p^*p_*
\]
be the isomorphism of
functors given by $\theta_{\CG}{(V,v)}=\CG(h')$ for each $\CG \in \CO_{X}\md$. Set 
\[
c_{(\CG,t)} := \theta_{\CG} \circ (p_{2*}t) \circ \eta_{2\CG}.
\]

\begin{lem}\label{igualita}
With the previous notations, the following identity holds
\[
p^*a_{\CG} \circ c_{(\CG,t)} = p^*b_{(\CG,t)} \circ c_{(\CG,t)}.
\]
\end{lem}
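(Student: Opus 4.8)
The plan is to check the identity on sections. Both sides are morphisms of $\CO_X$-Modules, hence of sheaves on $\aff_{\ff}/X$, so it suffices to prove that they agree after evaluation at an arbitrary object $(V,v) \in \aff_{\ff}/X$; fix such a $(V,v)$.

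First I would render every functor explicit on sections over $(V,v)$. Since $p$, $p_1$ and $p_2$ are flat of finite presentation, their inverse images are computed by reindexing, $(p^*\CF)(V,v) = \CF(V,pv)$ and likewise for $p_1^*,p_2^*$ (as recorded before \ref{desc1}), whereas the direct images $p_*,p_{1*},p_{2*}$ are computed through the base-change objects supplied by $p^{\aff}, p_1^{\aff}, p_2^{\aff}$ (see \ref{adjtop}). In particular $(p^*a_\CG)(V,v) = a_\CG(V,pv)$ and $(p^*b_{(\CG,t)})(V,v) = b_{(\CG,t)}(V,pv)$. Unwinding the definitions, each of $a_\CG(V,pv)$, $b_{(\CG,t)}(V,pv)$ and $c_{(\CG,t)}(V,v)$ becomes a composite of transition maps $\CG(\cdot)$ attached to morphisms in $\aff/X$ together with components of the descent datum $t$: the units $\eta_{1},\eta_{2}$ contribute the structural maps of the projections, $\phi_*$ contributes the $2$-cell $\phi$ exactly as in \ref{fund}, and $\theta$ contributes the isomorphism $h'$.

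Next I would identify the common target. Unwinding $p^*p_*p_{2*}p_1^*\CG$ at $(V,v)$ yields $\CG$ evaluated at the base change along $v$ of the triple fibre product $X\times_{\SX}X\times_{\SX}X$, the projections $p_{12},p_{13},p_{23}$ emerging once one passes through the nested base-change objects of the two direct images. The decisive point is to verify that the canonical isomorphisms packaged in $\theta$ (via $h'$), in $\phi_*$ (via $\phi$) and in $\eta_1,\eta_2$ (via the projection data) are compatible, so that on this common target the left-hand composite $p^*a_\CG\circ c_{(\CG,t)}$ reduces to the transport of $\CG$ along $p_{13}$ with a single occurrence of $t$, while the right-hand composite $p^*b_{(\CG,t)}\circ c_{(\CG,t)}$ reduces to the transport along $p_{12}$ and $p_{23}$ with two occurrences of $t$. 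With the objects and $2$-cells thus aligned, the desired equality is precisely the cocycle condition $p_{12}^*t\circ p_{23}^*t = p_{13}^*t$, pulled back from $X\times_{\SX}X\times_{\SX}X$ to its base change over $V$.

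The step I expect to be the main obstacle is this alignment: matching the three families of canonical isomorphisms, and in particular checking that the $2$-cells coming from $\phi$ and from the descent datum are compatibly placed so that the two routes collapse onto the two sides of the cocycle identity. Once that bookkeeping is settled, invoking the cocycle condition finishes the proof.
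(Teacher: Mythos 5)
Your plan coincides with the paper's own proof: the paper likewise evaluates both composites at an arbitrary $(V,v)\in\aff_{\ff}/X$, unwinds $p^*$, $p_*$, $p_{2*}$, $\theta$, $\phi_*$ and the adjunction units into transition maps of $\CG$ and instances of $t$, finds that the left side carries a single $t$ (along $p_{13}$) while the right side carries two (along $p_{12}$ and $p_{23}$), and concludes by evaluating the cocycle condition at a flat finitely presented map $g\colon W_2 \to X\times_{\SX}X\times_{\SX}X$ from the base-changed object over $V$. The alignment you flag as the main obstacle is precisely the bookkeeping the paper carries out with auxiliary pullbacks $W_1$, $W_2$ and explicitly constructed morphisms ($h'$, $l$, $l'$, $k$), and it goes through exactly as you predict.
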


\begin{proof}
Consider the pull-backs
\begin{center}
\begin{tikzpicture}
\matrix(m)[matrix of math nodes,
row sep=2.6em, column sep=2.8em,
text height=1.5ex, text depth=0.25ex]{
W_1             & X\times_{\SX}X \\
V\times_{\SX}X  & X \\};
 \path[->,font=\scriptsize,>=angle 90]
(m-1-1) edge node[auto] {$\pi_2$} (m-1-2)
        edge node[left] {$\pi_1$} (m-2-1)
(m-1-2) edge node[auto] {$p_1$} (m-2-2)
(m-2-1) edge node[auto] {$w_2$} (m-2-2);
\end{tikzpicture}  \hspace{6mm}    \begin{tikzpicture}
\matrix(m)[matrix of math nodes,
row sep=2.6em, column sep=2.8em,
text height=1.5ex, text depth=0.25ex]
{W_2 & X\times_{\SX}X\\
V\times_{\SX}X  & X\\}; \path[->,font=\scriptsize,>=angle 90]
(m-1-1) edge node[auto] {$\pi'_2$} (m-1-2)
        edge node[left] {$\pi'_1$} (m-2-1)
(m-1-2) edge node[auto] {$p_2$} (m-2-2)
(m-2-1) edge node[auto] {$w_2$} (m-2-2);
\end{tikzpicture}
\end{center}
Let $l \colon W_2 \to V \times_{\SX}X$ be the morphism such that
$w_1 l = w_1 \pi'_1$, $w_2 l = p_1 \pi'_2$ and
$\alpha l= \phi^{-1}\pi'_2 \circ \alpha \pi'_1$. Put
$l' \colon W_2 \to W_1$ for the morphism satisfying that $\pi_1 l' = l$ and $\pi_2 l' = \pi'_2$. Notice that $l'$ is an isomorphism and
$p^*((\phi_{*})_{ p_{1}^*\!\CG}){(V,v)} = \CG(l')$. We have
\begin{align*}
(p^*((\phi_{*})_{p_{1}^*\CG} \circ p_* \eta_{1\CG}) \circ  c_{(\CG,t)})){(V,v)}
&= \CG(l') \CG(\pi_1) \CG(h') t{(W,v_2)} \CG(v_1) \\
&= \CG(h' l) t{(W,v_2)} \CG(v_1)\\
&=t{(W_2,h l)} \CG(h' l) \CG(v_1)\\
%&= t{(W_2,h l)} \CG(v_1 h' l)\\
&= t{(W_2,h l)} \CG(w_1 l).
\end{align*}
On the other hand, we have
\begin{align*}
(p^*(p_* p_{2*} t \circ p_* \eta_{2\CG}) \circ c_{(\CG,t)}){(V,v)}
&= t{(W_2,\pi'_2)} \CG(\pi'_1) \CG(h') t{(W,v_2)} \CG(v_1) \\
%&= t{(W_2,\pi'_2)} \CG(h' \pi'_1) t{(W,v_2)} \CG(v_1) \\
&= t{(W_2,\pi'_2)} t{(W_2,h \pi'_1)} \CG(h' \pi'_1) \CG(v_1)\\
&= t{(W_2,\pi'_2)} t{(W_2,h \pi'_1)} \CG(w_1 l).
\end{align*}

Thus, it is sufficient to prove that $t{(W_2,\pi'_2)} \circ t{(W_2,h \pi'_1)} = t{(W_2,h l)}$. For that, consider the 2-pull-back

\begin{center}
\begin{tikzpicture}
\matrix(m)[matrix of math nodes,
row sep=2.8em, column sep=2.8em,
text height=1.5ex, text depth=0.25ex]{
(X\times_{\SX}X)_{p p_2}\times_{\SX}V & V\\
X\times_{\SX}X                          & \SX \\};
\path[->,font=\scriptsize,>=angle 90]
(m-1-1) edge node[auto] {$c_2$} (m-1-2)
        edge node[left] {$c_1$} (m-2-1)
(m-1-2) edge node[auto] {$pv$} (m-2-2) (m-2-1) edge node[auto]
{$p p_2$} (m-2-2);
\draw [shorten >=0.2cm,shorten <=0.2cm,->,double] (m-2-1) -- (m-1-2) node[auto, midway,font=\scriptsize]{$\gamma$};
\end{tikzpicture}
\end{center}
Let $k\colon W_2 \to (X\times_{\SX}X)_{p p_2}\times_{\SX}V$ be
the isomorphism given by $c_1 k = \pi'_2$, $c_2 k = w_1 \pi'_1$, and $\gamma k = \alpha^{-1}\pi'_1$. Consider the morphism
\[
\id\times_{\SX}v:(X\times_{\SX}X)_{p p_2}\times_{\SX}V \lto
X\times_{\SX} X \times_{\SX} X
\] satisfying
$p_{12} (\id\times_{\SX}v) = c_1$, $p_3 (\id\times_{\SX}v) = v c_2$,
and $\phi' (\id\times_{\SX}v) = \gamma$. The morphism
$g = (\id\times_{\SX}v) k$ is a finitely presented flat map.
Applying the cocycle condition of $t$ to $(W_2,g)$, we obtain that
\[
t{(W_2,p_{12} g)} \circ  t{(W_2, p_{23} g)} =
t{(W_2,p_{13} g)},
\]
and the result follows because $p_{12} g = \pi'_2$,
$p_{23} g = h \pi'_1$ and $p_{13} g = h l$.
\end{proof}

\begin{thm}\label{p52}
The functor $\SD \colon \CO_{\SX}\md \to \des(X/\SX)$  (and, therefore its adjoint $\SG$) is an equivalence of categories.
\end{thm}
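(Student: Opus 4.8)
The plan is to leverage the adjunction $\SD \dashv \SG$ already established in Proposition \ref{descadj}: an adjunction whose unit $\eta \colon \iid \to \SG\SD$ and counit $\epsilon \colon \SD\SG \to \iid$ are both natural isomorphisms is an equivalence, so it suffices to check invertibility of $\eta$ and $\epsilon$.

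First I would treat the unit. Fix $\CF \in \CO_{\SX}\md$ and an object $(U,u) \in \aff_{\ff}/\SX$. Evaluating at $(U,u)$ the equalizer that defines $\SG\SD(\CF) = \SG(p^*\!\CF,\phi^*_{\CF})$ and unwinding the definitions of $p_*$, $p_{i*}$ and $p_i^*$, the diagram becomes
\[
\CF(U,u) \lto \CF(U\times_{\SX}X) \rightrightarrows \CF(U\times_{\SX}X\times_{\SX}X),
\]
which is exactly the sheaf-condition diagram for $\CF$ relative to the covering $\{U\times_{\SX}X \to U\}$ of $(U,u)$, a single faithfully flat finitely presented morphism obtained by base change from the presentation $p$. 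Since $\CF$ is a sheaf this diagram is an equalizer, so $\eta_{\CF}(U,u)$ is an isomorphism for every $(U,u)$, and hence $\eta_{\CF}$ is an isomorphism.

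For the counit I would use that $p^* = p^{-1}$ is here the \emph{exact} restriction functor, given by the formula $p^*\CH(V,v)=\CH(V,pv)$ and exact because $p$ is flat. Applying this exact functor to the equalizer defining $\SG(\CG,t)$ shows that $p^*\SG(\CG,t)$ is the equalizer of the pair $p^*a_{\CG}$, $p^*b_{(\CG,t)}$. Now Lemma \ref{igualita} states precisely that $c_{(\CG,t)} \colon \CG \to p^*p_*\CG$ satisfies $p^*a_{\CG} \circ c_{(\CG,t)} = p^*b_{(\CG,t)} \circ c_{(\CG,t)}$, i.e.\ $c_{(\CG,t)}$ equalizes this pair; it therefore factors uniquely through a comparison morphism $\CG \to p^*\SG(\CG,t)$. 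I would identify this comparison morphism as a two-sided inverse to the counit $\epsilon_{(\CG,t)} \colon p^*\SG(\CG,t) \to \CG$.

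The main obstacle is showing this comparison map is an isomorphism, equivalently that $(\CG, c_{(\CG,t)})$ is \emph{itself} the equalizer of $p^*a_{\CG}$, $p^*b_{(\CG,t)}$: this is the effectivity of the descent datum. The hard part is to reduce it to faithfully flat descent along $p_2 \colon X\times_{\SX}X \to X$ (faithfully flat of finite presentation), where the datum $t$ becomes trivial; the cocycle condition on $t$ --- exactly the compatibility repackaged in Lemma \ref{igualita} --- then forces the equalizer to be $\CG$. Faithful flatness lets one conclude that the comparison map is an isomorphism before base change as well, so $\epsilon$ is a natural isomorphism and $\SD$ is an equivalence of categories.
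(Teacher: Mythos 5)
Your overall strategy (verify that the unit and counit of the adjunction $\SD \dashv \SG$ from Proposition \ref{descadj} are isomorphisms) is exactly the paper's, and the first half of your argument matches it in substance: the paper also proves $\bar{\eta}_{\CF}$ is invertible by comparing the equalizer defining $\SG\SD(\CF)$ with the sheaf condition for $\CF$ along the covering induced by the presentation (its diagram $\CF \to p_*p^*\CF \rightrightarrows p_*p^*p_*p^*\CF$, identified through the isomorphism $\theta$, is your pointwise covering $\{U\times_{\SX}X \to U\}$), and it constructs precisely your comparison map $r \colon \CG \to p^*\SG(\CG,t)$ from left exactness of $p^*$ together with Lemma \ref{igualita}.

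The gap lies in the remainder of the counit argument. The step you yourself label ``the hard part'' --- that $(\CG, c_{(\CG,t)})$ is itself the equalizer of $p^*a_{\CG}$ and $p^*b_{(\CG,t)}$ --- is never carried out, only renamed ``faithfully flat descent along $p_2$.'' In this generality that is not a quotable theorem: Theorem \ref{p52} concerns \emph{arbitrary} sheaves of $\CO$-modules on the small flat sites, whereas classical fppf descent (\cite[VIII]{sga1}, \cite[Chapter 6]{blr}) is a statement about quasi-coherent modules; effectivity of the datum $(\CG,t)$ here is essentially the content being proved, and the paper's remark after the theorem stresses exactly this distinction. Your reduction could be made to work --- pull back along $p_2$, use its diagonal section and the normalization of $t$ extracted from the cocycle condition to exhibit a split equalizer, then descend using that pullback along a covering is conservative on sheaves (this, rather than ring-theoretic faithful flatness, is the operative mechanism in this sheaf setting) --- but none of this is in your proposal. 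Moreover, even granting that $r$ is invertible, you still owe the identification of $r$ with a two-sided inverse of $\bar{\varepsilon}_{(\CG,t)}$; this is not automatic from the universal property and requires a computation. The paper settles both points at once by a softer route: it proves the identity $c_{(\CG,t)} \circ \bar{\varepsilon}_{(\CG,t)} = p^*i_{\CG}$ by direct calculation, and then concludes formally, since $p^*i_{\CG}$ is a monomorphism (giving $r \circ \bar{\varepsilon}_{(\CG,t)} = \id$) and $c_{(\CG,t)}$ is a monomorphism because $\eta_{2\CG}$ is injective for the sheaf $\CG$ (giving $\bar{\varepsilon}_{(\CG,t)} \circ r = \id$). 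Adopting that monomorphism trick would close your proof without any descent-effectivity argument at all.
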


\begin{proof}
%%%%%%%CORTE
We will see first that $\bar{\eta}$, the unit of the adjunction $\SD \dashv \SG$, is an isomorphism. Let $\CF \in \CO_{\SX}\md$ and consider the following commutative diagram 
\[
\begin{tikzpicture}
\matrix (m) [matrix of math nodes, row sep=3em, column sep=3em]{
\CF                    & p_*p^*\CF   & p_*p^*p_*p^*\CF \\
\SG(p^*\CF,\phi^*_\CF) & p_*p^*\CF   & p_*p_{2*}p_1^*p^*\CF \\
};
\draw [transform canvas={yshift= 0.3ex},->,font=\scriptsize] (m-1-2) -- node[above] {$\rho_1$}(m-1-3) ;
\draw [transform canvas={yshift=-0.3ex},->,font=\scriptsize] (m-1-2) -- node[below] {$\rho_2$}(m-1-3) ;
\draw [transform canvas={yshift= 0.3ex},->,font=\scriptsize] (m-2-2) -- node[above] {$a_{p^*\CF}$}(m-2-3) ;
\draw [transform canvas={yshift=-0.3ex},->,font=\scriptsize] (m-2-2) -- node[below] {$b_{(p^*{\CF},\phi^*_{\CF})}$}(m-2-3) ;
\draw [-, double distance=2pt] (m-1-2) -- (m-2-2);
\draw [->,font=\scriptsize]  (m-1-3) -- node[right] {via $\theta^{-1}$}(m-2-3);
\path[->,font=\scriptsize,>=angle 90]
    (m-1-1) edge node[left] {$\bar{\eta}_{\CF}$} (m-2-1)
    (m-1-1) edge node[auto] {$\eta_{\CF}$} (m-1-2)
    (m-2-1) edge node[auto] {$i_{p^*\CF}$} (m-2-2);
\end{tikzpicture}
\]
where $\eta_{\CF}$ is the unit of the adjunction $p^*\dashv p_*$ applied to $\CF$. The first row is an equalizer because $\CF$ is a sheaf and the claim follows.

Let us prove now that $\bar{\varepsilon}$, the counit of the adjunction $\SD \dashv \SG$, is an isomorphism. Let $(\CG,t)\in \des(X/\SX)$. We have that
$\bar{\varepsilon}_{(\CG,t)}=\varepsilon_{\CG}\circ p^*i_{\CG}$, where $\varepsilon_{\CG}$ denotes the counit of the adjunction $p^*\dashv p_*$. Since $p^*$ preserve finite limits, the horizontal row in the next diagram is exact
\[
\begin{tikzpicture}
\matrix (m) [matrix of math nodes, row sep=3em, column sep=3em]{
p^*\SG(\CG,t)  & p^* p_*\CG   & p^*p_*p_{2*}p_1^*\CG \\
\CG            \\%& p^* p_*\CG   & p^*p_*p_{2*}p_1^*\CG \\
};
\draw [transform canvas={yshift= 0.3ex},font=\scriptsize,->] (m-1-2) --
node[above] {$p^*a_\CG$} (m-1-3);
\draw [transform canvas={yshift=-0.3ex},font=\scriptsize,->] (m-1-2) --
node[below] {$p^*b_{(\CG,t)}$} (m-1-3);
%\draw [transform canvas={yshift= 0.3ex},font=\scriptsize,->] (m-2-2) --
%node[above] {$p^*a_\CG$} (m-2-3);
%\draw [transform canvas={yshift=-0.3ex},font=\scriptsize,->] (m-2-2) --
%node[below] {$p^*b_{(\CG,t)}$} (m-2-3);
%\draw [-, double distance=2pt] (m-1-2) -- (m-2-2);
%\draw [-, double distance=2pt] (m-1-3) -- (m-2-3);
\path[->,font=\scriptsize,>=angle 90]
    (m-1-1) edge node[left] {$\bar{\varepsilon}_{(\CG,t)}$}(m-2-1)
    (m-1-1) edge node[auto] {$p^*i_{\CG}$}(m-1-2)
    (m-2-1) edge node[right] {\quad$c_{(\CG,t)}$}(m-1-2);
\end{tikzpicture}
\]
By Lemma \ref{igualita}
there exists a morphism $r \colon \CG \to p^*\SG(\CG,t)$
with $p^*i_{\CG} \circ r = c_{(\CG,t)}$. It holds that $c_{(\CG,t)} \;
\bar{\varepsilon}_{(\CG,t)} = p^*i_{\CG}$. Indeed,
\begin{align*}
c_{(\CG,t)} \circ \bar{\varepsilon}_{(\CG,t)}
&=\theta_{\CG}\circ p_{2 *}t \circ \eta_{2\CG}\circ \varepsilon_{\CG}\circ p^*i_{\CG} \\
&=\varepsilon_{p^*p_*\CG} \circ p^*p_*(\theta_{\CG} \circ p_{2*}t \circ \eta_{2\CG})\circ p^*i_{\CG} \\
&\cong\varepsilon_{p^*p_*\CG} \circ p^*p_*\theta_{\CG} \circ p^*((\phi_*)_{p_1^*\!\CG})\circ p^*p_*\eta_{1\CG}\circ p^*i_{\CG}
\end{align*}
and after an straightforward calculation we see that
\[
\varepsilon_{p^*p_*\CG} \circ p^*p_*\theta_{\CG} \circ
p^*((\phi_*)_{p_1^*\!\CG})\circ p^*p_*\eta_{1\CG} = \id_{p^*p_*\CG}.
\]

We see now that $r$ is the inverse of
$\bar{\varepsilon}_{(\CG,t)}$. Indeed,
\[
p^*i_{\CG} \circ r \circ {\bar{\varepsilon}}_{(\CG,t)} =
c_{(\CG,t)}\circ \bar{\epsilon}_{(\CG,t)}              = p^*i_{\CG}
\]
and $p^*i_{\CG}$ is a monomorphism of sheaves, therefore
$r \circ \bar{\varepsilon}_{(\CG,t)} = \id_{p^*\SG(\CG,t)}$. Since $\CG$ is a
sheaf, $\eta_{2\CG}$ is injective and thus $c_{(\CG,t)}$ is a monomorphism of sheaves.
From
\[
c_{(\CG,t)}\circ \bar{\varepsilon}_{(\CG,t)} \circ r = p^*i_{\CG} \circ r=c_{(\CG,t)}
\]
it follows that $\bar{\varepsilon}_{(\CG,t)} \circ r = \id_{\CG}$.
%But this implies that so is $r$ and then $\bar{\varepsilon}_{(\CG,t)}$ is an isomorphism.
\end{proof}

\begin{rem}
The proof of the previous Theorem becomes simpler if we assume from the outset that we work on the category of quasi-coherent sheaves. For a nice treatment, cast in the setting of affine schemes, see \cite[Chap. 17]{w}. 
\end{rem}

\begin{cosa}\label{desqc}
 We denote by $\des_{\qc}(X/\SX)$ the full subcategory  of the
category $\des(X/\SX)$ whose objects are the pairs
$(\CG,t)$ with $\CG\in \qco(X)$.
\end{cosa}

\begin{cor}  \label{c53}
The categories $\qco(\SX)$ and $\des_{\qc}(X/\SX)$ are equivalent.
\end{cor}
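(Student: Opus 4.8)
The plan is to show that the equivalence $\SD \dashv \SG$ established in Theorem~\ref{p52} restricts to an equivalence between the full subcategory $\qco(\SX) \subset \CO_\SX\md$ and the full subcategory $\des_{\qc}(X/\SX) \subset \des(X/\SX)$. Since $\SD \colon \CO_\SX\md \to \des(X/\SX)$ is already an equivalence with quasi-inverse $\SG$, and since both $\qco(\SX)$ and $\des_{\qc}(X/\SX)$ are \emph{full} subcategories, it is enough to check that $\SD$ sends objects of $\qco(\SX)$ to objects of $\des_{\qc}(X/\SX)$ and that $\SG$ sends objects of $\des_{\qc}(X/\SX)$ to objects of $\qco(\SX)$. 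Once the two functors match up the objects of the subcategories, they automatically restrict to mutually quasi-inverse equivalences between them.

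First I would observe that $X$ is an affine scheme, hence in particular a geometric stack, so Theorem~\ref{agree} applies to give $\cart(X) = \qco(X)$; the same theorem gives $\cart(\SX) = \qco(\SX)$. With these identifications in hand, the forward inclusion is immediate: for $\CF \in \qco(\SX) = \cart(\SX)$, Proposition~\ref{desc1}(\emph{i}) yields $p^*\CF \in \cart(X) = \qco(X)$, and since $\SD(\CF) = (p^*\CF, \phi^*_\CF)$ by construction, we conclude $\SD(\CF) \in \des_{\qc}(X/\SX)$.

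For the reverse inclusion, take $(\CG, t) \in \des_{\qc}(X/\SX)$, so that $\CG \in \qco(X) = \cart(X)$. Because $\SG$ is quasi-inverse to $\SD$, the counit isomorphism $\bar{\varepsilon}_{(\CG,t)}$ of Theorem~\ref{p52} provides $\SD(\SG(\CG,t)) \cong (\CG, t)$; in particular $p^*\SG(\CG,t) \cong \CG \in \cart(X)$. Applying the descent criterion of Lemma~\ref{descart}, we deduce $\SG(\CG,t) \in \cart(\SX) = \qco(\SX)$. The main point to get right is precisely this reverse direction, where one must route through the counit of the equivalence to replace the a~priori unknown $p^*\SG(\CG,t)$ by the given Cartesian object $\CG$ before invoking Lemma~\ref{descart}; the forward direction is a direct application of Proposition~\ref{desc1}(\emph{i}). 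Combining the two inclusions, $\SD$ and $\SG$ restrict to an equivalence $\qco(\SX) \cong \des_{\qc}(X/\SX)$, as claimed.
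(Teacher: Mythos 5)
Your proposal is correct, and its overall strategy (restrict the equivalence of Theorem \ref{p52} by checking that each of $\SD$ and $\SG$ preserves the relevant full subcategory of objects) is exactly the paper's; the forward inclusion, via Proposition \ref{desc1}(\emph{i}) and Theorem \ref{agree}, is identical. Where you diverge is the reverse inclusion. The paper checks that $\SG(\CG,t) \in \qco(\SX)$ directly from the construction of $\SG$: the sheaf $\SG(\CG,t)$ is by definition the kernel of a map between $p_*\CG$ and $p_*p_{2*}p_1^*\CG$, both of which are Cartesian by Propositions \ref{desc1} and \ref{adjaf}, and $\cart(\SX)$ is closed under kernels by Proposition \ref{cartab}; Theorem \ref{agree} then converts Cartesian into quasi-coherent. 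You instead invoke the full strength of Theorem \ref{p52}: the counit isomorphism gives $p^*\SG(\CG,t) \cong \CG \in \cart(X)$, and Lemma \ref{descart} (descent of the Cartesian property along $p^*$) concludes. Both routes are valid and rest on results already proved; yours is slightly more economical since it reuses the counit isomorphism that Theorem \ref{p52} supplies anyway, while the paper's argument has the virtue of exhibiting quasi-coherence of $\SG(\CG,t)$ from its defining equalizer without passing back through $p^*$ (note that Lemma \ref{descart} is itself proved by the same closure-under-kernels mechanism, so at bottom the two arguments draw on the same machinery).
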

\begin{proof}
The result follows because for $\CF \in \qco(\SX)$ we have $\SD(\CF) \in \des_{\qc}(X/\SX)$ and, conversely, for $(\CG,t) \in \des_{\qc}(X/\SX)$ it holds that $\SG(\CG,t) \in \qco(\SX)$ as follows from Propositions \ref{cartab}, \ref{desc1} and \ref{p52}, having in mind Theorem \ref{agree}.
\end{proof}

\begin{rem}
Our method of proof is not too distant from Olsson's \cite[\S 4]{O}. Notice, however, that he is working with Cartesian sheaves on a simplicial site while our result follows from the more general Theorem \ref{p52} that applies not only to quasi-coherent Modules.
\end{rem}

\section{Representing quasi-coherent sheaves by comodules}\label{sec5}

\begin{cosa}\label{c51}
\textbf{Hopf algebroids as affine models of geometric stacks}.
Let $\SX$ be a geometric stack and $p \colon X \to \SX$ a presentation
with $X$ affine. The fibered product $X \times_{\SX} X$ is an affine scheme because $\SX$ is geometric. As we recalled at the end of \ref{31}, the pair $(X, X \times_{\SX} X)$ is a scheme in groupoids whose associated stack is $\SX$.

%\begin{cosa}\textbf{Hopf algebroids as affine models of geometric stacks}.
Let $A_0$ and $A_1$ be the rings defined by $X = \spec(A_0)$ and $X_1 := X\times_{\SX} X = \spec(A_1)$. Denote the pair $A_\bullet := (A_0, A_1)$. The dual structure of a scheme in groupoids is called a \emph{Hopf algebroid}. Let us spell out the corresponding structure of $A_\bullet$, there are:
\begin{itemize}
 \item \emph{two} homomorphisms
       \[\eta_L, \eta_R \colon A_0 \longrightrightarrows A_1,\]
       corresponding respectively to the projections 
       $p_1, p_2 \colon X \times_{\SX}\ X \rightrightarrows X$;
 \item the \emph{counit} homomorphism
       \[\epsilon \colon A_1 \lto A_0,\]
       corresponding to the diagonal $\delta \colon X \to X \times_{\SX} X$;
 \item the \emph{conjugation} homomorphism
       \[ \kappa \colon A_1 \lto A_1,\]
        corresponding to the map interchanging the factors in 
        $X \times_{\SX} X$;
 \item and the \emph{comultiplication}
\[
\nabla \colon A_1 \lto A_1\, {_{\eta_R}\otimes_{\eta_L}} A_1,
\]
defined as: if $p_i \colon X \times_{\SX} X \to X$, $i\in \{1,2\}$ denotes the canonical projections and similarly for $p_{ij} \colon X \times_{\SX} X \times_{\SX} X \to X \times_{\SX} X$, $\nabla$ corresponds via the isomorphism  
\[
 X \times_{\SX} X \times_{\SX} X  {\overset{\sim}{\longrightarrow}} 
(X \times_{\SX} X)\: {_{p_1}\times_{p_2}} (X \times_{\SX} X)
\]
(induced by the projections $p_{23}$ and $p_{12}$) to the projection 
\[
p_{13} \colon X\times_{\SX}X \times_{\SX}X\to X\times_{\SX}X,
\]
that expresses the composition of internal arrows in the scheme in groupoids $(X, X \times_{\SX} X)$.
\end{itemize}

 These data is subject to the following identities:
\begin{enumerate}
 \item $\epsilon \eta_L = \id_{A_0} = \epsilon \eta_R$ (source and target of identity).
 \item If  $j_1, j_2 \colon A_1\to A_1\, {_{\eta_R} \otimes_{\eta_L}} A_1$ are the maps given by $j_1(b)=b \otimes 1$, $j_2(b)=1\otimes b$, then
\(
 \nabla \eta_L = j_1 \eta_L \text{ and }
 \nabla \eta_R = j_2 \eta_R
\) (source and target of a composition).
 \item $\kappa \eta_L = \eta_R$ and $\kappa \eta_R = \eta_L$ (source and target of inverse).
 \item $(\id_{A_1} \otimes \epsilon) \nabla = \id_{A_1} = (\epsilon \otimes \id_{A_1}) \nabla$ (identity).
 \item $(\id_{A_1} \otimes \nabla) \nabla = (\nabla \otimes \id_{A_1}) \nabla$ (associativity of composition).
\item If $\mu$ is the multiplication on $A_1$, then
$
\mu(\kappa \otimes \id_{A_1})\nabla=\eta_R \epsilon
$
 and \\
$
\mu(\id_{A_1} \otimes \kappa) \nabla=\eta_L \epsilon
$ (inverse).
 \item $\kappa \kappa = \id_{A_1}$ (inverting twice).
\end{enumerate}
Compare with \cite[A1.1.1.]{R} and \cite[(2.4.3)]{lmb}. For the general notion of Hopf algebroid, there is no need to impose any condition on the homomorphisms $\eta_L$, $\eta_R$. In this paper these structure maps will always be assumed to be flat because we will deal with \emph{geometric} stacks.

%We will denote by $A_\bullet := (A_0, A_1)$ a Hopf algebroid.

The geometric stack defined by the scheme in groupoids ---with \emph{flat} structure maps---
$(\spec(A_0),\spec(A_1))$  is denoted
\[\stack(A_\bullet) := [(\spec(A_0),\spec(A_1))]\]
 (with the right hand side notation as in \cite[(3.4.3)]{lmb}). So, with the notation at the beginning of this section, $\SX = \stack(A_\bullet)$.
\end{cosa}

From now on, every geometric stack we consider will be the stack associated to a Hopf algebroid. From a geometric point of view, the data of a Hopf algebroid is more rigid than the underlying algebraic stack alone. It is equivalent to the datum of an algebraic stack \emph{together with a smooth presentation} by an affine scheme. In any case, the algebroid determines the stack and we will use this fact in what follows. Note that $\kappa$ is an isomorphism (being inverse to itself) and that $\eta_L$ and $\eta_R$ are \emph{smooth} morphisms.

\begin{cosa}\label{exaction}
\textbf{Example:} \emph{An algebraic group acting over an affine scheme}.

Let $G = \spec(H)$ be an affine flat algebraic group over the base scheme $S$ that in this section we will assume affine, $S = \spec(R)$. We recall that $H$ is endowed with the structure of a Hopf algebra \cite[\S 1.4]{w} whose structure homomorphisms are
\begin{itemize}
 \item the \emph{comultiplication}
\[
\nabla \colon H \lto H \otimes_{R} H,
\]
       corresponding to the multiplication $m \colon G \times G \to G$;
 \item the \emph{counit} 
       \[\varepsilon \colon H \lto R,\]
       corresponding to the unit section $e \colon S \to G$;
 \item the \emph{antipode}
       \[ \sigma \colon H \lto H,\]
        corresponding to the map taking inverse $\iota \colon G \to G$;
\end{itemize}
Let $X = \spec(B)$ be an affine scheme. Suppose given an action of $G$ on $X$, \ie{}, a morphism
\[
a \colon G \times X \lto X
\]
satisfying the usual conditions \cite[\S 3.1]{w}. This action corresponds to a homomorphism $\alpha \colon B \to H \otimes_{R} B$ and makes part of a canonically defined Hopf algebroid. Take $A_0 := B$, $A_1 = H \otimes_{R} B$. The corresponding structure homomorphisms are:
\begin{itemize}
 \item The homomorphisms
       \[\eta_L = i_2,\quad \eta_R = \alpha \]
       where $i_2 \colon B \to H \otimes_{R} B$ 
       is the morphism defined by $i_2(b) = 1 \otimes b$;
 \item the \emph{counit} homomorphism is 
       \[\varepsilon \otimes \id \colon H \otimes_{R} B \lto B;\]
 \item the \emph{conjugation} homomorphism
       \[\kappa \colon H \otimes_{R} B \lto H \otimes_{R} B,\]
        defined by $\kappa(h \otimes b) = (\sigma(h)\otimes 1)\alpha(b)$;
 \item and the \emph{comultiplication}
\[
\bar{\nabla} \colon H \otimes_{R} B \lto (H \otimes_{R} B)\, {_{\alpha}\otimes_{i_2}} (H \otimes_{R} B),
\]
is $(i_1 \otimes i_1)\nabla$, where $i_1(h) = h \otimes 1$.
\end{itemize}
Denote this Hopf algebroid by $(B, H \otimes_{R} B)$. The morphism $i_2$ is flat because $G$ is flat over $S$. Thus, the associated scheme in groupoids has flat structure maps. Denote:
\[
[X/G] := \stack(B, H \otimes_{R} B).
\]
This stack may be considered the geometric quotient of the scheme $X$ by the action of $G$. Usually this is not equivalent to a scheme. The quotient map $p \colon X \to [X/G]$ is a presentation of this stack.
\end{cosa}

\begin{cosa}
\textbf{Comodules over a Hopf algebroid}.
Let $A_\bullet := (A_0, A_1)$ be a Hopf algebroid. A (left)
$A_\bullet$-comodule $(M, \psi_M)$ is an $A_0$-module $M$ together with an
$A_0$-linear map $\psi_M \colon M \to A_1\: {_{\eta_R} \otimes
_{A_0}}M$, the target being an $A_0$-module via $\eta_L$, and
satisfying the following identities:
\begin{enumerate}
\item $(\nabla \otimes \id_M) \psi_M = (\id_{A_1} \otimes \psi_M) \psi_M$ (coassociativity of $\psi$).
\item $(\epsilon \otimes \id_M) \psi_M = \id_M$ ($\psi_M$ is counitary).
\end{enumerate}
The map $\psi_M$ is called the comodule structure map of $M$. If $M$ and $M'$ are (left) $A_\bullet$-comodules, a map of $A_\bullet$-comodules is an $A_0$-linear map $\lambda \colon M \to M'$ such that the following square is commutative
\begin{center}
\begin{tikzpicture}
\matrix(m)[matrix of math nodes,
row sep=2.6em, column sep=2.8em,
text height=1.5ex, text depth=0.25ex] {
M  & A_1\: {_{\eta_R} \otimes _{A_0}}M \\
M' & A_1\: {_{\eta_R} \otimes _{A_0}}M' \\};
\path[->,font=\scriptsize,>=angle 90]
(m-1-1) edge node[auto] {$\psi_M$} (m-1-2)
        edge node[left] {$\lambda$} (m-2-1)
(m-1-2) edge node[auto] {$\id \otimes \lambda$} (m-2-2)
(m-2-1) edge node[auto] {$\psi_{M'}$} (m-2-2);
\end{tikzpicture}
\end{center}
We denote by $A_\bullet\com$ the category of (left) $A_\bullet$-comodules. All comodules considered in this paper will be on the left so we will speak simply of comodules.
\end{cosa}

If $M$ is an $A_0$-module one can always define a structure of $A_\bullet$-comodule on $ A_1\: {_{\eta_R} \otimes _{A_0}}M$ by defining $\psi_{ A_1\: {_{\eta_R} \otimes _{A_0}}M} := \nabla \otimes \id$. This structure is called the \emph{extended comodule} structure on $A_1\: {_{\eta_R} \otimes _{A_0}}M$.

\begin{thm} \label{grothencomod}
If $A_1$ is \emph{flat} as an $A_0$-module, then the category
$A_\bullet\com$ is a Grothendieck category.
\end{thm}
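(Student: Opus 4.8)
The plan is to verify the four defining properties of a Grothendieck category for $A_\bullet\com$: that it is abelian, cocomplete, satisfies the exactness axiom AB5, and admits a generator. Throughout, the workhorse is the forgetful functor $U \colon A_\bullet\com \to A_0\md$, and the hypothesis that $A_1$ is flat over $A_0$ enters precisely to guarantee that $U$ is \emph{exact}: tensoring by the flat module $A_1$ is then both left and right exact, so $A_1\, {_{\eta_R}\otimes_{A_0}}(-)$ preserves kernels and cokernels.

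First I would check that $A_\bullet\com$ is abelian with kernels and cokernels computed in $A_0\md$. Given a morphism of comodules $\lambda \colon M \to M'$, the cokernel $C$ in $A_0\md$ inherits a coaction from $\psi_{M'}$ by right-exactness of the tensor product, and is the cokernel in $A_\bullet\com$. For the kernel $K = \ker(\lambda)$, flatness gives that $A_1 \otimes_{A_0} K \to A_1 \otimes_{A_0} M$ is injective with image $\ker(\id \otimes \lambda)$; since $\lambda$ is a comodule map, $\psi_M$ carries $K$ into this kernel, so $\psi_M$ restricts to a coaction $\psi_K \colon K \to A_1 \otimes_{A_0} K$, and the comodule axioms for $K$ follow from those of $M$ because the relevant structure maps are monomorphisms, again by flatness. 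Cocompleteness is then immediate: since $A_1 \otimes_{A_0}(-)$ is a left adjoint it commutes with all colimits, so $U$ \emph{creates} colimits---a colimit of comodules is computed on underlying modules and carries a unique compatible coaction. The axiom AB5 follows formally: filtered colimits are exact in $A_0\md$, and as $U$ is exact, faithful, and creates filtered colimits, exactness transports back to $A_\bullet\com$.

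The one substantial point, and the step I expect to be the main obstacle, is the construction of a generator. Here I would not try to realize comodules as quotients of ``free'' comodules---the forgetful functor $U$ is a left adjoint, its right adjoint being the extended-comodule functor $N \mapsto A_1\,{_{\eta_R}\otimes_{A_0}}N$, so there are no free comodules to work with---but instead produce a \emph{set} of generators by a cardinality bound. Fix an infinite cardinal $\kappa \ge \max(|A_0|, |A_1|)$. The key claim is that every comodule $M$ is the filtered union of its sub-comodules of cardinality at most $\kappa$. Granting this, the isomorphism classes of comodules of cardinality $\le \kappa$ form a set, and their coproduct is a generator: any nonzero element of any comodule lies in one of these sub-comodules, which therefore detects nonzero morphisms.

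To prove the claim I would close up under the coaction while controlling cardinality. Given a subset $S \subseteq M$ with $|S| \le \kappa$, set $N_0 = A_0 S$ and build an increasing chain $N_0 \subseteq N_1 \subseteq \cdots$: for each generator $y$ of $N_k$ choose a finite expression $\psi_M(y) = \sum_i a_i \otimes m_i$ in $A_1 \otimes_{A_0} M$ and adjoin the finitely many $m_i$ to the generators, letting $N_{k+1}$ be the $A_0$-submodule they span. Then $N = \bigcup_k N_k$ satisfies $\psi_M(N) \subseteq A_1 \otimes_{A_0} N$: for a generator $y$ the chosen expression manifestly lies in the image of $A_1 \otimes_{A_0} N$, and this extends to all of $N$ by $A_0$-linearity of $\psi_M$ together with the fact that $A_1 \otimes_{A_0} N$ is an $A_0$-submodule of $A_1 \otimes_{A_0} M$ via $\eta_L$. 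By flatness this inclusion is a monomorphism, so the restriction $\psi_N$ is well defined and the comodule axioms for $N$ descend from $M$. Since each step adjoins at most $\kappa$ elements and $|A_0| \le \kappa$, one gets $|N| \le \kappa$; letting $S$ range over singletons, and noting that the sum of two such sub-comodules is again one so that the family is filtered, exhibits $M$ as the required filtered union. The delicate feature throughout is that flatness of $A_1$ is exactly what allows a coaction-closed $A_0$-submodule to be recognized as a genuine sub-comodule, which is why the hypothesis cannot be dropped.
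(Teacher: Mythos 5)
Your proof is correct, and the first three quarters of it (abelian with kernels/cokernels computed in $A_0\md$ thanks to flatness, cocompleteness via creation of colimits by the forgetful functor, AB5 from exactness of filtered colimits in $A_0\md$) coincide with what the paper does, which simply cites \cite[A1.1.3.]{R} for abelianness and then runs the same colimit argument. Where you genuinely diverge is the generator. You use the Gabber-style cardinality bound: fix $\kappa \ge \max(|A_0|,|A_1|,\aleph_0)$, close a subset under the coaction in countably many stages, and show every comodule is the filtered union of subcomodules of size $\le \kappa$, so that representatives of their isomorphism classes generate. The paper instead exploits the comodule structure directly: the coaction $\psi_M \colon M \to A_1\:{_{\eta_R}\otimes_{A_0}}M$ is a \emph{monomorphism of comodules} into an extended comodule, and a free presentation $A_0^{(I)} \twoheadrightarrow M$ yields a surjection of comodules $\bar p \colon A_1^{(I)} \to A_1 \otimes_{A_0} M$; any nonzero $x \in M$ then lies in $\bar p\bigl(A_1^{I'} \cap K\bigr)$ for a finite $I' \subset I$, where $K = \bar p^{-1}(\psi_M(M))$, so the \emph{set} of subcomodules of $A_1^n$, $n \in \NN$, is a generating family. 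This is exactly the shortcut the introduction advertises (``without choosing \dots a big cardinal as in Gabber's proof''): it avoids all cardinal arithmetic and produces a canonical, explicit generating set. Your route is longer and requires the set-theoretic bookkeeping, but it is more robust---it uses nothing beyond flatness and closure of the coaction, and is the argument that survives in settings where the embedding-into-extended-comodules trick is unavailable. Both arguments hinge on the same use of flatness: a coaction-closed $A_0$-submodule (your $N$, the paper's $K$ and $A_1^{I'} \cap K$) is an honest subcomodule because $A_1 \otimes_{A_0} (-)$ preserves the relevant monomorphisms.
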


\begin{proof}
In \cite[A1.1.3.]{R} it is proved that $A_\bullet\com$ is an
abelian category if $A_1$ is a flat $A_0$-module. Therefore what is left to prove is that this category has exact direct limits and a set of generators.

Since tensor product commutes with colimits, the $A_0$-module colimit of a diagram in $A_\bullet\com$ is an $A_\bullet$-comodule. Direct limits of $A_\bullet$-comodules are exact, because direct limits are exact in the category  $A_0\md$. Since  $A_1$ is flat as an $A_0$-module, the kernel as an $A_0$-module of an $A_\bullet$-comodule morphism inherits the structure of $A_\bullet$-comodule.

Finally we show that the set $\SS$ of $A_\bullet$-subcomodules of $A_1^n$ for $n\in{\mathbb N}$ (viewed as $A_\bullet$-comodules via $\nabla$) is a set of generators of $A_\bullet\com$. Indeed, let $M$ be a left $A_\bullet$-comodule, $M\neq0$. The structure morphism $\psi = \psi_M \colon M \to A_1\: {_{\eta_R} \otimes _{A_0}}M$ is injective as a morphism of $A_\bullet$-comodules, considering the structure of extended comodule on $A_1\: {_{\eta_R} \otimes _{A_0}}M$. Let $p \colon A_0^{(I)} \to M$ be a free presentation of $M$ as an $A_0$-module.
Since the morphism $\id \otimes p \colon A_1\: {_{\eta_R} \otimes _{A_0}} A_0^{(I)} \to A_1\: {_{\eta_R} \otimes _{A_0}} M$ is a morphism of comodules and $A_1^{(I)} \cong A_1\: {_{\eta_R} \otimes _{A_0}} A_0^{(I)}$ as comodules, we have a surjective morphism of comodules $\bar{p} \colon A_1^{(I)} \to A_1\: {_{\eta_R} \otimes _{A_0}} M$.
Let $K$ be the pre-image of $M$ in $A_1^{(I)}$, it is a subcomodule because it is the kernel of a comodule map. Given $x \in M$, $x \neq 0$, there exists $I' = \{{i_1},\dots,{i_n}\} \subset I$ and $y \in A_1^{I'} \cap K$ such that $\bar{p}(y) = x$. Therefore $x \in \bar{p}(A_1^{I'} \cap K)$. This shows that $\Hom_{A_\bullet\com}(A_1^{I'} \cap K,M) \neq 0$. Notice that $A_1^{I'} \cap K \in \SS$. 
\end{proof}

\begin{rem}
 We remind the reader that if $A_\bullet$ is a Hopf algebroid arising from a geometric stack, $A_1$ is smooth over $A_0$ and, therefore, flat.
\end{rem}

\begin{cosa}\textbf{The category of descent data in $A_\bullet$}.
Let $A_\bullet$ be a Hopf algebroid as before, and $M$ an $A_0$-module. Write $A_2 = A_1\: {_{\eta_R}\!\otimes _{\eta_L}}\: A_1$. A
descent datum on $M$ is an isomorphism $ \tau:A_1\: {_{\eta_L}
\otimes _{A_0}}M \to  A_1\: {_{\eta_R} \otimes _{A_0}}M$ of
$A_1$-modules satisfying that the diagram
\begin{small}
\begin{center}
\begin{equation}\label{equdes}
\begin{tikzpicture}[baseline=(current  bounding  box.center)]
\matrix(m)[matrix of math nodes, row sep=2.6em, column sep=2em,
text height=1.5ex, text depth=0.25ex] {
A_2\: {_{j_1\eta_L}}\!\otimes _{A_0}\: M & A_2\: {_{j_1}\!\otimes _{A_1}}\: A_1 \:{_{\eta_L}\!\otimes _{A_0}}\: M & A_2\: {_{j_1}\!\otimes _{A_1}}\: A_1 \:{_{\eta_R}\!\otimes_{A_0}}\: M \\
A_2\: {_{\nabla}\!\otimes _{A_1}}\: A_1 \:{_{\eta_L}\!\otimes _{A_0}}\: M
& &  %A_2\: {_{j_1\eta_R}}\!\otimes _{A_0}\: M  
\\
A_2\: {_{\nabla}\!\otimes _{A_1}}\: A_1 \:{_{\eta_R}\!\otimes _{A_0}}\: M
& &   A_2\: {_{j_2}\!\otimes _{A_1}}\: A_1 \:{_{\eta_L}\!\otimes
_{A_0}}\: M \\
A_2\: {_{\nabla \eta_R}}\!\otimes _{A_0}\: M && A_2\: {_{j_2}\!\otimes _{A_1}}\: A_1 \:{_{\eta_R}\!\otimes_{A_0}}\: M  \\} ;
\path[->,font=\scriptsize,>=angle 90]
(m-1-2) edge node[auto] {$\id\otimes\tau $} (m-1-3)
(m-1-3) edge node[right] {$\wr$} node[left] {$j_1\eta_R = j_2\eta_L\,$} (m-3-3)
(m-2-1) edge node[left] {$\id\otimes\tau$} (m-3-1)
(m-3-3) edge node[auto] {$\id\otimes \tau$} (m-4-3);
\path
(m-1-1) edge[-, double distance=2pt] (m-1-2)
(m-1-1) edge[-, double distance=2pt] (m-2-1)
(m-3-1) edge[-, double distance=2pt] (m-4-1)
(m-4-3) edge[-, double distance=2pt] (m-4-1);
\end{tikzpicture}
\end{equation}
\end{center}
\end{small}
commutes.

We define the category $\des(A_\bullet)$ by taking as its objects the pairs $(M,\tau)$ with $M$ an $A_0$-module and $\tau$ a descent datum on $M$, and as its morphisms $f \colon (M_1,\tau_1) \to (M_2,\tau_2)$ those homomorphism of $A_0$-modules $f \colon M_1 \to M_2$ such that the diagram 
\begin{center}
\begin{tikzpicture}
\matrix(m)[matrix of math nodes,
row sep=2.6em, column sep=2.8em,
text height=1.5ex, text depth=0.25ex]
{A_1 \:{_{\eta_L}\!\otimes _{A_0}}\: M_1  & A_1 \:{_{\eta_R}\!\otimes _{A_0}}\: M_1  \\
 A_1 \:{_{\eta_L}\!\otimes _{A_0}}\: M_2 & A_1 \:{_{\eta_R}\!\otimes _{A_0}}\: M_2 \\};
\path[->,font=\scriptsize,>=angle 90]
(m-1-1) edge node[auto] {$\tau_1$} (m-1-2)
        edge node[left] {$\id\otimes f$} (m-2-1)
(m-1-2) edge node[auto] {$\id\otimes f$} (m-2-2)
(m-2-1) edge node[auto] {$\tau_2$} (m-2-2);
\end{tikzpicture}
\end{center}
commutes.

For future reference, note that commutativity of diagram \eqref{equdes} is equivalent to the following identity in $A_1\: {_{\eta_R} \otimes _{\eta_L}}\:
 A_1\: {_{\eta_R} \otimes _{A_0}}\: M$
\begin{equation}\label{cuentas}
\sum_{i=1}^{n}a c_i \otimes b \tau(1\otimes x_i)=\sum_{i=1}^{n}(a \otimes b)\nabla(c_i)\otimes x_i
\end{equation}
where  $a, b \in A_1$ and $x\in M$, with $\tau(1\otimes x)=\sum_{i=1}^{n}c_i \otimes x_i$.
\end{cosa}

The next proposition corresponds to \cite[Theorem 2.2.]{hmh}. We give here a somewhat simplified proof. The informed reader will recognize it as an instance of \cite[Th\'eor\`eme, p.97]{BR}. This idea that relates descent with comonads on a certain adjunction\footnote{In the case of geometric stacks, the coalgebras over the adjunction comonad are the comodules over the underlying Hopf algebroid.} is what makes Pribble's \cite{P} approach work.

\begin{prop}\label{comod}
The categories $\des(A_\bullet)$ and $A_\bullet\com$  are isomorphic.
\end{prop}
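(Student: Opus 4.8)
\emph{The plan} is to exhibit functors in both directions that are the identity on underlying $A_0$-modules and on $A_0$-linear maps, and that are mutually inverse on the extra structure, so that we obtain a genuine isomorphism (not merely an equivalence) of categories. Given a comodule $(M,\psi_M)$ with $\psi_M(x) = \sum_i c_i\otimes x_i$, I would set $\tau(a\otimes x) := a\,\psi_M(x) = \sum_i a c_i\otimes x_i$; this is the unique $A_1$-linear extension of $\psi_M$, and it is well defined on $A_1\: {_{\eta_L} \otimes _{A_0}}M$ precisely because $\psi_M$ is $A_0$-linear for the $\eta_L$-structure on its target $A_1\: {_{\eta_R} \otimes _{A_0}}M$. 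Conversely, given a descent datum $(M,\tau)$ I would set $\psi_M(x) := \tau(1\otimes x)$; since $\tau$ is $A_1$-linear it is determined by these values, so the two assignments are visibly inverse to one another. A map $f$ is a morphism of descent data exactly when $\tau_2(\id\otimes f) = (\id\otimes f)\tau_1$, and evaluating at $1\otimes x$ (then propagating by $A_1$-linearity) shows this is the same as $(\id\otimes f)\psi_{M_1} = \psi_{M_2}f$, i.e. a morphism of comodules; hence the correspondence is bijective on morphisms as well, and functoriality is trivial since the functors are the identity on arrows.

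What remains is to match the two sets of axioms. The cocycle condition of diagram \eqref{equdes} was already rewritten as \eqref{cuentas}, and specializing \eqref{cuentas} to $a=b=1$ gives precisely $\sum_i\nabla(c_i)\otimes x_i = \sum_i c_i\otimes\psi_M(x_i)$, which is the coassociativity $(\nabla\otimes\id_M)\psi_M = (\id_{A_1}\otimes\psi_M)\psi_M$; conversely coassociativity yields \eqref{cuentas} for all $a,b$ by $A_1$-bilinearity. Thus coassociativity of $\psi_M$ is equivalent to the cocycle condition on $\tau$, and the only point left is to identify the counit axiom $(\epsilon\otimes\id_M)\psi_M = \id_M$ with the requirement that $\tau$ be an isomorphism.

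For the comodule-to-descent direction I would produce an explicit inverse $\rho(a\otimes x) := \sum_i a\,\kappa(c_i)\otimes x_i$, where $\kappa$ is the conjugation (well-definedness uses $\kappa\eta_L = \eta_R$). A short computation shows $\rho\tau = \id$ and $\tau\rho = \id$ by running coassociativity through the maps $u\otimes v\otimes m\mapsto u\kappa(v)\otimes m$ and $u\otimes v\otimes m\mapsto\kappa(u)v\otimes m$ and invoking the inverse axioms $\mu(\id_{A_1}\otimes\kappa)\nabla = \eta_L\epsilon$ and $\mu(\kappa\otimes\id_{A_1})\nabla = \eta_R\epsilon$ together with the counit axiom. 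The harder converse---deducing the counit axiom merely from the fact that $\tau$ is an isomorphism---is where I expect the main obstacle. Here I would set $e := (\epsilon\otimes\id_M)\psi_M$ and first show, using coassociativity and the Hopf-algebroid counit identity $(\epsilon\otimes\id_{A_1})\nabla = \id_{A_1}$, that $\psi_M\circ e = \psi_M$; consequently $\tau(a\otimes(x - e(x))) = 0$ for every $a$. Since $\tau$ is injective and $\eta_L$ is \emph{faithfully} flat (it is dual to a projection $X\times_{\SX}X\to X$, a base change of the faithfully flat presentation $p$), the unit map $M\to A_1\: {_{\eta_L} \otimes _{A_0}}M$ is injective, and taking $a = 1$ forces $e = \id_M$, which is exactly the counit axiom. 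This appeal to the faithful flatness of $\eta_L$ is the essential input and the step most easily overlooked.
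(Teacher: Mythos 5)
Your proposal is correct and follows the paper's proof almost step for step: the same pair of mutually inverse functors ($\psi_M(x)=\tau(1\otimes x)$ in one direction, $\tau(a\otimes x)=a\,\psi_M(x)$ in the other), the same explicit inverse $\rho=(\mu\otimes\id)(\id\otimes\kappa\otimes\id)(\id\otimes\psi_M)$ built from the conjugation $\kappa$, and the same identification of the cocycle identity \eqref{cuentas} at $a=b=1$ with coassociativity. The one place you genuinely diverge is the step you call ``the essential input'': deducing the counit axiom from injectivity of $\tau$. Having shown $\tau\bigl(1\otimes(x-e(x))\bigr)=0$, you conclude $x=e(x)$ by invoking faithful flatness of $\eta_L$ to make the unit map $M \to A_1\:{_{\eta_L}\otimes_{A_0}}M$ injective. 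That is valid in the paper's setting, where $\eta_L$ is dual to a base change of the faithfully flat presentation, but it is stronger than needed: since $\epsilon\,\eta_L=\id_{A_0}$, the map $\epsilon\otimes\id$ splits the unit map, so $x\mapsto 1\otimes x$ is injective for \emph{any} Hopf algebroid, with no flatness hypothesis at all. This is exactly what the paper does: from $\sum_i \eta_L\epsilon(c_i)\otimes x_i=1\otimes x$ (obtained, as in your argument, from injectivity of $\tau$) it simply applies $\epsilon\otimes\id$ to land back in $M$. The payoff of the paper's variant is that Proposition \ref{comod} becomes a purely algebraic statement valid for arbitrary Hopf algebroids --- flatness of the structure maps only enters later, in Theorem \ref{grothencomod}, to make $A_\bullet\com$ abelian --- whereas your argument ties the proposition to the geometric hypotheses. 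So: correct proof, same architecture, but the step you flagged as essential and ``most easily overlooked'' is in fact free.
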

\begin{proof}
Let $(M,\tau)\in \des(A_\bullet)$.  To give a structure of $A_\bullet$-comodule on $M$ one defines  $\psi = \psi_M \colon M \to A_1 \:{_{\eta_R}\!\otimes _{A_0}}\: M$ as the following composition
\[
M\xto{\eta_L \otimes \id} A_1 \:{_{\eta_L}\!\otimes _{A_0}}\: M \overset{\tau}{\lto} A_1 \:{_{\eta_R}\!\otimes _{A_0}}\: M.
\]
Let us see that $\psi$ is counitary. Let $x\in M$, with notation as in \ref{cuentas} we have
\[
(\epsilon \otimes \id)\,\psi(x)=\sum_{i=1}^n \epsilon(c_i)\,x_i.
\]
To see that $\sum_{i=1}^n \epsilon(c_i)\,x_i=x$, we apply the homomorphism $\epsilon \otimes \id \otimes \id$ to both sides of the identity (\ref{cuentas}) for $a = b = 1$ to get
\[
\sum_{i=1}^n \tau(\eta_L \, \epsilon(c_i)\otimes x_i)=\sum_{i=1}^{n}c_i \otimes x_i,
\]
and since $\tau$ is an isomorphism we deduce that
\[
\sum_{i=1}^n \eta_L \epsilon(c_i) \otimes x_i = 1\otimes x .
\]
Thus
\[
\sum_{i=1}^n \epsilon(c_i) x_i = 
(\epsilon \otimes \id) \sum_{i=1}^n \eta_L \epsilon(c_i) \otimes x_i =
(\epsilon \otimes \id) (1\otimes x) = x .
\]
The coassociativity of  $\psi$ follows from the identity (\ref{cuentas}) for $a = b = 1$.
Now, if $f \colon (M_1,\tau_1) \to (M_2,\tau_2)$ is a morphim in $\des(A_\bullet)$, then
\[
(\id \otimes f)  \psi_1 =(\id\otimes f) \tau_1 (\eta_L\otimes \id) =
\tau_2 (\id\otimes f) (\eta_L \otimes \id) =
\tau_2 (\eta_L\otimes \id) f =
\psi_2 f
\]
so $f \colon (M_1, \psi_1) \to (M_2, \psi_2)$ is a morphism of $A_\bullet$-comodules. It follows that this defines a functor
$\SC \colon \des(A_\bullet) \to A_\bullet\com$.

Conversely, let $(M, \psi)$ be an $A_\bullet$-comodule. We define 
\[
\tau: A_1\: {_{\eta_L} \otimes _{A_0}}M \to  A_1\: {_{\eta_R} \otimes _{A_0}}M\]
as the composition
\[
A_1\:{_{\eta_L}
\otimes _{A_0}}M \xto{\id\otimes \psi} A_1\: {_{\eta_L} \otimes _{\eta_L}}\:
 A_1\: {_{\eta_R} \otimes _{A_0}}\: M \xto{\mu \otimes \id}
A_1\:{_{\eta_R} \otimes _{A_0}}M.
\]
It easy to prove that $\tau$ is a homomorphism of $A_1$-modules. We show that $\tau$ is an isomorphism by constructing its inverse. We define $\tau'$ as the following composition
\begin{small}
\[
A_1\:{_{\eta_R} \otimes _{A_0}}M 
\xto{\id\otimes \psi} 
A_1\: {_{\eta_R} \otimes _{\eta_L}}\: A_1\: {_{\eta_R} \otimes _{A_0}}\: M  \xto{\id\otimes \kappa \otimes \id} 
A_1\: {_{\eta_R} \otimes _{\eta_R}}\: A_1\: {_{\eta_L} \otimes _{A_0}}\: M  \xto{\mu \otimes \id}
A_1\:{_{\eta_L} \otimes _{A_0}}M.
\]
\end{small}%
Let us see that $\tau$ and $\tau'$ are mutually inverse. Consider the
commutative diagram in which all tensor products are taken over $A_0$ (with most subindices omitted)
\begin{small}
\begin{center}
\begin{tikzpicture}
\matrix(m)[matrix of math nodes, row sep=2.6em, column sep=2.8em,
text height=1.5ex, text depth=0.25ex] {
A_1\:{_{\eta_R}\otimes} M & A_1 \otimes A_1 \otimes M &
     A_1 \otimes A_1 \otimes M  & A_1\:{_{\eta_L}\otimes} M \\
A_1 \otimes A_1 \otimes M & A_1 \otimes A_1 \otimes A_1 \otimes M &
     A_1 \otimes A_1 \otimes A_1 \otimes M & A_1 \otimes A_1 \otimes M \\
&& A_1\otimes A_1 \otimes M  & A_1\:{_{\eta_R}\otimes} M \\};
\path[->,font=\tiny,>=angle 90]
(m-1-1) edge node[auto] {$\id\otimes \psi$} (m-1-2)
(m-1-2) edge node[auto] {$\id\otimes \kappa \otimes \id$} (m-1-3)
(m-1-3) edge node[auto] {$\mu \otimes \id$} (m-1-4)
(m-2-1) edge node[auto] {$\id \otimes \nabla \otimes \id$} (m-2-2)
(m-2-2) edge node[auto] {$\id \otimes \kappa \otimes \id$}(m-2-3)
(m-2-3) edge node[auto] {$\mu \otimes \id\otimes \id$} (m-2-4)
(m-3-3) edge node[auto] {$\mu \otimes \id$} (m-3-4)
(m-1-1) edge node[auto] {$\id \otimes \psi$} (m-2-1)
(m-1-2) edge node[auto] {$\id\otimes \id \otimes \psi$} (m-2-2)
(m-1-3) edge node[auto] {$\id\otimes \id \otimes \psi$} (m-2-3)
(m-1-4) edge node[auto] {$\id\otimes \psi$} (m-2-4)
(m-2-3) edge node[auto] {$\id \otimes \mu \otimes \id$}(m-3-3)
(m-2-4) edge node[auto] {$\mu \otimes \id$}(m-3-4);
\end{tikzpicture}
\end{center}
\end{small}
then we have
\begin{align*}
\tau \tau' 
&= (\mu \otimes \id) (\id \otimes \mu \otimes \id) (\id \otimes \kappa \otimes \id) (\id \otimes \nabla \otimes \id) (\id \otimes \psi) \\
&= (\mu \otimes \id) (\id\otimes \mu(\kappa\otimes \id) \nabla\otimes \id) (\id\otimes \psi) \\
&=(\mu \otimes\id) (\id\otimes \eta_R \epsilon \otimes \id) (\id\otimes \psi)
= \id . %\\
\end{align*}
A similar computation shows that $\tau' \tau=\id$.

Finally, it remains to prove that $\tau$ verifies ({\ref{cuentas}}). Since $\psi$ is coassociative, we have for each $x \in M$
\begin{equation}\label{cuentascoa}
\sum_{i=1}^{n}c_i \otimes \psi( x_i)=\sum_{i=1}^{n}\nabla(c_i)\otimes x_i
\end{equation}
where $\psi(x)=\sum_{i=1}^{n}c_i \otimes x_i$. But $\tau(1\otimes x)=\psi(x)$, for each $x\in M$, and the result follows multiplying by $a \otimes b \in A_2$ in both sides of (\ref{cuentascoa}).

Now, if $f \colon (M_1, \psi_1) \to (M_2, \psi_2)$ is a morphism of $A_{\bullet}$-comodules, and $\tau_1$ and $\tau_2$ are their corresponding descent data, then
\[
(\id\otimes f) \tau_1 = 
(\mu \otimes \id) (\id \otimes \id \otimes f) (\id \otimes \psi_1) =
\tau_2 (\id\otimes f)
\]
and so $f:(M_1, \tau_1)\to (M_2, \tau_2)$ is a morphism in $\des(A_{\bullet})$. It follows that this defines a functor $\SD' \colon A_\bullet\com \to \des(A_\bullet)$.

Note that the functors $\SC$ and $\SD'$ are inverse to each other. In fact, let $(M,\tau)\in \des(A_\bullet)$ and $\psi \colon M \to A_{1\,\eta_R} \otimes _{A_0}M\,$ be its associated $A_\bullet$-comodule structure.  The descent data associated to the comodule $(M, \psi)$ is the composition $(\mu \otimes \id) (\id\otimes \psi) = \tau$.
Conversely, let $(M, \psi)$ be an $A_\bullet$-comodule. If $\tau$ is the
corresponding descent data on $M$, the structure map of the $A_\bullet$-comodule associated to $(M,\tau)$ is the morphism $\tau (\eta_L \otimes 1) = \psi$. 
\end{proof}

\begin{cosa}
Let $X = \spec(A)$ be an affine scheme. Recall that by Theorem \ref{agree} the categories $\qco(X)$ and $\cart(X)$ are the same. Correspondingly, the functors ${\widetilde{(-)}}$ and $(-)_{\car}$ are identified, too. We will stick to the former notation according to the usage of \cite[(1.3)]{ega1}.
\end{cosa}

\begin{thm} \label{crt11}
Let $\SX$ be a geometric stack and $p \colon X \to \SX$ a presentation. Take $A_\bullet$ its corresponding Hopf algebroid. The categories $\des_{\qc}(X/\SX)$ and $A_\bullet\com$ are equivalent.
\end{thm}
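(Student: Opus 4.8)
The plan is to realize the desired equivalence as a composite
\[
\des_{\qc}(X/\SX) \;\cong\; \des(A_\bullet) \;\cong\; A_\bullet\com,
\]
in which the second equivalence is exactly the isomorphism of categories supplied by Proposition \ref{comod}. Thus all the work lies in identifying the \emph{geometric} descent category $\des_{\qc}(X/\SX)$ with the \emph{algebraic} one $\des(A_\bullet)$, by translating the pull-back formalism on the affine schemes $X$, $X\times_{\SX}X$ and $X\times_{\SX}X\times_{\SX}X$ into the tensor operations that define the Hopf algebroid $A_\bullet$.

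First I would set up the dictionary between sheaves and modules. Since $X=\spec(A_0)$ is affine, Theorem \ref{agree} together with Proposition \ref{cartaff} gives an equivalence $\qco(X)\cong A_0\md$ by $\CG\mapsto\Gamma(X,\CG)$; likewise $\qco(X\times_{\SX}X)\cong A_1\md$, since $X\times_{\SX}X=\spec(A_1)$ is affine by semi-separateness. Under these identifications the two projections $p_1,p_2\colon X\times_{\SX}X\to X$, which correspond respectively to $\eta_L,\eta_R$ (see \ref{c51}), induce pull-back functors $p_1^*,p_2^*$ that become the extension-of-scalars functors $A_1\, {_{\eta_L}\otimes_{A_0}}(-)$ and $A_1\, {_{\eta_R}\otimes_{A_0}}(-)$; this is precisely the affine computation already carried out in Proposition \ref{adjaf}(\emph{i}), where $f^*\CF\cong(B\otimes_A\CF(Y))_{\car}$. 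Consequently a descent datum $t\colon p_2^*\CG\to p_1^*\CG$ becomes an $A_1$-module isomorphism between $A_1\, {_{\eta_R}\otimes_{A_0}}M$ and $A_1\, {_{\eta_L}\otimes_{A_0}}M$, where $M=\Gamma(X,\CG)$. Since $t$ is invertible, passing to $t^{-1}$ reorients it into the datum $\tau\colon A_1\, {_{\eta_L}\otimes_{A_0}}M\to A_1\, {_{\eta_R}\otimes_{A_0}}M$ used in the definition of $\des(A_\bullet)$.

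Next I would check that the cocycle conditions correspond. Here $X\times_{\SX}X\times_{\SX}X=\spec(A_2)$ with $A_2=A_1\, {_{\eta_R}\otimes_{\eta_L}}A_1$, and under this identification the three projections $p_{12},p_{23},p_{13}$ translate into the ring homomorphisms $j_1,j_2$ and $\nabla$ respectively, the last of these being exactly how $\nabla$ was defined in \ref{c51}. Therefore $p_{12}^*t$, $p_{23}^*t$ and $p_{13}^*t$ become the base changes of $\tau$ along $j_1,j_2,\nabla$, and the geometric cocycle identity $p_{12}^*t\circ p_{23}^*t=p_{13}^*t$ unwinds into the commutativity of diagram \eqref{equdes}, equivalently the identity \eqref{cuentas}. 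Because global sections is an equivalence on each affine piece and the compatibility of a morphism with the descent data transports verbatim, this assignment yields an equivalence $\des_{\qc}(X/\SX)\cong\des(A_\bullet)$, and composing with Proposition \ref{comod} completes the proof. I expect the main obstacle to be precisely the bookkeeping of this last paragraph: one must keep the left/right roles of $\eta_L,\eta_R$ straight, account for the orientation reversal between $t$ and $\tau$, and verify that the base-change maps attached to the triple fibre product are exactly $j_1,j_2,\nabla$, so that the single geometric cocycle equation reproduces diagram \eqref{equdes} faithfully.
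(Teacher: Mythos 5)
Your proposal is correct and takes essentially the same route as the paper: the paper's own proof is exactly the composite $\des_{\qc}(X/\SX) \cong \des(A_\bullet) \cong A_\bullet\com$, where the first equivalence is induced by the pair $\Gamma(X,-)$, $\widetilde{(-)}$ on each affine piece and the second is Proposition \ref{comod}. The left/right bookkeeping you flag at the end (the orientation reversal between $t$ and $\tau$, and matching $p_{12}$, $p_{23}$, $p_{13}$ with $j_1$, $j_2$, $\nabla$) is precisely what the paper compresses into the single word ``induces''.
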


\begin{proof}
In this setting $X = \spec(A_0)$ and $\SX = \stack(A_\bullet)$. Notice that the equivalence of categories given by the couple of functors 
\[
\Gamma(X,-) \colon \qco(X) \to A_0\md \enskip\text{ and }\enskip
{\widetilde{(-)}} \colon  A_0 \md \to \qco(X)
\] 
induces an equivalence between the categories $\des_{\qc}(X/\SX)$ and  $\des(A_\bullet)$, and this last category is equivalent to $A_\bullet\com$ by Proposition \ref{comod}.
\end{proof}

\begin{rem}
This result is similar to \cite[Theorem 2.2]{hmh}. Notice however that Hovey's quasi-coherent sheaves over $\SX$ are sheaves $\CF$ for the (big) \emph{discrete} topology while we work in the $\ff$ topology. The ultimate reason of the agreement of both approaches lies in Lemma \ref{eshaz} that expresses the property of descent of Cartesian presheaves for this topology which makes them automatically sheaves. Notice also that the present proof is  simpler once we realize that all the data involved refer to modules on an affine scheme and, therefore, it has an essentially algebraic nature.
\end{rem}

\begin{cor} \label{descrt}
With $\SX$ and $A_\bullet$ as in the previous theorem, the categories $\qco(\SX)$ and $A_\bullet\com$ are equivalent.
\end{cor}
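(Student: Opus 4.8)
The plan is to obtain this corollary purely by concatenating the two equivalences that have just been established, since all of the substantive work has already been carried out in the preceding results. Fixing the presentation $p \colon X \to \SX$ with $X$ affine that determines $A_\bullet$, I would invoke Corollary \ref{c53}, which provides an equivalence $\qco(\SX) \simeq \des_{\qc}(X/\SX)$ realized by the descent functor $\SD$ (restricted to quasi-coherent objects) together with its quasi-inverse $\SG$. I would then invoke Theorem \ref{crt11}, which supplies an equivalence $\des_{\qc}(X/\SX) \simeq A_\bullet\com$; recall this second equivalence is itself built from the affine-scheme equivalence $\Gamma(X,-) \dashv \widetilde{(-)}$ between $\qco(X)$ and $A_0\md$ (which transports $\des_{\qc}(X/\SX)$ to $\des(A_\bullet)$) followed by the isomorphism $\des(A_\bullet) \cong A_\bullet\com$ of Proposition \ref{comod}.

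Composing these two equivalences yields the asserted equivalence
\[
\qco(\SX) \simeq \des_{\qc}(X/\SX) \simeq A_\bullet\com,
\]
and since a composite of equivalences of categories is again an equivalence, the proof is complete. There is essentially no obstacle to overcome at this stage: the genuinely delicate points---the agreement of Cartesian presheaves with quasi-coherent sheaves (Theorem \ref{agree}), the descent equivalence $\SD \dashv \SG$ (Theorem \ref{p52}), and the passage from descent data to comodule structures (Proposition \ref{comod})---have all been settled beforehand. If anything merits a remark, it is only that the presentation $p$ used to define $\des_{\qc}(X/\SX)$ in Corollary \ref{c53} is the very same one used to extract $A_\bullet$ in Theorem \ref{crt11}, so the two equivalences are indeed composable with no compatibility issue to check.

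Thus the entire content of the corollary is the observation that the chain $\SD$ followed by $\Gamma(X,-)$ and $\SC$ (with quasi-inverse $\widetilde{(-)} \circ \SD'$ followed by $\SG$) witnesses $\qco(\SX) \simeq A_\bullet\com$; I would state it in one or two lines and cite Corollary \ref{c53} and Theorem \ref{crt11} directly.
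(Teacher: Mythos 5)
Your proposal is correct and is exactly the paper's own proof: the paper's argument for Corollary \ref{descrt} is simply ``Combine Corollary \ref{c53} and Theorem \ref{crt11},'' i.e.\ the same composition of the two equivalences relative to the same presentation $p \colon X \to \SX$. Your additional remarks about how the constituent equivalences were built and about the compatibility of the presentation are accurate but not needed beyond the one-line citation.
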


\begin{proof}
Combine Corollary \ref{c53} and Theorem \ref{crt11}.
\end{proof}

\begin{cor} \label{Grothqco}
Let $\SX$ be a geometric stack, $\qco(\SX)$ is a Grothendieck category.
\end{cor}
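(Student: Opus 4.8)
The plan is to reduce the statement to results already at hand, since all the substantial work has been done. First I would fix a presentation $p \colon X \to \SX$ with $X$ an affine scheme and form its associated Hopf algebroid $A_\bullet = (A_0, A_1)$ as in \ref{c51}, so that $\SX = \stack(A_\bullet)$ with $X = \spec(A_0)$ and $X \times_{\SX} X = \spec(A_1)$.

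Next I would invoke Corollary \ref{descrt}, which furnishes an equivalence of categories
\[
\qco(\SX) \cong A_\bullet\com.
\]
Since being a Grothendieck category is a purely categorical property---it amounts to the category being abelian, cocomplete, having exact filtered direct limits, and possessing a generator, all of which are preserved by any equivalence of categories---it suffices to show that $A_\bullet\com$ is Grothendieck.

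For that I would apply Theorem \ref{grothencomod}, whose sole hypothesis is that $A_1$ be flat as an $A_0$-module. This is exactly where the \emph{geometric} assumption enters: as recorded in \ref{c51} and in the remark following Theorem \ref{grothencomod}, the structure maps $\eta_L, \eta_R \colon A_0 \to A_1$ of a Hopf algebroid coming from a geometric stack are smooth---because the presentation $p$ is smooth and surjective and the projections $p_1, p_2 \colon X \times_{\SX} X \to X$ are base changes of $p$---hence in particular flat. Therefore $A_1$ is flat over $A_0$, Theorem \ref{grothencomod} applies, and $A_\bullet\com$ is Grothendieck.

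There is no genuine obstacle here; the only point requiring a moment's care is the transport of the Grothendieck property across the equivalence of Corollary \ref{descrt}, and this is immediate once one recalls that each of the defining properties (abelianness, existence of small colimits, exactness of filtered colimits, existence of a generator) is invariant under equivalence. Combining the three ingredients then yields that $\qco(\SX)$ is a Grothendieck category, as claimed.
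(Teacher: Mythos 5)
Your proof is correct and is essentially the paper's own argument: the paper proves this corollary by combining Theorem \ref{grothencomod} with Corollary \ref{descrt}, exactly as you do. Your additional remarks---that the Grothendieck property is invariant under equivalence, and that flatness of $A_1$ over $A_0$ follows from smoothness of the presentation---are the same points the paper makes implicitly (the latter in the remark following Theorem \ref{grothencomod}).
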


\begin{proof}
Combine Theorem \ref{grothencomod} with Corollary \ref{descrt}.
\end{proof}

\begin{cosa}\textbf{Example:} \emph{Equivariant quasi-coherent sheaves}.

Go back to the setting of \ref{exaction}. A quasi-coherent sheaf on $[X/G]$ may be described as a descent datum associated to the presentation $p \colon X \to [X/G]$. Let us explain this.

Let $M$ be a $B$-module. According to \cite[Ch 1. \S 3. Definition 1.6.]{git} the sheaf $\CF :=\widetilde{M}$ on $X$ is equivariant\footnote{Mumford's original terminology in \cite{git} is that ``$s$ is a $G$-linearization of $\CF$''.} by the action of $G$ if there is given an isomorphism of sheaves on $G \times X$
\[
s \colon a^*\CF \liso p_2^*\CF
\]
such that the following cocycle condition of $t$ on $\CF$ holds
\[
p_{2 3}^* s \circ (\id_G \times a)^* s = (m \times \id_X)^*s
\]
where the maps are given by the following cartesian square
\begin{center}
\begin{tikzpicture}
\matrix(m)[matrix of math nodes, row sep= 2.6em, column sep=2.8em,
text height=1.5ex, text depth=0.25ex]
{G \times G \times X        & G \times X \\
 G \times X                 & X   \\};
\path[->,font=\scriptsize,>=angle 90]
(m-1-1) edge node[auto] {$p_{2 3}$} 
(m-1-2) edge node[left] {$\id_G \times a$} (m-2-1)
(m-1-2) edge node[auto] {$a$} (m-2-2) 
(m-2-1) edge node[auto] {$p_2$} (m-2-2);
\end{tikzpicture}
\end{center}
Notice that the pair $(\CF, s^{-1})$ is a descent datum on $\CF$ with respect to the presentation $p \colon X \to [X/G]$, therefore, by Corollary \ref{c53} it corresponds to a quasi-coherent sheaf on the quotient stack $[X/G]$.

Taking global sections these data are equivalent to give an isomorphism of $H \otimes_{R} B$-modules
\[
\tau \colon (H \otimes_{R} B)\: {_{i_2} \otimes_{B}} M \liso 
(H \otimes_{R} B)\: {_{\alpha} \otimes_{B}} M
\]
satisfying the commutativity of diagram (\ref{equdes}), with $A_1 = H \otimes_{R} B$ and $A_0 = B$. We see that $(M, \tau)$ and $(\CF, s^{-1})$ correspond to each other by the equivalence between the categories $\des_{\qc}(X/[X/G])$ and  $\des(B, H \otimes_{R} B)$.
\end{cosa}

We denote by $\GGamma_p^{\SX} \colon \qco(\SX) \to  A_\bullet\com$ the equivalence of categories defined as $\GGamma_p^{\SX}:= \SC \circ \Gamma(X,-)\circ \SD$, with $\SD$ as in \ref{fund} and $\SC$ as in Proposition \ref{comod}. A quasi-inverse is $\Theta_p^{\SX} = \SG \circ \widetilde{(-)} \circ \SD'$.

\begin{prop}\label{modcomod}
We have
\begin{enumerate}
\item If $(M, \psi)$ is an $A_\bullet$-comodule and $\CF$ is its associated $\CO_{\SX}$-module, then the $\widetilde{A_0}$-module $p^*\!\CF$ is $\widetilde{M}$.
\item If $\CG\in \qco(X)$ then the comodule $\GGamma_p^{\SX}(p_*\!\CG)$ is isomorphic to the extended comodule $A_1\: {_{\eta_R} \otimes_{A_0}\CG(X)}$.
\end{enumerate}
\end{prop}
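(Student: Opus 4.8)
The plan is to deduce both parts from a single structural observation about the definition of $\GGamma_p^{\SX}$. Write $\omega \colon A_\bullet\com \to A_0\md$ for the forgetful functor. I claim that, on the nose,
\[
\omega \circ \GGamma_p^{\SX} = \Gamma(X,-) \circ p^*.
\]
Indeed, unwinding $\GGamma_p^{\SX} = \SC \circ \Gamma(X,-) \circ \SD$, the functor $\SD$ sends $\CF$ to the descent datum $(p^*\CF,\phi^*_{\CF})$ whose underlying sheaf is $p^*\CF$, while both the induced $\Gamma(X,-)$ on descent data and the functor $\SC$ of Proposition \ref{comod} only rearrange the descent/comodule structure and leave the underlying $A_0$-module $\Gamma(X,p^*\CF)$ untouched. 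Hence $\omega(\GGamma_p^{\SX}\CF) = \Gamma(X,p^*\CF)$.

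Part (i) is then immediate. For $(M,\psi)\in A_\bullet\com$ put $\CF := \Theta_p^{\SX}(M,\psi)\in\qco(\SX)$. Since $\GGamma_p^{\SX}\circ\Theta_p^{\SX}\cong\id$ and $\omega(M,\psi)=M$, the displayed identity gives
\[
\Gamma(X,p^*\CF) = \omega\bigl(\GGamma_p^{\SX}\Theta_p^{\SX}(M,\psi)\bigr) \cong M.
\]
By Proposition \ref{desc1}(\emph{i}) the sheaf $p^*\CF$ lies in $\cart(X)$, which is equivalent to $A_0\md$ via the mutually quasi-inverse functors $\Gamma(X,-)$ and $\widetilde{(-)}$ of Proposition \ref{cartaff}; therefore $p^*\CF\cong\widetilde{M}$, as asserted.

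For part (\emph{ii}) I would transport the adjunction $p^* \dashv p_*$ across the equivalences. First recall the standard fact that $\omega$ admits a right adjoint, the extended comodule functor $E \colon A_0\md \to A_\bullet\com$ with $E(N) := \bigl(A_1\: {_{\eta_R} \otimes _{A_0}} N,\ \nabla\otimes\id\bigr)$, whose counit is $\epsilon\otimes\id$ and whose triangle identities are exactly the counitary and coassociativity axioms of a comodule. The identity of the first paragraph says precisely that the square of \emph{left} adjoints formed by $p^*$, $\omega$ and the vertical equivalences $\GGamma_p^{\SX}$, $\Gamma(X,-)$ commutes. Thus $\omega \cong \Gamma(X,-)\circ p^*\circ \Theta_p^{\SX}$, and taking right adjoints (an equivalence being a two-sided adjoint of its quasi-inverse, and right adjoints composing in the opposite order) yields
\[
E \cong \GGamma_p^{\SX}\circ p_*\circ \widetilde{(-)},\qquad\text{hence}\qquad E\circ\Gamma(X,-)\cong \GGamma_p^{\SX}\circ p_*.
\]
Evaluating at $\CG$ gives $\GGamma_p^{\SX}(p_*\CG)\cong E(\CG(X)) = A_1\: {_{\eta_R} \otimes _{A_0}} \CG(X)$ with its extended structure. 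As a reassuring cross-check, the underlying module can be read off directly from the explicit formulas for $p^*$ and $p_*$: $\Gamma(X,p^*p_*\CG) = (p^*p_*\CG)(X,\id) = \CG(X\times_{\SX}X,p_2) = A_1\: {_{\eta_R} \otimes _{A_0}} \CG(X)$.

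The main obstacle is the bookkeeping underlying part (\emph{ii}): one must verify carefully that $E$ is genuinely the right adjoint of $\omega$ (checking the triangle identities through the comodule axioms), and that the commuting square is compatible with both adjunctions so that the passage to mates is legitimate. A more hands-on alternative — identifying the algebraic descent datum $\tau$ attached to $\phi^*_{p_*\CG}$ and verifying $\psi = \tau(\eta_L\otimes\id) = \nabla\otimes\id$ — is available but forces a computation over the triple fibre product $X\times_{\SX}X\times_{\SX}X$ in which $\nabla$ (defined there via $p_{13}$) must be matched by hand; the adjunction argument above is what sidesteps that calculation.
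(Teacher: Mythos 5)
Your proof is correct, and for part (\emph{ii}) it takes a genuinely different route from the paper's. For (\emph{i}) the two arguments are essentially the same: the paper sends $(M,\psi)$ through Theorem \ref{crt11} to $(\widetilde{M},t)\in\des_{\qc}(X/\SX)$ and quotes Corollary \ref{descrt} to get $p^*\!\CF=p^*\SG(\widetilde{M},t)\cong\widetilde{M}$, while you route the same information through your identity $\omega\circ\GGamma_p^{\SX}=\Gamma(X,-)\circ p^*$ together with Propositions \ref{desc1} and \ref{cartaff}; both are unwindings of the already-established equivalences. For (\emph{ii}), however, the paper performs exactly the hands-on computation you chose to avoid: it identifies $\GGamma_p^{\SX}(p_*\CG)$ with $\CG(X_1,p_2)$ via $\wadj{\CG(p_2)}$, writes the descent datum $\tau$ on $A_1\:{_{\eta_R}\otimes_{A_0}}\CG(X)$ explicitly in terms of $p_*\widetilde{M}$, and verifies $\psi=\nabla\otimes\id$ by introducing the isomorphism $w\colon X_2\to X\times_{\SX}X\times_{\SX}X$ and matching $p_{13}$ with $m=\spec(\nabla)$ --- precisely the triple-fibre-product bookkeeping you mention. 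Your alternative (the forgetful functor $\omega$ has the extended-comodule functor $E$ as right adjoint; the square of left adjoints commutes; hence $\GGamma_p^{\SX}\circ p_*\cong E\circ\Gamma(X,-)$ by uniqueness of right adjoints) is sound and shorter, at the modest price of checking $\omega\dashv E$; there the triangle identities are the counit axiom of a comodule and the Hopf algebroid identity $(\id_{A_1}\otimes\epsilon)\nabla=\id_{A_1}$, while coassociativity is what makes the unit $\psi_M$ a morphism of comodules --- a small correction to your parenthetical. Note also that the caveat you raise at the end is unnecessary: no compatibility of the square with the adjunctions is needed, since once $\omega\circ\GGamma_p^{\SX}$ and $\Gamma(X,-)\circ p^*$ are naturally isomorphic, their right adjoints $\Theta_p^{\SX}\circ E$ and $p_*\circ\widetilde{(-)}$ are isomorphic by uniqueness of adjoints, which is all your argument uses. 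What the paper's computation buys in exchange is an explicit formula for the comodule structure map, which it then reuses almost verbatim in the proof of Proposition \ref{agreement}; your abstract isomorphism would have to be unwound again at that point.
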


\begin{proof}
By Theorem~\ref{crt11} $(M, \psi)$ corresponds to $(\widetilde{M},t) \in
\des_{\qc}(X/\SX)$. By Corollary \ref{descrt}
$p^*\!\CF = p^*\!\SG(\widetilde{M},t) \cong \widetilde{M}$. This gives (\emph{i}).

For (\emph{ii}), we have by {\ref{cartaff}} that $\widetilde{\CG(X)}$ and $\CG$ are isomorphic $\widetilde{A_0}$-modules and then
% by the functoriality of ${\bf \Gamma}_{\SX}p_*$,
 the comodules $\GGamma_p^{\SX}(p_*{\widetilde{\CG(X})})= A_1\: {_{\eta_R} \otimes _{A_0}} \CG(X)$ and $\GGamma_p^{\SX}(p_*\CG)=\CG(X_1,p_2)$ are isomorphic via $\wadj{\CG(p_2)}\colon A_1\: {_{\eta_R} \otimes_{A_0}} \CG(X) \to \CG(X_1,p_2)$, notation as in \ref{superadj}. To prove (\emph{ii}) is sufficient to show that the comodule structure on  $A_1\: {_{\eta_R} \otimes _{A_0}} \CG(X)$ given by \ref{fund} and Theorem \ref{crt11}, is $\nabla \otimes \id$.
Set $M=\CG(X)$. The descent datum $\tau$ on  $A_1\: {_{\eta_R} \otimes _{A_0}} M$ is defined by
\[
\tau =(\wadj{p_*{\widetilde{M}}(p_2,\iid)})^{-1} p_*{\widetilde{M}}(\id,\phi^{-1})\, \wadj{(p_*{\widetilde{M}}(p_1,\iid))}.
\]
 The structure map $\psi$ is the composition
\[
 A_1\: {_{\eta_R} \otimes _{A_0}}M \xto{\eta_L\otimes \id}
 A_1\: {_{\eta_L} \otimes _{\eta_L}} A_1\: {_{\eta_R} \otimes _{A_0}} M\overset{\tau}\lto 
 A_1\: {_{\eta_R} \otimes _{\eta_L}} A_1\: {_{\eta_R} \otimes _{A_0}} M.
\]
Let $X_2 = \spec(A_1\: {_{\eta_R} \otimes _{\eta_L}} A_1)$. Consider the isomorphism 
\begin{equation}\label{uvedoble}
 w \colon X_2 \liso X\times_{\SX}X\times_{\SX}X
\end{equation}
given by $p_{12} w=q_2$, $p_3 w=p_2 q_1$ and $\phi' w=\phi  q_1$ with $\phi'$ as in \ref{phiprima} where $q_i \colon X_2 \to X\times_{\SX}X$ ($i\in\{1,2\}$) are the canonical projections. We obtain
\begin{align*}
\psi&=(\wadj{p_*{\widetilde{M}}(p_2,\iid)})^{-1} p_*{\widetilde{M}}(\id,\phi^{-1}) {p_*{\widetilde{M}}(p_1,\iid)}\\
&= (\wadj{p_*{\widetilde{M}}(p_2,\iid)})^{-1} p_*{\widetilde M}(p_1,\phi^{-1})\\
&=(\wadj{p_*{\widetilde{M}}(p_2,\iid)})^{-1}\widetilde{M}(w)^{-1}\widetilde{M}(w)\widetilde{M}(p_{13})\\
&= \widetilde{M}(m) = \nabla \otimes\id.
%&= (\wadj{p_*{\widetilde{M}}(p_2,\id)})^{-1}{\widetilde{M}}(w)^{-1} \widetilde{M}(m) \\
%&= (\wadj{p_*{\widetilde{M}}(p_2,\id)})^{-1} \widetilde{M}(w)^{-1} (\nabla \otimes 1).
\end{align*}
The last equalities hold because $\widetilde{M}(w)\wadj{p_*{\widetilde{M}}(p_2,\iid)} = \id$ and $m = \spec(\nabla)$.
\end{proof}

\section{Properties and functoriality of quasi-coherent sheaves}\label{sec6}

We have reached our first main objective: representing quasi-coherent sheaves on a geometric stack. It is time to move forward to the next one, namely, describing a nice functoriality formalism for these kind of sheaves for an \emph{arbitrary} 1-morphism of stacks. Let then $f \colon \SX \to \SY$ be a 1-morphism of geometric stacks. 

Unfortunately, $f$ does not induce a (continuous) functor from $\aff_{\ff}/\SY$ to $\aff_{\ff}/\SX$, so we can not apply directly the methods from \ref{adjtop} and \ref{adjring}. Of course, it could be done if $f$ is an affine 1-morphism, but this condition is clearly too restrictive.

Besides, there is no hope to define a topos morphism from  $\SX_{\ff}$ to $\SY_{\ff}$ because the topology is too fine, however we will construct an adjunction that will induce the corresponding pair of adjoint functors between the categories of quasi-coherent sheaves. The problem arises already in the \emph{lisse-\'etale} topology, see the remark after Corollary \ref{adjhaz6}. However, the construction will follow the usual procedure (as in \emph{e.g.} \cite{lmb}). We will treat carefully the existence of an adjunction between the category of sheaves and how it extends to the category of \emph{quasi-coherent} sheaves.

\begin{cosa}\label{di}\textbf{Direct image}.
For fixed $(V,v)\in \aff_{\ff}/\SY$ we
denote by $\mathbf{J}(v,f)$ the category whose objects are the
$2$-commutative squares
\begin{center}
\begin{equation}\label{abiertos}
\begin{tikzpicture}[baseline=(current  bounding  box.center)]
\matrix(m)[matrix of math nodes, row sep=2.6em, column sep=2.8em,
text height=1.5ex, text depth=0.25ex]
{U  &   V \\
\SX & \SY \\}; 
\path[->,font=\scriptsize,>=angle 90]
(m-1-1) edge node[auto] {$h$} (m-1-2)
        edge node[left] {$u$} (m-2-1)
(m-1-2) edge node[auto] {$v$} (m-2-2) 
(m-2-1) edge node[auto] {$f$} (m-2-2);
\draw [shorten >=0.2cm,shorten <=0.2cm,->,double] (m-2-1) -- (m-1-2) node[auto, midway,font=\scriptsize]{$\gamma$};
\end{tikzpicture}
\end{equation}
\end{center}
where $(U,u) \in \aff_{\ff}/\SX$. We denote this object by
$(U,u,h,\gamma)$. A morphism from $(U,u,h,\gamma)$ to
$(U',u',h',\gamma')$ is just a morphism
$(l,\alpha)\colon (U,u)\to (U',u')$ in
$\aff_{\ff}/\SX$ satisfying $h'l = h$ and $\gamma =
\gamma' l \circ f \alpha$. If $\CG \in \pre(\aff_{\ff}/\SX)$
we define the presheaf $f_*\CG$ by:
\[
(f_*\CG)(V,v) = \invlim{\mathbf{J}{(v, f)}} \CG(U,u).
\]
%We denote by $\rho(h,\gamma):f_*\CG(V,v) \to \CG(U,u)$ the canonical map. 
Let $(V_1,v_1)$, $(V_2,v_2)\in \aff_{\ff}/\SY$ and $(j, \beta) \colon (V_1, v_1) \to (V_2, v_2)$. In this situation we define a functor
\[
(j, \beta)_* \colon \mathbf{J}{(v_1, f)} \lto \mathbf{J}{(v_2, f)}
\]
by $(j, \beta)_*(U,u,h,\gamma) := (U,u,jh, \beta h \circ \gamma)$. This induces the restriction morphism
\[
(f_*\CG)(V_2,v_2) \lto (f_*\CG)(V_1,v_1).
\]

In particular, if $f$ is an affine morphism, then
$f_*\CG(V,v)=\CG(\SX \times_{\SY} V, p_1)$
because $(\SX \times_{\SY} V,p_1,p_2,\phi_0)$, with $\phi_0$ the canonical 2-cell in the 2-pullback, is a final object of $\mathbf{J}{(v, f)}$.
\end{cosa}

\begin{cosa}\label{ii}\textbf{Inverse image}.
Let $(U,u)\in \aff_{\ff}/\SX$, we denote by $\mathbf{I}(u,f)$ the category whose objects are the $2$-commutative squares as (\ref{abiertos}) with $(V,v) \in \aff_{\ff}/\SY$. We denote these objects now as $(V,v,h,\gamma)$. A morphism from $(V,v,h,\gamma)$ to $(V',v',h',\gamma')$ is a  morphism
$(j,\beta)\colon (V,v)\to (V', v')$ in
$\aff_{\ff}/\SY$ satisfying $j  h = h'$ and $\gamma' =
\beta h \circ \gamma$. For $\CF \in \pre(\aff_{\ff}/\SY)$, the presheaf
$f^{\pin}\CF$ is given by
\[
(f^{\pin} \CF)(U,u) = \dirlim{\mathbf{I}{(u, f)}} \CF(V,v)
\]
for each $(U,u) \in \aff_{\ff}/\SX$. Consider $(U_1,u_1)$, $(U_2,u_2)\in \aff_{\ff}/\SX$ together with $(l, \alpha) \colon (U_1,u_1) \to (U_2,u_2)$. In this situation we define a functor
\[
(l, \alpha)^* \colon \mathbf{I}{(u_2, f)} \lto \mathbf{I}{(u_1, f)}
\]
by $(l, \alpha)^*(V,v,h,\gamma) := (V,v,hl,\gamma l \circ f \alpha)$. This induces the restriction morphism
\[
(f^{\pin}\CF)(U_2,u_2) \lto (f^{\pin}\CF)(U_1,u_1).
\]

In particular, if $f$ is a flat finitely presented morphism, then $f^{\pin}\CF(U,u)=\CF(U,fu)$ because $(U,fu,\id_U,\iid)$ is an initial object of $\mathbf{I}{(u, f)}$.
\end{cosa}

\begin{lem}
Let $f_1, f_2 \colon \SX \to \SY$ be 1-morphisms of geometric stacks. In the previous setting, any 2-morphism $\zeta \colon f_1 \imp f_2$ induces equivalences of categories 
\begin{enumerate}
\item $\zeta_{\mathbf{J}} \colon \mathbf{J}(v,f_1) \liso \mathbf{J}(v,f_2)$,
\item $\zeta^{\mathbf{I}} \colon \mathbf{I}(u,f_2) \liso \mathbf{I}(u,f_1)$.
\end{enumerate}
\end{lem}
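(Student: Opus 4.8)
The plan is to build both functors by transporting the connecting $2$-cell along $\zeta$ while leaving the underlying affine data untouched, and then to produce honest inverses from $\zeta^{-1}$. I would first record the one structural input: in the $2$-category of stacks every $2$-morphism is invertible, since a natural transformation between cartesian functors of categories fibered in groupoids is automatically a natural isomorphism. In particular $\zeta \colon f_1 \imp f_2$ has an inverse $\zeta^{-1} \colon f_2 \imp f_1$, and for each $1$-morphism $u$ the whiskered $2$-cell $\zeta u \colon f_1 u \imp f_2 u$ is invertible.

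For (i), I would define $\zeta_{\mathbf{J}}$ on objects by
\[
\zeta_{\mathbf{J}}(U,u,h,\gamma) := (U,u,h,\ \gamma \circ (\zeta u)^{-1}),
\]
which is legitimate because $\gamma \colon f_1 u \imp vh$ makes $\gamma \circ (\zeta u)^{-1}$ a $2$-cell $f_2 u \imp vh$, exactly what an object of $\mathbf{J}(v,f_2)$ requires; on a morphism $(l,\alpha)$ I would keep the same underlying pair. The one thing to check is that $(l,\alpha)$ still satisfies the morphism relation for the transported squares, namely
\[
\gamma \circ (\zeta u)^{-1} = \bigl(\gamma' \circ (\zeta u')^{-1}\bigr)\,l \circ f_2\alpha.
\]
This is where the only real computation sits. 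I would use the interchange law $f_2\alpha \circ \zeta u = \zeta(u'l) \circ f_1\alpha$, rewrite it as $(\zeta(u'l))^{-1} \circ f_2\alpha = f_1\alpha \circ (\zeta u)^{-1}$, and combine it with the whiskering identity $(\zeta u')^{-1} l = (\zeta(u'l))^{-1}$ and the original relation $\gamma = \gamma' l \circ f_1\alpha$; the three substitutions collapse the right-hand side to $\gamma \circ (\zeta u)^{-1}$. Functoriality is then automatic, since composition in $\mathbf{J}$ only sees the pairs $(l,\alpha)$, which $\zeta_{\mathbf{J}}$ leaves unchanged.

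For (ii), with $(U,u)$ now fixed the whiskered $2$-cell $\zeta u$ is a single fixed isomorphism, which explains the reversal of arrows; I would set
\[
\zeta^{\mathbf{I}}(V,v,h,\gamma) := (V,v,h,\ \gamma \circ \zeta u),
\]
turning $\gamma \colon f_2 u \imp vh$ into $\gamma \circ \zeta u \colon f_1 u \imp vh$, and again keep the underlying morphism $(j,\beta)$. Here the morphism relation $\gamma' = \beta h \circ \gamma$ is preserved simply by right-composing with $\zeta u$, giving $\gamma' \circ \zeta u = \beta h \circ (\gamma \circ \zeta u)$, so no interchange argument is needed.

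Finally, I would run the identical constructions for $\zeta^{-1}$ to obtain functors in the opposite directions; on objects the two round trips post-compose the connecting $2$-cell with $\zeta u \circ (\zeta u)^{-1} = \id$ (and dually), and act as the identity on the affine data, so each composite is literally the identity functor. Hence $\zeta_{\mathbf{J}}$ and $\zeta^{\mathbf{I}}$ are isomorphisms of categories, a fortiori the claimed equivalences. I expect the interchange identity in (i) to be the sole genuine obstacle; part (ii) and the inversion argument are formal.
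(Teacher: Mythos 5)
Your proposal is correct and follows essentially the same route as the paper: you define $\zeta_{\mathbf{J}}$ and $\zeta^{\mathbf{I}}$ by the very same formulas (note $(\zeta u)^{-1} = \zeta^{-1}u$, so your transported $2$-cell in (i) agrees with the paper's $\gamma \circ \zeta^{-1}u$), and you invert them by the corresponding constructions for $\zeta^{-1}$, exactly as the paper does. The only difference is that you spell out the interchange-law verification that the morphism relation is preserved, which the paper leaves implicit in ``extend in an obvious way to maps.''
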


\begin{proof}
For (\emph{i}), define, using the notation as in \ref{di}, 
\[
\zeta_{\mathbf{J}}(U,u,h,\gamma) = (U,u,h,\gamma \circ \zeta^{-1} u)
\]
and extend in an obvious way to maps. We see that these data defines a functor and a quasi-inverse is given by $(\zeta^{-1})_{\mathbf{J}}$.

Analogously, for (\emph{ii}), define, using the notation as in \ref{ii},
\[
 \zeta^{\mathbf{I}}(V,v,h,\gamma) = (V,v,h,\gamma \circ \zeta u)
\]
As before, this defines a functor whose quasi-inverse is $(\zeta^{-1})^{\mathbf{I}}$.
\end{proof}

\begin{prop}\label{2fib}
Let $f_1, f_2 \colon \SX \to \SY$ be 1-morphisms of geometric stacks. With the previous notation, given a 2-morphism $\zeta \colon f_1 \imp f_2$ we have isomorphisms
\begin{enumerate}
\item $\zeta_* \colon f_{1 *} \liso f_{2 *}$,
\item $\zeta^{\pin} \colon f_{2}^{\pin} \liso f_{1}^{\pin}$.
\end{enumerate}
\end{prop}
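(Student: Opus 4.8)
The plan is to deduce both isomorphisms directly from the equivalences of index categories constructed in the preceding Lemma, exploiting the elementary fact that an \emph{isomorphism} of categories induces a canonical isomorphism between limits, respectively colimits, of a functor and its precomposite. Note first that the equivalences $\zeta_{\mathbf{J}}$ and $\zeta^{\mathbf{I}}$ of that Lemma are in fact isomorphisms of categories: a direct check shows $(\zeta^{-1})_{\mathbf{J}}\,\zeta_{\mathbf{J}}(U,u,h,\gamma) = (U,u,h,\gamma\circ(\zeta^{-1}\zeta)u) = (U,u,h,\gamma)$, and symmetrically, so $\zeta_{\mathbf{J}}$ and $(\zeta^{-1})_{\mathbf{J}}$ are strictly mutually inverse, and likewise for $\zeta^{\mathbf{I}}$.

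For (\emph{i}), recall from \ref{di} that $(f_{i*}\CG)(V,v) = \invlim{\mathbf{J}(v,f_i)} \CG(U,u)$. Consider the functors $G_i \colon \mathbf{J}(v,f_i) \to \ab$ given on objects by $G_i(U,u,h,\gamma) = \CG(U,u)$, with the evident action on maps. The crucial observation is that $\zeta_{\mathbf{J}}$ leaves the pair $(U,u)$ untouched, altering only the $2$-cell $\gamma$; hence $G_1 = G_2 \circ \zeta_{\mathbf{J}}$ on the nose. Since $\zeta_{\mathbf{J}}$ is an isomorphism of categories, we obtain a canonical isomorphism $\invlim{\mathbf{J}(v,f_2)} G_2 \liso \invlim{\mathbf{J}(v,f_1)} G_1$, which is precisely the sought isomorphism $(f_{1*}\CG)(V,v) \liso (f_{2*}\CG)(V,v)$. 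Concretely, a compatible family $(s_{(U,u,h,\gamma)})$ indexed by $\mathbf{J}(v,f_1)$ is sent to the family whose component at $(U,u,h,\gamma') \in \mathbf{J}(v,f_2)$ is $s_{(U,u,h,\gamma'\circ\zeta u)}$.

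It remains to verify naturality, first in $(V,v)$ and then in $\CG$. For a morphism $(j,\beta)\colon (V_1,v_1) \to (V_2,v_2)$ in $\aff_{\ff}/\SY$, one checks that the reindexing functor $(j,\beta)_*$ of \ref{di} commutes strictly with $\zeta_{\mathbf{J}}$, since the two modifications act on disjoint parts of the datum:
\[
(j,\beta)_*\,\zeta_{\mathbf{J}}(U,u,h,\gamma) = (U,u,jh,\,\beta h \circ \gamma \circ \zeta^{-1} u) = \zeta_{\mathbf{J}}\,(j,\beta)_*(U,u,h,\gamma),
\]
the equality of $2$-cells following from associativity of vertical composition. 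Passing to limits, the isomorphisms above are compatible with the restriction maps of $f_{1*}\CG$ and $f_{2*}\CG$, so they assemble into an isomorphism of presheaves; naturality in $\CG$ is immediate from the functoriality of $G_i$ in $\CG$. This produces the natural isomorphism $\zeta_* \colon f_{1*} \liso f_{2*}$.

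Part (\emph{ii}) is entirely dual. One uses the isomorphism $\zeta^{\mathbf{I}} \colon \mathbf{I}(u,f_2) \liso \mathbf{I}(u,f_1)$, observes that the functor $(V,v,h,\gamma)\mapsto \CF(V,v)$ on $\mathbf{I}(u,f_i)$ is unchanged on the $(V,v)$ part under $\zeta^{\mathbf{I}}$, and invokes that an isomorphism of categories induces an isomorphism of colimits to get
\[
(f_{2}^{\pin}\CF)(U,u) = \dirlim{\mathbf{I}(u,f_2)} \CF(V,v) \;\liso\; \dirlim{\mathbf{I}(u,f_1)} \CF(V,v) = (f_{1}^{\pin}\CF)(U,u).
\]
Naturality in $(U,u)$ and in $\CF$ is checked exactly as before, the reindexing functors $(l,\alpha)^*$ of \ref{ii} commuting strictly with $\zeta^{\mathbf{I}}$. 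The only point demanding a modicum of care---and the closest thing here to an obstacle---is bookkeeping the orientation of the $2$-cells together with the variance of $f_{*}$ versus $f^{\pin}$, so that the strict-commutativity squares come out correctly; once the conventions of \ref{di} and \ref{ii} are respected, everything is formal.
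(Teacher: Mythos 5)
Your proposal is correct and follows essentially the same route as the paper: both proofs apply the equivalences $\zeta_{\mathbf{J}}$ and $\zeta^{\mathbf{I}}$ from the preceding lemma to the value functors $(U,u,h,\gamma)\mapsto\CG(U,u)$ and $(V,v,h,\gamma)\mapsto\CF(V,v)$, obtaining isomorphisms of the defining limits and colimits, and then observe naturality. Your version merely makes explicit some points the paper leaves implicit (that the equivalences are strict isomorphisms of categories and that they commute with the reindexing functors $(j,\beta)_*$ and $(l,\alpha)^*$), which is a welcome but not essentially different elaboration.
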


\begin{proof}
To prove $\zeta_*$ is an isomorphism, let us apply the equivalence of small categories 
$\zeta_{\mathbf{J}} \colon \mathbf{J}(v,f_1) \iso \mathbf{J}(v,f_2)$ to the sets $\CG(U,u)$ with $(U,u) \in \mathbf{J}(v,f_1)$, this gives a map
\[
\invlim{\mathbf{J}{(v, f_1)}} \CG(U,u) \liso
\invlim{\mathbf{J}{(v, f_2)}} \CG(U,u).
\]
This isomorphism is natural, therefore induces a natural isomorphism
\[
f_{1 *}\CG \liso f_{2 *}\CG.
\]
For $\zeta^{\pin}$ use a similar argument with $\zeta^{\mathbf{I}}$ in place of $\zeta_{\mathbf{J}}$.
\end{proof}

\begin{prop}\label{adjpre6}
The previous constructions define a pair of adjoint functors
\[
\pre(\aff_{\ff}/\SX)
\underset{f_*}{\overset{f^{\pin}}{\longleftrightarrows}}
\pre(\aff_{\ff}/\SY).
\]
\end{prop}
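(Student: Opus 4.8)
The plan is to produce, for all $\CF\in\pre(\aff_{\ff}/\SY)$ and $\CG\in\pre(\aff_{\ff}/\SX)$, a bijection
\[
\Hom_{\pre(\aff_{\ff}/\SX)}(f^{\pin}\CF,\CG)\cong\Hom_{\pre(\aff_{\ff}/\SY)}(\CF,f_*\CG)
\]
by unwinding both sides to one and the same concrete datum. To bookkeep this cleanly I would first introduce the (essentially small) auxiliary category $\mathbf{K}$ whose objects are the $2$-commutative squares $(U,u,h,\gamma)$ of \eqref{abiertos}, with $(U,u)\in\aff_{\ff}/\SX$, $(V,v)\in\aff_{\ff}/\SY$, $h\colon U\to V$ and $\gamma\colon fu\imp vh$, and whose morphisms are pairs consisting of a map $(l,\alpha)$ in $\aff_{\ff}/\SX$ and a map $(j,\beta)$ in $\aff_{\ff}/\SY$ compatible with the $h$'s and the $2$-cells in the evident manner, restricting to the conditions of \ref{ii} when $(l,\alpha)$ is an identity and to those of \ref{di} when $(j,\beta)$ is an identity. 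There are two forgetful functors $q\colon\mathbf{K}\to\aff_{\ff}/\SX$ and $r\colon\mathbf{K}\to\aff_{\ff}/\SY$, and by construction the fibre of $q$ over $(U,u)$ is $\mathbf{I}(u,f)$ while the fibre of $r$ over $(V,v)$ is $\mathbf{J}(v,f)$.

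Next I would unwind a morphism $\alpha\colon f^{\pin}\CF\to\CG$. By the universal property of the colimit $\dirlim{\mathbf{I}(u,f)}\CF(V,v)$ defining $f^{\pin}\CF(U,u)$, composing $\alpha_{(U,u)}$ with the colimit cocone gives a family of maps $\lambda_s\colon\CF(V,v)\to\CG(U,u)$, one for each object $s=(U,u,h,\gamma)$ of $\mathbf{K}$. The compatibility of the cocone with the transition maps of $\mathbf{I}(u,f)$ encodes exactly the morphisms of $\mathbf{K}$ fixing $(U,u)$, and naturality of $\alpha$ in $(U,u)$ through the functors $(l,\alpha)^*$ of \ref{ii} encodes the morphisms of $\mathbf{K}$ fixing $(V,v)$. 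As every morphism of $\mathbf{K}$ factors as a composite of these two types, the family $\{\lambda_s\}$ is precisely a natural transformation of presheaves $\CF\circ r\to\CG\circ q$ on $\mathbf{K}$. Dually, the universal property of the limit $\invlim{\mathbf{J}(v,f)}\CG(U,u)$ defining $f_*\CG(V,v)$ identifies a morphism $\CF\to f_*\CG$ with the same family $\{\lambda_s\}$, now compatible with the morphisms of $\mathbf{J}(v,f)$ via the functors $(j,\beta)_*$ of \ref{di} and natural in $(V,v)$; this again is exactly a natural transformation $\CF\circ r\to\CG\circ q$.

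Comparing the two descriptions gives the bijection, and since both identifications are assembled solely out of universal properties of limits and colimits they are automatically natural in $\CF$ and in $\CG$, which establishes the adjunction $f^{\pin}\dashv f_*$. The step I expect to require the most care is matching the $2$-cell conditions term for term: one must check that the constraint extracted from the colimit side (using $\gamma'=\beta h\circ\gamma$ for morphisms of $\mathbf{I}(u,f)$ and the rule $(V,v,h,\gamma)\mapsto(V,v,hl,\gamma l\circ f\alpha)$ for the restriction functors) agrees with the one extracted from the limit side (using $\gamma=\gamma' l\circ f\alpha$ for morphisms of $\mathbf{J}(v,f)$ and $(U,u,h,\gamma)\mapsto(U,u,jh,\beta h\circ\gamma)$). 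Once the morphisms of $\mathbf{K}$ are seen to be generated by these two classes of moves, this reduces to a routine diagram chase, and no input beyond the universal properties is needed.
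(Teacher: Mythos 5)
Your proof is correct and is essentially the paper's own argument: both sides are unwound, via the universal property of the colimits over $\mathbf{I}(u,f)$ defining $f^{\pin}\CF$ and of the limits over $\mathbf{J}(v,f)$ defining $f_*\CG$, into one and the same family of maps $\CF(V,v)\to\CG(U,u)$ indexed by the $2$-commutative squares and compatible with the two kinds of restriction functors, which is exactly what the paper does when it fixes $(U,u)$ and then fixes $(V,v)$. Your auxiliary category $\mathbf{K}$ and the identification of both $\Hom$-sets with natural transformations $\CF\circ r\to\CG\circ q$ is a symmetric repackaging of this common datum that makes the paper's closing assertion (``both maps are mutually inverse'') automatic, so it is a presentational refinement rather than a genuinely different route.
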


\begin{proof}
We have to prove, for $\CF \in \pre(\aff_{\ff}/\SY)$ and $\CG \in \pre(\aff_{\ff}/\SX)$ that there is an isomorphism
\[
\Hom_{\SX}(f^{\pin}\CF, \CG) \liso \Hom_{\SY}(\CF,f_*\CG)
\]
A map $f^{\pin}\CF \to \CG$ induces for every $(U,u) \in \aff_{\ff}/\SX$ and $(V,v) \in \mathbf{I}(u,f)$ morphisms
\[
\CF(V,v) \lto \dirlim{\mathbf{I}{(u, f)}} \CF(V,v) \lto \CG(U,u)
\]
Now fixing $(V,v)$ and varying $(U,u)$ we get a map
\[
\CF(V,v) \lto \invlim{\mathbf{J}{(v, f)}} \CG(U,u).
\]
This construction provides the adjunction map. By a similar procedure, we obtain a map in the opposite direction. It is not difficult to check that both maps are mutually inverse.
\end{proof}

\begin{cosa}
The \emph{continuity} of our construction is expressed by the relation between the coverings. Let us spell it out.

Take a covering $\{(g_i, \beta_i) \colon (V_i,v_i) \to (V,v)\}_{i \in I}$ in the site $\aff_{\ff}/\SY$. Given $(U,u,h,\gamma) \in \mathbf{J}(v,f)$, consider for each $i \in I$ the following Cartesian square of affine schemes
\begin{center}
\begin{tikzpicture}
\matrix(m)[matrix of math nodes, row sep=2.6em, column sep=2.8em,
text height=1.5ex, text depth=0.25ex]{
U \times_V V_i & V_i \\
U              & V   \\}; 
\path[->,font=\scriptsize,>=angle 90]
(m-1-1) edge node[auto] {$h_i$}  (m-1-2)
        edge node[left] {$g'_i$} (m-2-1)
(m-1-2) edge node[auto] {$g_i$}  (m-2-2) 
(m-2-1) edge node[auto] {$h$}    (m-2-2);
\end{tikzpicture}
\end{center}
Set $U_i := U \times_V V_i$ and $\gamma_i \colon f u g'_i \imp v_i h_i$, given by $\gamma_i := \beta_i^{-1}h_i \circ \gamma g'_i$. This defines a functor
\begin{equation}\label{cont}
 \overline{g_i} \colon \mathbf{J}(v,f) \lto \mathbf{J}(v_i,f)
\end{equation}
by $\overline{g_i}(U,u,h,\gamma) := (U_i,ug'_i,h_i,\gamma_i)$.
%For each $i, j \in I$, consider the Cartesian squares
%\begin{center}
%\begin{tikzpicture}
%\matrix(m)[matrix of math nodes, row sep=2.6em, column sep=2.8em,
%text height=1.5ex, text depth=0.25ex]{
%V_i\times_V V_j & V_j \\
%V_i             & V   \\};
%\path[->,font=\scriptsize,>=angle 90]
%(m-1-1) edge node[auto] {$p_2$} (m-1-2)
%        edge node[left] {$p_1$} (m-2-1)
%(m-1-2) edge node[auto] {$g_j$} (m-2-2)
%(m-2-1) edge node[auto] {$g_i$} (m-2-2);
%\end{tikzpicture} \hspace{6mm}  \begin{tikzpicture}
%\matrix(m)[matrix of math nodes, row sep=2.6em, column sep=2.8em,
%text height=1.5ex, text depth=0.25ex]{
%U_i\times_UU_j & U_j \\
%U_i            & U   \\};
%\path[->,font=\scriptsize,>=angle 90]
%(m-1-1) edge node[auto] {$p'_2$} (m-1-2)
%        edge node[left] {$p'_1$} (m-2-1)
%(m-1-2) edge node[auto] {$g'_j$} (m-2-2)
%(m-2-1) edge node[auto] {$g'_i$} (m-2-2);
%\end{tikzpicture}
%\end{center}
%Let $h_{ij}\colon U_i\times_UU_j \to V_i\times_V V_j$ the morphism
%such that $p_1 h_{ij} = h_i p'_1$ and $p_2 h_{ij}=h_j p'_2$. Set
%$\gamma^i_{ij} = (\alpha_i^{-1} p_1 h_{ij}) \circ (\gamma g'_i p'_1)$,
%$\gamma^j_{ij} = (\alpha_j^{-1} p_2 h_{ij}) \circ (\gamma g'_j p'_2)$
%%and $U_{ij} := U_i \times_U U_j$, 
%we have morphisms
%\begin{align*}
%(p'_1,\iid) \colon (U_i \times_U U_j, u g'_i p'_1, p_1  h_{ij},\gamma^i_{ij}) 
%&\lto (U_i, u g'_i, h_i, \gamma_i)  \\
%(p'_2,\iid) \colon (U_i \times_U U_j, u g'_j p'_2, p_2  h_{ij},\gamma^j_{ij}) 
%&\lto (U_j, u g'_j, h_j, \gamma_j)
%\end{align*}
%in $\mathbf{J}(v_i,f)$ and $\mathbf{J}(v_j,f)$, respectively.

A consequence of this observation is the following important statement.
\end{cosa}

\begin{prop} \label{p61}
If $\CG$ is a sheaf on $\aff_{\ff}/\SX$, then $f_*\CG$ is a sheaf.
\end{prop}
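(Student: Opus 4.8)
The plan is to verify the sheaf condition for $f_*\CG$ directly from the definition, using the continuity functor $\overline{g_i}$ from \eqref{cont} to reduce the gluing problem on $\SY$ to the known sheaf property of $\CG$ on $\SX$. Fix an object $(V,v)\in\aff_{\ff}/\SY$ and a covering $\{(g_i,\beta_i)\colon(V_i,v_i)\to(V,v)\}_{i\in I}$. I must show that the diagram
\[
(f_*\CG)(V,v)\lto\prod_{i\in I}(f_*\CG)(V_i,v_i)\longrightrightarrows\prod_{i,j\in I}(f_*\CG)(V_i\times_V V_j,\,v_{ij})
\]
is an equalizer, where the two parallel arrows are induced by the two projections. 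Unwinding the definition of $f_*$, each term is an inverse limit over the appropriate category $\mathbf{J}(-,f)$, so I am really comparing $\invlim{\mathbf{J}(v,f)}\CG(U,u)$ with a kernel built from the $\invlim{\mathbf{J}(v_i,f)}\CG$.

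The key step is to analyze the effect of $\overline{g_i}$ on objects. Given $(U,u,h,\gamma)\in\mathbf{J}(v,f)$, the functor $\overline{g_i}$ produces $(U_i,ug'_i,h_i,\gamma_i)$ with $U_i=U\times_V V_i$, and the family $\{g'_i\colon U_i\to U\}_{i\in I}$ is a covering of $U$ in $\aff_{\ff}/\SX$ (being the base change of a covering, it remains jointly surjective and flat of finite presentation). Since $\CG$ is a sheaf, for each fixed $(U,u,h,\gamma)$ the diagram
\[
\CG(U,u)\lto\prod_{i}\CG(U_i,ug'_i)\longrightrightarrows\prod_{i,j}\CG(U_i\times_U U_j,\,-)
\]
is an equalizer. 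The plan is then to pass to the inverse limit over $\mathbf{J}(v,f)$ and commute it with the finite products and the equalizer appearing here. Inverse limits commute with arbitrary products and with equalizers (both being limits), so the subtle point is matching the indexing categories: I would check that taking $\invlim{\mathbf{J}(v,f)}$ of $\prod_i\CG(U_i,ug'_i)$ reproduces $\prod_i(f_*\CG)(V_i,v_i)=\prod_i\invlim{\mathbf{J}(v_i,f)}\CG$, and similarly for the double products over $V_i\times_V V_j$.

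The main obstacle is precisely this identification of limits, i.e. establishing that the functor $\overline{g_i}\colon\mathbf{J}(v,f)\to\mathbf{J}(v_i,f)$ is cofinal (or at least induces an isomorphism after applying $\invlim{}\CG$), so that $\invlim{\mathbf{J}(v,f)}\CG(U_i,ug'_i)\cong\invlim{\mathbf{J}(v_i,f)}\CG(U',u')$. I would argue cofinality by showing that every object $(U',u',h',\gamma')\in\mathbf{J}(v_i,f)$ receives a map from some $\overline{g_i}(U,u,h,\gamma)$: given such an object, compose with $g_i$ to land over $(V,v)$, producing an object of $\mathbf{J}(v,f)$ whose image under $\overline{g_i}$ maps canonically to $(U',u',h',\gamma')$ via the universal property of the fiber product $U\times_V V_i$. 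Once cofinality is in hand, the equalizer for $f_*\CG$ follows formally by interchanging the limit $\invlim{\mathbf{J}(v,f)}$ with the finite products and the equalizer furnished by the sheaf property of $\CG$, completing the proof.
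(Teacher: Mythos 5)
Your overall strategy is the same as the paper's: pull the covering back along each $(U,u,h,\gamma)\in\mathbf{J}(v,f)$, use the sheaf property of $\CG$ on the resulting coverings $\{g'_i\colon U_i\to U\}_{i\in I}$, and commute the inverse limit over $\mathbf{J}(v,f)$ with products and equalizers. The gap sits exactly in the step you single out as the main obstacle: the comparison of indexing categories. First, the canonical map you invoke does not exist. Given $(U',u',h',\gamma')\in\mathbf{J}(v_i,f)$, composing with $g_i$ and applying $\overline{g_i}$ produces $(U'\times_V V_i,\,u'\mathrm{pr}_1,\,\mathrm{pr}_2,\,\gamma_i)$, and the projection $\mathrm{pr}_1\colon U'\times_V V_i\to U'$ is \emph{not} a morphism in $\mathbf{J}(v_i,f)$: such a morphism must satisfy the strict equality $h'\circ \mathrm{pr}_1=\mathrm{pr}_2$ of maps to $V_i$, and these two maps agree only after composing with $g_i$, which is a flat finitely presented covering map, not a monomorphism. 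The morphism that does exist is the graph section $(\id_{U'},h')\colon U'\to U'\times_V V_i$, which goes the opposite way, from $(U',u',h',\gamma')$ \emph{into} the image of $\overline{g_i}$; since $f_*\CG$ is the limit of a contravariant functor, this is in fact the direction that is useful.

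Second, even with the correct map, nonemptiness of the relevant comma categories only yields that the canonical map
\[
f_*\CG(V_i)\;=\;\invlim{\mathbf{J}(v_i,f)}\CG \;\lto\; \invlim{\mathbf{J}(v,f)}\CG(U\times_V V_i)
\]
is \emph{injective}: a compatible family over $\mathbf{J}(v_i,f)$ is determined by its values on the image of $\overline{g_i}$, because every object maps into that image via its graph. The isomorphism your argument needs requires in addition a surjectivity (connectedness-type) argument: one must extend a family compatible over $\mathbf{J}(v,f)$ to all of $\mathbf{J}(v_i,f)$ and verify compatibility with \emph{all} morphisms of $\mathbf{J}(v_i,f)$, not just those coming from $\mathbf{J}(v,f)$; your proposal is silent on this. (The identification is in fact true, but proving it means constructing the inverse explicitly with the graph maps and checking their naturality.) The paper avoids the issue altogether: it establishes only the monomorphism property, using precisely the graph maps $\overline{U}\to\overline{U}_i$, and then deduces that the bottom row
\[
f_*\CG(V)\lto\prod_{i\in I} f_*\CG(V_i)\longrightrightarrows\prod_{i,j\in I}f_*\CG(V_i\times_V V_j)
\]
is an equalizer from the fact that the top row of limits over $\mathbf{J}(v,f)$ is one, by a diagram chase in which the left vertical map is an equality and the middle and right vertical maps are monomorphisms. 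To repair your proof, either carry out the full cofinality argument with the graph maps, or retreat to the paper's weaker monomorphism-plus-diagram-chase argument.
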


\begin{proof}
Let $\{(g_i, \beta_i) \colon (V_i,v_i) \to (V,v)\}_{i \in I}$ be  a covering in the site $\aff_{\ff}/\SY$. With the previous notation, given $(U,u,h,\gamma) \in \mathbf{J}(v,f)$, the family
\[
\{(g'_i, \iid) \colon (U_i, ug'_i) \to (U,u)\}_{i \in I}
\]
is a covering in the site $\aff_{\ff}/\SX$. Associated to the sheaf $\CG$ we have an equalizer diagram
\[
\begin{tikzpicture}
\matrix (m) [matrix of math nodes, row sep=3em, column sep=3em]{
\CG(U) & \prod_{i \in I}  \CG(U_i) & 
\prod_{i, j \in I}  \CG(U_i \times_U U_j).\\
}; 
\draw [transform canvas={yshift= 0.3ex},font=\scriptsize,->]
(m-1-2) -- node[above]{$\rho_2$}(m-1-3);
\draw [transform canvas={yshift= -0.3ex},font=\scriptsize,->]
(m-1-2) -- node[below]{$\rho_1$}(m-1-3);
\path[->,font=\scriptsize,>=angle 90]
(m-1-1) edge node[auto] {} (m-1-2);
\end{tikzpicture}
\]
Taking limits, we obtain another equalizer diagram (in the following we will keep the notation $\rho$ on the maps to avoid the clutter)
\[
\begin{tikzpicture}
\matrix (m) [matrix of math nodes, row sep=3em, column sep=3em]{
\invlim{\mathbf{J}{(v, f)}} \CG(U)
&  \invlim{\mathbf{J}{(v, f)}}   \prod_{i \in I}     \CG(U_i)
&  \invlim{\mathbf{J}{(v, f)}} \prod_{i, j \in I}  \CG(U_i \times_U U_j).\\
}; 
\draw [transform canvas={yshift= 1.6ex},font=\scriptsize,->]
(m-1-2) -- node[above]{$\rho_2$}(m-1-3);
\draw [transform canvas={yshift= 1ex},font=\scriptsize,->]
(m-1-2) -- node[below]{$\rho_1$}(m-1-3);
\path[transform canvas={yshift= 1.3ex},->,font=\scriptsize,>=angle 90]
    (m-1-1) edge node[auto] {} (m-1-2);
\end{tikzpicture}
\]
By commutation of limits \cite[Prop. 2.12.1]{bor}, this sequence becomes
\[
\begin{tikzpicture}
\matrix (m) [matrix of math nodes, row sep=3em, column sep=3em]{
\invlim{\mathbf{J}{(v, f)}} \CG(U)
&  \prod_{i \in I} \invlim{\mathbf{J}{(v, f)}}    \CG(U_i)
&  \prod_{i, j \in I} \invlim{\mathbf{J}{(v, f)}} \CG(U_i \times_U U_j).\\
}; 
\draw [transform canvas={yshift= 1.6ex},font=\scriptsize,->]
(m-1-2) -- node[above]{$\rho_2$}(m-1-3);
\draw [transform canvas={yshift= 1ex},font=\scriptsize,->]
(m-1-2) -- node[below]{$\rho_1$}(m-1-3);
\path[transform canvas={yshift= 1.3ex},->,font=\scriptsize,>=angle 90]
(m-1-1) edge node[auto] {} (m-1-2);
\end{tikzpicture}
\]
Let us denote by $\mathbf{J}_i{(v, f)}$ the essential image of $\mathbf{J}{(v, f)}$ in $\mathbf{J}{(v_i, f)}$ through the functor $\overline{g_i}$ \eqref{cont}. Note that 
\[
\invlim{\mathbf{J}{(v, f)}}    \CG(U_i) \quad=
\invlim{\mathbf{J}_i{(v, f)}}  \CG(U_i)
\]
All of this applies verbatim to $\mathbf{J}_{ij}{(v, f)}$.

Consider now the following commutative diagram
\begin{center}
\begin{tikzpicture}[baseline=(current  bounding  box.center)]
\matrix (m) [matrix of math nodes, row sep=3em, column sep=3em]{
\invlim{\mathbf{J}{(v, f)}} \CG(U)
&  \prod_{i \in I} \invlim{\mathbf{J}_i{(v, f)}}    \CG(U_i)
&  \prod_{i,j \in I} \invlim{\mathbf{J}_{ij}{(v, f)}} \CG(U_i \times_U U_j)\\
f_*\CG(V)    &   \prod_{i \in I}    f_*\CG(V_i)
&   \prod_{i, j \in I} f_*\CG(V_i \times_V V_j)\\
};
\draw [-, double distance=2pt] (m-1-1) -- (m-2-1);
\draw [transform canvas={yshift= 1.6ex},font=\scriptsize,->]
(m-1-2) -- node[above]{$\rho_2$}(m-1-3);
\draw [transform canvas={yshift= 1ex},font=\scriptsize,->]
(m-1-2) -- node[below]{$\rho_1$}(m-1-3);
\path[transform canvas={yshift= 1.3ex},->,font=\scriptsize,>=angle 90]
(m-1-1) edge node[auto] {} (m-1-2);
\draw [transform canvas={yshift= 0.3ex},font=\scriptsize,->]
(m-2-2) -- node[above]{$\rho_2$}(m-2-3); 
\draw[transform canvas={yshift=-0.3ex},font=\scriptsize,->] 
(m-2-2) -- node[below]{$\rho_1$}(m-2-3);
\path[->,font=\scriptsize,>=angle 90] (m-2-1) edge node[auto]{} (m-2-2);
\draw[->] (m-2-2) -- (m-1-2) node[midway, right, scale=0.75]{can}; 
\draw[->] (m-2-3) -- (m-1-3) node[midway, right, scale=0.75]{can};
\end{tikzpicture}
\end{center}
In a natural way, for every $\overline{U}=(U,u,h,\gamma) \in \mathbf{J}{(v_i, f)}$  we have $(U,u,g_1 h, \beta h \circ \gamma) \in \mathbf{J}{(v, f)}$. Then we get
\[
\overline{U}_i = \overline{g_i}(U,u,g_1 h, \beta h \circ \gamma) \in \mathbf{J}_i{(v, f)}
\]
jointly with maps in $\mathbf{J}{(v_i, f)}$, $\overline{U} \to \overline{U}_i$ and  $\overline{U}_i\to \overline{U}$ whose composition is the identity. This implies that the canonical map 
\[
f_*\CG(V_i) \quad \xto{\text{ can }} \invlim{\mathbf{J}_i{(v, f)}}    \CG(U_i)
\]
is a monomorphism. Similarly the rightmost vertical map is a monomorphism, too. The top row is an equalizer and this implies that the bottom row is. As a consequence, $f_*\CG$ a sheaf.
\end{proof}

\begin{cor}\label{adjhaz6}
 Let $\CF \in \SY_{\ff}$. We denote by $f^{-1}\CF$ the sheaf
associated to the presheaf $f^{\pin}\CF$. There is a pair of adjoint functors
\[
\SX_{\ff} \,
\underset{f_*}{\overset{f^{-1}}{\longleftrightarrows}} \,
\SY_{\ff}.
\]
\end{cor}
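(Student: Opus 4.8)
The plan is to deduce the sheaf-level adjunction formally from the presheaf-level adjunction already established in Proposition \ref{adjpre6}, using two inputs: that sheafification is left adjoint to the inclusion of the topos into presheaves, and that $f_*$ carries sheaves to sheaves, which is exactly Proposition \ref{p61}. Write $a \colon \pre(\aff_{\ff}/\SX) \to \SX_{\ff}$ for the associated-sheaf functor and $\iota$ for the (fully faithful) inclusion of $\SX_{\ff}$ into $\pre(\aff_{\ff}/\SX)$, so that $a \dashv \iota$, and similarly on the $\SY$ side. By definition $f^{-1} = a \circ f^{\pin}$.

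First I would fix $\CF \in \SY_{\ff}$ and $\CG \in \SX_{\ff}$ and assemble the chain of natural bijections
\[
\Hom_{\SX_{\ff}}(f^{-1}\CF, \CG)
\cong \Hom_{\pre(\aff_{\ff}/\SX)}(f^{\pin}\CF, \CG)
\cong \Hom_{\pre(\aff_{\ff}/\SY)}(\CF, f_*\CG)
\cong \Hom_{\SY_{\ff}}(\CF, f_*\CG).
\]
Here the first isomorphism is the adjunction $a \dashv \iota$ applied to the presheaf $f^{\pin}\CF$ and the sheaf $\CG$ (which I view through $\iota$); the second is the presheaf adjunction of Proposition \ref{adjpre6}; and the last one uses that $f_*\CG$ is a sheaf by Proposition \ref{p61}, together with the full faithfulness of $\iota$, so that morphisms of presheaves $\CF \to f_*\CG$ are the same as morphisms in the topos $\SY_{\ff}$. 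Implicit throughout is that $f_*$ on sheaves is simply the restriction of the presheaf-level $f_*$, which lands in $\SY_{\ff}$ precisely by Proposition \ref{p61}.

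The only step that is not purely formal is this last passage from presheaf morphisms to sheaf morphisms, and it is exactly where Proposition \ref{p61} is indispensable: without knowing that $f_*\CG$ is a sheaf one could not identify $\Hom_{\pre}(\CF, f_*\CG)$ with $\Hom_{\SY_{\ff}}(\CF, f_*\CG)$. Thus the genuine obstacle has already been overcome, and what remains for the corollary is only bookkeeping. To finish I would check naturality of the composite bijection in both $\CF$ and $\CG$; this holds because each of the three isomorphisms in the chain is natural (the unit/counit of the sheafification adjunction and of the presheaf adjunction of Proposition \ref{adjpre6} are natural transformations, and $\iota$ is a functor), so their composite is a natural isomorphism and hence exhibits $f^{-1} \dashv f_*$ as functors $\SX_{\ff} \rightleftarrows \SY_{\ff}$.
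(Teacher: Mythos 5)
Your proof is correct and is precisely the paper's argument, which reads: ``Combine the previous Propositions \ref{adjpre6} and \ref{p61}, having in mind the adjoint property of sheafification.'' You have simply unpacked that one-line proof into the explicit chain of natural bijections, with the same three ingredients playing the same roles.
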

\begin{proof}
 Combine the previous Propositions \ref{adjpre6} and \ref{p61}, having in mind the adjoint property of sheafification.
\end{proof}

\begin{rem} In the case in which $f$ is a flat finitely presented morphism, then $f^{-1}$ is exact.

Notice that, in general,  the pair $(f^{-1},f_*)$ \emph{does
not} define a morphism of topoi, since $f^{-1}$ need not be
left exact. For an explicit example for the \emph{lisse-\'etale} topology see \cite[5.3.12]{beh}. The example remains valid for the $\ff$ topology because it is finer than the \emph{lisse-\'etale} one.
\end{rem}

\begin{prop}\label{l62}
 Let $f \colon \SX \to \SY$ and $g \colon \SY \to \SZ$ be 1-morphisms of geometric stacks. It holds that $(g f)_* \cong g_* f_*$
\end{prop}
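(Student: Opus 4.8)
The plan is to unwind both sides as inverse limits over the index categories of \ref{di} and to compare them through a single comparison functor. Fix $(W,w)\in\aff_{\ff}/\SZ$ and a presheaf $\CG$ on $\aff_{\ff}/\SX$. By definition $((gf)_*\CG)(W,w)=\invlim{\mathbf{J}(w,gf)}\CG(U,u)$, while $(g_*f_*\CG)(W,w)=\invlim{\mathbf{J}(w,g)}\invlim{\mathbf{J}(v,f)}\CG(U,u)$. The functors $(j,\beta)_*$ of \ref{di} make $(V,v,h',\gamma')\mapsto\mathbf{J}(v,f)$ into a pseudofunctor on $\mathbf{J}(w,g)$, and the inner limit $(f_*\CG)(V,v)$ depends on $(V,v,h',\gamma')$ exactly through these functors; hence, by interchange of limits over the associated Grothendieck construction (a Fubini-type statement extending \cite[Prop.~2.12.1]{bor}), the iterated limit equals $\invlim{\mathbf{T}}\CG(U,u)$, where $\mathbf{T}$ is the category whose objects are pairs $\big((V,v,h',\gamma'),(U,u,h,\gamma)\big)$ with $(V,v,h',\gamma')\in\mathbf{J}(w,g)$ and $(U,u,h,\gamma)\in\mathbf{J}(v,f)$, and whose morphisms are the evident compatible pairs.

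The heart of the argument is a comparison functor $\Phi\colon\mathbf{T}\to\mathbf{J}(w,gf)$ that pastes the two $2$-commutative squares, namely $\Phi\big((V,v,h',\gamma'),(U,u,h,\gamma)\big):=(U,u,h'h,\gamma'h\circ g\gamma)$, where $\gamma'h\circ g\gamma\colon gfu\imp wh'h$. I would show $\invlim{\mathbf{T}}(\CG\circ\Phi)\cong\invlim{\mathbf{J}(w,gf)}\CG$ by proving that $\Phi$ is \emph{initial}. To that end, define a section $\Sigma\colon\mathbf{J}(w,gf)\to\mathbf{T}$ by $\Sigma(U,u,h'',\gamma''):=\big((U,fu,h'',\gamma''),(U,u,\id_U,\iid)\big)$, which lands in $\mathbf{T}$ because $\gamma''\colon gfu\imp wh''$ is also a valid $2$-cell for $(U,fu)\in\aff_{\ff}/\SY$. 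A direct check gives $\Phi\Sigma=\id_{\mathbf{J}(w,gf)}$, and the pair $\big((h,\gamma),\id\big)$ defines a morphism $\Sigma\Phi(X)\to X$ in $\mathbf{T}$, natural in $X$, whose image under $\Phi$ is the identity. From $\Phi\Sigma=\id$ and this natural transformation $\Sigma\Phi\Rightarrow\id_{\mathbf{T}}$ one deduces that each comma category $(\Phi\downarrow c)$ is nonempty (it contains $(\Sigma c,\id_c)$) and connected (any $(X,\phi)$ is joined to $(\Sigma c,\id_c)$ through $(\Sigma\Phi X,\phi)$), so $\Phi$ is initial and the two limits agree.

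Assembling these isomorphisms for all $(W,w)$ yields $(gf)_*\CG\cong g_*f_*\CG$; naturality in $(W,w)$ (compatibility with the restriction maps induced by the $(j,\beta)_*$) and in $\CG$ is routine, since every map in sight is built functorially from $\Phi$ and $\Sigma$. Finally, because $f_*$, $g_*$ and $(gf)_*$ preserve sheaves (Proposition \ref{p61}), the presheaf-level isomorphism restricts to the categories of sheaves. I expect the main obstacle to be purely bookkeeping: organizing the whiskered $2$-cells so that $\Phi$, $\Sigma$ and the transformation $\Sigma\Phi\Rightarrow\id$ are genuinely functorial, and making the interchange-of-limits step precise with the correct variance of the pseudofunctor $(V,v,h',\gamma')\mapsto\mathbf{J}(v,f)$. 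A slicker but equivalent route, avoiding the Grothendieck construction, is to establish the dual statement $(gf)^{\pin}\cong f^{\pin}g^{\pin}$ for the inverse-image presheaf functors by the same final-functor argument and then invoke uniqueness of right adjoints via Proposition \ref{adjpre6}.
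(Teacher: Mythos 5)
Your argument has a fatal gap at its central step: the section $\Sigma$ does not exist. For $\Sigma(U,u,h'',\gamma'') := \bigl((U,fu,h'',\gamma''),(U,u,\id_U,\iid)\bigr)$ to be an object of $\mathbf{T}$, the pair $(U,fu)$ must be an object of $\aff_{\ff}/\SY$, and by the definition of the site (\ref{site}) this requires $fu \colon U \to \SY$ to be \emph{flat and finitely presented}. Since $f$ is an arbitrary 1-morphism of geometric stacks, $fu$ need not be flat nor of finite presentation; your parenthetical justification only checks that $\gamma''$ has the right shape as a 2-cell, not that the object lies in the site. This is precisely the non-functoriality of the small flat site that the paper emphasizes before \ref{di}: composition with $f$ induces no (continuous) functor between the sites, which is what forces the limit-indexed definition of $f_*$ in the first place; your $\Sigma$ exists exactly when $f$ is flat and finitely presented, the easy case already noted in \ref{ii}. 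Without $\Sigma$, finality of $\Phi$ is not established and the comparison collapses. Moreover, what you are attempting --- an isomorphism $(gf)_*\cong g_*f_*$ at the level of \emph{presheaves} --- is strictly stronger than the proposition and is not what the paper proves: Proposition \ref{l62} concerns sheaves $\CF\in\SX_{\ff}$, and the paper's inverse map $g_*f_*\CF(W,w)\to(gf)_*\CF(W,w)$ is constructed, for each $(U,u,h,\gamma)\in\mathbf{J}(w,gf)$, by choosing a presentation $q\colon V'\to\SY\times_{\SZ}W$ of the geometric stack $\SY\times_{\SZ}W$ (Proposition \ref{2cartsq}), extracting from it a flat covering $q'\colon U'\to U$ in $\aff_{\ff}/\SX$, and using the equalizer expressing that $\CF$ is a sheaf for this covering. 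The sheaf hypothesis is thus essential to the construction itself; it cannot be relegated, as in your last paragraph, to a final appeal to Proposition \ref{p61}. The same defect breaks your proposed ``slicker route'': a section for comparing $(gf)^{\pin}$ with $f^{\pin}g^{\pin}$ would require $(V,gv)$ to be an object of $\aff_{\ff}/\SZ$, which again fails for non-flat $g$.

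Two smaller points. First, the parts of your argument not involving $\Sigma$ are sound: the Fubini-type rewriting of the iterated limit as a limit over the Grothendieck construction $\mathbf{T}$ is correct (the assignment $(V,v,h',\gamma')\mapsto\mathbf{J}(v,f)$, $(j,\beta)\mapsto(j,\beta)_*$ is even a strict functor, so no pseudofunctor subtleties arise), and $\Phi$ is a well-defined functor with $\CG\circ\Phi$ equal to the diagram computing $(gf)_*\CG$. Second, even granting $\Sigma$, you verify the wrong comma categories: the diagrams here are contravariant, so restriction along $\Phi$ preserves their limits when the categories $(c\downarrow\Phi)$ are nonempty and connected (i.e.\ $\Phi$ is \emph{final}, equivalently $\Phi^{\op}$ is initial), whereas you check $(\Phi\downarrow c)$; for a general functor these conditions are independent. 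The data you posit --- $\Phi\Sigma=\id$ together with a natural transformation $\epsilon\colon\Sigma\Phi\imp\id_{\mathbf{T}}$ satisfying $\Phi\epsilon=\id$ and $\epsilon\Sigma=\id$ --- would exhibit an adjunction $\Sigma\dashv\Phi$ and hence yield both conditions at once, so this slip would be repairable; but only if $\Sigma$ existed, which it does not.
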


\begin{proof}
Let $\CF \in \SX_{\ff}$. For each $(W,w) \in \aff_{\ff}/\SZ$, we have that
\[
(gf)_*\CF(W,w) \quad = \invlim{\mathbf{J}{(w, gf)}} \CF(U,u)
\]
where $\mathbf{J}(w,gf)$ is as in \ref{di}. On the other hand
\[
g_*f_*\CF(W,w) \quad = \invlim{\mathbf{J}{(w, g)}} f_* \CF (V, v).
\]
We have a family of maps compatible with morphisms in $\mathbf{J}{(w, g)}$
\[
(gf)_*\CF(W,w) \lto f_* \CF (V, v)
\]
defining a natural map
\begin{equation}\label{fl1}
(gf)_*\CF(W,w) \lto g_*f_*\CF(W,w).
\end{equation}
Let us show that this map is an isomorphism.

Next, for each $(U,u,h,\gamma) \in \mathbf{J}{(w, gf)}$ we will construct a map
\[
g_*f_*\CF(W,w) \lto \CF(U,u)
\]
compatible with morphisms in $\mathbf{J}{(w, gf)}$. Consider the 2-fibered product stack $\SY' := \SY \times_{\SZ} W$, its canonical projections $q_1$ and $q_2$, and $\gamma_0 \colon gq_1 \imp wq_2$ its associated 2-morphism. $\SY'$ is a geometric stack by Proposition \ref{2cartsq}. There is a morphism  $f' \colon U \to \SY'$ given by the 2-isomorphism $\gamma \colon g f u \imp w h$, such that $q_1 f' = f u$, $h = q_2 f'$ and $\gamma_0 f' = \gamma$.
Let $q \colon V' \to \SY'$ be a presentation. Consider the 2-fiber squares
\begin{center}
\begin{tikzpicture}
\matrix(m)[matrix of math nodes, row sep=2.6em, column sep=2.8em,
text height=1.5ex, text depth=0.25ex]{
U' & V'   \\
U  & \SY' \\}; 
\path[->,font=\scriptsize,>=angle 90]
(m-1-1) edge node[auto] {$f''$} (m-1-2)
        edge node[left] {$q'$} (m-2-1)
(m-1-2) edge node[auto] {$q$} (m-2-2)
(m-2-1) edge node[auto] {$f'$} (m-2-2);
\draw [shorten >=0.2cm,shorten <=0.2cm,->,double] (m-2-1) -- (m-1-2) node[auto, midway,font=\scriptsize]{$\gamma''$};
\end{tikzpicture}
 \hspace{6mm}
\begin{tikzpicture}
\matrix(m)[matrix of math nodes, row sep=2.6em, column sep=2.8em,
text height=1.5ex, text depth=0.25ex]{
V'' & V'                 \\
V'  & \SY' \\}; 
\path[->,font=\scriptsize,>=angle 90]
(m-1-1) edge node[auto] {$p'_2$} (m-1-2)
        edge node[left] {$p'_1$} (m-2-1)
(m-1-2) edge node[auto] {$q$} (m-2-2)
(m-2-1) edge node[auto] {$q$} (m-2-2);
\draw [shorten >=0.2cm,shorten <=0.2cm,->,double] (m-2-1) -- (m-1-2) node[auto, midway,font=\scriptsize]{$\gamma'''$};
\end{tikzpicture}
\end{center}
Denote by $v' \colon V' \to \SY$ the map $v' = q_1q$. It is clear that $(V',v', q_2q, \gamma_0 q) \in \mathbf{J}{(w, g)}$. Take $v'' := v' p'_1$. In $\mathbf{J}{(w, g)}$, we have the projections $(p'_1, \iid)$ and $(p'_2, q_1\gamma''')$ from $(V'',v'', q_2qp'_1, \gamma_0 qp'_1)$ to $(V',v', q_2q, \gamma_0 q)$. We have a diagram
\[
\begin{tikzpicture}
\matrix (m) [matrix of math nodes, row sep=3em, column sep=4.5em]{
g_*f_*\CF(W,w) &  f_* \CF (V', v') &  f_* \CF (V'', v'') \\
\CF(U,u)       &  \CF(U', u')      &  \CF(U' \times_U U', u'') \\
}; 
\draw [transform canvas={yshift= 0.3ex},font=\scriptsize,->]
(m-1-2) -- node[above]{$f_*\CF(p'_1,\iid)$}(m-1-3);
\draw [transform canvas={yshift= -0.3ex},font=\scriptsize,->]
(m-1-2) -- node[below]{$f_*\CF(p'_2,q_1 \gamma''')$}(m-1-3);
\draw[->,font=\scriptsize,>=angle 90]
(m-1-1) edge node[auto]{$\lambda$} (m-1-2);
\draw [transform canvas={yshift= 0.3ex},font=\scriptsize,->]
(m-2-2) -- node[above]{$\CF(p_1,\iid)$}(m-2-3);
\draw [transform canvas={yshift= -0.3ex},font=\scriptsize,->]
(m-2-2) -- node[below]{$\CF(p_2,\iid)$}(m-2-3);
\path[->,font=\scriptsize,>=angle 90]
(m-2-1) edge node[auto]{$\CF(q',\iid)$} (m-2-2);
\draw[->,font=\scriptsize, dashed] (m-1-1) -- (m-2-1) ;
\draw[->,font=\scriptsize] (m-1-2) -- node[left]{can}(m-2-2); 
\draw[->,font=\scriptsize] (m-1-3) -- node[right]{can}(m-2-3);
\end{tikzpicture}
\]
where $p_1, p_2 \colon U' \times_U U' \to U'$ are the canonical projections, $u' = uq'$, $u'' = u' p_1$ and the vertical maps labelled ``can'' are induced by the limit diagrams indexed by $\mathbf{J}{(v', f)}$ and $\mathbf{J}{(v'', f)}$. The map $\lambda$ is the canonical map corresponding to the limit of the diagram indexed by $\mathbf{J}{(w, g)}$.

Notice that $q$ is smooth and surjective and the same holds for $q'$ therefore, $\{q' \colon U' \to U\}$ is a (flat) covering in $\aff_{\ff}/\SX$. As $\CF$ is a sheaf, the lower row is an equalizer diagram. We have that $f_*\CF(p'_1,\iid) \lambda = f_*\CF(p'_2,q_1 \gamma''') \lambda$, therefore there is a canonical vertical dashed map. Passing to the limit along $\mathbf{J}{(w, gf)}$ we obtain a natural map
\begin{equation*}%\label{fl2}
g_*f_*\CF(W,w) \lto 
(gf)_*\CF(W,w)
\end{equation*}
which is inverse to (\ref{fl1}).
\end{proof}

\begin{rem}
 The long proof of the previous result is due to the lack of an underlying map of sites, which, in turn, forces the awkward definition in \ref{di}. Of course, if $f$ is representable, by using an appropriate site of schemes or algebraic spaces, the preceding proof becomes the usual short one.
\end{rem}

\begin{cor}
In the previous situation $(g  f)^{-1} \liso f^{-1} g^{-1}$.
\end{cor}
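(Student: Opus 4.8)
The plan is to obtain this statement as a purely formal consequence of Proposition \ref{l62} together with the three adjunctions furnished by Corollary \ref{adjhaz6}, invoking the uniqueness of adjoint functors. First I would recall that Corollary \ref{adjhaz6} provides adjunctions $f^{-1} \dashv f_*$, $g^{-1} \dashv g_*$ and $(gf)^{-1} \dashv (gf)_*$ between the corresponding topoi, where the direct images go from source to target and the inverse images the other way, so that $f^{-1}g^{-1} \colon \SZ_{\ff} \to \SX_{\ff}$ and $g_*f_* \colon \SX_{\ff} \to \SZ_{\ff}$ are composable in the required manner.

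Next, since a composite of left adjoints is left adjoint to the composite of the corresponding right adjoints taken in the opposite order, the functor $f^{-1}g^{-1}$ is left adjoint to $g_* f_*$. Explicitly, for $\CF \in \SZ_{\ff}$ and $\CG \in \SX_{\ff}$ one has the natural chain of isomorphisms
\[
\Hom_{\SX}(f^{-1}g^{-1}\CF, \CG) \cong \Hom_{\SY}(g^{-1}\CF, f_*\CG) \cong \Hom_{\SZ}(\CF, g_*f_*\CG),
\]
the first coming from $f^{-1} \dashv f_*$ and the second from $g^{-1} \dashv g_*$. By Proposition \ref{l62} we have $g_*f_* \cong (gf)_*$ naturally, so the right-hand side is naturally isomorphic to $\Hom_{\SZ}(\CF, (gf)_*\CG)$, which in turn is identified with $\Hom_{\SX}((gf)^{-1}\CF, \CG)$ by the adjunction $(gf)^{-1} \dashv (gf)_*$.

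Finally, assembling these isomorphisms yields a natural isomorphism
\[
\Hom_{\SX}((gf)^{-1}\CF, \CG) \cong \Hom_{\SX}(f^{-1}g^{-1}\CF, \CG),
\]
functorial in both $\CF$ and $\CG$; by the Yoneda lemma this gives the desired isomorphism $(gf)^{-1} \liso f^{-1}g^{-1}$. There is essentially no genuine obstacle here, as the argument is entirely formal: the only point that depends on the preceding work is the \emph{naturality} of the isomorphism in Proposition \ref{l62}, which is what guarantees that the uniqueness-of-adjoints reasoning applies and that the resulting isomorphism is compatible with the identifications made along the way.
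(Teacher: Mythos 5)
Your proof is correct and is exactly the paper's argument: the paper's proof of this corollary reads, in full, "It is a consequence of the previous proposition by adjunction," which is precisely the uniqueness-of-adjoints reasoning you spell out via Proposition \ref{l62}, the adjunctions of Corollary \ref{adjhaz6}, and Yoneda. Your version simply makes the formal details explicit.
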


\begin{proof}
 It is a consequence of the previous proposition by adjunction.
\end{proof}

\begin{cosa}
In general, for $f\colon \SX \to \SY$, there is no map between ringed sites, however, we consider the morphism of sheaves of rings $f^\#\colon \CO_{\SY} \to f_*\CO_{\SX}$, given for each
 $(V,v)\in \aff_{\ff}/\SY$ by the induced ring homomorphism
\[
f^\#_{(V,v)}\colon \CO_{\SY}(V,v) \lto
\invlim{\mathbf{J}{(v,f)}} \CO_{\SX}(U,u).
\]

We want to extend the previous adjunction to the categories of sheaves of modules. As we do not have neither a topos morphism neither a continuous map of sites, this extension is not straightforward.

The functor $f_*$ takes $\CO_{\SX}$-Modules to $f_*\CO_{\SX}$-Modules and, by the forgetful functor associated to $f^\#$, to $\CO_{\SY}$-Modules. Note that the formula in Proposition \ref{l62} keeps holding by the transitivity of the forgetful functors.

We will apply Lemma~\ref{Olss} to guarantee that the functor $f^{\pin}$ commutes with finite products and so does $f^{-1}$. It will follow that if $\CF$ is an $\CO_{\SY}$-module, $f^{-1}\CF$ will be an $f^{-1}\CO_{\SY}$-module. Therefore we have to check the hypothesis of Lemma~\ref{Olss}, so we are reduced to prove the following.
\end{cosa}

\begin{lem}\label{fprod}
 The category $\mathbf{I}(u,f)$ has finite products. 
\end{lem}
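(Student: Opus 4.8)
The plan is to construct binary products in $\mathbf{I}(u,f)$ by fibre product over $\SY$, and this is all that is needed: in the proof of Lemma \ref{Olss} the finite products are reduced to the binary case by induction, so it suffices to exhibit a product of two objects. This is also exactly where the hypothesis that $\SY$ is geometric enters decisively. First I would take two objects $(V_1,v_1,h_1,\gamma_1)$ and $(V_2,v_2,h_2,\gamma_2)$ of $\mathbf{I}(u,f)$ and form the $2$-fibre product $V := V_1 \times_{\SY} V_2$, with projections $q_i \colon V \to V_i$ and canonical $2$-cell $\psi \colon v_1 q_1 \imp v_2 q_2$.

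The key preliminary point is that $V$ is again an object of $\aff_{\ff}/\SY$, and this is the one genuinely geometric step. By \ref{31} every morphism from an affine scheme to a geometric stack is affine, so $v_1$ is an affine morphism; hence the projection $q_2 \colon V \to V_2$, being the base change of $v_1$ along $v_2$, is affine, and since $V_2$ is affine so is $V$. Moreover $q_2$ is flat and of finite presentation, being a base change of $v_1$, so $v := v_2 q_2 \colon V \to \SY$ is flat and finitely presented as a composite of such maps. Thus $(V,v) \in \aff_{\ff}/\SY$.

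Next I would produce the factorization datum. The two $2$-cells $\gamma_1 \colon fu \imp v_1 h_1$ and $\gamma_2 \colon fu \imp v_2 h_2$ combine into a $2$-cell $\gamma_2 \circ \gamma_1^{-1} \colon v_1 h_1 \imp v_2 h_2$, so the universal property of the $2$-fibre product yields a morphism $h \colon U \to V$ together with compatible $2$-isomorphisms $q_i h \cong h_i$; composing $\gamma_2$ with the $2$-iso $v_2 q_2 h \cong v_2 h_2$ gives the required $2$-cell $\gamma \colon fu \imp vh$. The projections then upgrade to morphisms $(q_i,\psi_i) \colon (V,v,h,\gamma) \to (V_i,v_i,h_i,\gamma_i)$ in $\mathbf{I}(u,f)$, satisfying the identities $q_i h = h_i$ and $\gamma_i = \psi_i h \circ \gamma$ demanded in \ref{ii}. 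Finally I would verify the universal property: for any $(W,w,k,\delta)$ a pair of morphisms to $(V_1,v_1,h_1,\gamma_1)$ and $(V_2,v_2,h_2,\gamma_2)$ amounts to data $(j_i,\beta_i)$ with $j_i k = h_i$ and $\gamma_i = \beta_i k \circ \delta$, which by the universal property of $V_1 \times_{\SY} V_2$ is the same as a single $(j,\beta)\colon (W,w,k,\delta)\to (V,v,h,\gamma)$; one checks this correspondence is natural.

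I expect the main obstacle to be purely the $2$-categorical bookkeeping: keeping track of the several $2$-cells and reconciling the isomorphisms $q_i h \cong h_i$ furnished by the universal property of the $2$-fibre product with the strict equalities $jh = h'$ required in \ref{ii}. This discrepancy is harmless because $U$, $V_1$, $V_2$ and $V$ are schemes, hence $0$-truncated, so the relevant mapping groupoids are discrete and the $2$-isomorphisms may be taken to be identities; but it must be handled with care. The substantive, non-formal content is confined to the first step, where semi-separatedness of $\SY$ guarantees that the fibre product $V_1 \times_{\SY} V_2$ remains affine.
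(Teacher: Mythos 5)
Your proposal is correct and follows essentially the same route as the paper: form the $2$-fibre product $V_1 \times_{\SY} V_2$, use its universal property (and the discreteness of mapping groupoids into schemes) to obtain the strict factorization $h$, and take the two projections with the canonical $2$-cells as the product projections in $\mathbf{I}(u,f)$. Your argument is in fact slightly more complete than the paper's, since you verify explicitly that $(V_1 \times_{\SY} V_2,\, v_2 q_2)$ is an object of $\aff_{\ff}/\SY$ (affine by semi-separatedness via \ref{31}, flat and finitely presented by base change and composition), a point the paper leaves implicit.
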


\begin{proof}
We only need to prove that any two objects in $\mathbf{I}(u,f)$ admit a product. For that, let $(V_1,v_1,h_1,\gamma_1)$ and $(V_2,v_2,h_2,\gamma_2)$ be objects in $\mathbf{I}(u,f)$. For $i \in \{1,2\}$, let $p_i$ be the projection from $V_1\times_{\SY}V_2$ to $V_i$, and $\beta \colon v_1 p_1 \imp v_2 p_2$ be the canonical 2-morphism. If $h \colon U \to V_1\times_{\SY}V_2$ is the morphism satisfying $p_1 \, h=h_1$, $p_2 \, h=h_2$ and $\beta \,h=\gamma_2 \circ \gamma_1^{-1}$, then the object $(V_1\times_{\SY}V_2,v_1\, p_1,h,\gamma_1)$ together with the morphisms $(p_1,\iid)$ and $(p_2,\beta)$ is a product
of $(V_1,v_1,h_1,\gamma_1)$ and $(V_2,v_2,h_2,\gamma_2)$ in $\mathbf{I}(u,f)$.
\end{proof}

\begin{cor}\label{adjmod}
 The functor $f^*\colon \CO_{\SY}\md \to \CO_{\SX}\md$ given by
\[
f^*\CF := \CO_{\SX}\otimes_{f^{-1}\CO_{\SY}}f^{-1}\CF
\]
is left adjoint to the functor $f_*\colon \CO_{\SX}\md \to \CO_{\SY}\md$.
\end{cor}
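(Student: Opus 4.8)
The plan is to reproduce, in this concrete situation, the reasoning of \ref{adjring}, even though here there is no underlying continuous functor of sites: the only inputs I shall use are the adjunction of abelian sheaves from Corollary \ref{adjhaz6}, the ring morphism $f^\#$, and the fact that $f^{-1}$ commutes with finite products. All three are now available, so the module-level adjunction will drop out by combining an extension-of-scalars adjunction with the sheaf-level one.

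First I would record that $f^{-1}$ commutes with finite products. Indeed, by Lemma \ref{fprod} the category $\mathbf{I}(u,f)$ has finite products, so Lemma \ref{Olss} applies and shows that the presheaf functor $f^{\pin}$ commutes with finite products; since sheafification is exact it too commutes with finite products, and hence so does $f^{-1}$. A sheaf of rings is a ring object with respect to finite products, and a module over it is defined by an action expressed through products; therefore $f^{-1}\CO_{\SY}$ acquires a canonical structure of sheaf of rings, the morphism adjoint to $f^\#$ is a ring homomorphism $f_\#\colon f^{-1}\CO_{\SY}\to\CO_{\SX}$, and for every $\CF\in\CO_{\SY}\md$ the sheaf $f^{-1}\CF$ is naturally an $f^{-1}\CO_{\SY}$-module. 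This makes $f^*\CF=\CO_{\SX}\otimes_{f^{-1}\CO_{\SY}}f^{-1}\CF$ a well-defined $\CO_{\SX}$-module, where $\CO_{\SX}$ is regarded as an $f^{-1}\CO_{\SY}$-algebra through $f_\#$; dually $f_*\CG$ is an $\CO_{\SY}$-module via $f^\#$.

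With these structures in hand, for $\CF\in\CO_{\SY}\md$ and $\CG\in\CO_{\SX}\md$ I would produce the natural isomorphism as the composite
\[
\Hom_{\CO_{\SX}}(f^*\CF,\CG)\cong\Hom_{f^{-1}\CO_{\SY}}(f^{-1}\CF,\CG)\cong\Hom_{\CO_{\SY}}(\CF,f_*\CG),
\]
where $\CG$ is viewed as an $f^{-1}\CO_{\SY}$-module through $f_\#$. The first isomorphism is the usual tensor--hom (extension of scalars) adjunction for the ring homomorphism $f_\#$, which is purely formal once $f^*\CF$ is defined. The second comes from the adjunction $f^{-1}\dashv f_*$ of Corollary \ref{adjhaz6}.

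The main obstacle, and the only point requiring care, is to verify that the underlying bijection $\Hom(f^{-1}\CF,\CG)\cong\Hom(\CF,f_*\CG)$ of Corollary \ref{adjhaz6} restricts to the indicated subsets of linear maps, i.e. that $f^{-1}\CO_{\SY}$-linearity on the left matches $\CO_{\SY}$-linearity on the right. I would check this by tracking the unit $\CF\to f_*f^{-1}\CF$ of the adjunction: because $f^{-1}$ preserves finite products, this unit is compatible with the ring actions on source and target, the action on the target being transported through $f^\#$ and $f_\#$, so a homomorphism $\CF\to f_*\CG$ is $\CO_{\SY}$-linear precisely when its adjoint $f^{-1}\CF\to\CG$ is $f^{-1}\CO_{\SY}$-linear. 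Naturality of each isomorphism in both variables is then routine, completing the proof.
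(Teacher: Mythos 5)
Your proposal is correct and follows essentially the same route as the paper: the paper's proof is precisely to combine Corollary \ref{adjhaz6} with the discussion preceding Lemma \ref{fprod} (Lemma \ref{Olss} plus finite products in $\mathbf{I}(u,f)$ giving the ring and module structures on $f^{-1}\CO_{\SY}$ and $f^{-1}\CF$), exactly as in \ref{adjring}. You merely make explicit the composite of the extension-of-scalars adjunction for $f_\#$ with the sheaf-level adjunction $f^{-1}\dashv f_*$, including the linearity check, which the paper leaves implicit.
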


\begin{proof}
 Combine Corollary \ref{adjhaz6} with the previous discussion.
\end{proof}

\begin{cor}
 Let $f \colon \SX \to \SY$ and $g \colon \SY \to \SZ$ be 1-morphisms of geometric stacks. It holds that $(g f)^* \cong f^* g^*$.
%\begin{enumerate}
%\item  $(g f)_* = g_* f_*$ and
%\item .
%\end{enumerate}
%as functors defined on $\CO_{\SX}\md$ and $\CO_{\SZ}\md$, respectively.
\end{cor}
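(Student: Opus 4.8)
The plan is to deduce the statement formally from the uniqueness of adjoints, exactly as the preceding corollary deduced $(gf)^{-1} \liso f^{-1} g^{-1}$ from Proposition \ref{l62}. By Corollary \ref{adjmod} we have at our disposal three adjunctions $f^* \dashv f_*$, $g^* \dashv g_*$ and $(gf)^* \dashv (gf)_*$ between the respective categories of $\CO$-modules. The composite $f^* g^* \colon \CO_{\SZ}\md \to \CO_{\SX}\md$ is then left adjoint to the composite $g_* f_* \colon \CO_{\SX}\md \to \CO_{\SZ}\md$, since a composite of left adjoints is left adjoint to the composite of the corresponding right adjoints taken in the reverse order.

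On the other hand, Proposition \ref{l62} provides a natural isomorphism $(gf)_* \cong g_* f_*$. As remarked in the discussion preceding Lemma \ref{fprod}, this isomorphism is compatible with the forgetful functors induced by the maps $f^\#$, $g^\#$ and $(gf)^\#$, so it is in fact an isomorphism of functors $\CO_{\SX}\md \to \CO_{\SZ}\md$. Consequently $(gf)^*$ and $f^* g^*$ are both left adjoint to one and the same functor $g_* f_* \cong (gf)_*$, and by the essential uniqueness of adjoints they are naturally isomorphic, $(gf)^* \cong f^* g^*$.

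The only point that requires care --- and which is the crux of the matter --- is precisely the compatibility of the isomorphism of Proposition \ref{l62} with the $\CO$-module structures, so that it lifts from an isomorphism of sheaves to an isomorphism of functors of modules; everything else is purely formal. Since that compatibility has already been secured through the transitivity of the forgetful functors, no genuine obstacle remains, and the argument reduces to a one-line application of the uniqueness of left adjoints.
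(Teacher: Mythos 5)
Your proof is correct and follows exactly the paper's own route: the paper deduces this corollary ``as a consequence of Proposition \ref{l62} by adjunction,'' i.e.\ from $(gf)_* \cong g_* f_*$ (which, as the paper notes before Lemma \ref{fprod}, persists at the level of $\CO$-modules by transitivity of the forgetful functors) together with uniqueness of left adjoints. You have merely spelled out the details that the paper leaves implicit, including correctly identifying the module-structure compatibility as the only non-formal point.
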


\begin{proof} 
It is a consequence of Proposition \ref{l62} by adjunction.
\end{proof}

\begin{cor}
 Let $f_1, f_2 \colon \SX \to \SY$ be 1-morphisms of geometric stacks. With the previous notation, given a 2-morphism $\zeta \colon f_1 \imp f_2$ we have isomorphisms
\begin{enumerate}
\item $\zeta_* \colon f_{1 *} \liso f_{2 *}$
\item $\zeta^* \colon f_{2}^* \liso f_{1}^*$
\end{enumerate}
as functors defined on $\CO_{\SX}\md$ and $\CO_{\SY}\md$, respectively.
\end{cor}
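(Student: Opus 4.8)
The plan is to promote the sheaf-level isomorphisms of Proposition \ref{2fib} to the module categories by checking that they respect the relevant ring homomorphisms. For part (\emph{i}), recall that for each $i$ the sheaf $f_{i*}\CG$ acquires its $\CO_\SY$-module structure by restriction of scalars along $f_i^\#\colon \CO_\SY \to f_{i*}\CO_\SX$. Since Proposition \ref{2fib}(\emph{i}) already produces an isomorphism of sheaves of abelian groups $\zeta_*\colon f_{1*}\CG \liso f_{2*}\CG$, natural in $\CG$, the only remaining point is to verify that this map is $\CO_\SY$-linear, where the two sides carry their structures through $f_1^\#$ and $f_2^\#$ respectively.

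The key observation is that the equivalence $\zeta_{\mathbf{J}}\colon \mathbf{J}(v,f_1)\liso \mathbf{J}(v,f_2)$ constructed in the lemma preceding Proposition \ref{2fib} sends $(U,u,h,\gamma)$ to an object with the \emph{same} underlying morphism of affine schemes $h\colon U\to V$, altering only the $2$-cell $\gamma$. Because the component ring homomorphism $\CO_\SY(V,v)\to\CO_\SX(U,u)$ indexed by an object of $\mathbf{J}(v,f_i)$ in the definition of $f_i^\#$ depends solely on $h$, it follows at once that the induced ring isomorphism $(\zeta_*)_{\CO_\SX}\colon f_{1*}\CO_\SX \liso f_{2*}\CO_\SX$ satisfies $(\zeta_*)_{\CO_\SX}\circ f_1^\# = f_2^\#$. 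Evaluating over any $(V,v)\in\aff_\ff/\SY$ and using that both the ring action on $\CG$ and the products in the inverse limits are computed componentwise, this identity yields $\zeta_*(a\cdot m)=a\cdot\zeta_*(m)$ for $a\in\CO_\SY(V,v)$ and $m\in f_{1*}\CG(V,v)$. Hence $\zeta_*$ is $\CO_\SY$-linear and (\emph{i}) follows.

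For part (\emph{ii}) I would argue analogously through the formula $f_i^*\CF=\CO_\SX\otimes_{f_i^{-1}\CO_\SY}f_i^{-1}\CF$ of Corollary \ref{adjmod}. By Proposition \ref{2fib}(\emph{ii}) the presheaf map $\zeta^{\pin}\colon f_2^{\pin}\liso f_1^{\pin}$ sheafifies to an isomorphism $f_2^{-1}\liso f_1^{-1}$, natural in its argument; taking $\CF=\CO_\SY$ gives a ring isomorphism $f_2^{-1}\CO_\SY\liso f_1^{-1}\CO_\SY$. Exactly as before, the equivalence $\zeta^{\mathbf{I}}\colon \mathbf{I}(u,f_2)\liso\mathbf{I}(u,f_1)$ preserves the underlying morphism $h$, so the augmentations $f_{i\#}\colon f_i^{-1}\CO_\SY\to\CO_\SX$ adjoint to $f_i^\#$ satisfy $f_{1\#}\circ\zeta^{\pin}=f_{2\#}$. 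Therefore $\CO_\SX$ is the same algebra whether regarded over $f_2^{-1}\CO_\SY$ via $f_{2\#}$ or over $f_1^{-1}\CO_\SY$ via $f_{1\#}$ through $\zeta^{\pin}$, and tensoring the natural isomorphism $f_2^{-1}\CF\liso f_1^{-1}\CF$ by $\CO_\SX$ over these identified rings produces a natural isomorphism $\zeta^*\colon f_2^*\CF\liso f_1^*\CF$ of $\CO_\SX$-modules, which is (\emph{ii}).

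The main obstacle is not any genuine difficulty but the bookkeeping required to isolate the single fact on which everything turns: that the category equivalences $\zeta_{\mathbf{J}}$ and $\zeta^{\mathbf{I}}$ act as the identity on the scheme-level data $h$ governing $f_i^\#$ and $f_{i\#}$. Once this is extracted, both ring-map compatibilities, and hence the module-linearity of $\zeta_*$ and $\zeta^*$, are formal, while naturality in $\CG$ and $\CF$ is inherited directly from Proposition \ref{2fib}.
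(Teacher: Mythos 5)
Your proposal is correct and follows essentially the same route as the paper: the paper's proof is simply ``Immediate from Proposition \ref{2fib} and the previous discussion,'' and your argument supplies exactly the details that this compresses, namely that the equivalences $\zeta_{\mathbf{J}}$ and $\zeta^{\mathbf{I}}$ fix the underlying scheme morphism $h$, so the sheaf-level isomorphisms of Proposition \ref{2fib} are compatible with $f_i^\#$ and $f_{i\#}$ and hence lift to the module categories via the structures set up before Corollary \ref{adjmod}.
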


\begin{proof}
 Immediate from \ref{2fib} and the previous discussion.
\end{proof}

\begin{rem}
Notice that the isomorphism $\phi^*$ in \ref{fund} is a particular case of  (\emph{ii}).
\end{rem}

\begin{prop} \label{qcoinv}
If $\CF \in \qco(\SY)$ then $f^*\CF \in \qco(\SX)$.
\end{prop}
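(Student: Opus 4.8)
The plan is to reduce, via a presentation of $\SX$, to the case of a morphism of affine schemes already treated in Section \ref{sec2}. By Theorem \ref{agree} the subcategories $\qco$ and $\cart$ coincide, so it is equivalent to prove that $f^*\CF \in \cart(\SX)$.

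First I would fix presentations $p \colon X \to \SX$ and $q \colon Y \to \SY$ with $X$ and $Y$ affine. By Lemma \ref{descart} it suffices to check that $p^*f^*\CF \in \cart(X)$, and by the compatibility of inverse image with composition proved above (the isomorphism $(g f)^* \cong f^* g^*$) we have $p^*f^*\CF \cong (fp)^*\CF$, where $fp \colon X \to \SY$. Thus the problem is reduced to showing that the inverse image of a Cartesian presheaf along a $1$-morphism from an \emph{affine} scheme to $\SY$ is Cartesian.

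Next I would form the $2$-fibre product $Z := X \times_{\SY} Y$ of $fp$ and $q$, with projections $\pi_X \colon Z \to X$ and $\pi_Y \colon Z \to Y$ and structural $2$-cell $\theta \colon fp\,\pi_X \imp q\,\pi_Y$. Since $Y$ is affine and $\SY$ is geometric, $q$ is an affine morphism (see \ref{31}), and it is faithfully flat of finite presentation; hence so is its base change $\pi_X$, so $Z$ is affine and $\pi_X \colon Z \to X$ is a presentation of the affine scheme $X$ regarded as a geometric stack. Now $q^*\CF \in \cart(Y)$ by Proposition \ref{desc1}, and as $\pi_Y$ is a morphism of affine schemes, Proposition \ref{adjaf} gives $\pi_Y^*q^*\CF \in \cart(Z)$. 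The $2$-functoriality of $(-)^*$ (its compatibility with composition and with $2$-cells, established above) applied to $\theta$ yields $\pi_X^*(fp)^*\CF \cong \pi_Y^*q^*\CF$, whence $\pi_X^*(fp)^*\CF \in \cart(Z)$. Finally, Lemma \ref{descart} applied to the presentation $\pi_X$ shows $(fp)^*\CF \in \cart(X)$, and undoing the two reductions gives $f^*\CF \in \cart(\SX) = \qco(\SX)$.

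The main obstacle I anticipate is the base-change isomorphism $\pi_X^*(fp)^*\CF \cong \pi_Y^*q^*\CF$: it has to be assembled by chaining the composition isomorphism $(fp\,\pi_X)^* \cong \pi_X^*(fp)^*$, the $2$-cell isomorphism $\theta^* \colon (q\,\pi_Y)^* \liso (fp\,\pi_X)^*$, and $(q\,\pi_Y)^* \cong \pi_Y^*q^*$, which requires careful bookkeeping of the $2$-cells in the fibre-product square; every other step is a direct appeal to the results cited above.
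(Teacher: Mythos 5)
Your proof is correct and follows essentially the same route as the paper's: pull back along presentations, identify the pullback of $f^*\CF$ to an affine scheme with an affine-scheme pullback of $q^*\CF$ via pseudofunctoriality and the structural 2-cell, and conclude with Propositions \ref{desc1} and \ref{adjaf}, Lemma \ref{descart} and Theorem \ref{agree}. The only difference is organizational: the paper chooses at the outset a presentation $p \colon X \to \SX$ fitting into a 2-commutative square over $q$ with a map $f_0 \colon X \to Y$ (which is exactly what your $Z = X \times_{\SY} Y$ construction produces), so it applies Lemma \ref{descart} once, whereas you build the square explicitly and apply Lemma \ref{descart} twice, first to $\pi_X \colon Z \to X$ and then to $p$.
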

\begin{proof}
Let $p \colon X \to \SX$ and $q:Y \to \SY$ be affine presentations such that we have a $2$-commutative square
\begin{center}
\begin{tikzpicture}
\matrix(m)[matrix of math nodes, row sep=2.6em, column sep=2.8em,
text height=1.5ex, text depth=0.25ex]
{X & Y\\
\SX&\SY\\}; \path[->,font=\scriptsize,>=angle 90]
(m-1-1) edge node[auto] {$f_0$} (m-1-2)
        edge node[left] {$p$} (m-2-1)
(m-1-2) edge node[auto] {$q$} (m-2-2)
(m-2-1) edge node[auto] {$f$} (m-2-2);
\draw [shorten >=0.2cm,shorten <=0.2cm,->,double] (m-2-1) -- (m-1-2) node[auto, midway,font=\scriptsize]{$\gamma$};
\end{tikzpicture}
\end{center}
It follows from the previous Corollary that $p^* f^* \CF \cong f_0^* q^*\CF$, via $\gamma^*$ using pseudofunctoriality.
By  Proposition \ref{desc1} $q^*\CF \in \cart(Y)$ and by Proposition \ref{adjaf}, $f_0^* q^* \CF \in \cart(X)$. We conclude applying Lemma \ref{descart} and Theorem \ref{agree}.
\end{proof}

\begin{prop}\label{qcodir}
If $\CG \in \qco(\SX)$ then $f_*\CG \in \qco(\SY)$.
\end{prop}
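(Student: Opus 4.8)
The plan is to reduce the statement to the affine situations already settled in Proposition \ref{adjaf}(ii), in two stages: first trivializing the target by base change along a presentation of $\SY$, and then trivializing the source by descent along a presentation of the base-changed stack. Throughout I use the identification $\qco = \cart$ of Theorem \ref{agree}.

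First I would fix a presentation $q\colon Y \to \SY$ with $Y$ affine and form the $2$-fibre product $\SX' := \SX \times_{\SY} Y$, with projections $q'\colon \SX' \to \SX$ and $f'\colon \SX' \to Y$; by Proposition \ref{2cartsq} the stack $\SX'$ is again geometric. The crux of this stage is a base-change isomorphism $q^* f_*\CG \cong f'_* q'^*\CG$. Since $q$ and its base change $q'$ are flat of finite presentation, one has $q^* f_*\CG(V,v) = f_*\CG(V,qv)$ and $q'^*\CG(U',u') = \CG(U', q'u')$, so both sides are limits of $\CG$ over the index categories $\mathbf{J}(qv,f)$ and $\mathbf{J}(v,f')$ of \ref{di}. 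I would produce the isomorphism by exhibiting an equivalence $\mathbf{J}(v,f') \simeq \mathbf{J}(qv,f)$: an object $(U',u',h',\gamma')$ on the left maps to $(U', q'u', h', \gamma)$ on the right, where $\gamma\colon f q'u' \imp qvh'$ is the composite of the structural $2$-cell $f q'u' \imp q f'u'$ of the map $u'\colon U' \to \SX \times_\SY Y$ with $q\gamma'$; conversely an object $(U,u,h,\gamma)$ on the right determines, by the universal property of the $2$-fibre product applied to $u$, $vh$ and $\gamma$, a lift $u'\colon U \to \SX'$ with $q'u' = u$ and $f'u' = vh$, hence an object of $\mathbf{J}(v,f')$ on the same underlying scheme. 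These assignments are mutually quasi-inverse and carry $\CG(U',q'u')$ to $\CG(U,u)$ identically, so the limits agree naturally in $(V,v)$; both presheaves being sheaves (Proposition \ref{p61}), this yields an isomorphism of sheaves. I expect this base-change step to be the main obstacle, as it is essentially a careful tracking of $2$-cells through the universal property of $\SX'$.

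With the base change in hand, Proposition \ref{qcoinv} gives $\CG' := q'^*\CG \in \qco(\SX') = \cart(\SX')$, and it remains to show $f'_*\CG' \in \qco(Y) = \cart(Y)$ for the morphism $f'$ into the \emph{affine} scheme $Y$. Here I would choose a presentation $r\colon X' \to \SX'$ with $X'$ affine and set $g := f'r\colon X' \to Y$, a morphism of affine schemes. Because $\CG'$ is Cartesian, Lemma \ref{descart} (via $\SST := r_*r^*$) exhibits it as an equalizer $\CG' = \mathrm{eq}\bigl(\SST\CG' \rightrightarrows \SST^2\CG'\bigr)$ in $\CO_{\SX'}\md$. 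Applying $f'_*$, which preserves limits as a right adjoint (Corollary \ref{adjmod}), and using the transitivity $(f'r)_* \cong f'_* r_*$ of Proposition \ref{l62} together with the base-change isomorphism $r^* r_* \cong \pi_{2*}\pi_1^*$ of \ref{fund} for the square of $r$ with itself (here $\pi_1,\pi_2\colon X' \times_{\SX'} X' \to X'$), I would identify $f'_*\SST\CG' \cong g_* r^*\CG'$ and $f'_*\SST^2\CG' \cong (f'r\pi_2)_*\pi_1^* r^*\CG'$. Now $r^*\CG' \in \cart(X')$ and $\pi_1^* r^*\CG' \in \cart(X'\times_{\SX'}X')$ by Propositions \ref{desc1}(i) and \ref{adjaf}(i), while $g$ and $f'r\pi_2$ are morphisms of affine schemes, so Proposition \ref{adjaf}(ii) places both $g_* r^*\CG'$ and $(f'r\pi_2)_*\pi_1^* r^*\CG'$ in $\cart(Y)$. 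Since $\cart(Y)$ is closed under kernels and hence equalizers (Proposition \ref{cartaff} and the remark following it), the equalizer $f'_*\CG'$ lies in $\cart(Y)$.

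Combining the two stages gives $q^* f_*\CG \cong f'_*\CG' \in \cart(Y)$, and Lemma \ref{descart} applied to the presentation $q\colon Y \to \SY$ then yields $f_*\CG \in \cart(\SY) = \qco(\SY)$, as desired.
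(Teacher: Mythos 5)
Your overall architecture (base change to an affine target, then descent along the presentation of $\SY$) is reasonable, and your Stages 2 and 3 are sound; but the step you yourself flag as the crux --- the base-change isomorphism $q^*f_*\CG \cong f'_*q'^*\CG$ of Stage 1 --- has a genuine gap: the categories $\mathbf{J}(v,f')$ and $\mathbf{J}(qv,f)$ are \emph{not} equivalent, and your proposed quasi-inverse is not well defined. Given $(U,u,h,\gamma)\in \mathbf{J}(qv,f)$, the universal property of $\SX'=\SX\times_{\SY}Y$ does produce a lift $u'\colon U\to \SX'$ with $q'u'= u$ and $f'u'=vh$, but nothing makes $u'$ flat of finite presentation, so $(U,u')$ is in general not an object of $\aff_{\ff}/\SX'$, hence $(U,u',h,\iid)$ is not an object of $\mathbf{J}(v,f')$. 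Indeed $u'$ factors as the graph section $U\to U\times_{\SY}Y$ (a pullback of $\delta_{Y/\SY}$, hence a closed immersion) followed by the flat projection to $\SX'$; such sections are flat only in the \'etale setting, where they are open immersions --- which is exactly why this kind of site-level comparison works in section \ref{sec8} but fails for the $\ff$ topology. For a concrete counterexample take $\SX=\SY=\spec(k)$, $f=\id$, $Y=\mathbb{A}^1_k$, so $\SX'=\mathbb{A}^1_k$ and $f'=\id$, and take $(V,v)=(\mathbb{A}^1_k,\id)$: the object $(\spec(k),\id,h_0,\iid)$ of $\mathbf{J}(qv,f)$, with $h_0$ the inclusion of the origin, lifts to $u'=h_0$, which is not flat; moreover it is not isomorphic to the image of any object of $\mathbf{J}(v,f')$ under your forward assignment, since the conditions $f'u''=vh''$ and $h''=h_0 l$ would force the structure map $u''$ to be the (non-flat) origin. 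So the two limits cannot be identified by a purely index-categorical argument at the presheaf level; and repairing the step by invoking the Cartesian property of $q^*f_*\CG$ would be circular, because quasi-coherence of $f_*\CG$ is precisely what you are proving.

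The base-change statement itself is true for quasi-coherent $\CG$, but inside this paper's toolkit a correct proof of it requires essentially the same ingredients as the proposition, so it cannot serve as a lemma obtained for free. The paper's own proof avoids base change altogether by exploiting semi-separatedness on the source side: choose a presentation $p\colon X\to\SX$; then $fp\colon X\to\SY$ is an \emph{affine} morphism, since every morphism from an affine scheme to a geometric stack is affine (\ref{31}); write $\CG$ as the equalizer of $\SST\CG\rightrightarrows\SST^2\CG$ with $\SST=p_*p^*$, apply the left-exact $f_*$, and note that $f_*\SST\CG\cong(fp)_*p^*\CG$ and $f_*\SST^2\CG\cong(fp)_*p^*\SST\CG$ (Proposition \ref{l62}) are quasi-coherent because $fp$ is affine and $p^*\CG$, $p^*\SST\CG$ are Cartesian (Proposition \ref{desc1}, Theorem \ref{agree}); then $f_*\CG$ is quasi-coherent as a kernel of a map of quasi-coherent modules (Proposition \ref{cartab}). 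This is exactly your Stage 2 with $\SY$ in place of the affine scheme $Y$ and ``affine morphism of stacks'' in place of ``morphism of affine schemes''; run in that generality it proves the proposition directly and makes your Stages 1 and 3 unnecessary.
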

\begin{proof}
If $f$ is an affine morphism the proof is similar to the one given in
Proposition \ref{adjaf}. If $f$ is not affine, take an affine
presentation $p \colon X \to \SX$. The morphism $f p$ is affine.
Let $\SST := p_*p^*$ as in Lemma \ref{descart} and consider the following equalizer of quasi-coherent sheaves
\[
\CG {\xto{\eta_{\CG}}}
\SST \CG \,
\underset{\eta_{\SST\CG}}{\overset{\SST\eta_{\CG}}{\longrightrightarrows}}\,
\SST^2 \CG.
\]
Since $f_*$ is left exact we have the equalizer of
$\CO_{\SY}$-modules
\[
f_*\CG \,\,
{\xto{f_*\eta_{\CG}}}
f_*\SST  \CG \,\,
\underset{f_*\eta_{\SST\CG}}{\overset{f_*\SST\eta_{\CG}}{\longrightrightarrows}} \, \, f_*\SST^2 \CG.
\]
From Proposition \ref{desc1} and Theorem \ref{agree} by using Proposition \ref{l62} it follows that $f_*\SST\CG$ and $f_*\SST^2\CG$ are quasi-coherent sheaves. Then, $f_*\CG$ is a quasi-coherent
Module since it is the kernel of a morphism of quasi-coherent Modules.
\end{proof}

\begin{cor}
There is a pair of adjoint functors
\[
\qco(\SX) \,
\underset{f_*}{\overset{f^*}{\longleftrightarrows}} \,
\qco(\SY).
\]
\end{cor}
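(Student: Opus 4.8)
The plan is to recognize this corollary as a purely formal consequence of the adjunction already established at the level of $\CO$-modules, combined with the two preceding propositions; there is no new geometric or algebraic content to develop. First I would recall that Corollary~\ref{adjmod} furnishes an adjunction $f^* \dashv f_*$ between the full module categories $\CO_{\SY}\md$ and $\CO_{\SX}\md$, witnessed by a natural isomorphism
\[
\Hom_{\CO_{\SX}\md}(f^*\CF, \CG) \cong \Hom_{\CO_{\SY}\md}(\CF, f_*\CG)
\]
for $\CF \in \CO_{\SY}\md$ and $\CG \in \CO_{\SX}\md$.

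Next I would invoke the fact that $\qco(\SX)$ and $\qco(\SY)$ are \emph{full} subcategories of $\CO_{\SX}\md$ and $\CO_{\SY}\md$ respectively (as recorded in the definition in \ref{site}, and via Theorem~\ref{agree} together with Proposition~\ref{cartab}), so that Hom-sets computed in the quasi-coherent categories coincide with those computed in the ambient module categories. Propositions~\ref{qcoinv} and~\ref{qcodir} guarantee that $f^*$ carries $\qco(\SY)$ into $\qco(\SX)$ and that $f_*$ carries $\qco(\SX)$ into $\qco(\SY)$; hence for $\CF \in \qco(\SY)$ and $\CG \in \qco(\SX)$ both $f^*\CF$ and $f_*\CG$ already lie in the respective quasi-coherent subcategories. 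Restricting the displayed natural isomorphism to these objects, and using fullness to identify the Hom-sets on either side, yields the desired adjunction $f^* \dashv f_*$ between $\qco(\SX)$ and $\qco(\SY)$.

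I expect essentially no obstacle here: the substantive work has all been carried out in Propositions~\ref{qcoinv} and~\ref{qcodir}, and what remains is the standard categorical observation that an adjunction between two categories restricts to full subcategories whenever each of the two functors preserves the relevant subcategory. The only point worth making explicit is that the naturality of the adjunction isomorphism in both variables is inherited automatically under this restriction, since fullness ensures the restricted isomorphism is computed by the very same maps as in the ambient categories.
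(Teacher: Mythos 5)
Your argument is exactly the paper's proof: it combines the adjunction of Corollary~\ref{adjmod} with Propositions~\ref{qcoinv} and~\ref{qcodir}, merely spelling out the standard fact that an adjunction restricts to full subcategories preserved by both functors. Correct, and no difference in approach.
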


\begin{proof}
Combine the adjunction of Corollary \ref{adjmod} with Propositions \ref{qcoinv} and \ref{qcodir}.
\end{proof}

\section{Describing functoriality via comodules}\label{sec7}

  Let $\varphi \colon A_\bullet \to B_\bullet$ be a  homomorphism of Hopf algebroids. Following \cite{hov}, we describe a pair of adjoint functors
\[
\begin{tikzpicture}
\matrix (m) [matrix of math nodes, row sep=3em, column sep=3em]{
B_\bullet\com &  A_\bullet\com. \\
}; 
\draw [transform canvas={yshift= 0.3ex},font=\scriptsize,<-]
(m-1-1) -- node[above]{$B_0\otimes_{A_0}-$}(m-1-2);
\draw [transform canvas={yshift= -0.3ex},font=\scriptsize,->]
(m-1-1) -- node[below]{$\SU^\varphi$}(m-1-2);
\end{tikzpicture}
\]
We will see that for the corresponding 1-morphism of stacks $f := \stack(\varphi)$
\[
f \colon \stack(B_\bullet) \lto \stack(A_\bullet),
\]
this adjunction corresponds to the previously considered $f^* \dashv f_*$ between sheaves of quasi-coherent modules on the small flat site.

\begin{cosa}\label{homHopf}
 A homomorphism of Hopf algebroids $\varphi \colon A_\bullet \to B_\bullet$
is a pair of ring homomorphisms $\varphi_i \colon A_i \to B_i$ with
$i \in \{0,1\}$ that respects the structure of a Hopf algebroid, namely, the following equalities hold
\begin{align*}
 \varphi_1 \eta_L   &= \eta_L \varphi_0   & 
 \epsilon \varphi_1 &= \varphi_0 \epsilon        \\
 \varphi_1 \eta_R   &= \eta_R \varphi_0   &
 \kappa \varphi_1   &= \varphi_1 \kappa   & 
 (\varphi_1 \otimes \varphi_1)\nabla      &= \nabla  \varphi_1.
\end{align*}

\end{cosa}

\begin{cosa}
 The homomorphism $\varphi \colon A_\bullet \to B_\bullet$ induces a functor
\[
B_0\otimes_{A_0}- \colon A_\bullet\com \lto B_\bullet\com,
\]
sending an $A_\bullet$-comodule $(M, \psi)$ to the $B_\bullet$-comodule $(B_0\otimes_{A_0}M, \psi')$. Let 
\[
\overline{\varphi}(b_0 \otimes a_1) := \eta_L(b_0)\,\varphi_1(a_1), \text{ for } a_1 \in A_1 \text{ and } b_0 \in B_0. 
\]
The structure map $\psi'$ is given by the composition
\[
B_0 \otimes_{A_0} M \xto{\id\otimes \psi}
B_0 \otimes_{\eta_L} A_{1 \,\eta_R} \otimes_{A_0} M \xto {\overline{\varphi} \otimes \id}
B_1\: {_{\eta_R}\otimes_{A_0}} M \liso
B_1\: {_{\eta_R}\otimes_{B_0}} (B_0\otimes_{A_0} M).
\]
\end{cosa}

\begin{cosa}\label{olv}
From \cite[Proposition 1.2.3]{hov}, we will describe
\[
\SU^\varphi \colon B_\bullet\com \lto A_\bullet\com,
\]
a right adjoint for the functor $B_0\otimes_{A_0}-$. This will be done in two steps. 

Consider first an extended comodule, that is, given $N \in B_0\md$, let us consider the comodule $B_1 \otimes_{B_0}N$ with the induced structure. We define
\[
\SU^\varphi(B_1 \otimes_{B_0}N) := A_1 \otimes_{A_0}N
\] 
with the induced structure. For maps $\lambda \colon B_1 \otimes_{B_0}N \to B_1 \otimes_{B_0}N'$ with $N, N' \in B_0\md$, then $\SU^\varphi(\lambda)$ is defined as the following composition
\begin{align*}
A_1\: {_{\eta_R} \otimes_{A_0}} N &\xto{\nabla \otimes \id}
    A_1\: {_{\eta_R}\otimes_{\eta_L}} A_1\: {_{\eta_R}\otimes_{A_0}} N \\
&\xto{\id\otimes \varphi_1 \otimes \id}
A_1\: {_{ \eta_R} \otimes _{\eta_L \varphi_0}}\: B_1\: {_{\eta_R} \otimes _{B_0}}\: N\\
&\xto{\id\otimes \lambda} A_1\: {_{\eta_R} \otimes_{\eta_L \varphi_0}}\: B_1\: {_{\eta_R} \otimes_{B_0}}\: N'\\
&\xto{\id\otimes \epsilon \otimes \id} A_1\: {_{\eta_R} \otimes _{A_0}}N'.
\end{align*}
%\[
%\begin{tikzpicture}
%\matrix (m) [matrix of math nodes, row sep=.8em, column sep=4em]{
%A_1\: {_{\eta_R} \otimes_{A_0}} N
%       &  A_{1\, \eta_R}\otimes_{\eta_L} A_{1\, \eta_R}\otimes_{A_0} N \\
%\phantom{A_1\: {_{\eta_R} \otimes_{A_0}} N} & A_1\: {_{ \eta_R} \otimes _{\eta_L \varphi_0}}\: B_1\: {_{\eta_R} \otimes _{B_0}}\: N \\
%\phantom{A_1\: {_{\eta_R} \otimes_{A_0}} N} & A_1\: {_{\eta_R} \otimes_{\eta_L \varphi_0}}\: B_1\: {_{\eta_R} \otimes_{B_0}}\: N' \\
%\phantom{A_1\: {_{\eta_R} \otimes_{A_0}} N} & A_1\: {_{\eta_R} \otimes _{A_0}}N'.\\
%};
%\path[->,font=\scriptsize,>=angle 90] 
%(m-1-1) edge node[auto] {$\nabla \otimes \id$} (m-1-2)
%(m-2-1) edge node[auto] {$\id\otimes \varphi_1 \otimes \id$} (m-2-2)
%(m-3-1) edge node[auto] {$\id\otimes f$} (m-3-2)
%(m-4-1) edge node[auto] {$\id\otimes \epsilon \otimes \id$} (m-4-2);
%\end{tikzpicture}
%\]
%where the map $\alpha$ is defined by $\alpha(a_1\otimes
%n)=\varphi_1(a_1)\otimes n$, for $a_1 \in A_1$ and $n \in N$.  
%$\alpha:=  \varphi_1\otimes 1$**.

Now, let $(N, \psi)$ be an arbitrary $B_\bullet$-comodule. Let $\lambda' \colon B_1 \otimes_{B_0} N \to N'$ be the cokernel of $\psi$ as $B_0$-modules. Let $\psi'$ be the induced comodule structure on $N'$ and consider the following $A_\bullet$-comodule homomorphism:
\[
A_1\: {_{\eta_R} \otimes _{A_0}}N  \xto{\SU^\varphi(\psi' \lambda')}
A_1\: {_{\eta_R} \otimes _{A_0}}N'.
\]
Define
\[
\SU^\varphi(N) := \ker(\SU^\varphi(\psi' \lambda'))
\]
The extension to morphisms is straightforward by the naturality of kernels.
\end{cosa}

The following result is due to Hovey \cite{hov}. We have included a proof for the reader's convenience.

\begin{prop}
Given a homomorphism $\varphi \colon A_\bullet \to B_\bullet$ of Hopf algebroids, there is an associated pair of adjoint functors
\[
\begin{tikzpicture}
\matrix (m) [matrix of math nodes, row sep=3em, column sep=3em]{
B_\bullet\com &  A_\bullet\com. \\
}; 
\draw [transform canvas={yshift= 0.3ex},font=\scriptsize,<-]
(m-1-1) -- node[above]{$B_0\otimes_{A_0}-$}(m-1-2);
\draw [transform canvas={yshift= -0.3ex},font=\scriptsize,->]
(m-1-1) -- node[below]{$\SU^\varphi$}(m-1-2);
\end{tikzpicture}
\]
\end{prop}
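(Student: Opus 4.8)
The plan is to establish the adjunction $B_0\otimes_{A_0}-\,\dashv\,\SU^\varphi$ in stages, reducing first to extended comodules and then to an arbitrary comodule via its canonical presentation as a kernel of a map of extended comodules. Write $F := B_0\otimes_{A_0}-$, and recall the standard cofree adjunctions: the forgetful functors $U_B\colon B_\bullet\com\to B_0\md$ and $U_A\colon A_\bullet\com\to A_0\md$ admit right adjoints given by the extended comodule functors $R_B=B_1\otimes_{B_0}-$ and $R_A=A_1\otimes_{A_0}-$. Explicitly, for a comodule $L$ and a module $P$ the bijection $\Hom_{B_\bullet\com}(L,R_BP)\cong\Hom_{B_0\md}(U_BL,P)$ sends $g$ to $(\epsilon\otimes\id)g$ and has inverse $\theta\mapsto(\id\otimes\theta)\psi_L$, and likewise for $A$. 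These use the flatness of $\eta_R$ (which holds since $A_\bullet,B_\bullet$ arise from geometric stacks), so that both comodule categories are abelian with exact forgetful functors.

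The key intermediate step is to prove that for every $M\in A_\bullet\com$ and $P\in B_0\md$ there is a bijection
\[
\Hom_{B_\bullet\com}(FM,\,R_BP)\;\cong\;\Hom_{A_\bullet\com}(M,\,\SU^\varphi(R_BP)),
\]
obtained by composing the cofree adjunction on the $B$-side, the restriction-of-scalars adjunction $\Hom_{B_0\md}(FM,P)\cong\Hom_{A_0\md}(M,P)$ along $\varphi_0$, and the cofree adjunction on the $A$-side, using $\SU^\varphi(R_BP)=R_A(\varphi_0^\ast P)=A_1\otimes_{A_0}P$. The substantial point is that this bijection must be natural not merely in the module $P$ but with respect to \emph{every} morphism of extended comodules $\mu\colon R_BP\to R_BP'$, under the comparison functor $\SU^\varphi$. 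Unwinding the composite shows that post-composition by $\mu$ corresponds on the $B_0$-level to the operation $\theta\mapsto(\epsilon\otimes\id)\,\mu\,(\id\otimes\theta)\,\psi_{FM}$, and one must check that, after transport to the $A$-side, this matches post-composition by $\SU^\varphi(\mu)$. This verification is exactly where the explicit formula defining $\SU^\varphi$ on morphisms between extended comodules is used, together with the compatibilities $\varphi_1\eta_L=\eta_L\varphi_0$, $\varphi_1\eta_R=\eta_R\varphi_0$, $(\varphi_1\otimes\varphi_1)\nabla=\nabla\varphi_1$ and $\epsilon\varphi_1=\varphi_0\epsilon$ from \ref{homHopf}. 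I expect this naturality computation to be the main obstacle: it is the essentially algebraic heart of the argument and the only step requiring a genuine diagram chase.

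To pass from extended comodules to an arbitrary $N\in B_\bullet\com$, I would use that the structure map $\psi_N\colon N\to R_BN$ is a counitary comodule monomorphism, split as a map of $B_0$-modules by $\epsilon\otimes\id$. Letting $\lambda'\colon R_BN\to N'$ be its cokernel and $\psi'$ the induced comodule structure on $N'$, the fact that $\psi'$ is a monomorphism gives $N=\ker(\psi'\lambda')$ in $B_\bullet\com$, exhibiting $N$ as the kernel of the morphism of extended comodules $\psi'\lambda'\colon R_BN\to R_BN'$; by definition $\SU^\varphi(N)=\ker\SU^\varphi(\psi'\lambda')$. Applying the left-exact functors $\Hom_{B_\bullet\com}(FM,-)$ and $\Hom_{A_\bullet\com}(M,-)$ to these two kernel sequences and invoking the natural bijections of the previous step for $R_BN$ and $R_BN'$, the compatibility established there makes the two left-exact sequences correspond, so a short diagram chase (or the five lemma) yields the desired
\[
\Hom_{B_\bullet\com}(FM,N)\;\cong\;\Hom_{A_\bullet\com}(M,\SU^\varphi N).
\]
Naturality in $M$ is automatic, since every map used is natural in $M$, which completes the proof that $F\dashv\SU^\varphi$.
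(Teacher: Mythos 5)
Your proof is correct and follows essentially the same route as the paper: first establish the bijection for extended comodules by composing the two cofree adjunctions with restriction of scalars along $\varphi_0$ (using $\SU^\varphi(B_1\otimes_{B_0}P)=A_1\otimes_{A_0}P$), then extend to an arbitrary comodule $N$ via the canonical kernel presentation $0\to N\to B_1\otimes_{B_0}N\to B_1\otimes_{B_0}N'$ and left-exactness of the Hom functors. The only difference is that you explicitly flag, and correctly locate, the naturality of the extended-comodule bijection with respect to comodule morphisms between extended comodules (the point that makes the comparison squares commute, and where the formula for $\SU^\varphi$ on morphisms is actually used), a verification the paper's proof invokes silently.
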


\begin{proof}
Let $(P, \psi) \in B_\bullet\com$ and $(M, \psi') \in A_\bullet\com$. We have to construct an isomorphism
\[
\Hom_{B_\bullet\com}(B_0 \otimes_{A_0} M, P) \liso 
\Hom_{A_\bullet\com}(M, \SU^\varphi(P))
\]
First, if $P$ is extended, \ie{}\! $P = B_1 \otimes_{B_0} N$ for a $B_0$-module $N$, then we have
\begin{align*}
\Hom_{B_\bullet\com}(B_0 \otimes_{A_0} M, P)
&= \Hom_{B_\bullet\com}(B_0 \otimes_{A_0} M, B_1 \otimes_{B_0} N) \\
&\cong \Hom_{B_0\md}(B_0 \otimes_{A_0} M, N) \\
&\cong \Hom_{A_0\md}(M, N)\\
&\cong \Hom_{A_\bullet\com}(M, A_1 \otimes_{A_0} N)\\
&= \Hom_{A_\bullet\com}(M, \SU^\varphi(P))
\end{align*}

In the general case, $P$ may be obtained as a kernel of a morphism between extended $B_\bullet$-comodules, in other words, we have an exact sequence
\[
0 \lto P \lto P' \lto P''
\]
with $P'$ and $P''$ extended $B_\bullet$-comodules. Now we have a diagram
%\begin{small}
\[
\begin{tikzpicture}
\matrix (m) [matrix of math nodes, row sep=2.5em, column sep=2em]{
\Hom_{B_\bullet}(B_0 \otimes_{A_0} M, P) & \Hom_{B_\bullet}(B_0 \otimes_{A_0} M, P') & \Hom_{B_\bullet}(B_0 \otimes_{A_0} M, P'') \\
\Hom_{A_\bullet}(M, \SU^\varphi(P)) & \Hom_{A_\bullet}(M, \SU^\varphi(P')) & \Hom_{A_\bullet}(M, \SU^\varphi(P''))  \\
};
\draw [->] (m-1-2) -- (m-1-3);
\draw [->] (m-2-2) -- (m-2-3);
\draw[->] (m-1-2) -- node[auto] {$\wr$} (m-2-2);
\draw[->] (m-1-3) -- node[auto] {$\wr$} (m-2-3);
\draw[dashed, ->] (m-1-1) -- (m-2-1);
\path[->] (m-1-1) edge (m-1-2)
          (m-2-1) edge (m-2-2);
\end{tikzpicture}
\]
%\end{small}
in which the last two vertical maps are isomorphisms because $P'$ and $P''$ are extended comodules, therefore they induce the dashed vertical map which is also an isomorphism as desired.
\end{proof}

%\begin{rem}
%\end{rem}

\begin{cosa}
Our next task is to identify the functor $\SU^\varphi$ with the direct image of sheaves on the geometric stacks corresponding to the Hopf algebroids.

Let $f \colon \SX \to \SY$ be a morphism of geometric stacks, $p
\colon X \to \SX$ and $q \colon Y \to \SY$ affine presentations
and $f_0 \colon X \to Y$ be a morphism such that the following
square
\begin{center}
\begin{tikzpicture}
\matrix(m)[matrix of math nodes, row sep=2.6em, column sep=2.8em,
text height=1.5ex, text depth=0.25ex]
{X & Y\\
\SX&\SY\\}; \path[->,font=\scriptsize,>=angle 90]
(m-1-1) edge node[auto] {$f_0$} (m-1-2)
        edge node[left] {$p$} (m-2-1)
(m-1-2) edge node[auto] {$q$} (m-2-2)
(m-2-1) edge node[auto] {$f$} (m-2-2);
\draw [shorten >=0.2cm,shorten <=0.2cm,->,double] (m-2-1) -- (m-1-2) node[auto, midway,font=\scriptsize]{$\gamma$};
\end{tikzpicture}
\end{center}
is 2-commutative.
Set $\SX = \stack(B_\bullet)$ and $\SY = \stack(A_\bullet)$.
Denote the ring homomorphisms associated to $f$ as $\varphi_0\colon A_0
\to B_0$ and $\varphi_1\colon A_1 \to B_1$. We remind the reader that
$\varphi=(\varphi_0,\varphi_1)$ is a homomorphism of Hopf algebroids (see \ref{homHopf}).
\end{cosa}

\begin{prop}\label{agreement}
There is a natural isomorphism of functors from $\qco(\SX)$ to $A_\bullet\com$
\[
\Upsilon \colon 
\GGamma^{\SY}_q f_*
\, \liso \,
\SU^\varphi \GGamma^{\SX}_p
\]
\end{prop}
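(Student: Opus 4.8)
The plan is to realise both functors as kernels of a matching pair of maps between \emph{extended} objects, using the comonad $\SST = p_*p^*$ on $\qco(\SX)$ together with the explicit form of $\SU^\varphi$ on extended comodules in \ref{olv}. Fix $\CG \in \qco(\SX)$ and set $M := \Gamma(X,p^*\CG)$, so that $\GGamma^{\SX}_p\CG = (M,\psi')$ is a $B_\bullet$-comodule. Since $\CG$ is a sheaf, the adjunction $p^*\dashv p_*$ presents it as the equalizer $\CG \xrightarrow{\eta_\CG}\SST\CG \rightrightarrows \SST^2\CG$ with maps $\SST\eta_\CG,\eta_{\SST\CG}$, exactly as in the proof of Proposition \ref{qcodir}. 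As $f_*$ is left exact and $\GGamma^{\SY}_q$ is an exact equivalence, applying $\GGamma^{\SY}_q f_*$ gives $\GGamma^{\SY}_q f_*\CG \cong \ker\bigl(\GGamma^{\SY}_q f_*\SST\CG \rightrightarrows \GGamma^{\SY}_q f_*\SST^2\CG\bigr)$. On the algebraic side, coassociativity and counitarity exhibit $(M,\psi')$ as the equalizer of the two extended-comodule maps $\nabla\otimes\id$ and $\id\otimes\psi'$ from $B_1\otimes_{B_0}M$ to $B_1\otimes_{B_0}B_1\otimes_{B_0}M$; since $\SU^\varphi$ is a right adjoint, hence left exact, we get $\SU^\varphi\GGamma^{\SX}_p\CG \cong \ker\bigl(\SU^\varphi(B_1\otimes_{B_0}M) \rightrightarrows \SU^\varphi(B_1\otimes_{B_0}B_1\otimes_{B_0}M)\bigr)$.

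The core of the argument is then a natural isomorphism at the level of these extended objects, namely $\GGamma^{\SY}_q f_*\SST^n\CG \cong \SU^\varphi\GGamma^{\SX}_p\SST^n\CG$ for $n=1,2$. First I would record, via Proposition \ref{modcomod}(ii) and (i), that $\GGamma^{\SX}_p\SST\CG \cong B_1\otimes_{B_0}M$ and $\GGamma^{\SX}_p\SST^2\CG \cong B_1\otimes_{B_0}B_1\otimes_{B_0}M$ as extended comodules, so that $\SU^\varphi$ of these is computed by the rule $\SU^\varphi(B_1\otimes_{B_0}N)=A_1\otimes_{A_0}N$ of \ref{olv}. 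On the geometric side, the $2$-commutativity of the square ($q f_0 \cong f p$) together with Proposition \ref{l62} yields $f_*\SST\CG = f_*p_*p^*\CG \cong q_*f_{0*}p^*\CG$; since $f_0\colon X\to Y$ is a morphism of affine schemes, $f_{0*}$ is restriction of scalars along $\varphi_0$ by Proposition \ref{adjaf}(ii), whence $f_{0*}p^*\CG \cong \widetilde{{}_{A_0}M}$; and Proposition \ref{modcomod}(ii) applied to $(\SY,q)$ identifies $\GGamma^{\SY}_q q_*\widetilde{{}_{A_0}M} \cong A_1\otimes_{A_0}{}_{A_0}M$, which is precisely $\SU^\varphi(B_1\otimes_{B_0}M)$. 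The same chain with $\SST^2$ in place of $\SST$ settles $n=2$, and every isomorphism used is natural in $\CG$.

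The remaining and, I expect, \emph{main} obstacle is to check that these extended-level isomorphisms intertwine the two parallel maps on each side, so that the kernels coincide. Concretely one must show that the geometric cofaces $\SST\eta_\CG$ and $\eta_{\SST\CG}$ correspond, under the identifications above, to the cobar cofaces $\nabla\otimes\id$ and $\id\otimes\psi'$ produced by $\SU^\varphi$ — suitably twisted by $\varphi_1$ and by $\overline{\varphi}(b\otimes a)=\eta_L(b)\varphi_1(a)$. This amounts to tracing the geometric comonad structure (the functorial isomorphism $\phi_*$ and the units $\eta_i$ of $p_i^*\dashv p_{i*}$ from \ref{fund}, together with the base-change isomorphism $f_*p_*\cong q_*f_{0*}$) through Proposition \ref{modcomod} and matching it termwise with Hovey's description of $\SU^\varphi$; the required commutativity rests on the compatibilities of \ref{homHopf}, that $\varphi$ respects $\eta_L,\eta_R,\epsilon$ and $\nabla$. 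Once the two coface maps are matched, the universal property of kernels produces the isomorphism $\Upsilon_\CG$, and naturality of all the ingredients upgrades it to the natural isomorphism $\Upsilon\colon \GGamma^{\SY}_q f_*\liso \SU^\varphi\GGamma^{\SX}_p$.
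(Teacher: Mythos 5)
Your overall strategy coincides with the paper's: reduce to objects in the image of the comonad $\SST = p_*p^*$, identify $\GGamma^{\SY}_q f_*$ and $\SU^\varphi\GGamma^{\SX}_p$ on such extended objects (via Proposition \ref{modcomod}, the base-change isomorphism $f_*p_*\cong q_*f_{0*}$ coming from $\gamma$ and Proposition \ref{l62}, and Hovey's rule $\SU^\varphi(B_1\otimes_{B_0}N)=A_1\otimes_{A_0}N$ from \ref{olv}), and then pass to kernels using the equalizer $\CG\to\SST\CG\rightrightarrows\SST^2\CG$. That part of your argument is sound and is exactly how the paper begins.

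The gap is that the step you yourself flag as the main obstacle --- checking that the extended-level isomorphisms intertwine the two parallel pairs --- is precisely the mathematical content of the paper's proof, and you do not carry it out: saying it ``amounts to tracing the geometric comonad structure and matching it termwise'' restates the problem rather than solving it. Two things are worth knowing here. First, half of the check is free and you miss this economy: since $\SST\eta_{\CG}=\SST(\eta_{\CG})$ is the image under $\SST$ of a morphism, its compatibility follows from the naturality of $\Upsilon_{\SST(-)}$ alone, with no computation. Second, the other coface $\eta_{\SST\CG}$ is not of the form $\SST(g)$, and this is where all the work lies: one must (a) compute $\SU^\varphi\GGamma_p^{\SX}\eta_{\SST\CG}$ as the composite $(\id\otimes\varphi_1\otimes\id)\circ(\nabla\otimes\id)$, using Hovey's formula for $\SU^\varphi$ on maps of extended comodules together with the identification $p_{13}w=m=\spec(\nabla)$ of \ref{uvedoble}; and (b) identify $\GGamma_q^{\SY}f_*\eta_{\SST\CG}$ geometrically, which in the paper requires introducing the fiber products $Y\times_{\SY}X$ and $(Y\times_{\SY}X)\times_{\SX}X$, comparison isomorphisms $c$ and $c'$ with $Y_1\: {_{p_2}\times_{Y}} X$ and $Y_1\: {_{p_2}\times_{f_0 p_1}} X_1$, and a proof that the resulting morphism $l'$ is the spectrum of that same composite. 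This is a genuine multi-step chase through 2-fiber products; it does not follow formally from the compatibilities of \ref{homHopf}, and without it the two kernels have not been shown to agree, so the map $\Upsilon_{\CG}$ is not actually constructed.
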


\begin{proof}
We will start considering an object in the source that lies in the essential image of the functor $\SST = p_*p^*$. Let $\CF \in \qco(\SX)$ with $p^*\!\CF = \widetilde{M}$, for $M \in B_0\md$. By Proposition \ref{modcomod}(\emph{ii}), $\GGamma_p^{\SX}\SST\CF = (B_1 \otimes_{B_0} M, \nabla \otimes \id)$. By \ref{olv}
\[
\SU^\varphi \GGamma_p^{\SX} \SST \CF = 
(A_1 \otimes_{A_0} M, \nabla \otimes \id)
\]
On the other hand, by 2-functoriality of direct images there is a canonical isomorphism via $\gamma^{-1}_*$
\[
f_*\SST\CF = f_*p_*p^*\!\CF \iso q_*f_{0 *}\widetilde{M} 
= q_* \widetilde{M_{[\varphi_0]}}.
\] 
Therefore there is an isomorphism $\GGamma_q^{\SY} f_*\SST\CF \iso \GGamma_q^{\SY} q_*\widetilde{M_{[\varphi_0]}}$ and we have
\[
\GGamma_q^{\SY} q_*\widetilde{M_{[\varphi_0]}} = 
(A_1 \otimes_{A_0} M, \nabla \otimes \id)
\] 
by Proposition \ref{modcomod}(\emph{ii}), again. This defines $\Upsilon_{\SST\CF} \colon \GGamma_q^{\SY} f_* \SST\CF \, \liso \, \SU^\varphi \GGamma_p^{\SX} \SST\CF$.
Notice that $\Upsilon_{\SST(-)}$ is a natural transformation.

%Let us see that $\Upsilon_{T\CF}$ is natural. Let $\sigma \colon \CF_1 \to \CF_2$ be a morphism in $\qco(\SX)$. Let us consider $T \sigma \colon T\CF_1 \to T\CF_2$, that lies in the essential image of $T$. Let $p^*\!\CF_i = \widetilde{M_i}$, $i \in \{1,2\}$. It holds that $\GGamma_p^{\SX}T \sigma = \id \otimes h$ where $h = \Gamma(X,p^*\sigma) \colon M_1 \to M_2$. So 
%\[
%\SU^\varphi \GGamma_p^{\SX} T \sigma = \id \otimes h_{[\varphi_0]},
%\]
%where $h_{[\varphi_0]}$ is $h$ considered as an $A_0$-linear map via $\varphi_0$. On the other hand $f_*T \sigma \iso q_* \widetilde{h_{[\varphi_0]}}$, via $\gamma^{-1}_*$, from which it follows that
%\[
%\GGamma_q^{\SY} q_*\widetilde{h_{[\varphi_0]}} \liso \id \otimes h_{[\varphi_0]}, 
%\]
%as before.

Finally we extend this to all quasi-coherent sheaves on $\SX$ and all morphisms. Consider the canonical equalizer of quasi-coherent $\CO_{\SX}$-modules
\[
\CF \; \; {\overset{\eta_{\CF}}{\lto}} \; \; 
	\SST \CF\, \underset{\SST\eta_{\CF}}{\overset{\eta_{\SST\CF}}{\longrightrightarrows}}
\; \; \SST^2  \CF.
\]
We may thus present every object in $\qco(\SX)$ as a kernel of objects that lie in the image of the functor $\SST$. Consider the following commutative equalizer diagram
\[
\begin{tikzpicture}
\matrix (m) [matrix of math nodes, row sep=3em, column sep=3.5em]{
\GGamma_q^{\SY}f_*\CF        &  \GGamma_q^{\SY} f_*\SST \CF\,    
                                      & \GGamma_q^{\SY} f_* \SST^2 \CF \\
\SU^\varphi\GGamma_p^{\SX} \CF &   \SU^\varphi \GGamma_p^{\SX} \SST \CF      
                                      & \SU^\varphi\GGamma_p^{\SX} \SST^2 \CF . \\
}; 
\draw[->,font=\scriptsize, dashed] 
(m-1-1) -- node[left]{$\Upsilon_{\CF}$}(m-2-1) ;
\draw [transform canvas={yshift= 0.3ex},font=\scriptsize,->]
(m-1-2) -- node[above]{$\GGamma_q^{\SY}f_*\eta_{\SST\CF}$} (m-1-3);
\draw [transform canvas={yshift=-0.3ex},font=\scriptsize,->]
(m-1-2) -- node[below]{$\GGamma_q^{\SY}f_*\SST\eta_{\CF} $} (m-1-3);
\draw [transform canvas={yshift= 0.3ex},font=\scriptsize,->]
(m-2-2) -- node[above]{$\SU^\varphi\GGamma_p^{\SX} \eta_{\SST\CF}$} (m-2-3);
\draw [transform canvas={yshift=-0.3ex},font=\scriptsize,->]
(m-2-2) -- node[below]{$\SU^\varphi\GGamma_p^{\SX} \SST\eta_{\CF}$} (m-2-3);
\draw[->,font=\scriptsize] (m-1-2) -- node[left]{$\Upsilon_{\SST\CF}$}(m-2-2); 
\draw[->,font=\scriptsize] (m-1-3) -- node[right]{$\Upsilon_{\SST^2\CF}$}(m-2-3);
\draw[->,font=\scriptsize]
(m-1-1) edge node[auto] {$\GGamma_q^{\SY}f_*\eta_{\CF}$} (m-1-2);
\draw[->,font=\scriptsize]
(m-2-1) edge node[auto] {$\SU^\varphi\GGamma_p^{\SX}\eta_{\CF}$} (m-2-2);
\end{tikzpicture}
\]
Notice that $\SU^\varphi \GGamma_p^{\SX} \SST \eta_{\CF} \,\Upsilon_{\SST\CF} =
\Upsilon_{\SST^2\CF} \;\GGamma_q^{\SY} f_*\SST \eta_{\CF}$ being $\Upsilon_{\SST(-)}$ a natural transformation. Let us check that 
\[
\SU^\varphi \GGamma_p^{\SX} \eta_{\SST\CF}\, \Upsilon_{\SST\CF} =
\Upsilon_{\SST^2\CF}\,\GGamma_q^{\SY}f_*\eta_{\SST\CF}.
\]
Applying $\GGamma_p^{\SX}$ to the map $\eta_{\SST\CF} \colon \SST\CF \to \SST^2\CF$, we get 
\[
\CF(p_{13}, \iid) \colon 
\CF(X_1, pp_2) \lto 
\CF(X_1 \times_{\SX} X, pp_3).
\]
In other words, having in mind that $p_{13}w = m$ where $w \colon X_2 \to X_1\times_{\SX}X$ is the isomorphism in \ref{uvedoble} and $m = \spec(\nabla)$, taking $M := \CF(X, p)$, this morphism corresponds to: 
\[
\nabla \otimes \id \colon 
B_1\: {_{\eta_R} \otimes_{A_0}} M \lto 
B_{1\eta_R}\otimes_{\eta_L}B_{1\eta_R} \otimes_{A_0} M.
\] 
Now, we apply $\SU^\varphi$ and we obtain the following composition
\[
A_1\: {_{\eta_R} \otimes_{A_0}} M \xto{\nabla \otimes \id}
A_1\: {_{\eta_R} \otimes_{\eta_L}} A_1\: {_{\eta_R}\otimes_{A_0}} M
                                  \xto{\id \otimes \varphi_1 \otimes \id} 
A_1\: {_{\eta_R} \otimes_{\eta_L}} B_{1\eta_R} \otimes_{B_0} M.
\]
On the other hand, consider the 2-cartesian squares
\begin{center}
\begin{tikzpicture}
\matrix(m)[matrix of math nodes, row sep=2.6em, column sep=2.8em,
text height=1.5ex, text depth=0.25ex]{
Y\times_{\SY}X   & X  \\
Y& \SY\\}; \path[->,font=\scriptsize,>=angle 90]
(m-1-1) edge node[auto] {$\pi_2$} (m-1-2)
        edge node[left] {$\pi_1$} (m-2-1)
(m-1-2) edge node[auto] {$f p$} (m-2-2)
(m-2-1) edge node[auto] {$q$} (m-2-2); 
\draw [shorten >=0.2cm,shorten <=0.2cm,->,double] (m-2-1) -- (m-1-2) 
node[auto, midway,font=\scriptsize]{$\phi_1$};
\end{tikzpicture} 
\hspace{6mm} 
\begin{tikzpicture}
\matrix(m)[matrix of math nodes, row sep=2.6em, column sep=2.8em,
text height=1.5ex, text depth=0.25ex]{
(Y\times_{\SY}X)\times_{\SX}X   & X  \\
Y\times_{\SY}X & \SX\\}; \path[->,font=\scriptsize,>=angle 90]
(m-1-1) edge node[auto] {$\pi'_2$} (m-1-2)
        edge node[left] {$\pi'_1$} (m-2-1)
(m-1-2) edge node[auto] {$p$} (m-2-2) 
(m-2-1) edge node[auto] {$p\pi_2$} (m-2-2); 
\draw [shorten >=0.2cm,shorten <=0.2cm,->,double] (m-2-1) -- (m-1-2) 
node[auto, midway,font=\scriptsize]{$\phi_2$};
\end{tikzpicture}
\end{center}
and the morphism 
$l \colon (Y \times_{\SY} X) \times_{\SX} X \lto Y \times_{\SY} X$
satisfying that $\pi_1 l = \pi_1 \pi'_1$, $\pi_2 l = \pi'_2$ and $\phi_1 l = f \phi_2 \circ \phi_1 \pi'_1$. Applying $\GGamma_q^{\SY} f_*$ to $\eta_{\SST\CF}$, we have the following commutative diagram
\begin{center}
\begin{tikzpicture}
\matrix(m)[matrix of math nodes, row sep=2.6em, column sep=3.5em,
text height=1.5ex, text depth=0.25ex]{
\GGamma_q^{\SY} f_* \SST\CF     & \GGamma_q^{\SY} f_* \SST^2\CF  \\
\CF(Y\times_{\SY} X, p\pi_2) & \CF((Y\times_{\SY} X)\times_{\SX} X, p\pi'_2) 
\\}; 
\path[->,font=\scriptsize,>=angle 90]
(m-1-1) edge node[auto] {$\GGamma_q^{\SY} f_* \eta_{\SST\CF}$} (m-1-2)
        edge node[left] {via $\gamma_*$} (m-2-1)
(m-1-2) edge node[auto] {via $\gamma_*$} (m-2-2)
(m-2-1) edge node[auto] {$\CF(l, \iid)$} (m-2-2);
\end{tikzpicture}
\end{center}
Consider now the diagram 
\begin{center}
\begin{tikzpicture}
\matrix(m)[matrix of math nodes, row sep=2.6em, column sep=3.5em,
text height=1.5ex, text depth=0.25ex]{
(Y \times_{\SY} X) \times_{\SX} X   & Y \times_{\SY} X   \\
Y_1\: {_{p_2}\times_{f_0 p_1}} X_1  & Y_1\: {_{p_2}\times_{Y}} X \\}; 
\path[->,font=\scriptsize,>=angle 90]
(m-1-1) edge node[auto] {$l$} (m-1-2)
(m-2-1) edge node[left] {$c'$} (m-1-1)
(m-2-2) edge node[auto] {$c$} (m-1-2)
(m-2-1) edge node[auto] {$l'$} (m-2-2);
\end{tikzpicture}
\end{center}
where $c$ is the composition of the following isomorphisms
\[
Y_1\: {_{p_2}\times_Y} X = (Y \times_{\SY} Y)\: {_{p_2}\times_Y} X \liso
Y\: {_{q}\times_{qf_0}} X \liso Y \times_{\SY} X,
\]
$c'$ is the isomorphism given by
\[
Y_1\: {_{p_2}\times_{f_0 p_1}} X_1 \liso\, 
%Y_1\: {_{p_2}\times_{f_0 p_1}} (X \times_{\SX} X) \liso
(Y_1\: {_{p_2}\times_{f_0}} X) \times_{\SX} X \xto{c \times \id}
(Y \times_{\SY} X) \times_{\SX} X 
\]
and $l' = c^{-1} l {c'}$.
A computation shows that the following composition
%\[
%Y_1 \times_Y X_1 \cong Y_1 \times_Y X_1 \times_X X
%\xto{\id \times f_1 \times \id} Y_1 \times_Y Y_1 \times_Y X
%\xto{m \times \id} Y_1\times_Y X
%\]
\begin{align*}
 Y_1\: {_{p_2}\times_{f_0 p_1}} X_1
&\liso Y_{1}\: {_{p_2}\times_{f_0 p_1}} X_1\: {_{p_2}\times_{X}} X \\
&\xto{\id \times f_1 \times \id}  Y_1\: {_{p_2}\times_{p_1}} Y_1\: {_{p_2}\times_{Y}} X\\
&\xto{s \times_Y X} (Y_1\: {_{p_1}\times_{p_2}} Y_1) {\times_{Y}} X \\
&\xto{m \times \id} Y_1\: {_{p_2}\times_{Y}} X
\end{align*}
agrees with $l'$, where $s$ is the isomorphism that interchanges the factors of the fiber product. But then, considering the isomorphisms $Y_1 \times_Y X = \spec(A_1\: {_{\eta_R} \otimes_{A_0}} B_0)$ and $Y_1 \times_Y X_1 = \spec(A_1\: {_{\eta_R}\otimes_{\eta_L}} B_1)$,  $l'$ is identified the spectrum of the composition
\[
A_1\: {_{\eta_R} \otimes_{A_0}} B_0 \xto{\nabla \otimes \id}
A_1\: {_{\eta_R} \otimes_{\eta_L}} A_1\: {_{\eta_R}\otimes_{A_0}} B_0
                                  \xto{\id \otimes \varphi_1 \otimes \id} 
A_1\: {_{\eta_R} \otimes_{\eta_L}} B_{1\eta_R} \otimes_{B_0} B_0.
\]
Finally, as the map $\CF(l', \iid)$ corresponds to the last map tensored with $M = \CF(X, p)$ this shows the agreement of both morphisms.

The fact that the rows are equalizers defines the promised dashed natural isomorphism $\Upsilon_{\CF}$.
\end{proof}

\begin{cor}\label{adjHoveyInv}
There is a natural isomorphism of functors
\[
\GGamma_p^{\SX} f^*
\liso  B_0\otimes_{A_0}\GGamma_q^{\SY}.%(- )
\]
\end{cor}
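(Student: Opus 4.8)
The plan is to deduce this statement formally from Proposition~\ref{agreement} by invoking the uniqueness of adjoint functors, so that essentially no new computation is required. The two ingredients are already at hand: on the one hand we have the adjunction $f^* \dashv f_*$ between $\qco(\SY)$ and $\qco(\SX)$ established at the end of Section~\ref{sec6}; on the other hand we have the adjunction $B_0 \otimes_{A_0} - \dashv \SU^\varphi$ between $A_\bullet\com$ and $B_\bullet\com$ furnished by the preceding proposition. The equivalences $\GGamma_p^{\SX}$ and $\GGamma_q^{\SY}$, together with their respective quasi-inverses $\Theta_p^{\SX}$ and $\Theta_q^{\SY}$ (see the discussion preceding Proposition~\ref{modcomod}), will be used to transport the first adjunction onto the categories of comodules.

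First I would transport the adjunction $f^* \dashv f_*$ through the equivalences $\GGamma_p^{\SX}$ and $\GGamma_q^{\SY}$. Since conjugating an adjoint pair by equivalences of categories again yields an adjoint pair, we obtain
\[
\GGamma_p^{\SX}\, f^*\, \Theta_q^{\SY}
\;\dashv\;
\GGamma_q^{\SY}\, f_*\, \Theta_p^{\SX},
\]
an adjunction between functors $A_\bullet\com \to B_\bullet\com$ and $B_\bullet\com \to A_\bullet\com$.

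Next I would identify the right adjoint in this transported pair. By Proposition~\ref{agreement} there is a natural isomorphism $\GGamma_q^{\SY} f_* \liso \SU^\varphi \GGamma_p^{\SX}$, and precomposing with $\Theta_p^{\SX}$ together with the natural isomorphism $\GGamma_p^{\SX}\Theta_p^{\SX} \cong \id$ yields
\[
\GGamma_q^{\SY}\, f_*\, \Theta_p^{\SX}
\;\liso\;
\SU^\varphi .
\]
Thus $\GGamma_p^{\SX} f^* \Theta_q^{\SY}$ and $B_0 \otimes_{A_0} -$ are both left adjoints of the single functor $\SU^\varphi$ (up to the displayed natural isomorphism), so by uniqueness of adjoints they are canonically isomorphic. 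Finally, composing this isomorphism on the right with $\GGamma_q^{\SY}$ and using $\Theta_q^{\SY} \GGamma_q^{\SY} \cong \id$ gives
\[
\GGamma_p^{\SX}\, f^*
\;\liso\;
B_0 \otimes_{A_0} \GGamma_q^{\SY},
\]
as desired.

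Since the genuine geometric content is entirely contained in Proposition~\ref{agreement}, I do not expect a real obstacle here; the only point requiring mild care is to verify that the isomorphism supplied by uniqueness of adjoints is compatible with the units and counits of the two adjunctions, so as to be canonical and natural in the argument, but this is a purely formal check once the identification of the right adjoints is in place.
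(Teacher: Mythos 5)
Your proposal is correct and follows essentially the same route as the paper: the paper's proof also observes that $\GGamma_p^{\SX} f^* \Theta_q^{\SY}$ is left adjoint to $\SU^\varphi$ (via Proposition \ref{agreement} and the transported adjunction $f^* \dashv f_*$) and concludes by uniqueness of adjoints. You have merely spelled out the transport of the adjunction and the final conjugation step, which the paper leaves implicit.
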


\begin{proof}
Denote by $\Theta_q^{\SY}$ a quasi-inverse of $\GGamma_q^{\SY}$. The functor $\GGamma_p^{\SX} f^* \Theta_q^{\SY}$ is left adjoint to $U^{\varphi}$, so it has to agree with $B_0\otimes_{A_0} -$.
\end{proof}

\section{Deligne-Mumford stacks and functoriality for the \'etale topology}\label{sec8}

In section 6 we have shown how to obtain an adjoint functoriality of the category of quasi-coherent sheaves on a geometric stack overcoming the difficulty of the lack of functoriality of the $\ff$ topos. But one can (and should) wonder how this construction is related to the case where a functorial topos exists. This is the case for the \'etale topos on a Deligne-Mumford stack, as follows from Theorem \ref{funtopet} below.  Notice that the equivalence between quasi-coherent sheaves on the \emph{lisse-\'etale} and the \emph{\'etale} topology is treated in \cite{lmb} for Deligne-Mumford stacks where functoriality is not a problem. In our setting, our previous construction agrees  with the existing functoriality of the \emph{\'etale} topology. Let us see how.

\begin{cosa}
Let $\SX$ be a geometric Deligne-Mumford stack and consider the category $\aff_{\et}/\SX$ whose objects are pairs $(U,u)$ with $U$ an affine scheme and $u \colon U \to \SX$  an \'etale morphism, morphisms are as in $\aff/\SX$. Note that all morphisms are \'etale.  We define a topology in this category having as coverings finite families of \'etale morphisms that are jointly surjective. We denote also by $\aff_{\et}/\SX$ this site and by $\SX_{\et}$ the associated topos of sheaves.
The category $\aff_{\et}/\SX$ is ringed by the sheaf of rings $\CO \colon \aff_{\et}/\SX \to \ring$, defined by $\CO(U,u) = B$ with $U = \spec(B)$.
We define
\[
\qco_{\et}(\SX) := \qco(\aff_{\et}/\SX, \CO).
\]
As in Theorem \ref{agree}, $\qco_{\et}(\SX)$ agrees with the category of Cartesian sheaves, and we will use this fact freely.
\end{cosa}

Let $f\colon \SX \to \SY$ be a 1-morphism of geometric
\emph{Deligne-Mumford} stacks.

\begin{cosa}\textbf{Direct image for the \'etale topology}.
Let $(V,v)\in \aff_{\et}/\SY$. Denote by $\mathbf{J}_{\et}(v,f)$ the category whose objects are the
$2$-commutative squares
\begin{center}
\begin{equation}\label{masab}
\begin{tikzpicture}[baseline=(current  bounding  box.center)]
\matrix(m)[matrix of math nodes, row sep=2.6em, column sep=2.8em,
text height=1.5ex, text depth=0.25ex]{
U   & V  \\
\SX & \SY\\};
\path[->,font=\scriptsize,>=angle 90]
(m-1-1) edge node[auto]{ $h$} (m-1-2)
        edge node[left] {$u$} (m-2-1)
(m-1-2) edge node[auto] {$v$} (m-2-2)
(m-2-1) edge node[auto] {$f$} (m-2-2);
\draw [shorten >=0.2cm,shorten <=0.2cm,->,double] (m-2-1) -- (m-1-2) node[auto, midway,font=\scriptsize]{$\gamma$};
\end{tikzpicture}
\end{equation}
\end{center}
where $(U,u) \in \aff_{\et}/\SX$. As in \ref{di} (with appropriate changes) we define  for $\CG \in \pre(\aff_{\et}/\SX)$, 
\[
(f_*^{\et}\CG)(V,v)\, = \invlim{\mathbf{J}_{\et}(v,f)} \CG(U,u).
\]
Again, if $f$ is an affine morphism, then
$(f_*^{\et}\CG)(V,v)=\CG(V\times_{\SY}\SX,p_2)$.
\end{cosa}

\begin{cosa}\textbf{Inverse image for the \'etale topology}.
 Now, fix $(U,u)\in \aff_{\et}/\SY$ and denote by $\mathbf{I}_{\et}(u,f)$ the
category whose objects are the $2$-commutative squares as in diagram (\ref{masab}), with $(V,v) \in \aff_{\et}/\SY$. By a similar procedure to \ref{ii} define for $\CF \in \pre(\aff_{\et}/\SY)$ and $(U,u) \in \aff_{\et}/\SX$
\[
(f^{\pin} \CF)(U,u)\, = \dirlim{\mathbf{I}_{\et}{(u,f)}} \CF(V,v).
\]
Notice that if $f$ is  \'etale, then
$(f^{\pin} \CF)(U,u) = \CF(U,fu)$.
\end{cosa}

\begin{prop}\label{adjpre8}
The previous constructions define a pair of adjoint functors
\[
\pre(\aff_{\et}/\SX)
\underset{f_*^{\et}}{\overset{f^{\pin}_{\et}}{\longleftrightarrows}}
\pre(\aff_{\et}/\SY).
\]
\end{prop}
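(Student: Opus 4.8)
The plan is to mimic verbatim the proof of Proposition~\ref{adjpre6}, since the \'etale constructions are defined by exactly the same colimit/limit recipe over index categories $\mathbf{I}_{\et}(u,f)$ and $\mathbf{J}_{\et}(v,f)$ whose objects are again the $2$-commutative squares \eqref{masab}. Concretely, for presheaves $\CF \in \pre(\aff_{\et}/\SY)$ and $\CG \in \pre(\aff_{\et}/\SX)$ I would exhibit a natural bijection
\[
\Hom_{\SX}(f^{\pin}_{\et}\CF, \CG) \liso \Hom_{\SY}(\CF, f_*^{\et}\CG).
\]

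First I would unwind the left-hand side. A morphism $\theta \colon f^{\pin}_{\et}\CF \to \CG$ amounts to compatible maps $\theta_{(U,u)} \colon \dirlim{\mathbf{I}_{\et}(u,f)} \CF(V,v) \to \CG(U,u)$. By the universal property of the colimit, giving $\theta_{(U,u)}$ is the same as giving a family of maps $\CF(V,v) \to \CG(U,u)$ indexed by the objects $(V,v,h,\gamma) \in \mathbf{I}_{\et}(u,f)$ and compatible with the morphisms of that category. Then, fixing an object $(V,v)$ and letting $(U,u)$ range over all completions of it to a square, i.e.\ over $\mathbf{J}_{\et}(v,f)$, these maps assemble by compatibility into a single map $\CF(V,v) \to \invlim{\mathbf{J}_{\et}(v,f)} \CG(U,u) = (f_*^{\et}\CG)(V,v)$, and letting $(V,v)$ vary yields the desired morphism $\CF \to f_*^{\et}\CG$. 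Running the construction in reverse produces a map back; the crux is that the domain-indexing category $\mathbf{I}_{\et}(u,f)$ and the codomain-indexing category $\mathbf{J}_{\et}(v,f)$ have the \emph{same} objects, the squares of \eqref{masab}, only with the roles of the two legs interchanged, so that the colimit over one side and the limit over the other are adjoint.

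The only thing requiring care is the bookkeeping of morphisms: a morphism in $\mathbf{I}_{\et}(u,f)$ is an arrow $(j,\beta)$ on the $\SY$-side fixing the $\SX$-corner $(U,u)$, whereas a morphism in $\mathbf{J}_{\et}(v,f)$ is an arrow $(l,\alpha)$ on the $\SX$-side fixing the $\SY$-corner $(V,v)$, and I would check that the compatibility conditions on one side match those on the other under passage across the adjunction. This is a purely formal diagram chase, identical to the fibred-flat case, and I expect no genuine obstacle: no sheaf condition, flatness, or geometric hypothesis on $f$ enters at the level of presheaves. Indeed I would simply observe that the entire argument of Proposition~\ref{adjpre6} applies verbatim once $\aff_{\ff}$, $\mathbf{I}(u,f)$ and $\mathbf{J}(v,f)$ are replaced by $\aff_{\et}$, $\mathbf{I}_{\et}(u,f)$ and $\mathbf{J}_{\et}(v,f)$ throughout.
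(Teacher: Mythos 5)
Your proposal is correct and is essentially the paper's own argument: the paper proves Proposition \ref{adjpre8} by declaring it analogous to Proposition \ref{adjpre6}, whose proof is exactly your unit-counit-free construction of the bijection $\Hom_{\SX}(f^{\pin}_{\et}\CF,\CG) \cong \Hom_{\SY}(\CF,f_*^{\et}\CG)$ by passing from the colimit description over $\mathbf{I}_{\et}(u,f)$ to the limit description over $\mathbf{J}_{\et}(v,f)$ via the shared objects (the $2$-commutative squares). The compatibility bookkeeping you flag is likewise left as a routine check in the paper.
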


\begin{proof}
 The proof is analogous to the one in Proposition \ref{adjpre6}.
\end{proof}

\begin{prop}\label{p71}
The functor $f^{\pin}:\pre(\aff_{\et}/\SY)\,\to \pre(\aff_{\et}/\SX)\,$ is \emph{exact}.
\end{prop}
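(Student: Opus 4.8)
The plan is to prove exactness objectwise. Kernels and cokernels of presheaves of abelian groups are computed at each $(U,u)\in\aff_\et/\SX$, and by definition $f^{\pin}_\et\CF(U,u)=\dirlim{\mathbf{I}_\et(u,f)}\CF(V,v)$ is a direct limit; so it suffices to show that, for every fixed $(U,u)$, this direct limit is exact as a functor of $\CF$. A direct limit of abelian groups is exact as soon as its index category is filtered, hence the whole statement reduces to the claim that $\mathbf{I}_\et(u,f)$ is \emph{cofiltered} whenever it is nonempty (if it is empty the colimit is $0$, which is trivially exact). This is exactly the property that \emph{fails} for the $\ff$ topology --- compare the remark after Lemma \ref{Olss}, where one only secures commutation with finite products via Lemma \ref{Olss} --- and the point of the proof is that the étale hypothesis repairs it.

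Right-exactness is automatic, since $f^{\pin}_\et$ is pointwise a colimit and therefore preserves cokernels; the entire content is the cofiltered property, which I would verify through its two nontrivial clauses. For the existence of lower bounds I would repeat verbatim the construction of Lemma \ref{fprod}: given objects $(V_1,v_1,h_1,\gamma_1)$ and $(V_2,v_2,h_2,\gamma_2)$ of $\mathbf{I}_\et(u,f)$, a common lower bound is carried by $V_1\times_\SY V_2$, with the morphism $h$ induced by $h_1$ and $h_2$ and the canonical $2$-cell, together with the two projections. Here one uses that $\SY$ is geometric, so $\delta_\SY$ is affine and $V_1\times_\SY V_2$ is again affine, and that étale morphisms are stable under base change and composition, so $V_1\times_\SY V_2\to\SY$ is again étale; this keeps the construction inside $\aff_\et/\SY$.

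The main obstacle, and the only place the étale hypothesis is genuinely used, is the clause on parallel arrows: given $(j,\beta),(j',\beta')\colon(V,v,h,\gamma)\to(V',v',h',\gamma')$, I must produce a morphism into $(V,v,h,\gamma)$ equalizing them. The key is that $v'\colon V'\to\SY$ is étale, hence unramified, so its diagonal $\Delta_{V'/\SY}\colon V'\to V'\times_\SY V'$ is an open immersion. Pulling this open immersion back along $(j,j')\colon V\to V'\times_\SY V'$ produces an open immersion $E\hookrightarrow V$ on which $j$ and $j'$ agree together with their structural $2$-cells; in particular $E$ is again étale over $\SY$. The identities $jh=h'=j'h$ and $\gamma'=\beta h\circ\gamma=\beta' h\circ\gamma$, with $\gamma$ invertible, force $\beta h=\beta' h$ and hence force $h$ to factor through $E$, which furnishes the equalizing object after restricting to a suitable affine étale neighbourhood of the image of $U$. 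It is precisely this open-immersion diagonal --- unavailable for a general flat map, which is why $\mathbf{I}(u,f)$ in \ref{ii} had only finite products --- that makes $\mathbf{I}_\et(u,f)$ cofiltered. The remaining bookkeeping, namely arranging the equalizing object to be affine (using that $U$ is quasi-compact, or passing through the cofinal family of affine opens of the a priori only étale scheme $E$) and checking functoriality of the constructions in the $2$-cells, is routine. Granting cofilteredness, $f^{\pin}_\et\CF(U,u)$ is computed by a filtered colimit and is therefore exact in $\CF$, whence $f^{\pin}_\et$ is exact.
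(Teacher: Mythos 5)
Your proposal takes the same route as the paper's proof: right-exactness is formal (the paper gets it from the adjunction of Proposition \ref{adjpre8}), and left-exactness reduces to showing that $\mathbf{I}_{\et}(u,f)$ is cofiltered, with binary lower bounds given by $V_1\times_{\SY}V_2$ as in Lemma \ref{fprod}, and the equalizing of a parallel pair obtained from the diagonal of the \'etale morphism $v'\colon V'\to\SY$, which is an open immersion because \'etale maps are unramified. Your $E$ is literally the object the paper calls $Z$, and your observation that invertibility of $\gamma$ forces the two structural $2$-cells to agree after composing with $h$, so that $h$ factors through $E$, is the same verification as in the paper.

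The one flaw is in what you dismiss as routine bookkeeping: neither of your proposed devices for making the equalizing object affine actually works. A quasi-compact image need not lie in any affine open of $E$: take $V=\spec(k[x,y])$, let $E$ be the complement of the origin, and let $U$ be the affine scheme $\{x\neq0\}\sqcup\{y\neq0\}$ mapping onto $E$; the image of $U$ is all of $E$, which is not affine, so there is no affine \'etale neighbourhood of the image inside $E$, nor a cofinal system of affine opens over it. The gap closes for a reason you did not invoke: $E$ is the fibre product $V\times_{V'\times_{\SY}V'}V'$, the pullback of the diagonal of $V'$ along $(j,j')\colon V\to V'\times_{\SY}V'$, and $V'\times_{\SY}V'$ is \emph{affine} because $\SY$ is semi-separated; a fibre product of affine schemes over an affine scheme is affine, so $E$ is automatically affine, and it is \'etale over $\SY$ via the open immersion $E\to V$. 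This is exactly how the paper proceeds: it defines $Z$ by that Cartesian square, notes that $q_1\colon Z\to V$ is \'etale because the diagonal $\delta$ is an open embedding, and exhibits $(q_1,\id)\colon(Z,vq_1,l,\gamma)\to(V,v,h,\gamma)$ as the equalizing morphism. With that substitution your argument is complete and coincides with the paper's.
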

\begin{proof}
The functor $f^{\pin}$ is right exact, because it has a right
adjoint. To prove that $f^{\pin}$ is left exact it is enough to show that $\mathbf{I}_{\et}{(u,f)}$ is a cofiltered category for $(U,u) \in \aff_{\et}/\SY$.

By \cite[I, 2.7]{sga41} we have to prove that the category $\mathbf{I}_{\et}{(u,f)}$  is connected and satisfies PS 1) and PS 2) in \emph{loc.~cit.} The category $\mathbf{I}_{\et}{(u,f)}$ is connected since it has finite products (analogous to the $\ff$ case, Lemma \ref{fprod}).

To prove PS 1), for $i\in\{1,2\}$ take $(j_i,\beta_i) \colon (V_i,v_i,h_i,\gamma_i) \to (V,v,h,\gamma)$ morphisms in $\mathbf{I}_{\et}{(u,f)}$, the projections $p_i \colon V_1 \times_V V_2  \to V_i$ and  $h' \colon U \to V_1\times_V V_2$ the morphism satisfying $p_i h' = h_i$. We have that $(V_1\times_V V_2,v_1 p_1,h',\gamma_1)$ is in $\mathbf{I}_{\et}{(u,f)}$ and the morphisms $(p_1,\iid)$ and $\,(p_2,\beta_2^{-1} p_2 \circ \beta_1 p_1)$ satisfy
 \[
(j_1,\beta_1) \circ (p_1,\iid) =
(j_2,\beta_2) \circ (p_2,\beta_2^{-1} p_2 \circ \beta_1 p_1).
\]

Finally, we prove PS 2): Let $(g,\alpha),(g',\alpha') \colon (V,v,h,\gamma) \to
(V',v',h',\gamma')$ be morphisms in $\mathbf{I}_{\et}{(u,f)}$ and consider the pull-back
%$(g,\alpha),(g',\alpha') \colon (V,v,h,\gamma) \rightrightarrows
%(V',v',h',\gamma')$ be morphisms in $\mathbf{I}_{\et}{(u,f)}$ and consider the pull-back
\begin{center}
\begin{tikzpicture}
\matrix(m)[matrix of math nodes, row sep=2.6em, column sep=2.8em,
text height=1.5ex, text depth=0.25ex]{
V' \times_{\SY} V' & V'\\
V'                       &\SY\\};
\path[->,font=\scriptsize,>=angle 90]
(m-1-1) edge node[auto] {$p_2$} (m-1-2)
        edge node[left] {$p_1$} (m-2-1)
(m-1-2) edge node[auto] {$v'$} (m-2-2)
(m-2-1) edge node[auto] {$v'$} (m-2-2);
\draw [shorten >=0.2cm,shorten <=0.2cm,->,double] (m-2-1) -- (m-1-2) node[auto, midway,font=\scriptsize]{$\beta$};
\end{tikzpicture}
\end{center}
We have the morphism $r \colon V \to V' \times_{\SY} V'$ satisfying $p_1 r = g$, $p_2 r = g'$ and $\beta r = \alpha' \circ \alpha^{-1}$. Consider the Cartesian square
\begin{center}
\begin{tikzpicture}
\matrix(m)[matrix of math nodes, row sep=2.6em, column sep=2.8em,
text height=1.5ex, text depth=0.25ex]
{Z & V'\\
V&V' \times_{\SY} V'\\};
\path[->,font=\scriptsize,>=angle 90]
(m-1-1) edge node[auto] {$q_2$} (m-1-2)
        edge node[left] {$q_1$} (m-2-1)
(m-1-2) edge node[auto] {$\delta$} (m-2-2)
(m-2-1) edge node[auto] {$r$} (m-2-2);
\end{tikzpicture}
\end{center}
and the morphism $l \colon U \to Z$ such that $q_1 l=h$ and $q_2
 l=h'$. Notice that $q_1$ is \'etale because $\delta$ is an open embedding and therefore, \'etale. The object $(Z,vq_1,l,\gamma)\in {\bf
I}_{\et}(u,f)$ and  the morphism $(q_1,\id)\colon
(Z,vq_1,l,\gamma) \to (V,v,h,\gamma)$ is the equalizer of $(g,
\alpha)$ and $(g', \alpha')$.
\end{proof}      

\begin{rem}
Notice that this result, specifically PS 2), is \emph{false} for finer topologies.
\end{rem}

\begin{prop} \label{p81}
If $\CF$ is a sheaf on $\aff_{\et}/\SX$, then $f_*^{\et}\CF$ is a sheaf.
\end{prop}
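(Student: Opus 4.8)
The plan is to transcribe the argument of Proposition \ref{p61} into the étale setting, the only new site‑theoretic inputs being that étale morphisms are stable under base change and that a jointly surjective finite family of étale morphisms stays jointly surjective after pullback. First I would fix a covering $\{(g_i,\beta_i)\colon (V_i,v_i)\to (V,v)\}_{i\in I}$ in $\aff_{\et}/\SY$ and, exactly as in \eqref{cont}, attach to each object $(U,u,h,\gamma)\in\mathbf{J}_{\et}(v,f)$ the base changes $U_i:=U\times_V V_i$ with projections $g_i'\colon U_i\to U$. Since $g_i$ is étale, so is $g_i'$, whence $\{(g_i',\iid)\colon (U_i,ug_i')\to (U,u)\}_{i\in I}$ is a covering in $\aff_{\et}/\SX$; this yields continuity functors $\overline{g_i}\colon \mathbf{J}_{\et}(v,f)\to\mathbf{J}_{\et}(v_i,f)$, given by $\overline{g_i}(U,u,h,\gamma):=(U_i,ug_i',h_i,\gamma_i)$.

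Next, the sheaf property of $\CF$ provides, for each object of $\mathbf{J}_{\et}(v,f)$, an equalizer $\CF(U)\to\prod_{i\in I}\CF(U_i)\rightrightarrows\prod_{i,j\in I}\CF(U_i\times_U U_j)$. I would take the inverse limit over $\mathbf{J}_{\et}(v,f)$ and invoke commutation of limits with limits \cite[Prop. 2.12.1]{bor} to interchange the finite products with the inverse limit over $\mathbf{J}_{\et}(v,f)$, and then identify the inverse limit over $\mathbf{J}_{\et}(v,f)$ of $\CF(U_i)$ with the inverse limit over $\mathbf{J}_{\et,i}(v,f)$ of $\CF(U_i)$, where $\mathbf{J}_{\et,i}(v,f)$ denotes the essential image of $\overline{g_i}$. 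Assembling these into the comparison diagram whose lower row is the sheaf axiom for $f_*^{\et}\CF$ relative to $\{(g_i,\beta_i)\}_{i\in I}$, equipped with the canonical vertical maps, reduces the claim to showing that the lower row is an equalizer.

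The crux, and the only step requiring genuine care, is precisely the one appearing in Proposition \ref{p61}: the canonical comparison maps $f_*^{\et}\CF(V_i)\to\invlim{\mathbf{J}_{\et,i}(v,f)}\CF(U_i)$ are monomorphisms. I would establish this by observing that every $\overline{U}=(U,u,h,\gamma)\in\mathbf{J}_{\et}(v_i,f)$ gives rise to $(U,u,g_i h,\beta_i h\circ\gamma)\in\mathbf{J}_{\et}(v,f)$, hence to $\overline{U}_i:=\overline{g_i}(U,u,g_i h,\beta_i h\circ\gamma)\in\mathbf{J}_{\et,i}(v,f)$, together with morphisms $\overline{U}\to\overline{U}_i\to\overline{U}$ in $\mathbf{J}_{\et}(v_i,f)$ whose composite is the identity; this retraction forces the comparison maps to be monic. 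With the top row an equalizer and the outer vertical maps monic, a diagram chase shows the lower row is an equalizer, so $f_*^{\et}\CF$ is a sheaf. I would note that, just as in the flat case, this argument makes no use of the exactness of $f^{\pin}_{\et}$ nor of the cofilteredness of $\mathbf{I}_{\et}(u,f)$ from Proposition \ref{p71}; those facts bear only on the inverse image functor.
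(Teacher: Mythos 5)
Your proposal is correct and is exactly what the paper intends: its proof of this proposition consists of the single remark that ``the proof follows the line of Proposition \ref{p61},'' and your write-up is precisely that transcription, with the right new ingredients made explicit (stability of \'etale morphisms and of joint surjectivity under base change, plus closure of \'etale maps under composition so that $(U_i,ug_i')$ lies in $\aff_{\et}/\SX$). Your closing observation that neither the exactness of $f^{\pin}_{\et}$ nor the cofilteredness of $\mathbf{I}_{\et}(u,f)$ is needed here is also accurate; those facts only enter for the inverse image.
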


\begin{proof}
The proof follows the line of Proposition \ref{p61}.
\end{proof}

\begin{thm}\label{funtopet}
Let $f\colon \SX \to \SY$ be a 1-morphism of geometric Deligne-Mumford stacks There is an associated morphism of topoi
\[
f_{\et} \colon \SX_{\et} \lto \SY_{\et}
\]
\end{thm}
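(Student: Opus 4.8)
The plan is to follow the same pattern as Corollary \ref{adjhaz6}, with the crucial improvement that the inverse image functor will now turn out to be exact. First I would invoke Proposition \ref{p81}: since $f_*^{\et}$ carries sheaves to sheaves, it restricts to a functor $f_*^{\et} \colon \SX_{\et} \to \SY_{\et}$. Next, for a sheaf $\CF \in \SY_{\et}$ I define $f^{-1}_{\et}\CF$ to be the sheaf associated to the presheaf $f^{\pin}_{\et}\CF$. Combining the presheaf-level adjunction of Proposition \ref{adjpre8} with the adjoint property of sheafification, exactly as in the proof of Corollary \ref{adjhaz6}, yields an adjunction
\[
\SX_{\et} \, \underset{f_*^{\et}}{\overset{f^{-1}_{\et}}{\longleftrightarrows}} \, \SY_{\et}.
\]

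The heart of the matter, and the feature that distinguishes the \'etale situation from the $\ff$ one treated in Section \ref{sec6}, is to verify that $f^{-1}_{\et}$ is \emph{exact}; by the definition recalled in \ref{adjtop}, this is precisely what promotes the adjunction to a morphism of topoi. Now $f^{-1}_{\et}$ is right exact because it is a left adjoint, so only left exactness remains. To establish it I would factor $f^{-1}_{\et}$ as the composite
\[
\SY_{\et} \xto{\;\iota\;} \pre(\aff_{\et}/\SY) \xto{\;f^{\pin}_{\et}\;} \pre(\aff_{\et}/\SX) \xto{\;a\;} \SX_{\et},
\]
where $\iota$ denotes the inclusion of sheaves into presheaves and $a$ the sheafification. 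Each of these three functors preserves finite limits: $\iota$ does so because it is a right adjoint of $a$; the middle functor does so by Proposition \ref{p71}; and $a$ does so because sheafification is exact. Hence the composite $f^{-1}_{\et}$ preserves finite limits, so it is left exact, and therefore exact. The pair $(f_*^{\et}, f^{-1}_{\et})$ then constitutes a morphism of topoi $f_{\et} \colon \SX_{\et} \to \SY_{\et}$, as claimed.

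I expect the main obstacle to be concentrated entirely in the left exactness of $f^{-1}_{\et}$, which rests on Proposition \ref{p71}. That proposition in turn depends on the category $\mathbf{I}_{\et}(u,f)$ being cofiltered, a property that holds for the \'etale topology on Deligne-Mumford stacks but fails for finer topologies such as $\ff$ (\cfr{} the remark following Proposition \ref{p71}). This is exactly the reason the construction of Section \ref{sec6} delivers only an adjunction, whereas here the stronger cofiltering property upgrades it to a genuine morphism of topoi.
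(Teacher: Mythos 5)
Your proposal is correct and follows essentially the same route as the paper: define $f^{-1}_{\et}$ as the sheafification of $f^{\pin}_{\et}$, obtain the adjunction from Propositions \ref{adjpre8} and \ref{p81} together with the sheafification adjunction, and deduce exactness of $f^{-1}_{\et}$ from the exactness of $f^{\pin}_{\et}$ (Proposition \ref{p71}, resting on the cofilteredness of $\mathbf{I}_{\et}(u,f)$) and of sheafification. Your explicit factorization through the inclusion of sheaves into presheaves merely spells out what the paper states tersely, and is a correct account of the same argument.
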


\begin{proof}
Let $\CG \in \SY_{\et}$, we denote by $f^{-1}_{\et}\CG$ the sheaf associated to the presheaf $f^{\pin}\CG$. This, together with Proposition \ref{p81} provides an adjunction 
\[
\SX_{\et}
\underset{f_*^{\et}}{\overset{f^{-1}_{\et}}{\longleftrightarrows}}
\SY_{\et}
\]
The pair
$(f_*^{\et},f^{-1}_{\et})$ defines a morphism of topoi, since $f^{-1}_{\et}$
is exact because so is $f^{\pin}$ and the sheafification functor.
\end{proof}

\begin{prop}\label{l82}
 Let $f \colon \SX \to \SY$ and $g \colon \SY \to \SZ$ be 1-morphisms of geometric Deligne-Mumford stacks. It holds that $(g f)^{\et}_* \cong g^{\et}_* f^{\et}_*$
\end{prop}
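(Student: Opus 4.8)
The plan is to capitalize on the essential novelty of this section: by Theorem~\ref{funtopet} the $1$-morphisms $f$, $g$ and $gf$ now induce \emph{genuine} morphisms of topoi $f_{\et}$, $g_{\et}$ and $(gf)_{\et}$, so that---unlike in Section~\ref{sec6}---we may simply compose geometric morphisms. The composite $g_{\et}\circ f_{\et}\colon \SX_{\et}\to\SZ_{\et}$ is again a morphism of topoi, and by the very definition of composition its direct image functor is $g^{\et}_*\circ f^{\et}_*$; here Proposition~\ref{p81} is needed, in order to know that $f^{\et}_*\CF$ is a sheaf on $\SY_{\et}$ before applying $g^{\et}_*$. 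It therefore suffices to identify $g_{\et}\circ f_{\et}$ with $(gf)_{\et}$. Since a morphism of topoi is determined up to canonical isomorphism by its inverse image, I would reduce the statement to producing a natural isomorphism
\[
(gf)^{-1}_{\et}\;\liso\;f^{-1}_{\et}\,g^{-1}_{\et},
\]
after which $(gf)^{\et}_*\cong g^{\et}_* f^{\et}_*$ follows by uniqueness of right adjoints.

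To obtain this isomorphism I would first descend to presheaves. By Proposition~\ref{adjpre8} the presheaf inverse image $f^{\pin}$ is left adjoint to $f^{\et}_*$, and by Proposition~\ref{p81} the latter preserves sheaves; a standard computation with the associated-sheaf functor then identifies $f^{-1}_{\et}g^{-1}_{\et}$ with the sheaf associated to the presheaf functor $f^{\pin}g^{\pin}$. It thus remains to compare $f^{\pin}g^{\pin}$ with $(gf)^{\pin}$. For $(U,u)\in\aff_{\et}/\SX$ the first is an iterated colimit, over the Grothendieck construction of $(V,v,h,\gamma)\mapsto\mathbf{I}_{\et}(v,g)$ along $\mathbf{I}_{\et}(u,f)$, while the second is a colimit over $\mathbf{I}_{\et}(u,gf)$; there is an evident comparison functor sending a pair $\bigl((V,v,h,\gamma),(W,w,k,\delta)\bigr)$ to $(W,w,kh,\delta h\circ g\gamma)$, over which the value $\CH(W,w)$ is constant, and the task is to show that it induces an isomorphism of colimits after sheafification.

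The hard part will be exactly the point where the construction of Section~\ref{sec6} became awkward: the comparison functor is not cofinal at the level of presheaves, since an object $(W,w,m,\epsilon)\in\mathbf{I}_{\et}(u,gf)$ provides no a priori factorization of $U$ through an affine $V$ that is étale over $\SY$. This is where the Deligne--Mumford hypothesis is indispensable. By Proposition~\ref{2cartsq} the fibre product $\SY\times_{\SZ}W$ is a geometric Deligne--Mumford stack, and choosing an étale presentation of it furnishes, étale locally on $U$, the required intermediate object $V$, exactly as in the proof of Proposition~\ref{l62}. Because the indexing categories $\mathbf{I}_{\et}$ and $\mathbf{J}_{\et}$ are now cofiltered (Proposition~\ref{p71}) and $f^{\et}_*$ is an honest topos direct image, the descent bookkeeping that encumbered Proposition~\ref{l62} collapses: after sheafification the interpolation becomes available, the comparison functor becomes cofinal, and the two colimits agree. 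The remaining verifications---naturality in $\CH$ and compatibility with the restriction maps along coverings---are routine, and completing them yields the isomorphism of inverse images and hence the proposition.
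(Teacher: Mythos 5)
Your route is genuinely different from the paper's and is viable in outline, but its decisive step is not yet a proof. The paper proves the direct-image identity head-on: its proof of this proposition is literally ``mutatis mutandis Proposition \ref{l62}'', i.e.\ one writes down the canonical map $(gf)^{\et}_*\CF \to g^{\et}_*f^{\et}_*\CF$ and inverts it by producing, for each object of $\mathbf{J}_{\et}(w,gf)$, a map $g^{\et}_*f^{\et}_*\CF(W,w)\to \CF(U,u)$, using the interpolation $\SY':=\SY\times_{\SZ}W$ (geometric by Proposition \ref{2cartsq}; one also needs it to be Deligne--Mumford so that an affine \'etale presentation $V'\to\SY'$ exists --- true, since $\SY'\to\SY$ is representable \'etale, but an extra remark the paper's ``mutatis mutandis'' silently absorbs), the induced covering $U'=U\times_{\SY'}V'\to U$, and the equalizer expressing that $\CF$ is a sheaf; the inverse-image compatibility is then the corollary, by adjunction. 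You run the adjunction the other way: prove $(gf)^{-1}_{\et}\cong f^{-1}_{\et}g^{-1}_{\et}$ and deduce the proposition by uniqueness of right adjoints. Your preparatory reductions are correct: the composite topos morphism of Theorem \ref{funtopet} has direct image $g^{\et}_*f^{\et}_*$ (Proposition \ref{p81} being the point), and $f^{-1}_{\et}g^{-1}_{\et}\CH\cong a(f^{\pin}g^{\pin}\CH)$ for the sheafification functor $a$, again because $f^{\et}_*$ preserves sheaves. Both proofs then rest on the same geometric kernel, the interpolation through $\SY\times_{\SZ}W$, so neither is shorter; what yours buys is that the whole argument lives on the colimit side, where cofilteredness of the index categories (unavailable in the $\ff$ setting) can be exploited.

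The gap is in the sentence ``after sheafification \dots the comparison functor becomes cofinal''. That is not a meaningful statement: cofinality is a property of the functor between index categories, your comparison functor from the Grothendieck construction to $\mathbf{I}_{\et}(u,gf)$ is genuinely not cofinal, and sheafifying presheaves does not alter that functor. What must be proved is that the presheaf map $f^{\pin}g^{\pin}\CH\to (gf)^{\pin}\CH$ is a \emph{local} isomorphism. Your interpolation argument does give local surjectivity: a section represented by $(W,w,m,\epsilon)$ acquires a preimage in the iterated colimit after restriction along the covering $U'=U\times_{\SY'}V'\to U$. Local injectivity is a separate verification that you never address: given two elements of the iterated colimit with equal images, one must first equalize their images at a finite stage --- this uses that $\mathbf{I}_{\et}(u,gf)$ is cofiltered, i.e.\ Proposition \ref{p71} applied to the 1-morphism $gf$ (your citation of \ref{p71} for $\mathbf{J}_{\et}$ is empty: that proposition says nothing about the categories $\mathbf{J}_{\et}$) --- and then interpolate a second time, invoking the products and the equalizers (axiom PS 2) of $\mathbf{I}_{\et}$ established in the proof of \ref{p71}, to identify the two factorizations through $\SY$ \'etale-locally on $U$. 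This is exactly the bookkeeping you assert ``collapses''; it does not disappear, it is transposed to the colimit side. With that verification written out, your argument closes and yields a legitimate alternative proof.
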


\begin{proof}
The proof is \emph{mutatis mutandis} the same as Proposition \ref{l62}.
\end{proof}

\begin{cor}
In the previous situation $(g f)^{-1}_{\et} \liso f^{-1}_{\et} g^{-1}_{\et}$.
\end{cor}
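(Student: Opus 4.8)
The plan is to deduce the statement formally from Proposition \ref{l82} by uniqueness of adjoints, exactly as in the corresponding corollary for the $\ff$ topology in Section \ref{sec6}. First I would recall that Theorem \ref{funtopet} provides, for each of the three $1$-morphisms $f$, $g$ and $gf$ of geometric Deligne-Mumford stacks, an adjunction between the associated inverse and direct image functors on the corresponding étale topoi, namely $f^{-1}_{\et} \dashv f^{\et}_*$, $g^{-1}_{\et} \dashv g^{\et}_*$ and $(gf)^{-1}_{\et} \dashv (gf)^{\et}_*$.

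Next I would observe that the composite $f^{-1}_{\et} g^{-1}_{\et}$ is a left adjoint of the composite $g^{\et}_* f^{\et}_*$. This is the standard fact that adjunctions compose: for $\CF \in \SZ_{\et}$ and $\CG \in \SX_{\et}$ there is a chain of natural isomorphisms
\[
\Hom_{\SX_{\et}}(f^{-1}_{\et} g^{-1}_{\et} \CF, \CG)
\liso \Hom_{\SY_{\et}}(g^{-1}_{\et} \CF, f^{\et}_* \CG)
\liso \Hom_{\SZ_{\et}}(\CF, g^{\et}_* f^{\et}_* \CG),
\]
obtained by applying successively the adjunctions $f^{-1}_{\et} \dashv f^{\et}_*$ and $g^{-1}_{\et} \dashv g^{\et}_*$.

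Finally, by Proposition \ref{l82} there is a natural isomorphism $g^{\et}_* f^{\et}_* \cong (gf)^{\et}_*$; hence both $f^{-1}_{\et} g^{-1}_{\et}$ and $(gf)^{-1}_{\et}$ are left adjoints of the functor $(gf)^{\et}_*$. Since left adjoints are unique up to a unique natural isomorphism, I obtain the desired canonical isomorphism $(gf)^{-1}_{\et} \liso f^{-1}_{\et} g^{-1}_{\et}$. I do not expect any real obstacle here: all the geometric content has already been absorbed into Proposition \ref{l82}, and what remains is the purely formal bookkeeping of composing adjunctions. The only point requiring a little care is to check that the resulting isomorphism is natural, which follows at once from the naturality of each of the adjunction isomorphisms used in the chain above.
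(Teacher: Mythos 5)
Your proof is correct and is exactly the argument the paper intends: its one-line proof ("a consequence of the previous proposition by adjunction") is precisely your chain — compose the adjunctions from Theorem \ref{funtopet}, invoke Proposition \ref{l82}, and conclude by uniqueness of left adjoints. You have simply spelled out the formal bookkeeping that the paper leaves implicit.
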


\begin{proof}
 It is a consequence of the previous proposition by adjunction.
\end{proof}

\begin{cosa}\label{modet}
 Let $f \colon \SX \to \SY$ be a 1-morphism of geometric Deligne-Mumford stacks. The ring homomorphism $f^{\#}_{\et} \colon {\CO}_{\SY} \to f_*{\CO}_{\SX}$ induces a canonical morphism of ringed topoi
\[
(f_{\et},f_\#^{\et}) \colon (\SX_{\et}, {\CO}_{\SX}) \lto
(\SY_{\et}, {\CO}_{\SY})
\]
where $f_{\#}^{\et} \colon f^{-1}{\CO}_{\SY} \to {\CO}_{\SX}$ is the morphism adjoint to $f^{\#}_{\et}$.  Now we can define the functor $f^*_{\et}\colon \CO_{\SY}\md \to \CO_{\SX}\md$ by the usual formula
\[
f^*_{\et}\CF := \CO_{\SX}\otimes_{f^{-1}_{\et}\CO_{\SY}}f^{-1}_{\et}\CF.
 \]
It is left adjoint to  $f_*^{\et}\colon \CO_{\SX}\md \to \CO_{\SY}\md$. 
\end{cosa}

\begin{prop}
In the previous situation, we have
\begin{enumerate}
 \item if $\CG \in \qco_{\et}(\SY)$, then $f^*_{\et}\CG \in \qco_{\et}(\SX)$, and 
 \item if $\CF \in \qco_{\et}(\SX)$, then $f_*^{\et}\CF \in \qco_{\et}(\SY)$.
\end{enumerate}
\end{prop}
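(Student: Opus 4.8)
The plan is to transcribe the arguments of Propositions \ref{qcoinv} and \ref{qcodir} to the \'etale setting, exploiting the crucial simplification that, by Theorem \ref{funtopet} and \ref{modet}, the pair $(f^*_{\et}, f_*^{\et})$ now arises from a genuine morphism of \emph{ringed topoi}. Before starting I would record the \'etale analogues of the structural results of sections \ref{sec2}--\ref{sec3}. These hold verbatim: an \'etale map is in particular flat of finite presentation, so for an \'etale affine presentation $p \colon X \to \SX$ one still has $p^*_{\et}\CF(V,v) = \CF(V,pv)$, whence the \'etale versions of Proposition \ref{desc1} and Lemma \ref{descart}; and since \emph{every} morphism of $\aff_{\et}/\SX$ is \'etale, hence flat, only the flat half of the proof of Proposition \ref{cartab} is needed to see that $\qco_{\et}(\SX)$ is abelian and closed under kernels. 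I would also use freely the identification of $\qco_{\et}$ with Cartesian sheaves granted above.

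For (\emph{i}), I would choose \'etale affine presentations $p \colon X \to \SX$ and $q \colon Y \to \SY$ together with $f_0 \colon X \to Y$ filling a $2$-commutative square. By pseudofunctoriality of inverse images --- the \'etale counterpart of $(g f)^* \cong f^* g^*$, obtained from Proposition \ref{l82} by adjunction --- there is an isomorphism $p^*_{\et} f^*_{\et}\CG \cong f_0^* q^*_{\et}\CG$ induced by $\gamma^*$. Now $q^*_{\et}\CG$ is Cartesian on $X$ by the \'etale form of Proposition \ref{desc1}(\emph{i}), and $f_0^*$ carries Cartesian sheaves to Cartesian sheaves by the \'etale form of Proposition \ref{adjaf}(\emph{i}) (equivalently, the classical fact that pullback along a map of affine schemes preserves quasi-coherence). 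Hence $p^*_{\et} f^*_{\et}\CG \in \cart(X)$, and the \'etale descent lemma (analogue of Lemma \ref{descart}) forces $f^*_{\et}\CG \in \cart(\SX) = \qco_{\et}(\SX)$.

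For (\emph{ii}) I would first treat the case of an affine $f$, where $(f_*^{\et}\CF)(V,v) = \CF(V \times_{\SY} \SX, p_2)$ and the Cartesian property is verified exactly as in Proposition \ref{adjaf}(\emph{ii}), using that $\CF$ is Cartesian together with base change of the affine morphism. For general $f$, pick an \'etale affine presentation $p \colon X \to \SX$; since $\SX$ is geometric and $X$ is affine, the composite $f p$ is an affine morphism. Writing $\SST := p_*^{\et} p^*_{\et}$, the sheaf $\CF$ sits in the canonical equalizer $\CF \to \SST\CF \rightrightarrows \SST^2\CF$. As $f_*^{\et}$ is the direct image of a morphism of topoi it is left exact, so applying it yields an equalizer $f_*^{\et}\CF \to f_*^{\et}\SST\CF \rightrightarrows f_*^{\et}\SST^2\CF$. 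By the composition law (Proposition \ref{l82}) one has $f_*^{\et}\SST\CF \cong (f p)_*^{\et} p^*_{\et}\CF$ and $f_*^{\et}\SST^2\CF \cong (f p)_*^{\et} p^*_{\et}\SST\CF$; since $f p$ is affine and the pullbacks $p^*_{\et}\CF$ and $p^*_{\et}\SST\CF$ are quasi-coherent on $X$ by part (\emph{i}), the affine case shows both terms are quasi-coherent. Therefore $f_*^{\et}\CF$ is the kernel of a morphism of quasi-coherent sheaves, hence quasi-coherent.

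The individual computations are routine transcriptions, so the only places demanding real care are the following. The first is the base-change isomorphism in (\emph{i}): I expect to derive it cleanly from the now-available morphism-of-ringed-topoi structure, rather than re-running the intricate limit manipulations of section \ref{sec6}. The second, and the genuine crux, is to confirm that the affine- and presentation-level lemmas of sections \ref{sec2}--\ref{sec3} really do transfer to the \'etale topology; this reduces to checking that ``\'etale $\Rightarrow$ flat of finite presentation'' makes every relevant formula go through unchanged and that an \'etale presentation furnishes a covering of each object of $\aff_{\et}/\SX$ (so that the descent equalizer used in both parts is available).
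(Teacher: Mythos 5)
Your proposal is correct and is exactly the paper's approach: the paper's entire proof is the remark that the statement "can be shown in a similar way to Propositions \ref{qcoinv} and \ref{qcodir}," and your transcription (pseudofunctoriality via Proposition \ref{l82} plus the 2-cell, the affine case of the direct image, and the equalizer $\CF \to \SST\CF \rightrightarrows \SST^2\CF$ with $fp$ affine) fills in precisely the details the paper leaves implicit, including the genuinely necessary checks that an \'etale presentation yields coverings and that all morphisms in $\aff_{\et}/\SX$ are flat. (Only a trivial slip: $q^*_{\et}\CG$ is Cartesian on $Y$, not on $X$; it is $f_0^*q^*_{\et}\CG$ that lives on $X$.)
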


\begin{proof}
This can be shown in a similar way to Propositions \ref{qcoinv} and \ref{qcodir}.
\end{proof}

\begin{cor}
 There is a pair of adjoint functors
\[
\qco_{\et}(\SX) \,
\underset{f^{\et}_*}{\overset{f_{\et}^*}{\longleftrightarrows}} \,
\qco_{\et}(\SY).
\]
\end{cor}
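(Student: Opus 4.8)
The plan is to deduce this formally, exactly mirroring the corollary that followed Proposition~\ref{qcodir} in the $\ff$ setting: combine the adjunction on module categories with the immediately preceding proposition on preservation of quasi-coherence. There is no serious content beyond bookkeeping, so the proof will be very short.

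First I would recall from \ref{modet} that the morphism of ringed topoi $(f_{\et}, f_\#^{\et})$ supplies an adjoint pair
\[
\CO_{\SX}\md \, \underset{f_*^{\et}}{\overset{f_{\et}^*}{\longleftrightarrows}} \, \CO_{\SY}\md,
\]
that is, for every $\CG \in \CO_{\SY}\md$ and every $\CF \in \CO_{\SX}\md$ a natural isomorphism
\[
\Hom_{\CO_{\SX}\md}(f_{\et}^*\CG, \CF) \,\liso\, \Hom_{\CO_{\SY}\md}(\CG, f_*^{\et}\CF).
\]
Next I would invoke the preceding Proposition, which guarantees that $f_{\et}^*$ sends $\qco_{\et}(\SY)$ into $\qco_{\et}(\SX)$ and that $f_*^{\et}$ sends $\qco_{\et}(\SX)$ into $\qco_{\et}(\SY)$. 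Since $\qco_{\et}(\SX)$ and $\qco_{\et}(\SY)$ are, by definition, \emph{full} subcategories of $\CO_{\SX}\md$ and $\CO_{\SY}\md$ respectively, the $\Hom$-sets computed in these subcategories coincide with those computed in the ambient module categories.

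Consequently, restricting $\CG$ to objects of $\qco_{\et}(\SY)$ and $\CF$ to objects of $\qco_{\et}(\SX)$, the displayed adjunction isomorphism restricts verbatim, and its naturality is inherited; this yields the claimed pair $f_{\et}^* \dashv f_*^{\et}$ between the categories of quasi-coherent sheaves. The only point requiring verification is precisely the stability of $\qco_{\et}$ under both functors, and that is exactly what the previous proposition provides, so there is in fact no genuine obstacle here.

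\begin{proof}
Combine the adjunction of \ref{modet} with the previous Proposition. As $f_{\et}^*$ preserves $\qco_{\et}$ and $f_*^{\et}$ preserves $\qco_{\et}$, and both are full subcategories of the respective categories of $\CO$-Modules, the adjunction isomorphism restricts to the full subcategories of quasi-coherent sheaves.
\end{proof}
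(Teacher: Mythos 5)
Your proposal is correct and is exactly the paper's argument: the paper's proof reads ``Combine the previous proposition with \ref{modet},'' i.e.\ restrict the module-category adjunction of \ref{modet} to the full subcategories of quasi-coherent sheaves, which is legitimate precisely because the preceding proposition shows both $f_{\et}^*$ and $f_*^{\et}$ preserve quasi-coherence. Your write-up merely makes the fullness/restriction bookkeeping explicit, which the paper leaves implicit.
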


\begin{proof}
Combine the previous proposition with \ref{modet}.
\end{proof}

For $\SX$ a geometric Deligne-Mumford stack we denote by $\sr_{\SX}:\SX_{\ff} \to \SX_{\et}$ the restriction functor.

\begin{prop} \label{p72}%By Proposition \ref{meme}
Let $f\colon \SX \to \SY$ be a 1-morphism of geometric
Deligne-Mum\-ford stacks. There is an isomorphism \( \sr_{\SY} f_* \cong f_*^{\et} \sr_{\SX} \) of functors from $\SX_{\ff}$ to $\SY_{\et}$.
\end{prop}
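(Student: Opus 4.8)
The plan is to fix an object $(V,v)\in\aff_{\et}/\SY$ (viewed also in $\aff_{\ff}/\SY$, since $v$ is étale, hence flat of finite presentation) and to identify both functors, evaluated at $(V,v)$, with the global sections of a single sheaf over the $2$-fibre product $\SX_V:=\SX\times_\SY V$, computed once in the flat and once in the étale topology. First I would record that $\SX_V$ is a geometric Deligne--Mumford stack by Proposition~\ref{2cartsq}, and that its projection $\pi\colon\SX_V\to\SX$ is étale, being the base change of the étale morphism $v$. The universal property of the $2$-fibre product gives an equivalence between the indexing category $\mathbf{J}(v,f)$ of \ref{di} and the category $\aff/\SX_V$: an object $(U,u,h,\gamma)$ corresponds to the map $\tilde u\colon U\to\SX_V$ with $\pi\tilde u\cong u$ and second projection $h$. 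Since $\pi$ is étale, cancellation of flatness (resp. of étaleness) along étale morphisms shows that $u=\pi\tilde u$ is flat of finite presentation (resp. étale) if and only if $\tilde u$ is; hence the equivalence restricts to $\mathbf{J}(v,f)\simeq\aff_{\ff}/\SX_V$ and $\mathbf{J}_{\et}(v,f)\simeq\aff_{\et}/\SX_V$, compatibly. Under it $\CG(U,u)=(\pi^{-1}\CG)(U,\tilde u)$, because $\pi$ is flat of finite presentation.

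With this identification the defining formulas of \ref{di} and of the étale direct image become
\[
(\sr_\SY f_*\CG)(V,v)=\invlim{\aff_{\ff}/\SX_V}\pi^{-1}\CG,
\qquad
(f_*^{\et}\sr_\SX\CG)(V,v)=\invlim{\aff_{\et}/\SX_V}\sr_{\SX_V}\pi^{-1}\CG,
\]
that is, the global sections of the single sheaf $\CH:=\pi^{-1}\CG$ computed over the flat, respectively the étale, site of $\SX_V$. The heart of the matter is therefore the following lemma: for a geometric Deligne--Mumford stack $\SZ$ and any sheaf $\CH$ on $\SZ_{\ff}$, the restriction map
\[
\invlim{\aff_{\ff}/\SZ}\CH\;\lto\;\invlim{\aff_{\et}/\SZ}\sr_\SZ\CH
\]
is an isomorphism. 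To prove it I would fix an \emph{étale} presentation $a\colon Z\to\SZ$ with $Z$ affine; since $\SZ$ is geometric, the iterated fibre products $Z_n=Z\times_\SZ\cdots\times_\SZ Z$ are affine and their structure maps to $\SZ$ are étale, so the $(Z_n)$ are objects of $\aff_{\et}/\SZ$.

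The key geometric point is that, although an arbitrary flat object $(U,u)$ of $\aff_{\ff}/\SZ$ need not admit any map \emph{from} an étale object, it always maps \emph{to} one: the projection $r\colon U\times_\SZ Z\to Z$ is a morphism of the flat site onto the étale object $(Z,a)$, while $q\colon U\times_\SZ Z\to U$ is an étale covering. For injectivity, if a compatible family $(s)$ vanishes on every étale object, then for each flat $(U,u)$ the section $s_U$ restricts along $q$ to $s_{U\times_\SZ Z}=\CH(r)(s_Z)=0$, whence $s_U=0$ because $\CH$ is separated and $q$ is a covering. For surjectivity, given a compatible family $(t)$ on the étale site I would set $w:=\CH(r)(t_Z)\in\CH(U\times_\SZ Z)$; its two pullbacks to $U\times_\SZ Z_2$ both equal the pullback of $t_{Z_2}$ along $U\times_\SZ Z_2\to Z_2$, since $\CH(\mathrm{pr}_i)(t_Z)=t_{Z_2}$ for the two projections $Z_2\rightrightarrows Z$, so $w$ satisfies descent along the covering $q$ and yields a unique $s_U\in\CH(U)$ with $\CH(q)(s_U)=w$. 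One then checks that $(s_U)$ is compatible with the restriction maps of the flat site, is independent of the chosen atlas, and restricts to $(t)$ on étale objects, giving the inverse map.

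Finally I would assemble naturality. In $\CG$ everything is functorial, being built from $\pi^{-1}$ and limits; in $(V,v)$ a morphism $(j,\beta)\colon(V_1,v_1)\to(V_2,v_2)$ induces a morphism $\SX_{V_1}\to\SX_{V_2}$ over $\SX$ compatible with the restriction functors of \ref{di}, so the two sides transform identically, and the isomorphisms of the lemma are patently natural in $\CH$. This produces the natural isomorphism $\sr_\SY f_*\cong f_*^{\et}\sr_\SX$ of functors $\SX_{\ff}\to\SY_{\et}$. The main obstacle is the surjectivity half of the lemma: producing $s_U$ on every flat object by descent from the étale atlas and verifying, through a diagram chase with the coverings $U\times_\SZ Z_n\to U$, that the resulting family is well defined, atlas-independent and compatible. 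The preliminary identification $\mathbf{J}(v,f)\simeq\aff_{\ff}/\SX_V$, resting on flat/étale cancellation along the étale projection $\pi$, is what reduces the whole statement to this single descent lemma.
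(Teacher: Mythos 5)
Your argument is correct, but it takes a genuinely different route from the paper's. The paper's proof is a reduction to the affine case via the presentation monad: for a presentation $p\colon X\to\SX$ both $p$ and $fp$ are affine morphisms, so their flat and \'etale direct images are given by the same explicit formula and commute with restriction; combining this with the composition isomorphisms $(gf)_*\cong g_*f_*$ (Propositions \ref{l62} and \ref{l82}) gives $\sr_{\SY}f_*\SST\CF\cong f_*^{\et}\sr_{\SX}\SST\CF$ for $\SST=p_*p^*$, and the isomorphism is then transported to $\CF$ along the equalizer $\CF\to\SST\CF\rightrightarrows\SST^{2}\CF$, all functors involved being left exact. You instead reindex the defining limit of $(f_*\CG)(V,v)$ over the geometric Deligne--Mumford stack $\SX\times_{\SY}V$, using the equivalences $\mathbf{J}(v,f)\simeq\aff_{\ff}/(\SX\times_{\SY}V)$ and $\mathbf{J}_{\et}(v,f)\simeq\aff_{\et}/(\SX\times_{\SY}V)$ (which rest on cancellation of flatness, respectively \'etaleness, along the \'etale projection $\pi$, i.e., on the fact that a section of an \'etale morphism is an open immersion), thereby reducing the proposition to a single comparison lemma: for any sheaf on the flat site of a geometric Deligne--Mumford stack, flat and \'etale global sections agree, which you prove by descent along an affine \'etale atlas $Z\to\SZ$ using the coverings $U\times_{\SZ}Z\to U$. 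Both ingredients check out: the cancellation claim is valid, $\pi^{\pin}\CG$ is already a sheaf (coverings are stable under composition with the flat, finitely presented $\pi$), so $\pi^{-1}\CG$ has the simple description you use, and your descent lemma is true, the compatibility and atlas-independence verifications you flag being routine though heavy on 2-cell bookkeeping. As for what each approach buys: yours is self-contained at the level of sections, avoids the (laboriously proved) Propositions \ref{l62} and \ref{l82}, and exhibits the isomorphism concretely as restriction of limits along $\mathbf{J}_{\et}(v,f)\subset\mathbf{J}(v,f)$, which makes naturality in both $\CG$ and $(V,v)$ transparent; the paper's proof is much shorter because it reuses machinery already established for the functoriality formalism and sidesteps the new comparison lemma entirely.
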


\begin{proof}  As $p$ is an affine morphism, $\sr_{\SX} p_* = p_*^{\et} \sr_X$.
% because restriction is compatible with pull-back. 
In general, let $\CF\in \SX_{\ff}$. Recall that we have an equalizer in $\SX_{\ff}$
\begin{equation}  \label{etale}
\CF \;\;  \xto{\eta_{\CF}} \;\;  \SST \CF\;\; 
\underset{\eta_{\SST\CF}}{\overset{\SST\eta_{\CF}}{\longrightrightarrows}}
\; \; \SST^2 \CF,
\end{equation}
with $\SST = p_*p^*$. By Proposition \ref{l62} and Proposition \ref{l82}, using also that $f p$ is an affine morphism we have that
\[
\sr_{\SY}f_*p_* \cong \sr_{\SY}(fp)_* = 
(fp)_*^{\et} \sr_X \cong f_*^{\et}p_*^{\et}\sr_X .
\]
From this it follows that $\sr_{\SY}f_*\SST\CF = f_*^{\et} \sr_{\SX}\SST\CF$ and we construct the diagram whose rows are equalizers
\[
\begin{tikzpicture}
\matrix (m) [matrix of math nodes, row sep=3em, column sep=3em]{
\sr_{\SY} f_* \CF       & \sr_{\SY}f_*\SST\CF   & \sr_{\SY}f_*\SST^2\CF \\
f_*^{\et} \sr_{\SX} \CF & f_*^{\et}\sr_{\SX} \SST\CF & f_*^{\et}\sr_{\SX} \SST^2\CF\\
};
\draw [transform canvas={yshift= 0.3ex},->] (m-1-2) -- (m-1-3);
\draw [transform canvas={yshift=-0.3ex},->] (m-1-2) -- (m-1-3);
\draw [transform canvas={yshift= 0.3ex},->] (m-2-2) -- (m-2-3);
\draw [transform canvas={yshift=-0.3ex},->] (m-2-2) -- (m-2-3);
\draw[->] (m-1-2) -- node[auto] {$\wr$} (m-2-2);
\draw[->] (m-1-3) -- node[auto] {$\wr$} (m-2-3);
\draw[dashed, ->] (m-1-1) -- node[auto] {$\wr$} (m-2-1);
\path[->] (m-1-1) edge (m-1-2)
          (m-2-1) edge (m-2-2);
\end{tikzpicture}
\]
that shows existence of the dashed isomorphism. Notice that this isomorphism is clearly compatible with morphisms.
\end{proof}

Notice that if $\CG\in \qco(\SX)$, then $\sr_{\SX}\CG\in
\qco_{\et}(\SX)$, an obvious fact if we consider the Cartesian characterization. We also denote by $\sr_{\SX}$ the induced functor between the categories of quasi-coherent sheaves.
% and by $\tau$ the isomorphism restriction of
% $\tau \colon \sr_{\SY} f_* \liso f_*^{\et} \sr_{\SX}$ to functors from $\qco(\SX)$ to $\qco_{\et}(\SY)$.

\begin{prop}\label{p73}
The functor $\sr_{\SX}\colon \qco(\SX) \to \qco_{\et}(\SX)$ is an
equivalence of categories.
\end{prop}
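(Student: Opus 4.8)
The plan is to show that both $\qco(\SX)$ and $\qco_\et(\SX)$ are controlled by \emph{one and the same} category of descent data, and that the passage to this data is compatible with $\sr_\SX$. Since $\SX$ is Deligne--Mumford, I would first fix an \emph{étale} presentation $p \colon X \to \SX$ with $X$ affine. As an étale morphism is in particular flat of finite presentation, $(X,p)$ is simultaneously an object of $\aff_\ff/\SX$ and of $\aff_\et/\SX$, and $p$ serves as a presentation for either site; note also that $p$ is an affine morphism by the remark in \ref{31}. Moreover the iterated fibre products $X$, $X_1 = X\times_\SX X$ and $X_2 = X\times_\SX X\times_\SX X$ are affine (semi-separatedness) and étale over $\SX$ (composites and base changes of $p$), hence each lies in both sites.

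The decisive observation is that the category $\des_{\qc}(X/\SX)$ of \ref{desqc} is built solely out of quasi-coherent sheaves on the affine schemes $X$ and $X_1$ together with the pullback functors along the flat projections, data which is insensitive to the ambient topology on $\SX$. Consequently the \emph{same} category $\des_{\qc}(X/\SX)$ serves both settings. For the flat site, Corollary \ref{c53} gives an equivalence $\SD \colon \qco(\SX) \liso \des_{\qc}(X/\SX)$. I would then check that the entire descent machinery of Sections \ref{sec3} and \ref{sec4}---Proposition \ref{descadj}, Theorem \ref{p52} and Corollary \ref{c53}---transfers verbatim to $\aff_\et/\SX$, the only property of the presentation actually used being that $p$ is faithfully flat of finite presentation (valid here since $p$ is surjective étale) together with the simple affine description of $p_*$. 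This produces an equivalence $\SD^\et \colon \qco_\et(\SX) \liso \des_{\qc}(X/\SX)$.

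It remains to verify that the triangle formed by $\SD$, $\SD^\et$ and $\sr_\SX$ commutes up to canonical isomorphism, i.e.\ $\SD^\et \circ \sr_\SX \cong \SD$. This is immediate from the construction in \ref{fund}: for $\CF \in \qco(\SX)$ the datum $\SD(\CF) = (p^*\CF, \phi^*_\CF)$ depends only on the values of $\CF$ at the étale objects $X$ and $X_1$ and on the isomorphism given by $(\phi^*_\CF)(U,u) = \CF(\id_U, \phi u)$; since $\sr_\SX$ leaves these values untouched, $\SD^\et(\sr_\SX\CF)$ and $\SD(\CF)$ agree naturally in $\CF$. With the triangle commuting and both $\SD$, $\SD^\et$ equivalences, the two-out-of-three property forces $\sr_\SX \cong (\SD^\et)^{-1}\SD$ to be an equivalence, which is the assertion.

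I expect the genuine obstacle to be the transfer of Theorem \ref{p52} and Corollary \ref{c53} to the étale site. Rather than re-running the whole argument, I would note that the proof of \ref{p52} invokes only faithfully flat descent along the presentation (for the equalizer establishing that $\bar\eta$ is an isomorphism) and formal properties of the adjunction $p^* \dashv p_*$ and of the auxiliary isomorphism $\theta$ behind Lemma \ref{igualita}; all of these persist after replacing the flat topology by the étale one, because an étale presentation remains faithfully flat and every scheme in sight is affine. Alternatively, one can bypass re-proving \ref{p52} altogether by working with the Cartesian description $\qco_\et(\SX) = \cart(\aff_\et/\SX, \CO)$ and invoking the étale analogue of Lemma \ref{descart}, the key point once more being that $X$, $X_1$ and $X_2$ are common to both sites.
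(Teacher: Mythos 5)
Your proposal is correct and is essentially the paper's own argument: both proofs fix an \'etale affine presentation $p\colon X \to \SX$, run the descent equivalence of Theorem \ref{p52}/Corollary \ref{c53} on each of the two sites, and conclude from the commutativity of the resulting triangle with $\sr_{\SX}$ (using that $X$, $X_1$, $X_2$ are affine and lie in both sites, and that $\sr_X p^* = p^*_{\et}\sr_{\SX}$). The single imprecision is your claim that both settings are governed by literally the same category $\des_{\qc}(X/\SX)$: the \'etale machinery produces descent data consisting of sheaves on $\aff_{\et}/X$ rather than on $\aff_{\ff}/X$, so one still needs an equivalence between the flat and \'etale descent categories --- this is precisely what the paper supplies via the restriction functor $\sr_{X/\SX}$, its quasi-inverse $\SW_{X/\SX}$, and the comparison isomorphisms $w_i$ --- but your observation that quasi-coherent sheaves on the affine schemes $X$, $X_1$ are determined by their modules of global sections in either topology repairs this without difficulty.
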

\begin{proof}
Let $\des_{\qc}^{\et}(X/\SX)$ be the category whose objects are the
pairs $(\CG, t)$, where $\CG\in \qco_{\et}(X)$ and $t\colon
p_{2,\et}^*\CG \to p_{1,\et}^*\CG$ is an isomorphism satisfying the
usual cocycle condition. An argument similar to the one that proves Proposition \ref{p52} yields an equivalence of categories
\[
\SD_{\et}\colon \qco_{\et}(\SX) \lto \des_{\qc}^{\et}(X/\SX).
\]
Indeed, set $X_1 := X\times_{\SX}X$. We have natural isomomorphisms of functors
\[
w_i \colon 
p_i^*\left(\widetilde{\CG(X)}\right) \lto
\widetilde{p_i^*\!\CG(X_1)} \quad (i \in \{1, 2\}),
\]
defined, for $(U,u)\in\aff_{\ff}/X$, with $U=\spec (B)$ and $u=\spec
(\nu)$ by
\[
{w_i}{(U,u)}\colon B \otimes_{A_0} \CG(X) \liso 
B\: {_{\nu\epsilon}\otimes_{A_1}}{A_1}_{\eta_L}\otimes_{A_0}\CG(X)
\xto{\id\otimes\wadj{\CG(p_i)}}
B\: {_{\nu\epsilon}\otimes_{A_1}} \CG(X_1,p_i).
\]
%with $i \in \{1, 2\}$. 
The induced functor
\[
 \sr_{X/\SX}\colon \des_{\qc}(X/\SX) \lto
 \des_{\qc}^{\et}(X/\SX)
\]
 given by $\sr_{X/\SX}(\CG,t)=(\sr_X\CG, \sr_{X_1}t)$
 is also an equivalence of categories; a quasi-inverse of $\sr_{X/\SX}$ is the functor $\SW_{X/\SX} \colon \des_{\qc}^{\et}(X/\SX)
\to \des_{\qc}(X/\SX)$ given by
\[
\SW_{X/\SX}(\CG, t) =
(\widetilde{\CG(X)}, w_1^{-1} \widetilde{(t(X_1,\id_{X_1}))} w_2).
\]
The result follows from the fact that the
following diagram
\begin{center}
\begin{tikzpicture}
\matrix(m)[matrix of math nodes, row sep=3em, column sep=3.4em,
text height=1.5ex, text depth=0.25ex]{
\qco(\SX)         & \qco_{\et}(\SX) \\
\des_{\qc}(X/\SX) & \des_{\qc}^{\et}(X/\SX) \\};
\path[->,font=\scriptsize,>=angle 90]
(m-1-1) edge node[auto] {$\sr_{\SX}$} (m-1-2)
        edge node[left] {$\SD$} (m-2-1)
(m-1-2) edge node[auto] {$\SD_{\et}$} (m-2-2)
(m-2-1) edge node[auto] {$\sr_{X/\SX}$} (m-2-2);
\end{tikzpicture}
\end{center}
commutes because $\sr_X p^*\!\CG = p^*_{\et}\sr_{\SX}\CG$.
\end{proof}

\begin{rem}
 The same strategy used in Propositions \ref{p72} and \ref{p73} may be used to prove that on an ordinary scheme the quasi-coherent sheaves over the Zariski topology agree with those defined like here on the \'etale topology. We leave the details to the interested readers.
\end{rem}

\begin{cor}\label{final}
In the previous setting, there is a natural isomorphism of functors from $\qco(\SY)$ to $\qco_{\et}(\SX)$
\[
\sr_{\SX} f^* \liso f^*_{\et} \sr_{\SY}.
\]

\end{cor}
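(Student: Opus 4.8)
The plan is to deduce the statement about the left adjoints $f^*$ and $f^*_{\et}$ purely formally from the already established compatibility of the \emph{right} adjoints in Proposition \ref{p72}, exploiting that the restriction functors $\sr_{\SX}$ and $\sr_{\SY}$ are equivalences by Proposition \ref{p73}. In other words, I would run the mate (conjugate) calculus for the commutative square of functors rather than construct the isomorphism by hand.

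First I would record that Proposition \ref{p72}, once restricted to quasi-coherent sheaves, gives a natural isomorphism $\sr_{\SY} f_* \cong f^{\et}_* \sr_{\SX}$ of functors $\qco(\SX) \to \qco_{\et}(\SY)$. This restriction is legitimate because $f_*$ preserves quasi-coherence by Proposition \ref{qcodir}, its \'etale analogue $f^{\et}_*$ does so by the corresponding Proposition in Section \ref{sec8}, and the $\sr$'s preserve quasi-coherence as noted before Proposition \ref{p73}. Since $\sr_{\SX}$ and $\sr_{\SY}$ are equivalences, each admits a quasi-inverse, say $\sr_{\SX}^{a}$ and $\sr_{\SY}^{a}$, which serves simultaneously as a left and a right adjoint.

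The key step is the observation that left adjoints of naturally isomorphic functors are themselves naturally isomorphic, combined with the fact that a left adjoint of a composite is the composite of the left adjoints in the reverse order. Taking the left adjoint of each side of $\sr_{\SY} f_* \cong f^{\et}_* \sr_{\SX}$, using the adjunction $f^* \dashv f_*$ from the end of Section \ref{sec6}, the adjunction $f^*_{\et} \dashv f^{\et}_*$ from Section \ref{sec8}, and $\sr^{a} \dashv \sr$ at both ends, yields a natural isomorphism
\[
f^* \sr_{\SY}^{a} \liso \sr_{\SX}^{a} f^*_{\et}.
\]
I would then pre-compose with $\sr_{\SY}$ and post-compose with $\sr_{\SX}$, using $\sr_{\SX}\sr_{\SX}^{a} \cong \id$ and $\sr_{\SY}^{a}\sr_{\SY} \cong \id$, to arrive at the desired $\sr_{\SX} f^* \liso f^*_{\et} \sr_{\SY}$.

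The argument is entirely formal, so there is no analytic obstacle; the only point requiring care is the bookkeeping of the directions of the several adjunctions and verifying that the resulting isomorphism is the \emph{canonical} mate (compatible with the units and counits) rather than merely an abstract one, which is what makes it natural and coherent with the earlier 2-functorial constructions. One could instead give a hands-on verification through the comodule dictionary (Corollary \ref{adjHoveyInv} together with Proposition \ref{agreement}), translating everything into $B_0\otimes_{A_0}-$ and $\SU^\varphi$, but the adjoint argument is shorter and coordinate-free, and I would present that one.
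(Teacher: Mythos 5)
Your proposal is correct and is essentially the paper's own argument: the paper proves the corollary in one line by observing that $\sr_{\SX} f^* \mathsf{s}_{\SY}$ (with $\mathsf{s}_{\SY}$ a quasi-inverse of $\sr_{\SY}$) is left adjoint to $f_*^{\et}$, which is precisely your mate computation built from Proposition \ref{p72}, the equivalences of Proposition \ref{p73}, and uniqueness of left adjoints.
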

\begin{proof}
If the functor $\mathsf{s}_{\SY} \colon \qco_{\et}(\SY)\to \qco(\SY)$ is a quasi-inverse of $\sr_{\SY}$, then the functor $\sr_{\SX} f^* \mathsf{s}_{\SY}$ is left adjoint to $f_*^{\et}$.
\end{proof}

\begin{rem}
The previous discussion expresses the fact that the formalism of functoriality that we established in \S 6 agrees with the usual formalism derived from the \'etale topos map induced from a 1-morphism of Deligne-Mum\-ford stacks on the category of quasi coherent sheaves. 

Recall that, when the stacks are equivalent to schemes, the corresponding theory of quasi-coherent sheaves is equivalent to the usual one in the Zariski topology \cite[VII, 4.3]{sga42}.
\end{rem}

%%%%%%%%%%%%%%%%%%%%%%%%%%%%%%%%%%%%%%%%%%%%%%%%%%%%%%%%%%%%%%%%%%%%%%%
%%%%%%%%%%%%%%%%%%%%        BIBLIOGRAFIA       %%%%%%%%%%%%%%%%%%%%%%%%
%%%%%%%%%%%%%%%%%%%%%%%%%%%%%%%%%%%%%%%%%%%%%%%%%%%%%%%%%%%%%%%%%%%%%%%

\end{document}